\crefname{subsection}{Subsection}{Subsections}
\newcommand{\com}[1]{{\color{blue}#1}}
\renewcommand{\leq}{\leqslant}
\renewcommand{\geq}{\geqslant}
\DeclareMathOperator{\Aut}{Aut}
\DeclareMathOperator{\End}{End}
\DeclareMathOperator{\im}{Im}
\DeclareMathOperator{\id}{id}
\DeclareMathOperator{\Map}{Map}
\DeclareMathOperator{\Sym}{Sym}
\DeclareMathOperator{\cm}{\sigma}
\newtheorem{theorem}{Theorem}[section]
\newtheorem{lemma}[theorem]{Lemma}
\newtheorem{proposition}[theorem]{Proposition}
\newtheorem{corollary}[theorem]{Corollary}
\newtheorem{thmx}{Theorem}
\theoremstyle{definition}
\newtheorem{definition}[theorem]{Definition}
\newtheorem{example}[theorem]{Example}
\providecommand\@dotsep{5}
\let\oldtocsection=\tocsection
\let\oldtocsubsection=\tocsubsection
\renewcommand{\tocsection}[2]{\hspace{0em}\oldtocsection{#1}{#2}}
\renewcommand{\tocsubsection}[2]{\hspace{2em}\oldtocsubsection{#1}{#2}}
\newcommand{\ourstructure}{YB-semitruss\xspace}
\newcommand{\ourstructures}{YB-semitrusses\xspace}
\numberwithin{equation}{section}
\subjclass[2020]{16T25, 16W22, 20M25, 16S36, 16Y99}
\keywords{Yang--Baxter equation, set-theoretic solution, structure semigroup, structure algebra, semitruss}
\begin{document}

\title[YB-semitrusses]{Left non-degenerate set-theoretic solutions of the Yang-Baxter equation and semitrusses}
\author{I. Colazzo, E. Jespers, A. Van Antwerpen and C. Verwimp }
\date{}
\address[I. Colazzo (ORCID: 0000-0002-2713-0409)]{Department of Mathematics,
College of Engineering, Mathematics and Physical Sciences, University of Exeter, Exeter EX4 4QF, UK
}
\email{I.Colazzo@exeter.ac.uk}
\address[E. Jespers (ORCID: 0000-0002-2695-7949), A. Van Antwerpen (ORCID: 0000-0001-7619-6298), and C. Verwimp (ORCID: 0000-0003-3128-4854)]{Department of Mathematics and Data Science,
Vrije Universiteit Brussel, Pleinlaan 2, 1050 Brussel}
\email{eric.jespers@vub.be}
\email{arne.van.antwerpen@vub.be}
\email{charlotte.verwimp@vub.be}

\maketitle

\begin{abstract}
To determine and analyze arbitrary left non-degenerate set-theoretic solutions of the Yang-Baxter equation (not necessarily bijective), we introduce an associative algebraic structure, called a \ourstructure, that forms a subclass of the category of semitrusses as introduced by Brzezi\'nski.
Fundamental examples of \ourstructures are structure monoids of left non-degenerate set-theoretic solutions and (skew) left braces. Gateva-Ivanova and Van den Bergh introduced structure monoids and showed their importance (as well as that of the structure algebra) for studying involutive non-degenerate solutions. 
Skew left  braces were introduced by Guarnieri, Vendramin and Rump to deal with bijective non-degenerate solutions.
Hence, \ourstructures also yield a unified treatment of these different algebraic structures.
The algebraic structure of \ourstructures is investigated and as a consequence it is proven, for example, that
any finite left non-degenerate set-theoretic solution of the Yang-Baxter equation is 
right non-degenerate if and only if it is bijective. Furthermore, it is
shown that some finite left non-degenerate solutions can be reduced to non-degenerate
solutions of smaller size. The structure algebra of a finitely generated \ourstructure
is an algebra defined by homogeneous quadratic relations. We prove that it often
is a  left Noetherian algebra of finite Gelfand-Kirillov dimension  that satisfies a 
polynomial identity, but in general it is not right Noetherian.
\end{abstract}
\maketitle

\tableofcontents

\section{Introduction}
Let $V$ be a vector space over a field $K$. A linear map $R\colon V\otimes V\to V\otimes V$ is called a solution of the Yang--Baxter equation if \[(R\otimes{\id})({\id}\otimes R)(R\otimes{\id})=({\id}\otimes R)(R\otimes{\id})({\id}\otimes R).\] 
This  equation originates from papers by Baxter \cite{Ba} and Yang \cite{Ya} on statistical physics and the search for solutions has
attracted numerous studies both in mathematical physics and pure mathematics.
As the study of arbitrary solutions is complex, Drinfeld in \cite{Drinfeld} initiated the investigations of set-theoretic solutions of the Yang-Baxter equation. These are  solutions which are induced
by a linear extension of a map $r\colon X\times X\to X\times X$, where $X$ is a basis of $V$. In this case $r$ satisfies
\[(r\times{\id})({\id}\times r)(r\times{\id})=({\id}\times r)(r\times{\id})({\id}\times r),\]
and one says that $(X,r)$ is a set-theoretic solution of the Yang--Baxter equation (throughout the paper we will simply call this a solution).
For any $x,y \in X$, we put $r(x,y)=(\lambda_x(y),\rho_y(x))$. The solution
is said to be bijective if $r$ is bijective. If furthermore $r^{2}=\id$ then it is said to be an involutive solution. A solution is said to be 
left non-degenerate (respectively right non-degenerate)  if all maps $\lambda_x$ (respectively all maps $\rho_x$) are bijective; a non-degenerate 
solution is one that is a left and right non-degenerate solution.
Solutions of the Yang-Baxter equation also are of central importance for many mathematical topics, including quantum groups, Hopf algebras, Galois theory, group algebras, knot theory, radical rings, pre-Lie algebras, solvable groups and Garside groups (see for example \cite{AngionoVendramin,CJDR,CJORet,Chouraqui,Dehornoy,EPGS,ESS99,FCC,GIVdB98,KasselBook,LeVe17,RumpRR,RumpClassical,SmokPre,SmokPre1}).

In recent years several fundamental new algebraic structures have been introduced to describe certain classes of solutions.
Rump  \cite{Rump2007} introduced the notion of a (left) brace, that is a tuple $(B,+,\circ)$ with $(B,+)$ an abelian group and $(B,\circ)$ a group so that  
$a\circ (b+c) = (a\circ b) -a +(a\circ c)$ holds for all $a,b,c\in B$ (see also \cite{CJO14}).   If one assumes that $(B,+)$ is an arbitrary group then one obtains the
notion of a skew left brace as introduced by Guarnieri and Vendramin \cite{GV17}. 
It has been shown by Bachiller, Ced\'o and Jespers  in \cite{BCJ2016} that all 
involutive non-degenerate solutions can be explicitly constructed from the description of all left braces, and later this has been extended  by Bachiller  in \cite{B2018}
to all  bijective non-degenerate  solutions from all  skew left braces.
This reduces  the classification of all bijective non-degenerate solutions to classify all  skew left braces.

The starting points of all these investigations is in the work of Gateva-Ivanova and Van den Bergh \cite{GIVdB98}, Etingoff, Schedler and 
Soloviev \cite{ESS99},
where a beautiful monoid and group theory translation was given of
 involutive non-degenerate solutions $(X,r)$, associating a monoid and group, denoted by $M(X,r)$ and $G(X,r)$ respectively. These are   called the structure monoid and structure group.
Recall that, for a solution $(X,r)$, the structure monoid  $M(X,r)$ is the monoid (with operation written multiplicatively as $\circ$) generated by the elements 
of $X$ 
subject to the relations $x\circ y =\lambda_x (y) \circ \rho_y(x)$. If one considers the group generated by $X$ subject to these 
relations, considered as group relations, then one obtains the structure group $G(X,r)$. 
For a field $K$, the structure algebra is the $K$-algebra defined by these relations, and hence, since the relations are homogeneous, (quadratic), this is the monoid algebra $K[M(X,r)]$. It turns out \cite{ESS99,JOItype} that if $(X,r)$ is an involutive non-degenerate solution then $M(X,r)$ is a 
regular submonoid of 
the holomorph of the free abelian monoid $FaM_{X}$  
of rank $|X|$ (with basis $X$) and $G(X,r)$ is the group of fractions of this cancellative monoid. The group $G(X,r)$ is a Bieberbach group  and it is a regular subgroup of 
the holomorph of the free abelian group $Fa_{X}$ (with basis $X$) of rank $|X|$.
So, $M(X,r) =\{ (a, \lambda_a) \mid a \in FaM_{X} \}$ is a submonoid of the natural semidirect product $FaM_{X}\rtimes \Sym (X)$, 
where $\lambda : FaM_{X} \rightarrow \Aut (FaM_{X}): a\mapsto \lambda_a$  
is a map so 
that $\lambda_1=\id$ and $\lambda_x$ is the mapping obtained from the defining solution $(X,r)$.
Furthermore, as shown by Gateva-Ivanova and Van den Bergh \cite{GIVdB98}, in this case, the  structure algebra $K[M(X,r)]$ has a rich algebraic structure that resembles that of a polynomial algebra in finitely many commuting variables. In particular,
if  $X$ is finite then it is left and right Noetherian domain that  satisfies a polynomial identity (and it is a maximal order). For details and a survey we refer the reader to \cite{JOBook}.

Lu, Yan and Zhu \cite{LuYaZhu00} and Soloviev \cite{So00} considered arbitrary bijective non-degenerate solutions and studied their structure groups. In contrast to the involutive case, the set $X$ is not necessarily canonically embedded into the structure group
$G(X,r)$ see \cite{SmVe18, LeVe2019}. This problem can be avoided by working with the structure monoid and many of the above mentioned results  have been extended to arbitrary bijective (left) non-degenerate solutions \cite{JeKuVA19,JeKuVA19Cor}. Furthermore, it has been shown that the algebraic structure of the  algebra $K[M(X,r)]$ determines crucial information of the solutions, for example, it is a domain (or equivalently it is a prime algebra) if and only if $(X,r)$ is involutive.
The structure algebra $R=K[M(X,r)]=\bigoplus_{n\in \mathbb{N}} R_n$ has a natural gradation and  it is a locally finite connected graded algebra, i.e. $\dim_{K}(R_n) < \infty$ and $R_0=K$. Moreover,  it is generated by 
the elements of degree 1. In \cite{GIComApp}, Gateva-Ivanova  posed several interesting questions 
on $\dim_{K}(R_2)$ for non-degenerate finite solutions $(X,r)$. This dimension is at 
most ${|X| \choose 2}$ and  this bound   is reached precisely when  $(X,r)$ is involutive (see  \cite{JeKuVA19}
and \cite{GIVdB98}).
In \cite{CJOMinCond} the minimal possible values of the dimension of $R_2$ 
are determined and one completely classifies  non-degenerate finite solutions $(X, r)$ for which these bounds are attained.

Rump   introduced some other structures to study bijective  (left) non-degenerate solutions $(X,r)$, such as cycle sets  \cite{Ru05}, $q$-cycle sets and $q$-braces \cite{Ru19}. 
Cycle sets and $q$-cycle sets are in a one-to-one correspondence with left non-degenerate involutive, respectively left non-degenerate solutions. Moreover, $q$-braces are groups equipped with two binary operations that give a $q$-cycle set structure. Such structures  describe non-degenerate $q$-cycle sets, i.e. non-degenerate bijective solutions.
Skew left braces are an intermediate step between left braces and q-braces.  

Brzezi\'nski in \cite{Br18} initiated another notion, called a left semitruss, this to put the algebraic structures of skew left braces and structure monoids into  a larger context, purely from an algebraic point of view, without making the link with solutions.
A left semitruss is  a tuple $(A,+,\circ , \lambda)$ such that $(A,+)$ and $(A,\circ)$ are  semigroups and $\lambda : A\rightarrow  \Map(A,A)$ is a function so that
$a\circ (b+c) =(a\circ b) + \lambda_a(c)$, for all $a,b,c\in A$. A  skew left brace $(A,+,\circ)$ is a left semitruss with $(A,+)$ and $(A,\circ)$ groups and 
$\lambda_a(c) =-a + (a\circ c)$.
Based on a fundamental result of Gateva-Ivanova and Majid \cite{GIMa08}, showing that any solution $(X,r)$  can be lifted to a solution on the structure monoid $M(X,r)$,  Ced\'o, Jespers and Verwimp showed in \cite{CJV2020} that $M(X,r)$ is a left semitruss (by considering it as a regular submonoid of the structure monoid of an associated shelf solution)  for any left non-degenerate solution $(X,r)$.

The aim of this paper is to study  arbitrary left non-degenerate solutions via the study of certain, strongly related, associative structures. 
For this, we consider within the category of left semitrusses those that  yield left non-degenerate solutions, and we therefore will call these objects \ourstructures. It turns out that  such a \ourstructure  is a tuple $(A,+,\circ, \lambda , \cm)$, with 
$(A,+,\circ,\lambda )$  a left semitruss, so that $a\circ b =a+\lambda_a (b)$ and $\cm:A \rightarrow \Map (A,A):a\mapsto \cm_a$ is a map that determines the addition:
$a+b=b+\cm_b (a)$. Furthermore $\lambda$ is a semigroup homomorphism $(A,\circ) \rightarrow \Aut (A,+)$, $\cm$ is a semigroup anti-homomorphism $(A,+) \rightarrow \End (A,+)$, and
    $\cm_{\lambda_a(d)}\lambda_a(b) = \lambda_a \cm_d(b)$, for all $a,b\in A$.
The map $\lambda$ is called the $\lambda$-map of the semitruss and the map 
$\cm$ is called the $\cm$-map of the \ourstructure.

We give a brief outline of the paper and mention parts of some of the main results.
In Section~\ref{sec_LNDsAndYBsemitrusses},
we set up the machinery of these \ourstructures and we show 
that this object determines all left non-degenerate solutions. In particular, in Corollary~\ref{CorEpiGraded} we show the following result.

\begin{thmx}
Left non-degenerate solutions are restrictions of solutions associated to  \ourstructures.
\end{thmx}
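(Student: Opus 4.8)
The plan is to realize an arbitrary left non-degenerate solution $(X,r)$ as the restriction of the canonical solution carried by its structure monoid $M(X,r)$, once the latter has been equipped with a \ourstructure structure.

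First I would use the machinery developed earlier in Section~\ref{sec_LNDsAndYBsemitrusses}: for a left non-degenerate solution $(X,r)$ the structure monoid $M=M(X,r)$ carries, besides its multiplication $\circ$, an addition $+$ (obtained from the associated shelf solution, following Ced\'o, Jespers and Verwimp, where $M$ appears as a regular submonoid of the structure monoid of that shelf) together with maps $\lambda,\cm\colon M\to\Map(M,M)$ extending the maps $\lambda_x$ and $\cm_x$ attached to the generators, and that $(M,+,\circ,\lambda,\cm)$ is a \ourstructure. The defining identities of a \ourstructure --- that $\lambda$ is a semigroup homomorphism $(M,\circ)\to\Aut(M,+)$, that $\cm$ is a semigroup anti-homomorphism $(M,+)\to\End(M,+)$, the compatibility $\cm_{\lambda_a(d)}\lambda_a(b)=\lambda_a\cm_d(b)$, and the coupling relations $a\circ b=a+\lambda_a(b)$ and $a+b=b+\cm_b(a)$ --- are first verified on the generating set $X$, where they amount to the left non-degeneracy of $(X,r)$ together with the Gateva-Ivanova--Majid lifting property, and are then propagated to all of $M$ because each of $\circ$, $+$, $\lambda$ and $\cm$ is determined by its restriction to $X$.

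Second, by the result of the section that attaches to every \ourstructure a left non-degenerate solution, $M$ carries the solution $r_M(a,b)=(\lambda_a(b),\rho_b(a))$, where $\rho_b(a)$ is the unique element with $a\circ b=\lambda_a(b)\circ\rho_b(a)$; since the \ourstructure structure on $M$ was built from the defining relations of $M(X,r)$, the solution $r_M$ is precisely the Gateva-Ivanova--Majid lift of $r$, so $r_M(x,y)=r(x,y)$ for all $x,y\in X$. It remains to check that $X$ sits inside $M$ as an $r_M$-invariant subset. The canonical map $X\to M(X,r)$ is injective, because the defining relations $x\circ y=\lambda_x(y)\circ\rho_y(x)$ are homogeneous of degree two for the natural length grading on $M(X,r)$, so no two distinct generators become identified; hence $X$ is identified with the degree-one component of $M$. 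As $\lambda_x(y)\in X$ and $\rho_y(x)\in X$ whenever $x,y\in X$, the subset $X\times X$ is stable under $r_M$ and $r_M|_{X\times X}=r$. Therefore $(X,r)$ is the restriction to $X$ of the solution $(M,r_M)$ associated to the \ourstructure $M$, which is the assertion of Corollary~\ref{CorEpiGraded}.

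The step I expect to be the main obstacle is the first one: showing that the addition and the maps $\lambda$, $\cm$ genuinely extend from the generators to all of $M(X,r)$ and satisfy the \ourstructure axioms --- equivalently, that the Gateva-Ivanova--Majid lift of $r$ is compatible with an additive monoid structure on $M$. Granting the earlier results of the section that provide this, the remaining steps --- reading off the associated solution on $M$ and checking the embedding $X\hookrightarrow M$ together with its $r_M$-invariance --- are routine bookkeeping on generators and on the length grading.
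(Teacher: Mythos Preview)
Your proposal is correct and follows essentially the same approach as the paper: the key input is Proposition~\ref{SolFromSemiTruss}, which equips the structure monoid $M(X,r)$ of a left non-degenerate solution with a \ourstructure structure whose associated solution is $r_M$, and then the restriction $r_M|_{X\times X}=r$ together with the injectivity of $X\hookrightarrow M(X,r)$ (via the length grading) gives the statement. You have also correctly identified that the substantive work lies in establishing the \ourstructure axioms on $M(X,r)$, which the paper packages into Proposition~\ref{SolFromSemiTruss} and the discussion preceding it.
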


 Hence, the study of such solutions is being reduced to the study of \ourstructures. 
It turns out that a \ourstructure is an epimorphic image of a structure semigroup $S(X,r)=M(X,r)\setminus \{ 1\}$ (considered as a \ourstructure)  of a left non-degenerate solution, where $1$ is the identity of $M$.
Further it is shown that all associative structures associated to 
left non-degenerate solutions that appear in the literature are special cases and correspond to \ourstructures with additional assumptions.

In Section \ref{sec_bijective},  we characterize when the solution $(A,r_A) $ associated to a \ourstructure is bijective. 
In particular, we obtain the following result (Theorem~\ref{maintheorembijective}).

\begin{thmx}
If $(X,r)$ is a finite left non-degenerate solution, then  $r$ is bijective
if and only if $(X,r)$ is right non-degenerate.
\end{thmx}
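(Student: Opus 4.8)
The plan is to reduce the statement to a property of the $\cm$-map of the associated structure \ourstructure, via one ``diagonal'' identity that links $\cm$, $\lambda$ and $\rho$. By Corollary~\ref{CorEpiGraded} and the constructions of Section~\ref{sec_LNDsAndYBsemitrusses} we may work inside $M=M(X,r)$, which is a \ourstructure with maps $\lambda$ and $\cm$ and whose associated solution restricts to $r$ on $X$. Spelling out $a\circ b=a+\lambda_a(b)$ and $a+b=b+\cm_b(a)$ in the relation $x\circ y=\lambda_x(y)\circ\rho_y(x)$ yields, for $c\in X$,
\[
\cm_c(x)=\lambda_c\bigl(\rho_{\lambda_x^{-1}(c)}(x)\bigr),\qquad\text{i.e.}\qquad \Delta\,r\,\Delta^{-1}(x,c)=\bigl(c,\cm_c(x)\bigr),
\]
where $\Delta(x,y)=(x,\lambda_x(y))$ is a bijection of $X\times X$ (left non-degeneracy). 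Hence $r$ is bijective if and only if $\cm_x|_X\in\Sym(X)$ for every $x\in X$; this is the characterisation, established in Section~\ref{sec_bijective}, of when the solution associated to a \ourstructure is bijective. Since the defining relations are homogeneous and $X$ is finite, once each $\cm_x|_X$ is a permutation these generate a finite subgroup of $\Sym(X)$, and therefore $\cm_m\in\Aut(M,+)$ for all $m\in M$.

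The second ingredient consists of two maps arising from the Yang--Baxter equation: a homomorphism $\lambda\colon(M,\circ)\to\Sym(X)$ (using $\lambda_x\lambda_y=\lambda_{\lambda_x(y)}\lambda_{\rho_y(x)}$ and left non-degeneracy) and an anti-homomorphism $\rho\colon(M,\circ)\to\Map(X,X)$ (well defined because $\rho_y\rho_x=\rho_{\rho_y(x)}\rho_{\lambda_x(y)}$), the latter with finite image $N:=\rho(M)\subseteq\Map(X,X)$. Combining the defining compatibility $\cm_{\lambda_a(d)}=\lambda_a\cm_d\lambda_a^{-1}$ of a \ourstructure with the formula for $\cm_c$ and with $\lambda_x\lambda_y=\lambda_{\lambda_x(y)}\lambda_{\rho_y(x)}$ gives the key identity
\[
q\circ\rho_y=\lambda_y^{-1}\circ\cm_y\circ q\qquad(y\in X),\qquad\text{where}\quad q(z):=\lambda_z^{-1}(z),
\]
and iterating it along a $\circ$-word yields $q\circ\rho_m=\theta(m)\circ q$ for every $m\in M$, with $\theta(m)$ the corresponding product of the maps $\lambda_x^{-1}\cm_x|_X$.

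For the implication ``$r$ bijective $\Rightarrow$ $(X,r)$ right non-degenerate'', assume $r$ bijective. Then every $\lambda_x^{-1}\cm_x|_X$ lies in $\Sym(X)$ and these generate a finite group $H\leq\Sym(X)$, so $\theta(m)\in H$ for all $m$; hence $q(\im\rho_m)=\theta(m)(\im q)=\im q$ and $|\im\rho_m|\geq|\im q|$ for every $m\in M$. Choosing $m$ with $\rho_m$ an idempotent of $N$ of minimal rank and substituting $m^{\circ n}$ (note $\rho_{m^{\circ n}}=\rho_m$) gives $\theta(w)^n q=q\circ\rho_m$ for all $n\geq 1$ and a word $w$ representing $m$, whence $q=\theta(w)q$ and then $q=q\circ\rho_m$ with $\rho_m$ idempotent. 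What remains --- and this is the heart of the matter, where finiteness is indispensable --- is to promote this to $q\in\Sym(X)$, equivalently to show that the minimal rank in $N$ is $|X|$, so that $N$ is a group of permutations; I expect to do so by a pigeonhole argument inside the finite monoid $N$, using $\rho_z\rho_y=\rho_{\rho_z(y)}\rho_{\lambda_y(z)}$ to rule out rank-decreasing transformations. Once $q$ is bijective, $q\circ\rho_y=\lambda_y^{-1}\circ\cm_y\circ q$ becomes $\rho_y=q^{-1}\circ\lambda_y^{-1}\circ\cm_y\circ q\in\Sym(X)$ for every $y\in X$, which is right non-degeneracy.

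For the converse, left and right non-degeneracy force $M(X,r)$ to be cancellative; being finitely generated of finite Gelfand--Kirillov dimension, it then satisfies the Ore condition and embeds into its group of fractions $G$, to which $r$ lifts (in the manner of Gateva-Ivanova and Majid) as a left non-degenerate solution on a group, which one checks is bijective; since $X\hookrightarrow M(X,r)\hookrightarrow G$ and the lift restricts to $r$, $r$ is bijective. (Alternatively, right non-degeneracy together with the identity $q\circ\rho_y=\lambda_y^{-1}\circ\cm_y\circ q$ and a finiteness argument again force $q$ to be bijective and each $\cm_x|_X$ to be a permutation, whence $r$ is bijective by the first paragraph.) In both directions the whole content sits in the finiteness hypothesis: $q$ need not be bijective for a general finite left non-degenerate solution, so its bijectivity has to be squeezed out of the extra assumption (bijectivity of $r$, respectively right non-degeneracy) via the finite transformation monoid $\rho(M)$, and this is the step I expect to be the main obstacle.
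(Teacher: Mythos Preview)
Your setup is right and coincides with the paper's: the conjugation $r=\Delta^{-1}s\,\Delta$ reduces bijectivity of $r$ to bijectivity of every $\cm_x$, and your ``diagonal'' identity
\[
q\circ\rho_y \;=\; \lambda_y^{-1}\circ\cm_y\circ q,\qquad q(z)=\lambda_z^{-1}(z),
\]
is exactly the relation the paper exploits (it appears in the proof of Lemma~\ref{prop:bijdiagonal}). But both implications, as you present them, have genuine gaps.

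\textbf{Right non-degenerate $\Rightarrow$ bijective.} Your primary route through cancellativity of $M(X,r)$ fails: non-degeneracy does \emph{not} force $M(X,r)$ to be cancellative. Take $X=\{1,2\}$ with $\lambda_1=\lambda_2=(1\,2)$ and $\rho_1=\rho_2=\id$; this is a bijective non-degenerate solution, yet the defining relations collapse all words of length two to a single element, so $M(X,r)$ is not even left cancellative. The paper's argument is instead a one-line computation (Lemma~\ref{lemma:diagonal}): if $t=q(a)$ then $\rho_t(a)=\lambda_t^{-1}\cm_{\lambda_a(t)}(a)=\cm_t\lambda_a^{-1}(a)=\cm_t(t)$, a value depending only on $t$; hence $q(a)=q(d)$ gives $\rho_t(a)=\rho_t(d)$ and right non-degeneracy forces $a=d$. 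Thus $q$ is injective, bijective on the finite $X$, and then your own identity yields $\cm_y=\lambda_y\, q\,\rho_y\, q^{-1}\in\Sym(X)$.

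\textbf{Bijective $\Rightarrow$ right non-degenerate.} You explicitly leave open the crucial step (forcing $q$ to be bijective), hoping for a pigeonhole argument in the monoid $\rho(M)$. The paper bypasses this with a second identity (Lemma~\ref{lem:rewritediag}), valid once each $\cm_a$ is bijective:
\[
\lambda_b^{-1}\,q(a)\;=\;q\bigl(\lambda^{-1}_{\cm_a^{-1}\lambda_a(b)}(a)\bigr),
\]
which says $\im q$ is stable under every $\lambda_b^{-1}$. Since $X$ is finite there is $m$ with $(\lambda_a^{-1})^m=\id$ on $X$, so $a=(\lambda_a^{-1})^{m-1}q(a)$ and repeated use of the identity pushes the $\lambda^{-1}$'s through $q$, giving $a\in\im q$; thus $q$ is surjective, hence bijective. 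Injectivity of each $\rho_a$ then follows from $q\rho_a=\lambda_a^{-1}\cm_a q$ together with the bijectivity of $q$ and $\cm_a$ (cf.\ the end of the proof of Proposition~\ref{bijectivenondeg1}). Your identity alone does not produce surjectivity of $q$; the missing ingredient is precisely this $\lambda^{-1}$-invariance of $\im q$.
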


That the right non-degeneracy is a necessary condition  already has been shown by Castelli, Catino and Stefanelli in \cite{CaCaSt21}. Our proof also clarifies which  data  of the tuple defining a \ourstructure   controls specific properties of the associated  solution.

In order to study solutions one would like techniques to reduce solutions to solutions of smaller size. For bijective non-degenerate solutions this for example has been successfully done via the retract of a solution (of a skew left brace); to do so one makes fundamentally use of the fact there are two group operations involved.  In Section \ref{sec_retraction}, 
we show that for non-degenerate solutions this can be done in general (Theorem~\ref{retractassoc} and Corollary~\ref{permutationyb}).

\begin{thmx}
Let $(A,+,\circ, \lambda, \cm)$ be a non-degenerate \emph{\ourstructure}. Then 
$ \mathcal{G}(A)=\{ f(a)= (\cm_a,\lambda_a, \rho_a)\mid a\in A\}$ is a \ourstructure
for the operations
$$f(a)\circ f(b)= (\cm_{a\circ b},\lambda_{a\circ b},\rho_{a\circ b}), \quad
f(a)+f(b) = (\cm_{a+b}, \lambda_{a+b},\rho_{a+b}),$$   $\lambda$-map
 $$\lambda_{f(a)}f(b) =(\cm_{\lambda_a(b)},\lambda_{\lambda_{a}(b)},\rho_{\lambda_{a}(b)})=f(\lambda_a (b)),$$
 and $\cm$-map
  $$\cm_{f(b)}f(a) = (\cm_{\cm_{b}(a)}, \lambda_{\cm_{b}(a)}, \rho_{\cm_{b}(a)})=f(\cm_b(a)).$$ 
The map $f: A\rightarrow \mathcal{G}(A): a\mapsto f(a)$ is a \ourstructure epimorphism
and  the associated solution of $\mathcal{G}(A)$ is non-degenerate.

Furthermore,  $(\mathcal{G}(A),\circ)$ is a cancellative monoid (and thus also $(\mathcal{G}(A),+)$ is left cancellative) that satisfies the left and right Ore condition.
\end{thmx}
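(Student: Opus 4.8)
The object $\mathcal G(A)$ is, by definition, the image of the map $f\colon A\to\Map(A,A)^{3}$, $a\mapsto(\cm_a,\lambda_a,\rho_a)$, so the content of the statement is to transport the \ourstructure structure of $A$ along $f$. Put $\delta=\ker f=\{(a,b)\in A\times A:\lambda_a=\lambda_b,\ \cm_a=\cm_b,\ \rho_a=\rho_b\}$. The plan has three parts: (i) show that $\delta$ is a congruence of $(A,+)$ and of $(A,\circ)$ that is compatible with the $\lambda$-map, with the $\cm$-map and with the associated solution $r_A$, so that $\mathcal G(A)\cong A/\delta$ inherits all of this, the displayed formulas are the induced operations, and $f$ is a \ourstructure epimorphism; (ii) deduce non-degeneracy of the induced solution; (iii) prove that $(\mathcal G(A),\circ)$ is cancellative and satisfies the Ore conditions. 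The one structural consequence of the non-degeneracy hypothesis, beyond left non-degeneracy (bijectivity of the $\lambda_a$), that underlies all three parts is that \emph{each $\cm_a$ is an automorphism of $(A,+)$}, as established in the analysis of non-degeneracy in Section~\ref{sec_bijective}; I use it freely below.

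For (i), the $\circ$-part is transparent: if $a\,\delta\,a'$ and $b\,\delta\,b'$ then $\lambda_{a\circ b}=\lambda_a\lambda_b=\lambda_{a'}\lambda_{b'}=\lambda_{a'\circ b'}$ since $\lambda$ is a $\circ$-homomorphism; $\cm_{a\circ b}=\cm_{a+\lambda_a(b)}=\cm_{\lambda_a(b)}\cm_a=\lambda_a\cm_b\lambda_a^{-1}\cm_a$, which equals $\cm_{a'\circ b'}$, using that $\cm$ is a $+$-anti-homomorphism together with the compatibility relation $\cm_{\lambda_a(b)}\lambda_a=\lambda_a\cm_b$; and $\rho_{a\circ b}=\rho_b\rho_a=\rho_{b'}\rho_{a'}=\rho_{a'\circ b'}$, using that for $r_A$ (Section~\ref{sec_LNDsAndYBsemitrusses}) $\rho$ is a $\circ$-anti-homomorphism $(A,\circ)\to\Map(A,A)$. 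The $+$-part is the genuinely technical point: I would deduce it from the $\circ$-case by writing $a+b=a\circ\lambda_a^{-1}(b)=b\circ\lambda_b^{-1}\cm_b(a)$, which reduces the claim to showing that $\delta$ is preserved by the maps $\lambda_a$, $\lambda_a^{-1}$ and $\cm_b$. Preservation under $\lambda_a$ and $\cm_b$ follows from the auxiliary identities $\cm_{\lambda_a(b)}=\lambda_a\cm_b\lambda_a^{-1}$ and $\cm_{\cm_b(a)}=\cm_b\cm_a\cm_b^{-1}$ (the latter by applying the $+$-anti-homomorphism $\cm$ to $a+b=b+\cm_b(a)$ and using that $\cm_b$ is invertible), from $\lambda_{\lambda_a(b)}\lambda_{\rho_b(a)}=\lambda_a\lambda_b$ (apply the $\circ$-homomorphism $\lambda$ to $a\circ b=\lambda_a(b)\circ\rho_b(a)$, then cancel the automorphism $\lambda_{\rho_b(a)}$), and from the Yang--Baxter relations for $r_A$ controlling the $\rho$-components of $\lambda_a(b)$ and $\cm_b(a)$. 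The same ingredients give compatibility of $\delta$ with $r_A$ (hence well-definedness of the induced solution) and well-definedness of $\lambda_{f(a)}f(b)=f(\lambda_a(b))$ and $\cm_{f(b)}f(a)=f(\cm_b(a))$. Once (i) holds, that $\mathcal G(A)$ satisfies every defining identity of a \ourstructure and that $f$ is an epimorphism are immediate, since $f$ is surjective and the identities hold in $A$.

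For (ii): $r_{\mathcal G(A)}(f(a),f(b))=(f(\lambda_a(b)),f(\rho_b(a)))$. The map $\lambda_{f(a)}\colon f(b)\mapsto f(\lambda_a(b))$ is onto $\mathcal G(A)$ because $\lambda_a$ is onto $A$; it is injective because, $\delta$ being a $+$-congruence, $f(\lambda_a(b))=f(\lambda_a(b'))$ forces $f(a)\circ f(b)=f(a+\lambda_a(b))=f(a+\lambda_a(b'))=f(a)\circ f(b')$, whence $f(b)=f(b')$ by left cancellativity of $(\mathcal G(A),\circ)$ (proved in (iii)). Symmetrically $\rho_{f(b)}$ is onto, and it is injective because $\rho_b$ is bijective by non-degeneracy of $A$; so $r_{\mathcal G(A)}$ is non-degenerate.

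For (iii): suppose $f(a)\circ f(c)=f(a)\circ f(c')$, i.e.\ $a\circ c\,\delta\,a\circ c'$. Comparing $\lambda$-, $\cm$- and $\rho$-components of $f(a\circ c)$ and $f(a\circ c')$ gives $\lambda_a\lambda_c=\lambda_a\lambda_{c'}$, $(\lambda_a\cm_c\lambda_a^{-1})\cm_a=(\lambda_a\cm_{c'}\lambda_a^{-1})\cm_a$ and $\rho_c\rho_a=\rho_{c'}\rho_a$; cancelling the bijections $\lambda_a$, $\cm_a$ and $\rho_a$ (the last two bijective exactly because $A$ is non-degenerate) yields $\lambda_c=\lambda_{c'}$, $\cm_c=\cm_{c'}$, $\rho_c=\rho_{c'}$, i.e.\ $f(c)=f(c')$; the mirror computation gives right cancellativity. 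An identity element is the image under $f$ of the neutral element (adjoining one to $A$ if necessary, with $\lambda_1=\cm_1=\rho_1=\id$), so $(\mathcal G(A),\circ)$ is a cancellative monoid, and $(\mathcal G(A),+)$ is left cancellative since $u+v=u\circ\lambda_u^{-1}(v)$ with $\lambda_u$ bijective on $\mathcal G(A)$. For the Ore conditions, note first that $u\circ v=u+\lambda_u(v)$ with $\lambda_a$ bijective gives $a\circ\mathcal G(A)=a+\mathcal G(A)$, while $a+b=b+\cm_b(a)$ gives $a+b\in(a+\mathcal G(A))\cap(b+\mathcal G(A))$; hence $(a\circ\mathcal G(A))\cap(b\circ\mathcal G(A))\neq\emptyset$, the right Ore condition. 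The left Ore condition is obtained by the analogous argument, using instead $a\circ b=\lambda_a(b)+\cm_{\lambda_a(b)}(a)$, the anti-homomorphism $\rho$, and the left-hand reversibility $\mathcal G(A)+v=v+\mathcal G(A)$ (which uses that each $\cm_v$ is surjective); equivalently, both conditions follow at once from realizing $\mathcal G(A)$ as a cancellative epimorphic image of the structure semigroup of the non-degenerate — hence, by Section~\ref{sec_bijective}, bijective — solution $(A,r_A)$, whose structure monoid possesses a two-sided group of fractions, these properties passing to surjective cancellative images. The main obstacles are therefore the bookkeeping in (i) showing that $\delta$ is a $+$-congruence compatible with $r_A$, and, throughout, having available that the $\cm_a$ are automorphisms in the non-degenerate case.
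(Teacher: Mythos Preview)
Your proof rests on the claim that in a non-degenerate \ourstructure every $\cm_a$ is a bijection (equivalently, that $r_A$ is bijective), which you attribute to Section~\ref{sec_bijective}. This is not established there: Section~\ref{sec_bijective} only proves that non-degeneracy forces bijectivity of $r_A$ under an extra hypothesis (finiteness, or $\mathbb N$-graded with finite homogeneous components, or bijective diagonal map $\mathfrak q$). For a general non-degenerate \ourstructure one only knows that $\mathfrak q$ is injective (Lemma~\ref{lemma:diagonal}), and Lemma~\ref{prop:bijdiagonal} shows $\cm_b=\lambda_b\,\mathfrak q\,\rho_b\,\mathfrak q^{-1}$ precisely when $\mathfrak q$ is bijective. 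In fact the implication you invoke is one of the things the present theorem is used to \emph{prove} (see Corollary~\ref{bijectivenondeg2}), so relying on it here is circular. You use $\cm_b^{-1}$ in several places: to get $\cm_{\cm_b(a)}=\cm_b\cm_a\cm_b^{-1}$ for the $+$-congruence, to cancel $\cm_a$ in the proof of cancellativity, and to obtain $\mathcal G(A)+v=v+\mathcal G(A)$ for the left Ore condition. Each of these steps collapses without bijectivity of $\cm$.

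The paper's proof is organised so as to avoid $\cm$-bijectivity entirely. The key device is Proposition~\ref{prop:c_a=c_b}: in a non-degenerate \ourstructure, $\lambda_a=\lambda_b$ and $\rho_a=\rho_b$ already force $\cm_a=\cm_b$, so the $\cm$-component of $f(a)$ is redundant. Well-definedness of the $\cm$-map on $\mathcal G(A)$ is then obtained via Lemma~\ref{ManyClaims}, whose proofs use only bijectivity of the $\lambda_a$ and $\rho_a$. Cancellativity of $(\mathcal G(A),\circ)$ follows by cancelling $\lambda_a$ and $\rho_a$ in $\lambda_a\lambda_b=\lambda_a\lambda_{b'}$ and $\rho_b\rho_a=\rho_{b'}\rho_a$ and then invoking Proposition~\ref{prop:c_a=c_b}; no appeal to $\cm_a^{-1}$ is needed. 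For the Ore conditions the paper uses the relation $f(a)\circ f(b)=\lambda_{f(a)}(f(b))\circ\rho_{f(b)}(f(a))$ with both $\lambda_{f(a)}$ and $\rho_{f(b)}$ bijective on $\mathcal G(A)$, which immediately gives both one-sided conditions without touching $\cm$. If you rewrite your argument replacing every use of $\cm_b^{-1}$ by the corresponding statement about $\rho$ (via $\cm_b(a)=\lambda_b\rho_{\lambda_a^{-1}(b)}(a)$ and the Yang--Baxter relations), you will essentially recover the paper's route.
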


The \ourstructure  $ \mathcal{G}(A)$ is thus  ``close'' to being  a left cancellative semi-brace as introduced by Catino, Colazzo and Stefanelli \cite{CCS2017}. If this multiplicative semigroup is a group then it turns out that $ \mathcal{G}(A)$  is a skew left brace and that the associated solution is bijective. In particular, it follows that if $(X,r)$ is a non-degenerate solution so that
for every $a\in M(X,r)$ there exists $b\in M(X,r)$ such that
$\lambda_a \lambda_b =\id$ and   $\rho_b \rho_a =\id $ (for example of all $\lambda_a$ and $\rho_b$ are of finite order) then $r$ is bijective.
This is another application of the machinery of \ourstructures.

Finally, in Section \ref{sec_algebraic}, we study the additive and multiplicative structure of  a \ourstructure $A$ that is an epimorphic image of the structure monoid $M(X,r)$ of a finite left non-degenerate solution $(X,r)$. To do so  we exploit the fact that in the finite semigroup $\mathcal{C}(A):=\{ \cm_a \mid a\in A\}$
every left ideal is a two-sided ideal. It turns out that $(A,+)$ is a finite ``module'' over a subsemigroup that satisfies the ascending chain condition on left ideals and it has an ideal chain of which the Rees factors are finite unions of commutative semigroups.
As a consequence we obtain the following results (Theorem~\ref{ThmNoetherianPI} and Corollary~\ref{corollary:pinoetherian}).

\begin{thmx}
 Let $(X,r)$ be a finite left non-degenerate solution. Let $K$ be a field.
Then the algebra $K[(M(X,r),+)]$  is a left Noetherian PI-algebra of finite Gelfand-Kirillov dimension bounded by $|X|$. 
 
 If, furthermore,  the diagonal map $\mathfrak{q}:X\rightarrow X:x\mapsto \lambda_x^{-1}(x)$ is bijective    then also
 the structure algebra $K[(M(X,r),\circ)]$ is a connected  graded left Noetherian representable algebra.
\end{thmx}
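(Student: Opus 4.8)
The plan is to strip $A:=M(X,r)$ down to its \ourstructure skeleton and feed that into standard criteria for Noetherianity, polynomial identity and growth of semigroup algebras. Recall that $A$ is generated by $X$ as a $\circ$-monoid, and since $a\circ b=a+\lambda_a(b)$ with each $\lambda_a$ restricting to a bijection of $X$, also as a $+$-semigroup; the presentations involved are length-homogeneous, so $K[(A,+)]$ and $K[(A,\circ)]$ are connected graded affine algebras generated in degree $1$ by $|X|$ elements. The key finiteness input is that $\mathcal{C}(A)=\{\cm_a\mid a\in A\}$ is finite: each $\cm_a$ is an additive endomorphism determined by its restriction to $X$, and the \ourstructure axioms force $\cm_x(X)\subseteq X$ for $x\in X$, so $\mathcal{C}(A)$ embeds in $\Map(X,X)$. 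In the finite semigroup $\mathcal{C}(A)$ every left ideal is two-sided (proved in Section~\ref{sec_algebraic}), and this is what controls the additive structure.

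First I would analyse $(A,+)$. Using the anti-homomorphism $\cm\colon(A,+)\to\End(A,+)$ together with the $\mathcal{J}$-class (equivalently left-ideal) stratification of the finite semigroup $\mathcal{C}(A)$, one builds a finite chain of ideals of $(A,+)$ whose Rees factors are finite unions of finitely generated commutative cancellative semigroups; concretely, elements $a$ are grouped according to the ideal of $\mathcal{C}(A)$ generated by $\cm_a$, the bottom pieces being genuinely commutative because there $\cm$ acts through idempotents. In parallel one exhibits a subsemigroup $T\le(A,+)$ with the ascending chain condition on left ideals such that $(A,+)$ is a finite union of cosets $a_i+T$, i.e.\ $K[(A,+)]$ is a finitely generated left $K[T]$-module. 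Since finitely generated commutative semigroup algebras are affine and Noetherian, the standard exact sequence $0\to K[I]\to K[S]\to K_0[S/I]\to 0$ of bimodules, applied along the ideal chain, shows $K[T]$ is left Noetherian, hence so is $K[(A,+)]$ by module-finiteness (one checks the chain does not close up on the right, matching the non-right-Noetherianity noted in the introduction). Polynomial identity is inherited the same way: each Rees factor is a finite union of commutative semigroups, hence its semigroup algebra is PI, and a left Noetherian algebra carrying an ideal chain with PI Rees factors is PI; one may alternatively pass to the finite union of abelian-by-finite groups of quotients and conclude representability directly. Finally, $\GK K[(A,+)]$ equals the maximum of the GK dimensions of the Rees factors, each of which is connected graded, commutative and generated by at most $|X|$ elements in degree $1$, hence has GK dimension at most $|X|$; thus $\GK K[(A,+)]\le|X|$.

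For the second statement, assume $\mathfrak{q}$ is bijective. This is precisely the hypothesis guaranteeing that the multiplicative monoid $(A,\circ)=(M(X,r),\circ)$ is left cancellative (one is in the situation of Section~\ref{sec_retraction}, where cancellativity and the Ore condition were established), so that $K[(A,\circ)]$ is a connected graded algebra (homogeneous quadratic relations, $R_0=K$, generated in degree $1$) well suited to comparison with $K[(A,+)]$. Since $(A,\circ)$ and $(A,+)$ share the same underlying set with $a\circ b=a+\lambda_a(b)$ and $\lambda$ a homomorphism into $\Aut(A,+)$, the algebra $K[(A,\circ)]$ is a $\lambda$-twisted form of $K[(A,+)]$; using $\lambda_a\in\Aut(A,+)$ and the compatibility $\cm_{\lambda_a(d)}\lambda_a=\lambda_a\cm_d$ one transports the ideal chain and the module-finiteness through the twist to obtain an analogous chain in $(A,\circ)$ with the same Rees factors up to $\lambda$-action, whence $K[(A,\circ)]$ is left Noetherian and PI. Being affine, connected graded, left Noetherian and PI, it is representable by Anan'in's theorem.

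The main obstacle is the combinatorial heart of the second step: producing the ideal chain of $(A,+)$ whose Rees factors are genuinely finite unions of commutative cancellative semigroups, and simultaneously securing the finite left $K[T]$-module description over a subsemigroup $T$ with ACC on left ideals, and then verifying that the bimodule exact sequences glue without losing left Noetherianity. The delicate point is that $(A,+)$ is neither commutative nor cancellative, so the commutative cancellative pieces must be located precisely inside the $\cm$-stratification; once that semigroup-theoretic picture is in place, the ring-theoretic conclusions (left Noetherian, PI, GK dimension, and the transfer to $(A,\circ)$) follow from standard machinery. A secondary subtlety is checking that $\mathfrak{q}$-bijectivity genuinely upgrades $(A,\circ)$ to a setting where the twist argument applies rather than only $(A,+)$.
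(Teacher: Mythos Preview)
Your treatment of the additive part is in the right spirit---module-finiteness over a subsemigroup and an ideal chain whose Rees factors are unions of commutative semigroups---though the paper's execution differs: rather than stratify by $\mathcal{J}$-classes of $\mathcal{C}(A)$, one first passes to the submonoid $B(v)=\langle vx_1,\ldots,vx_n\rangle$ for $v$ chosen so that every $\cm_{vx_i}$ is idempotent, shows $A=B(v)+F(v)$ with $F(v)$ finite, and then builds the chain inside $B(v)$ using the elements $\kappa(t_k)=y_{\kappa(1)}+\cdots+y_{\kappa(k)}$, for which $B_{\kappa(t_k)}+\kappa(t_k)$ is genuinely commutative (not cancellative---you do not need and will not get cancellativity here). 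This is concrete enough that the Noetherian and PI conclusions follow by Rowen/K\c{e}pczyk and module-finiteness.

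The second part, however, has a real gap. You claim that bijectivity of $\mathfrak{q}$ makes $(M(X,r),\circ)$ left cancellative and then attempt a ``$\lambda$-twisted form'' transfer from $K[(A,+)]$. Neither step is correct. Section~\ref{sec_retraction} establishes cancellativity only for $\mathcal{G}(A)$, not for $M(X,r)$ itself; and $a\circ b=a+\lambda_a(b)$ does not make $K[(A,\circ)]$ a twist of $K[(A,+)]$ in any sense that would transport an ideal chain. The actual role of $\mathfrak{q}$ is completely different: since $\mathfrak{q}$ has finite order $m$, the identity $kx=x^{(1)}\circ x^{(2)}\circ\cdots\circ x^{(k)}$ with $x^{(i+1)}=\mathfrak{q}(x^{(i)})$ yields $\lambda_{mvx}=\id$, so one can enlarge $v$ to force $B(v)\subseteq M(X,r)$ inside $A(X,r)\rtimes\im\lambda$. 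Then $K[A(X,r)\rtimes\im\lambda]$ is a Noetherian left $K[B(v)]$-module by the first part (the image of $\lambda$ is finite), and $K[M(X,r)]$ sits inside it as a module-finite extension of $K[B(v)]$. Representability then follows from Ananin. Without locating $B(v)$ inside $M(X,r)$ you have no bridge from the additive to the multiplicative algebra.
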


Note that $K[M(X,r)]$ is a $K$-algebra defined by homogeneous quadratic relations. Such algebras  have received a lot of interest in recent years in several contexts, such as arithmetical orders, 
 classification of four dimensional noncommutative projective surfaces, Artin-Shelter regular algebras, regular languages and constructions of Noetherian algebras
(see for example \cite{CedoOkn2012,GIQuad,JOBook,JVC,YZ}). In case $r$ is bijective, Theorem D has been proven in \cite{JeKuVA19,JeKuVA19Cor}. It furthermore   was shown that in this case  $K[M(X,r)]$ is a prime algebra (or equivalently a domain) if and only if $r$ is involutive. Hence showing that the algebraic structure of the algebra $K[M(X,r)]$ determines properties of the solution. For involutive finite solutions  Gateva-Ivanova and Van den Bergh showed that homologically $K[M(X,r)]$ shares many properties with polynomial algebras in commuting variables. This rich algebraic structure  of the algebra $K[M(X,r)]$ 
gives perspectives for developing and applying a variety of algebraic methods in the context of solutions of the Yang-Baxter equation.

With a \ourstructure $(A,+,\circ, \lambda , \cm)$ one associates another relevant solution, called the associated derived solution and which is denoted $(A,s_A)$. It is the associated solution of the \ourstructure  $(A,+,+, \iota, \cm)$, where $\iota$ is the map $A\to \Aut(A,+)$ defined by $\iota_a=\id_A$. It is this structure that is fundamental in the proof of Theorem D.

We end the paper with describing the algebraic structure of \ourstructures for which $\mathcal{C}(A)$ is finite and has a principal ideal chain of length one, that is $\mathcal{C}(A)$ is a left simple semigroup. We show that their associated derived solutions are determined by finitely many bijective non-degenerate solutions.  In particular we prove the following result
(see Theorem~\ref{descrleftsimple}).

 \begin{thmx}
 Let $A=(A,+,\circ, \lambda , \cm)$  be a \ourstructure. Assume $\mathcal{C}=\mathcal{C}(A)$ is a finite left simple semigroup, let $E=E(\mathcal{C})$ be its set of idempotents and $G_e$ 
 the maximal subgroup of $\mathcal{C}$ containing $e \in E$. Then, $A=\cup_{e\in E} A_e$, a disjoint union of left ideals $A_e=\cm^{-1}(G_e)$, so that $s_A$ restricts to a bijective non-degenerate solution on $G_e(A_e)$
 and, for $a_e\in A_e$, $a_f\in A_f$, we have $s_A(a_e,a_f) =( a_f, \cm_{f(a_f)}(f(a_e)))$, 
 i.e. the derived solution is determined by bijective non-degenerate solutions. 
 \end{thmx}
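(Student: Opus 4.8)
\emph{Plan of proof.} The idea is to feed the structure theory of finite left simple semigroups into the explicit form of the derived solution. Since the \ourstructure producing $s_A$ is $(A,+,+,\iota,\cm)$, with trivial $\lambda$-map, one reads off $s_A(a,b)=(b,\cm_b(a))$. Now a finite left simple semigroup is a left group, so the idempotents $E$ of $\mathcal{C}=\mathcal{C}(A)$ form a left-zero band, each idempotent is a right identity of $\mathcal{C}$, each maximal subgroup $G_e=e\mathcal{C}$ is a right ideal, and $\mathcal{C}=\bigcup_{e\in E}G_e$ (disjointly). As $\cm\colon A\to\mathcal{C}$ is onto, the sets $A_e:=\cm^{-1}(G_e)$ are non-empty and partition $A$, and each $A_e$ is a left ideal of $(A,+)$: for $b\in A$ and $a\in A_e$ the anti-homomorphism property gives $\cm_{b+a}=\cm_a\cm_b\in G_e\mathcal{C}=G_e$.

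\emph{The key identity and how $s_A$ moves blocks.} Applying $\cm$ to the defining relation $a+b=b+\cm_b(a)$ yields
\[
\cm_b\cm_a=\cm_{\cm_b(a)}\cm_b\qquad(a,b\in A).
\]
If $b\in A_e$, then $\cm_b$ is invertible in the group $G_e$, and, using that idempotents of $\mathcal{C}$ are right identities, we may cancel $\cm_b$ on the right to obtain $\cm_{\cm_b(a)}=\cm_b\cm_a\cm_b^{-1}\in G_e\mathcal{C}G_e=G_e$; hence $\cm_b(a)\in A_e$ for every $a\in A$. It follows that $s_A(A_e\times A_f)\subseteq A_f\times A_f$ for all $e,f\in E$, and, specializing to $a,b\in A_e$, that $A_e$ is closed under $+$ and under $\cm$, i.e. $A_e$ is a sub-\ourstructure of the derived semitruss, on which $s_A$ restricts to a solution.

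\emph{Retracting a block.} On $A_e$ put $a\sim a'\iff\cm_a=\cm_{a'}$. From $\cm_{a+b}=\cm_b\cm_a$ and $\cm_{\cm_b(a)}=\cm_b\cm_a\cm_b^{-1}$ one checks that $\sim$ is a congruence of this sub-\ourstructure; it is exactly the retraction relation of the restricted derived solution, and the quotient $G_e(A_e):=A_e/{\sim}$ is a \ourstructure whose underlying set is, via $\cm$, in bijection with the finite group $G_e$. Under this bijection the induced $\cm$-map $\cm_{[b]}$ becomes conjugation $g\mapsto\cm_bg\cm_b^{-1}$ by $\cm_b\in G_e$, hence a bijection, its inverse being realized by an element of $A_e$ since $\cm(A_e)=G_e$. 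Therefore the induced solution $\bar s_A([a],[b])=([b],[\cm_b(a)])$, that is $(g,h)\mapsto(h,hgh^{-1})$ on $G_e$, is right non-degenerate; it is left non-degenerate (trivial $\lambda$) and bijective, with inverse $([b],[c])\mapsto(\cm_{[b]}^{-1}([c]),[b])$. Thus $s_A$ restricts to a bijective non-degenerate solution on each $G_e(A_e)$.

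\emph{Assembling, and the main obstacle.} The canonical map $f\colon A\to\bigcup_{e\in E}G_e(A_e)$ is a \ourstructure epimorphism intertwining $s_A$ with these solutions, and for $a_e\in A_e$, $a_f\in A_f$ the asserted identity $s_A(a_e,a_f)=(a_f,\cm_{f(a_f)}(f(a_e)))$ is merely $s_A(a_e,a_f)=(a_f,\cm_{a_f}(a_e))$ read through $f$; hence $s_A$ is recovered from the finitely many bijective non-degenerate solutions on the $G_e(A_e)$. The one genuinely delicate step is the identity $\cm_{\cm_b(a)}=\cm_b\cm_a\cm_b^{-1}$: it hinges on being able to invert $\cm_b$ inside $\mathcal{C}$ and to cancel idempotents there, and this is precisely where the finiteness and left simplicity of $\mathcal{C}$ enter, forcing the left-group structure (in particular that idempotents are right identities of $\mathcal{C}$). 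Granting this, the remaining points — that $\sim$ is a congruence, that $A_e$ is a sub-\ourstructure, and the bijectivity and non-degeneracy bookkeeping — are routine.
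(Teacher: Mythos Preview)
You have misidentified the object $G_e(A_e)$ and the symbol $f$ in the displayed formula. In the paper $G_e(A_e)$ is not a quotient but a \emph{subset} of $A$: by definition $G_e(T)=\{\cm_b(t):\cm_b\in G_e,\ t\in T\}$, and in fact $G_e(A_e)=G_e(A)=e(A)$, the image of the idempotent endomorphism $e\in E(\mathcal{C})$. The assertion is that $s_A$ literally \emph{restricts} to $e(A)\times e(A)$ and that this restriction is bijective and non-degenerate. Your construction $A_e/{\sim}$ with $a\sim a'\iff\cm_a=\cm_{a'}$ produces instead a finite quotient in bijection with the group $G_e$, on which the induced map is the conjugation rack; that is correct, but it is a different and strictly coarser statement than the one claimed. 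Similarly, in the formula $s_A(a_e,a_f)=(a_f,\cm_{f(a_f)}(f(a_e)))$ the letter $f$ is the idempotent $f\in E(\mathcal{C})\subseteq\End(A,+)$, not your quotient map; both sides live in $A\times A$, and the formula encodes the nontrivial identities $\cm_{a_f}=\cm_{f(a_f)}$ (since $\cm_{a_f}\in G_f$ and $f$ is the identity of $G_f$) and $\cm_c(a_e)=\cm_c(f(a_e))$ for $c\in A_f$ (since $\cm_c f=\cm_c$ in $G_f$). Under your reading the formula collapses to a tautology, which is a sign that the intended content has been lost.

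What your argument is actually missing is the bijectivity on the subset. One has to observe that for $b\in A_f$ the endomorphism $\cm_b$ maps $G_f(A_f)=f(A)$ into itself, that $f$ acts as the identity on $f(A)$, and that $\cm_b\in G_f$ has a group inverse in $G_f$ which is again realised as some $\cm_{b'}$; hence $\cm_b|_{f(A)}$ is a bijection, and the restricted solution $s_A|_{f(A)\times f(A)}$ is bijective and non-degenerate. Your identity $\cm_{\cm_b(a)}=\cm_b\cm_a\cm_b^{-1}$ and your use of the left-group structure of $\mathcal{C}$ are exactly the right ingredients, and the decomposition $A=\bigcup_e A_e$ into left ideals is handled correctly; but the passage to the retract short-circuits the part of the theorem that concerns the subset $G_e(A_e)\subseteq A$, and so the statement as written remains unproved.
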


\section{Left non-degenerate solutions and \ourstructures}\label{sec_LNDsAndYBsemitrusses}

In order to study the algebraic essence of several associative structures associated to solutions,
Brzezi\'nski in \cite{Br18} introduced  the notion of  a left semitruss.
A left semitruss is a quadruple $(A,+,\circ,\lambda )$ such that $(A,+)$ and $(A,\circ)$ are non-empty  semigroups and $\lambda \colon
A\longrightarrow \Map(A,A): a \mapsto \lambda_a$ is a function 
such that
\begin{equation}\label{eq:leftsemitrussrelation}
    a\circ (b+ c)= (a\circ b)+ \lambda_a (c),
\end{equation}
for all $a,b,c\in A$. 
One calls $(A,+)$ the additive semigroup and  $(A,\circ)$ the multiplicative semigroup of the left  semitruss and the relation \eqref{eq:leftsemitrussrelation} is called the left semitruss identity.

Since there is no natural construction to associate a (left non-degenerate)
solution to an arbitrary  semitruss, in \cite{Br18,CVA21} special subclasses of left
semi-trusses are considered that yield (left non-degenerate) solutions, 
but not every (left non-degenerate) solution can be obtained in this way. 
In order to resolve this for all left non-degenerate solutions,
we  define the following  subclass. Strictly speaking these should be called
left non-degenerate YB-left semitrusses, but for simplicity reasons 
we simply will call them
\ourstructures. In Subsection~\ref{sec_StructureMonoid} we will then show 
that they determine all left non-degenerate solutions.

\begin{definition}\label{def:qsemitruss}
A tuple $(A,+,\circ, \lambda , \cm)$ is said to be a \emph{\ourstructure} if  $(A,+,\circ,\lambda )$ is a left semitruss
and  $\cm:A \rightarrow \Map (A,A):a\mapsto \cm_a$ is map such that, for each $a,b,d\in A$, the following conditions are satisfied:
 \begin{align}
      &\lambda_a \in \Aut (A,+) \mbox{ and } \lambda_a \lambda_b =\lambda_{a\circ b}\label{trusslambda},
   \\
   &a+\lambda_a(b) = a\circ b \label{eq:sumcirc},\\
    &a+b=b+\cm_b(a) \label{eq:csum},\\
    & \cm_a \in \End (A,+) \mbox{ and } \cm_{a+b} =\cm_{b}\cm_{a}\label{eq:condc},  
    \\
    &\cm_{\lambda_a(d)}\lambda_a(b) = \lambda_a \cm_d(b). \label{eq:clambda}
 \end{align}
The map   $\lambda: (A,\circ) \rightarrow \Aut (A,+): a \mapsto \lambda_a$ is thus a semigroup homomorphism, it is called the \emph{$\lambda$-map} of the semitruss and the map 
$\cm: (A,+) \rightarrow \End(A,+): a \mapsto \cm_a$ is a semigroup anti-homomorphism, called the \emph{$\cm$-map} of the \ourstructure.
\end{definition}

If $(A,+)$ is a semigroup and $\lambda: A \rightarrow \End (A,+): a\mapsto \lambda_a$ is a mapping so that $\lambda_{a} \lambda_b  =\lambda_{a+\lambda_a (b)}$, for all $a,b\in A$, then $(A,\circ)$ is a semigroup for the operation $a\circ b =a+\lambda_a (b)$
and  $A$ is a left semitruss. Clearly  $(A,\circ) \rightarrow (A,+)\rtimes \End (A,+): a \mapsto (a,\lambda_a) $ is a semigroup monomorphism. 
Of course one can transfer the addition on $A$ to the image $\{ (a,\lambda_a )\mid a\in A\}$, i.e. $(a,\lambda_a) + (b,\lambda_b) := (a+b, \lambda_{a+b})$. Hence, if $(A,+)$ is a semigroup then subsemigroups of the type  $\{ (a,\lambda_a) \mid a\in A\}$ of $(A,+)\rtimes \End (A,+)$ yield examples of left semitrusses
and one has the stronger condition; $a\circ b =a+\lambda_a (b)$ (here we identify $a$ with $(a,\lambda_a)$). Note that these have a similar flavour to regular subgroups of the holomorph of a group.

Examples of \ourstructures are regular subsemigroups of $(A,+)\rtimes \Aut (A,+)$ with $(A,+)$  
an abelian semigroup (here each map $\cm_a =\id$). Specific examples of   such semigroups are semigroups of I-type, studied by Gateva-Ivannova and Van den Bergh 
\cite{GIVdB98} and, more general,  semigroups of IG-type, studied by Goffa, Jespers and Okni\'nski in \cite{GJ,GJO}.  
Here $(A,+)$ is  a 
free abelian semigroup, respectively an arbitrary   cancellative abelian semigroup. It is shown in \cite{GIVdB98} that the former determine the involutive non-degenerate solutions; the latter have been studied in \cite{GJ,GJO} only from a pure algebraic point of view.

Now, we focus on the case in which the additive and the multiplicative structure of a \ourstructure are monoids. Firstly, we prove that they share the same identity, say 1, then show that $\lambda_1$ and $\cm_1$ are the identity map and $\lambda_a(1)=1$ (for any $a\in A$). Later, we introduce the notion of unital \ourstructure requiring in addition that $ \cm_a(1)=1$ (for any $a\in A$). In Subsection~\ref{sec_StructureMonoid} we show that structure monoids of solutions are examples of unital \ourstructure.
\begin{lemma}\label{lemma:unital}
    Let $(A,+,\circ,\lambda,\cm)$ be a \ourstructure and assume $(A,+)$ and $(A,\circ)$ are monoids.  
    Then, condition \eqref{eq:sumcirc} is equivalent with  $(A,+)$ and $(A,\circ)$ sharing  the same identity, say $1$. Moreover, $\lambda_1=\cm_1=\id$ and $\lambda_a(1)=1$, for any $a\in A$.
    \begin{proof}
        Assume that $(A,+,\circ,\lambda,\cm)$ is a \ourstructure such that $(A,+)$ and $(A,\circ)$ are monoids.
        Let $0$ denote the identity of $(A,+)$ and let $1$ denote the identity of $(A,\circ)$. By \eqref{trusslambda},  $\lambda_1=\lambda_{1\circ 1} = \lambda_1\lambda_1$ and thus, because $\lambda_1$ is bijective, $\lambda_1=\id$. Hence,  by \eqref{eq:sumcirc}, $0 = 1\circ 0 = 1+\lambda_1(0) = 1+0 = 1$.
        Conversely, assume that $(A,\circ)$ and $(A,+)$ share the same identity, say $1$. It then follows from the left semitruss identity  that $a+\lambda_a(b) = a\circ 1 +\lambda_a(b) = a \circ (1+b) = a\circ b$, as desired.
        
        Moreover, for any $a\in A$, because of \eqref{eq:csum} we have $ a = a+1 = 1+ \cm_1(a) = \cm_1(a)$, i.e. $\cm_1 =\id$. Also, for any $a,b\in A$, we have $b + \lambda_a(1) = \lambda_a(\lambda^{-1}_a(b))  + \lambda_a(1)=  \lambda_a(\lambda^{-1}_a(b)+1) = \lambda_a(\lambda^{-1}_a(b))=b$. This shows that $\lambda_a(1)$ is a right identity and thus $\lambda_a(1) = 1$.
    \end{proof}
\end{lemma}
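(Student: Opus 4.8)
The plan is to establish the equivalence first and then read off the three supplementary identities one at a time, using only the left semitruss identity \eqref{eq:leftsemitrussrelation}, the axioms \eqref{trusslambda}--\eqref{eq:clambda}, and the monoid hypotheses. Write $0$ for the identity of $(A,+)$ and $1$ for the identity of $(A,\circ)$; the content of the equivalence is that \eqref{eq:sumcirc} is tantamount to $0 = 1$.

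For the forward direction I would first note that $\lambda_1$ is idempotent: \eqref{trusslambda} applied to $a = b = 1$ gives $\lambda_1 = \lambda_{1\circ 1} = \lambda_1\lambda_1$, and bijectivity of $\lambda_1 \in \Aut(A,+)$ then forces $\lambda_1 = \id$. Next, specialise \eqref{eq:sumcirc} to $a = 1$, $b = 0$: the left-hand side is $1 + \lambda_1(0) = 1 + 0 = 1$, whereas the right-hand side is $1\circ 0 = 0$ since $1$ is the multiplicative identity; hence $0 = 1$. For the converse, suppose $(A,+)$ and $(A,\circ)$ share their identity $1$. Then for all $a,b\in A$ we have $a + \lambda_a(b) = (a\circ 1) + \lambda_a(b) = a\circ(1 + b) = a\circ b$, the middle equality being \eqref{eq:leftsemitrussrelation} and the outer ones using that $1$ is both the additive and the multiplicative identity; this is precisely \eqref{eq:sumcirc}.

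It then remains to pin down $\cm_1$ and $\lambda_a(1)$. For $\cm_1 = \id$, set $b = 1$ in \eqref{eq:csum}: $a + 1 = 1 + \cm_1(a)$ collapses to $a = \cm_1(a)$ for every $a \in A$. For $\lambda_a(1) = 1$, the strategy is to show $\lambda_a(1)$ is a right identity of $(A,+)$ and appeal to uniqueness of the identity in a monoid: given $b \in A$, use that $\lambda_a$ is bijective to write $b = \lambda_a(c)$ with $c = \lambda_a^{-1}(b)$, and then, since $\lambda_a$ is additive, $b + \lambda_a(1) = \lambda_a(c) + \lambda_a(1) = \lambda_a(c + 1) = \lambda_a(c) = b$.

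I do not expect any genuine obstacle here; the only subtlety is purely notational, namely keeping the two putative identities $0$ and $1$ distinct in the equivalence until the moment they are proved equal, and making sure the converse really produces \eqref{eq:sumcirc} rather than silently assuming it.
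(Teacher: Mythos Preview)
Your proposal is correct and follows essentially the same argument as the paper's proof: the same specialization of \eqref{trusslambda} to get $\lambda_1=\id$, the same instance $a=1$, $b=0$ of \eqref{eq:sumcirc} to force $0=1$, the same use of the left semitruss identity for the converse, and the same derivations of $\cm_1=\id$ from \eqref{eq:csum} and of $\lambda_a(1)=1$ via the right-identity trick using bijectivity and additivity of $\lambda_a$.
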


\begin{definition}
    A \ourstructure $(A,+,\circ,\lambda, \cm)$ is said to be \emph{unital}, if $(A,+)$ and $(A,\circ)$ are monoids and, for any $a\in A$,
    \begin{align}\label{eq:condunital}
        \cm_a(1)=1,
    \end{align}
    where $1$ denotes the identity in $(A,+)$ (and also in $(A,\circ)$).
\end{definition}

Note that if $(A,+)$ is left cancellative monoid, with identity say $1$, then \eqref{eq:condunital} follows from the definition of \ourstructure. Indeed $a+1=a=1+a\overset{\eqref{eq:csum}}{=}a+\cm_a(1)$, and by left cancellativity, we get $1 =\cm_a(1)$. 

However, the above does not hold in  general.
Indeed, consider for example  the multiplicative monoid  $A=\{ 0,1\}$. This can be made into a \ourstructure by defining $(A,+)=(A,\circ)$ and 
$\lambda_1=\lambda_0=\id_A$, $\cm_1=\id$, and $\cm_0(x)=0$, for $x\in A$. 
Note that $A$ is not a unital \ourstructure since \eqref{eq:condunital} does not hold in $A$ because by definition $\cm_0(1) = 0\neq 1$.

Note that condition \eqref{eq:csum} implies, for all $a,b\in A$,
\begin{align}\label{eq:condc1}
   & \cm_a \cm_b = \cm_{\cm_a(b)} \cm_a.
\end{align}
Indeed, by \eqref{eq:condc} and \eqref{eq:csum},  $\cm_{a}\cm_{b}= \cm_{b+a}=\cm_{a+  \cm_a(b)} = \cm_{\cm_a(b)}\cm_a$.

Condition \eqref{eq:sumcirc} links the multiplicative with the additive structure. Condition \eqref{eq:csum} implies that $A+b\subseteq b+A$, so right ideals in the semigroup $(A,+)$ are two-sided ideals. If, furthermore, each $\cm_b$ is bijective then $A+b=b+A$, i.e. every element of $A$ is a normal element. As shown in \cite{JeKuVA19} the latter is an essential property to describe the structure monoids of bijective non-degenerate solutions. Condition \eqref{eq:condc} can be avoided 
to associate a left non-degenerate solution to the considered \ourstructures (see Proposition~\ref{prop:semitruss}). However, to deal
with an arbitrary left non-degenerate solution we will show in Proposition~\ref{SolFromSemiTruss}  that, without loss of generality, we may assume that \eqref{eq:condc} holds.

Because of the equations \eqref{eq:YBE1}, \eqref{eq:YBE2} and \eqref{eq:YBE3}, equation \eqref{eq:condc1} means that the map 
$$s_A : A^2 \rightarrow A^2 : (a,b)\mapsto (b,\cm_b(a)),$$ is a solution. We call it the {\it  associated 
derived solution of  the \ourstructure $A$}
(in the terminology of \cite{Le18} one says that  $(A,\triangleleft)$ is  a shelf, where
$a\triangleleft b =\cm_b(a)$, and equality \eqref{eq:condc1} says that $\triangleleft$ is self distributive, 
i.e $(a \triangleleft b) \triangleleft  d = (a\triangleleft d) \triangleleft (b\triangleleft d)$). 
Clearly $s_A$ is bijective if and only if all $\cm_a$ are bijective, that is $s_A$ is a non-degenerate bijective solution 
(or in the language of \cite{Kamada}, $(A,\triangleleft )$ is a rack).
Note that $s_A$ is the associated solution of the \ourstructure $(A,+,+,\iota , \cm)$, where $\iota$ is the map $A\rightarrow \Aut(A,+)$ defined by $\iota_a =\id_{A}$.

Recall (see for example \cite{CedoAgta}) that a mapping 
 $$r:X^2 \rightarrow X^2: (x,y)\mapsto (\lambda_x(y), \rho_y(x)),$$
defines a solution if and only if, for all $a,b,c\in X$, 
\begin{align} 
    \lambda_a\lambda_b = \lambda_{\lambda_a(b)}\lambda_{\rho_b(a)},\label{eq:YBE1}\\
    \lambda_{\rho_{\lambda_b(c)}(a)}\rho_c(b) = \rho_{\lambda_{\rho_b(a)}(c)}\lambda_a(b),\label{eq:YBE2}\\
    \rho_c\rho_b = \rho_{\rho_c(b)}\rho_{\lambda_b(c)}.\label{eq:YBE3}\
\end{align}

The meaning of the other requirements in Definition~\ref{def:qsemitruss} is clarified in the following proposition. For this we  introduce the so-called $\rho$-map of a left semitruss
 $(A,+,\circ , \lambda)$ for which there exists a map  $\cm :A \rightarrow \Map (A,A)$ so that $a+b=b+\cm_b(a)$, for all $a,b\in A$ (i.e. condition \eqref{eq:csum} holds) and  $\lambda: (A,\circ) \rightarrow \Aut (A,+): a \mapsto \lambda_a $
is a homomorphism (for example $A$ is a \ourstructure).
Then 
  $$\rho: A \rightarrow \Map (A,A): a\mapsto \rho_a,$$
with, for all $a,b\in A$,
 $$\rho_b (a)=\lambda^{-1}_{\lambda_{a}(b)}\cm_{\lambda_a(b)}(a),$$
is called its \emph{$\rho$-map}.

We now prove a generalization of \cite[Proposition 5.4]{CJV2020}. The proof follows the same main idea as in \cite[Proposition 2.2]{JeKuVA19} and \cite[Theorem 2.3]{So00}.

\begin{proposition}\label{prop:semitruss}
Let $(A,+,\circ , \lambda)$ be a left semitruss and let  $\cm :A \rightarrow \Map (A,A)$ be a map so that $a+b=b+\cm_b(a)$, for all $a,b\in A$ (i.e. condition \eqref{eq:csum} holds).
Assume that $\lambda: (A,\circ) \rightarrow \Aut (A,+): a \mapsto \lambda_a $
is a homomorphism and condition \eqref{eq:sumcirc} is
satisfied.
Then 
  \begin{align}\label{eq:solsemitruss}
  r_A:A^2 \rightarrow A^2: (a,b) \mapsto (\lambda_a (b),\rho_b (a)),
  \end{align}
is a (left non-degenerate) solution if and only if
$\cm_{\lambda_a(b)} \lambda_a = \lambda_a \cm_b$, for all $a,b\in A$ (i.e. condition \eqref{eq:clambda} holds). 
 
Furthermore, if $(A,+,\circ , \lambda ,\cm)$ is a \ourstructure, then $r_A =\varphi^{-1} s_A \varphi$ where $\varphi:A^2 \rightarrow A^2: (a,b)\mapsto (a,\lambda_a(b))$. In particular, $r_A$ is bijective if and only if $s_A$ is bijective, 
or equivalently $s_A$ is (right) non-degenerate (i.e. all maps $\cm_a$ are bijective).
\end{proposition}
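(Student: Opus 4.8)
The plan is to verify the three identities \eqref{eq:YBE1}--\eqref{eq:YBE3} for $r_A$, whose $\lambda$-map is the given $\lambda$ and whose $\rho$-map is the $\rho$ introduced just above, with $\rho_b(a)=\lambda^{-1}_{\lambda_a(b)}\cm_{\lambda_a(b)}(a)$. First I would record the auxiliary identity $a\circ b=\lambda_a(b)\circ\rho_b(a)$ for all $a,b\in A$: by \eqref{eq:sumcirc} its right-hand side is $\lambda_a(b)+\lambda_{\lambda_a(b)}(\rho_b(a))=\lambda_a(b)+\cm_{\lambda_a(b)}(a)$, which by \eqref{eq:csum} equals $a+\lambda_a(b)=a\circ b$. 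Applying the homomorphism $\lambda\colon(A,\circ)\to\Aut(A,+)$ to this identity yields $\lambda_a\lambda_b=\lambda_{a\circ b}=\lambda_{\lambda_a(b)}\lambda_{\rho_b(a)}$, so \eqref{eq:YBE1} holds with no extra hypothesis; by the characterization recalled above, $r_A$ is therefore a solution if and only if \eqref{eq:YBE2} and \eqref{eq:YBE3} hold. Note that \eqref{eq:condc} plays no role here, which is the content of the remark preceding the proposition.

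For the equivalence with \eqref{eq:clambda}, I would substitute the closed formula for $\rho$ into \eqref{eq:YBE2} and \eqref{eq:YBE3} and reduce both sides, using repeatedly the left semitruss identity \eqref{eq:leftsemitrussrelation}, equations \eqref{eq:csum} and \eqref{eq:sumcirc}, that each $\lambda_a\in\Aut(A,+)$, and the consequences $a\circ b=\lambda_a(b)\circ\rho_b(a)$ and $\lambda_{\rho_b(a)}=\lambda_{\lambda_a(b)}^{-1}\lambda_a\lambda_b$ of the previous paragraph. The expectation is that after cancellation the two identities become equivalent, under the standing hypotheses, to the single relation $\cm_{\lambda_a(b)}\lambda_a=\lambda_a\cm_b$, i.e.\ to \eqref{eq:clambda}; conversely, assuming \eqref{eq:clambda} the same manipulations give \eqref{eq:YBE2} and \eqref{eq:YBE3}. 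For the forward direction one must be a little careful that the generic identity really forces \eqref{eq:clambda} and not merely some consequence of it: I would isolate, in the reduced form of \eqref{eq:YBE3}, the terms carrying a single $\cm$ conjugated by a single $\lambda_a$ and use bijectivity of the $\lambda$'s to cancel down to \eqref{eq:clambda}.

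For the ``furthermore'' part, assume $A$ is a \ourstructure, so each $\lambda_a\in\Aut(A,+)$ and hence $\varphi\colon A^2\to A^2$, $(a,b)\mapsto(a,\lambda_a(b))$, is a bijection with $\varphi^{-1}(a,c)=(a,\lambda_a^{-1}(c))$. Then $\varphi^{-1}s_A\varphi(a,b)=\varphi^{-1}s_A(a,\lambda_a(b))=\varphi^{-1}(\lambda_a(b),\cm_{\lambda_a(b)}(a))=(\lambda_a(b),\lambda^{-1}_{\lambda_a(b)}\cm_{\lambda_a(b)}(a))=(\lambda_a(b),\rho_b(a))=r_A(a,b)$, so $r_A=\varphi^{-1}s_A\varphi$; this uses only the definition of the $\rho$-map. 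Since $\varphi$ is a bijection, $r_A$ is bijective if and only if $s_A$ is, and $s_A(a,b)=(b,\cm_b(a))$ (which is itself a solution) is bijective exactly when every $\cm_a$ is bijective; because the $\lambda$-maps of $s_A$ are all $\id_A$, this is the same as $s_A$ being right non-degenerate, equivalently non-degenerate.

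The main obstacle I anticipate is the bookkeeping in the second paragraph: inserting the nested formula $\rho_b(a)=\lambda^{-1}_{\lambda_a(b)}\cm_{\lambda_a(b)}(a)$ into \eqref{eq:YBE2} and \eqref{eq:YBE3} produces long interleaved products of $\lambda$'s and $\cm$'s, and one must track the $\lambda$-subscript on each $\cm$ carefully — moving factors past one another via \eqref{eq:leftsemitrussrelation}, \eqref{eq:csum} and the automorphism property of the $\lambda_a$ — to confirm that everything reduces exactly to \eqref{eq:clambda} with nothing left over, and, for the forward implication, that no strictly weaker condition suffices.
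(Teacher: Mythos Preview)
Your strategy is viable and your third paragraph (the ``furthermore'' part) is complete and essentially identical to the paper's. For the main equivalence, however, the paper takes a cleaner route that sidesteps exactly the bookkeeping you flag as the obstacle.

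Instead of verifying \eqref{eq:YBE2} and \eqref{eq:YBE3} by direct substitution, the paper \emph{lifts} the conjugation you already found for the ``furthermore'' part to $A^3$: define $J\colon A^3\to A^3$ by $J(a,b,d)=(a,\lambda_a(b),\lambda_a\lambda_b(d))$, a bijection. A short computation (using only \eqref{eq:sumcirc}, \eqref{eq:csum} and the homomorphism $\lambda$) gives $J^{-1}(s_A\times\id)J=r_A\times\id$ unconditionally, while $J^{-1}(\id\times s_A)J=\id\times r_A$ holds precisely when \eqref{eq:clambda} does. Since $s_A$ is already a solution (this uses \eqref{eq:condc1}, which follows from \eqref{eq:csum}), conjugating the braid relation for $s_A$ by $J$ yields the braid relation for $r_A$ in one stroke, with no case-by-case verification of \eqref{eq:YBE2} and \eqref{eq:YBE3}. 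For the converse (solution $\Rightarrow$ \eqref{eq:clambda}), the paper computes $\cm_{\lambda_a(d)}(\lambda_a(b))$ directly from $\cm_b(a)=\lambda_b\rho_{\lambda_a^{-1}(b)}(a)$ using only \eqref{eq:YBE1} and \eqref{eq:YBE2}; your plan to extract \eqref{eq:clambda} from \eqref{eq:YBE3} would likely also work but is not the most economical choice.

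So: your proposal is a correct outline, but the paper's $J$-conjugation is the ``right'' argument here---it explains \emph{why} $r_A$ is a solution (it is conjugate on triples to the derived solution $s_A$) rather than verifying it by brute force, and it makes transparent that \eqref{eq:clambda} is exactly the compatibility needed for the second-factor conjugation to close up.
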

\begin{proof}
Let $J: A^3\to A^3$ be the map defined by $J(a, b, d) = (a, \lambda_a(b), \lambda_a\lambda_b(d))$. Clearly $J$ is bijective and $J^{-1}(a, b, d) = (a, \lambda_a^{-1}(b), \lambda^{-1}_{\lambda^{-1}_a(b)}\lambda^{-1}_a(d))$, for all $a, b, d \in A$.  We have
\begin{align*}
    J^{-1}(s_A \times \id) J(a,b,d) 
    &=J^{-1}(s_A\times  \id) (a,\lambda_a(b),\lambda_a\lambda_b(d))\\
    &=J^{-1}(\lambda_a(b), \cm_{\lambda_a(b)}(a), \lambda_a\lambda_b(d)) \\
    &=(\lambda_a(b), \lambda^{-1}_{\lambda_a(b)}\cm_{\lambda_a(b)}(a), \lambda^{-1}_{\lambda^{-1}_{\lambda_a(b)}\cm_{\lambda_a(b)}(a)}\lambda^{-1}_{\lambda_a(b)}\lambda_a\lambda_b(d))\\
    &=(\lambda_a(b), \lambda^{-1}_{\lambda_a(b)}\cm_{\lambda_a(b)}(a),
    \lambda^{-1}_{\lambda_a(b)\circ\lambda^{-1}_{\lambda_a(b)}\cm_{\lambda_a(b)}(a)}\lambda_{a\circ b}(d)) \\
    &=(\lambda_a(b), \lambda^{-1}_{\lambda_a(b)}\cm_{\lambda_a(b)}(a),\lambda^{-1}_{a\circ b}\lambda_{a\circ b}(d))\\
    &=(\lambda_a(b), \lambda^{-1}_{\lambda_a(b)}\cm_{\lambda_a(b)}(a), d)\\
    &=(\lambda_a (b) , \rho_b(a), d),
\end{align*}
since $\lambda_a(b)\circ \lambda^{-1}_{\lambda_a(b)}\cm_{\lambda_a(b)}(a) =\lambda_a(b)+ \cm_{\lambda_a(b)}(a)= a+ \lambda_a(b)=a\circ b$. 
Hence, $J^{-1}(s_A \times \id )J = r_A \times \id $. Moreover,  if condition \eqref{eq:clambda} holds then
\begin{align*} 
    J^{-1}(\id \times s_A) J(a,b,d)
    &=J^{-1}(\id \times s_A)(a,\lambda_a(b),\lambda_a\lambda_b(d)) \\
    &=J^{-1}(a,\lambda_a\lambda_b(d), \cm_{\lambda_a\lambda_b(d)}\lambda_a(b))\\
    &=(a, \lambda^{-1}_a\lambda_a\lambda_b(d), \lambda^{-1}_{\lambda^{-1}_a\lambda_a\lambda_b(d)}\lambda^{-1}_a \cm_{\lambda_a\lambda_b(d)}\lambda_a(b))\\
    &=(a, \lambda_b(d), \lambda^{-1}_{\lambda_b(d)}\lambda^{-1}_a\lambda_a \cm_{\lambda_b(d)}(b))\\
    &=(a, \lambda_b(d), \lambda^{-1}_{\lambda_b(d)} \cm_{\lambda_b(d)}(b))\\
    &=(a, \lambda_b (d), \rho_d(b)),
\end{align*}
yields $J^{-1}(\id \times s_A) J=\id \times r_A$. Because  $(A,s_A)$ is a solution (as mentioned earlier the condition \eqref{eq:clambda} is not needed for this) we get that  $(A,r_A)$ is a solution.
This proves the necessity of condition \eqref{eq:clambda}.

Conversely, assume that $(A,r_A)$ is a solution.
As  $\cm_b(a)=\lambda_{b}\rho_{\lambda^{-1}_a(b)}(a)$ we get that
\begin{align*}
    \cm_{\lambda_a(d)}(\lambda_a(b)) &= \lambda_{\lambda_a(d)}\rho_{\lambda^{-1}_{\lambda_a(b)}\lambda_a(d)}\lambda_a(b)\\
    &=\lambda_{\lambda_a(d)}\rho_{\lambda_{\rho_b(a)}\lambda^{-1}_b(d)}\lambda_a(b) &\mbox{by \eqref{eq:YBE1}}\\
    &= \lambda_{\lambda_a(d)} \lambda_{\rho_{\lambda_{b}\lambda^{-1}_b(d)}(a)}\rho_{\lambda^{-1}_b(d)}(b) &\mbox{by \eqref{eq:YBE2}}\\
    &=\lambda_{\lambda_a(d)}\lambda_{\rho_{d}(a)}\rho_{\lambda^{-1}_b(d)}(b)\\
    &= \lambda_a\lambda_d\rho_{\lambda^{-1}_b(d)}(b) &\mbox{by \eqref{eq:YBE1}}\\
    &=\lambda_a \cm_d(b).
\end{align*}
Hence, $\lambda_a \cm_d(b)=\cm_{\lambda_a(d)}(\lambda_a(b))$.

To finish the proof it is enough to observe that $s_A=\varphi r_A \varphi^{-1}$.
\end{proof}

The previous proposition states that a left non-degenerate solution $r_A$ can be determined by a conjugate of the derived solution $s_A$, by a mapping say $\varphi$.
However, $\varphi$ is not an arbitrary permutation of $A^2$. In the case of bijective solutions it has to be a so-called Drinfeld twist on solutions, see for example \cite{Ghobadi}. One can extend the latter definition to arbitrary left non-degenerate solutions and with the same strategy as in \cite[Example~2.1]{Ghobadi} one proves that $r_A =\varphi^{-1} s_A \varphi$ for some permutation $\varphi$ of $A^{2}$ so that $(\varphi, \Psi, \Phi)$ is a Drinfeld twist on the solution $(A,r_A)$.

For a  \ourstructure $A$ we call $r_A$ the {\it associated solution}. It is said to be the \emph{trivial solution} if $r_A(a,b)=(b,a)$ for all $a,b\in A$.
It turns out that if an associated solution is bijective then its inverse is the associated solution of a well-described associated \ourstructure.

For a semigroup $(A,+)$ we denote by $(A,+^{op})$ its opposite semigroup, i.e. $a+^{op}b= b+a$, for $a,b\in A$.

\begin{proposition}\label{prop:opposite}
Let $(A,+,\circ, \lambda, \cm)$ be a \ourstructure with $\cm_a$ bijective for any $a\in A$, that is,  the associated solution $r_A$ is bijective.
Then, 
\begin{align*}
    r_A^{-1}(a,b)=(\cm^{-1}_a\lambda_a(b), \lambda^{-1}_{\cm^{-1}_a\lambda_a(b)}(a)),
\end{align*}
and  $(A,+^{op},\circ, \bar{\lambda},\bar{\cm})$ is a \ourstructure, with $\bar{\lambda}_a=\cm^{-1}_a\lambda_a$ and $\bar{\cm}_a=\cm^{-1}_a$,  and its  associated solution is $r_A^{-1}$. We call $(A,+^{op},\circ, \bar{\lambda},\bar{\cm})$ the \emph{opposite \ourstructure}.

In particular, for a \ourstructure $(A,+,\circ, \lambda, \cm)$, the associated solution $r_A$ is involutive if and only if the associated derived solution $s_A$ is trivial (that is, all maps $\cm_a =\id$).
\begin{proof}
Put $r=r_A$. So, 
$
    r(a,b)=(\lambda_a(b),\lambda^{-1}_{\lambda_a(b)}\cm_{\lambda_a(b)}(a)).
$
Moreover, put $r'(a,b)=(\cm^{-1}_a\lambda_a(b), \lambda^{-1}_{\cm^{-1}_a\lambda_a(b)}(a))$. Then
\begin{align*}
    &rr'(a,b)=r(\cm^{-1}_a\lambda_a(b), \lambda^{-1}_{\cm^{-1}_a\lambda_a(b)}(a)) \\
    &=(\lambda_{\cm^{-1}_a\lambda_a(b)}\lambda^{-1}_{\cm^{-1}_a\lambda_a(b)}(a),\lambda^{-1}_{\lambda_{\cm^{-1}_a\lambda_a(b)}\lambda^{-1}_{\cm^{-1}_a\lambda_a(b)}(a)}\cm_{\lambda_{\cm^{-1}_a\lambda_a(b)}\lambda^{-1}_{\cm^{-1}_a\lambda_a(b)}(a)}{\cm^{-1}_a\lambda_a(b)})\\
    &=(a, \lambda^{-1}_a\cm_a{\cm^{-1}_a\lambda_a(b)}) =(a,b),
\end{align*}
and
\begin{align*}
    &r'r(a,b) =r'(\lambda_a(b),\lambda^{-1}_{\lambda_a(b)}\cm_{\lambda_a(b)}(a))\\
    &=(\cm^{-1}_{\lambda_a(b)}\lambda_{\lambda_a(b)}\lambda^{-1}_{\lambda_a(b)}\cm_{\lambda_a(b)}(a), \lambda^{-1}_{\cm^{-1}_{\lambda_a(b)}\lambda_{\lambda_a(b)}\lambda^{-1}_{\lambda_a(b)}\cm_{\lambda_a(b)}(a)}{\lambda_a(b)})\\
    &=(a,b).
\end{align*}
So indeed $r'=r^{-1}$.

Now, assume that $(A,+^{op},\circ, \bar{\lambda},\bar{\cm})$, with $\bar{\lambda}_a=\cm^{-1}_a\lambda_a$ and $\bar{\cm}_a=\cm^{-1}_a$, is a \ourstructure. The associated solution to such a \ourstructure is given by
\begin{align*}
    \bar{r}(a,b)&=(\bar{\lambda}_a(b),\bar{\lambda}^{-1}_{\bar{\lambda}_a(b)}\bar{\cm}_{\bar{\lambda}_a(b)}(a))\\
    &=(\cm^{-1}_a\lambda_a(b), \lambda^{-1}_{\cm^{-1}_a\lambda_a(b)}\cm_{\cm^{-1}_a\lambda_a(b)}\cm^{-1}_{\cm^{-1}_a\lambda_a(b)}(a))\\
    &=(\cm^{-1}_a\lambda_a(b), \lambda^{-1}_{\cm^{-1}_a\lambda_a(b)}(a))\\
    &=r^{-1}(a,b).
\end{align*}

So it remains to prove that $(A,+^{op},\circ, \bar{\lambda},\bar{\cm})$ is a \ourstructure.
Clearly $\bar{\lambda}_a\in \Aut (A,+^{op})$ for each $a\in A$. That $(A,\circ ) \rightarrow \Aut (A,+^{op}): a\mapsto \bar{\lambda}_a$ is a homomorphism follows from the following:
\begin{align*}
    \bar{\lambda}_a\bar{\lambda}_b (d) 
    &=\cm_a^{-1}\lambda_a \cm_b^{-1}\lambda_b(d)\\
    &= \cm_a^{-1}\cm^{-1}_{\lambda_a(b)}\lambda_a\lambda_b(d) &\mbox{by \eqref{eq:clambda}}\\
    &= (\cm_{\lambda_a(b)}\cm_a)^{-1}\lambda_{a\circ b}(d) &\mbox{by \eqref{trusslambda}}\\
    &=\cm_{a+\lambda_a(b)}^{-1}\lambda_{a\circ b}(d) &\mbox{by \eqref{eq:condc}}\\
    &=\cm_{a\circ b}^{-1}\lambda_{a\circ b}(d) &\mbox{by \eqref{eq:sumcirc}}\\
    &= \bar{\lambda}_{a\circ b}(d).
\end{align*}
Also, for each $a\in A$, it is clear that $\bar{\cm}_a \in \Aut (A,+^{op})$ and thus also that 
$(A,+^{op}) \rightarrow (\text{Map} (A,A),\circ): a \mapsto \bar{\cm}_a$ is  semigroup anti-homomorphism.

Since  $(A,+,\circ,\lambda, \cm)$ satisfies \eqref{eq:csum}, we have that  $a+b = b+\cm_b(a)$ for all $a,b\in A$. Hence, 
 also $\cm^{-1}_b(a)+b = b+a$, or equivalently, $b+^{op}\cm^{-1}_b(a) = a +^{op}b$,
i.e. \eqref{eq:csum} holds for $(A,+^{op},\circ, \bar{\lambda}, \bar{\cm})$.
Since $(A,+,\circ,\lambda, \cm)$ satisfies equation \eqref{eq:sumcirc}, we obtain
\begin{align*}
    a +^{op} \bar{\lambda}_a(b) &= a+^{op}\cm^{-1}_a\lambda_a(b)\\
    &=a+^{op}\bar{\cm}_a\lambda_a(b)\\
    &= \lambda_a(b)+^{op}a&\mbox{(by \eqref{eq:csum} for $(A,+^{op},\circ, \bar{\lambda}, \bar{\cm})$)}\\
    &=a+ \lambda_a(b) \\
    &=a\circ b.
\end{align*}
So, \eqref{eq:sumcirc} holds for $(A,+^{op},\circ, \bar{\lambda}, \bar{\cm})$. 
Finally, \eqref{eq:clambda} yields
$
    \cm_{\lambda_a(b)}\lambda_a = \lambda_a \cm_b $
and thus $\lambda_a \cm^{-1}_b=\cm^{-1}_{\lambda_a(b)}\lambda_a$. Hence,
$\cm^{-1}_a\lambda_a \cm^{-1}_b = \cm^{-1}_a\cm^{-1}_{\lambda_a(b)}\lambda_a$.
Because of \eqref{eq:condc1} we know that
$\cm^{-1}_a\cm^{-1}_{\lambda_a(b)} = \cm^{-1}_{\cm^{-1}_{a}\lambda_a(b)}\cm^{-1}_{a}$. Hence, 
$\bar{\lambda}_a\cm^{-1}_b     = \cm^{-1}_{\bar{\lambda}_a(b)}\bar{\lambda}_a $.
So, $(A,+^{op},\circ, \bar{\lambda}, \bar{\cm})$ is a \ourstructure.
\end{proof}
\end{proposition}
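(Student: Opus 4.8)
The plan is to reduce the first and third claims to Proposition~\ref{prop:semitruss} and to establish the second by checking the five axioms of Definition~\ref{def:qsemitruss} directly for the opposite tuple. For the first claim: since every $\cm_a$ is bijective, the derived solution $s_A(a,b)=(b,\cm_b(a))$ is bijective with $s_A^{-1}(a,b)=(\cm_a^{-1}(b),a)$, and Proposition~\ref{prop:semitruss} gives $r_A=\varphi^{-1}s_A\varphi$ with $\varphi(a,b)=(a,\lambda_a(b))$ and $\varphi^{-1}(a,b)=(a,\lambda_a^{-1}(b))$; hence $r_A^{-1}=\varphi^{-1}s_A^{-1}\varphi$, and a one-line substitution yields $r_A^{-1}(a,b)=(\cm_a^{-1}\lambda_a(b),\lambda_{\cm_a^{-1}\lambda_a(b)}^{-1}(a))$, as stated. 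Alternatively, one checks directly that the given map is a two-sided inverse of $r_A$; the first coordinates cancel at once, and the second coordinates reduce via $\lambda_x\lambda_x^{-1}=\id$ together with the identity $\lambda_a(b)+\cm_{\lambda_a(b)}(a)=a+\lambda_a(b)$, which is \eqref{eq:csum}.

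For the second claim I would verify the axioms of Definition~\ref{def:qsemitruss} for $(A,+^{op},\circ,\bar\lambda,\bar\cm)$ one at a time. Since the automorphism (resp.\ endomorphism) monoid of a semigroup coincides with that of its opposite, $\bar\lambda_a=\cm_a^{-1}\lambda_a\in\Aut(A,+^{op})$ and $\bar\cm_a=\cm_a^{-1}\in\End(A,+^{op})$ require no argument, and $\cm_{b+a}=\cm_a\cm_b$ (from \eqref{eq:condc}) shows immediately that $\bar\cm$ is an anti-homomorphism $(A,+^{op})\to\End(A,+^{op})$. For \eqref{trusslambda} I would compute $\bar\lambda_a\bar\lambda_b=\cm_a^{-1}\lambda_a\cm_b^{-1}\lambda_b=\cm_a^{-1}\cm_{\lambda_a(b)}^{-1}\lambda_a\lambda_b$ using the operator form $\lambda_a\cm_b^{-1}=\cm_{\lambda_a(b)}^{-1}\lambda_a$ of \eqref{eq:clambda}, then collapse $\cm_{\lambda_a(b)}\cm_a=\cm_{a+\lambda_a(b)}=\cm_{a\circ b}$ via \eqref{eq:condc} and \eqref{eq:sumcirc} together with $\lambda_a\lambda_b=\lambda_{a\circ b}$, obtaining $\cm_{a\circ b}^{-1}\lambda_{a\circ b}=\bar\lambda_{a\circ b}$. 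Axiom \eqref{eq:csum} for $+^{op}$ reads $b+a=\cm_b^{-1}(a)+b$, which is exactly \eqref{eq:csum} applied to $\cm_b^{-1}(a)$ and $b$; axiom \eqref{eq:sumcirc} for $+^{op}$ reads $\cm_a^{-1}\lambda_a(b)+a=a\circ b$, which follows from \eqref{eq:csum} applied to $\cm_a^{-1}\lambda_a(b)$ and $a$ together with $a+\lambda_a(b)=a\circ b$.

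The delicate point is \eqref{eq:clambda} for the opposite structure, i.e.\ the operator identity $\cm_{\bar\lambda_a(d)}^{-1}\bar\lambda_a=\bar\lambda_a\cm_d^{-1}$. I would start from the right-hand side $\cm_a^{-1}\lambda_a\cm_d^{-1}$, rewrite $\lambda_a\cm_d^{-1}=\cm_{\lambda_a(d)}^{-1}\lambda_a$ (the inverse of the operator form of \eqref{eq:clambda}), and then rewrite $\cm_a^{-1}\cm_{\lambda_a(d)}^{-1}=\cm_{\cm_a^{-1}\lambda_a(d)}^{-1}\cm_a^{-1}=\cm_{\bar\lambda_a(d)}^{-1}\cm_a^{-1}$ by applying \eqref{eq:condc1} to the element $\cm_a^{-1}\lambda_a(d)$ (whose image under $\cm_a$ is $\lambda_a(d)$) and inverting; the composition of these two steps is the left-hand side. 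I expect applying \eqref{eq:condc1} in the correct direction after passing to inverses to be the one step where it is easy to slip, so I would write it out in full.

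Finally, once $(A,+^{op},\circ,\bar\lambda,\bar\cm)$ is known to be a \ourstructure, Proposition~\ref{prop:semitruss} applied to it produces the associated solution $\bar r(a,b)=(\bar\lambda_a(b),\bar\lambda_{\bar\lambda_a(b)}^{-1}\bar\cm_{\bar\lambda_a(b)}(a))$; expanding $\bar\lambda$ and $\bar\cm$ and cancelling $\cm_{\bar\lambda_a(b)}\cm_{\bar\lambda_a(b)}^{-1}=\id$ collapses this to $(\cm_a^{-1}\lambda_a(b),\lambda_{\cm_a^{-1}\lambda_a(b)}^{-1}(a))=r_A^{-1}(a,b)$, which is the third claim. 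For the last assertion, $r_A$ is involutive iff $r_A=r_A^{-1}$; comparing first coordinates forces $\lambda_a(b)=\cm_a^{-1}\lambda_a(b)$ for all $a,b$, and since each $\lambda_a$ is surjective this gives $\cm_a=\id$ for all $a$, i.e.\ $s_A(a,b)=(b,a)$ is the trivial solution. Conversely, if all $\cm_a=\id$ then both the formula for $r_A$ and that for $r_A^{-1}$ reduce to $(a,b)\mapsto(\lambda_a(b),\lambda_{\lambda_a(b)}^{-1}(a))$, so $r_A$ is involutive (and in particular bijective, so the hypothesis of the proposition holds automatically in this case).
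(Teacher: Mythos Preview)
Your proof is correct and follows essentially the same route as the paper: verify the formula for $r_A^{-1}$, check the five axioms of Definition~\ref{def:qsemitruss} for the opposite tuple, and then read off the associated solution. The axiom checks (including the delicate \eqref{eq:clambda} via \eqref{eq:condc1}) match the paper's computations almost line for line.

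Two small remarks. First, your derivation of $r_A^{-1}$ via $r_A=\varphi^{-1}s_A\varphi$ from Proposition~\ref{prop:semitruss} is slicker than the paper's approach, which simply writes down the candidate $r'$ and verifies $rr'=r'r=\id$ by brute substitution; your method explains \emph{where} the formula comes from. Second, you supply an argument for the ``in particular'' clause that the paper leaves implicit; note that for the forward implication you should remark that $r_A$ involutive forces $r_A$ bijective, hence (by Proposition~\ref{prop:semitruss}) all $\cm_a$ are bijective, so the formula for $r_A^{-1}$ you derived under that hypothesis is available before you compare first coordinates.
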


To deal with involutive solutions that are not  necessarily non-degenerate, Rump introduced recently in \cite{Ru22} the algebraic structure called   a  strong semibrace. A strong semibrace is a monoid $(A,\circ)$ with neutral element $0$ and 
an additional binary operation $\cdot$ such that the following are satisfied for all $a,b,c\in A$:
\begin{align*}
    &(a\circ b)\cdot c = a\cdot (b \cdot c),\qquad &0 \cdot a = a,\\
    &a \cdot (b\circ b) = ((c \cdot a)\cdot b) \circ (a\cdot c), \qquad &a\cdot 0 =0,\\
    &(a\cdot b)\circ a = (b\cdot a) \circ b.
\end{align*}
If in addition we assume that, for any $a\in A$, the left multiplication map  $\ell_a: A\to A: b \mapsto a\cdot b$ is bijective 
then $(A,\cdot)$ is a cycle set (with associated left non-degenerate solution $r(a,b)=(\ell^{-1}_a(b), \ell_{\ell^{-1}_a(b)}(a))$).
One can note that a strong semibrace with $(A,\cdot)$ a cycle set is an instance of a \ourstructure. Indeed $(A,+,\circ^{op},\lambda,\iota)$ is a \ourstructure, where $a+b = (a \cdot b)\circ a$, the $\lambda$-map is the inverse of the left multiplication and $\iota_a = \id$ (so $(A,+)$ is abelian).

The $\lambda$ and $\rho$ maps provide relations in \ourstructures.

\begin{proposition}\label{epitruss}
Let $(A,+,\circ, \lambda, \cm)$ be a \ourstructure.
Then for all $a,b\in A$ and for the $\lambda$-map and $\rho$-map we have
\begin{align}
    &a\circ b = \lambda_a (b) \circ \rho_b(a), \label{eq:truss1}\\
    &\lambda_b(a\circ d) = \lambda_b (a) \circ \lambda_{\rho_a(b)}(d), \label{eq:truss2}\\
    &\rho_b (d\circ a) =\rho_{\lambda_a (b)}(d) \circ \rho_b (a). \label{eq:truss3}
\end{align}
\end{proposition}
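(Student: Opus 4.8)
The plan is to prove the three identities in the order in which they are stated, since \eqref{eq:truss1} is the engine that drives both \eqref{eq:truss2} and \eqref{eq:truss3}. For \eqref{eq:truss1} I would argue as in the computation already performed inside the proof of Proposition~\ref{prop:semitruss}: unfold the definition of the $\rho$-map and apply \eqref{eq:sumcirc} to write $\lambda_a(b)\circ\rho_b(a)=\lambda_a(b)+\lambda_{\lambda_a(b)}\rho_b(a)$; since $\rho_b(a)=\lambda^{-1}_{\lambda_a(b)}\cm_{\lambda_a(b)}(a)$ the right-hand side collapses to $\lambda_a(b)+\cm_{\lambda_a(b)}(a)$, which by \eqref{eq:csum} equals $a+\lambda_a(b)=a\circ b$. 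Only \eqref{eq:sumcirc} and \eqref{eq:csum} are needed here.

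For \eqref{eq:truss2} I would rewrite the left-hand side using \eqref{eq:sumcirc} and the additivity of $\lambda_b$ as $\lambda_b(a)+\lambda_b\lambda_a(d)$, then use \eqref{trusslambda} to replace $\lambda_b\lambda_a$ by $\lambda_{b\circ a}$, substitute $b\circ a=\lambda_b(a)\circ\rho_a(b)$ from \eqref{eq:truss1}, and apply \eqref{trusslambda} once more to factor $\lambda_{b\circ a}=\lambda_{\lambda_b(a)}\lambda_{\rho_a(b)}$. A final appeal to \eqref{eq:sumcirc} then identifies $\lambda_b(a)+\lambda_{\lambda_b(a)}\lambda_{\rho_a(b)}(d)$ with $\lambda_b(a)\circ\lambda_{\rho_a(b)}(d)$, as wanted.

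The identity \eqref{eq:truss3} is the one I expect to be the main obstacle, precisely because $(A,\circ)$ need not be cancellative, so the tempting shortcut of associating $(d\circ a)\circ b$ two different ways and cancelling a common left factor is not available. Instead the plan is to compute $\rho_b(d\circ a)$ directly from the definition of the $\rho$-map. Using $\lambda_{d\circ a}=\lambda_d\lambda_a$ from \eqref{trusslambda}, $d\circ a=d+\lambda_d(a)$ from \eqref{eq:sumcirc}, the fact that $\cm_c\in\End(A,+)$ from \eqref{eq:condc}, and $\lambda^{-1}_{\lambda_d\lambda_a(b)}\in\Aut(A,+)$, the expression should split as
\[
\rho_b(d\circ a)=\lambda^{-1}_{\lambda_d\lambda_a(b)}\cm_{\lambda_d\lambda_a(b)}(d)+\lambda^{-1}_{\lambda_d\lambda_a(b)}\cm_{\lambda_d\lambda_a(b)}\lambda_d(a).
\]
The first summand is by definition exactly $\rho_{\lambda_a(b)}(d)$. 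For the second summand I would use \eqref{eq:clambda} in the form $\cm_{\lambda_d\lambda_a(b)}\lambda_d=\lambda_d\cm_{\lambda_a(b)}$ and then rewrite $\lambda^{-1}_{\lambda_d(\lambda_a(b))}\lambda_d$ as $\lambda_{\rho_{\lambda_a(b)}(d)}\lambda^{-1}_{\lambda_a(b)}$, which follows from \eqref{eq:truss1} applied to the pair $d,\lambda_a(b)$ together with \eqref{trusslambda}; since $\lambda^{-1}_{\lambda_a(b)}\cm_{\lambda_a(b)}(a)=\rho_b(a)$, the second summand becomes $\lambda_{\rho_{\lambda_a(b)}(d)}\rho_b(a)$. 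One last application of \eqref{eq:sumcirc} then converts $\rho_{\lambda_a(b)}(d)+\lambda_{\rho_{\lambda_a(b)}(d)}\rho_b(a)$ into $\rho_{\lambda_a(b)}(d)\circ\rho_b(a)$, completing the proof.

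In this last step the only genuinely non-formal inputs are \eqref{eq:clambda}, used to commute $\cm$ past $\lambda$, and \eqref{eq:truss1}, used to re-express $\lambda^{-1}_{\lambda_d(c)}\lambda_d$; everything else is careful bookkeeping of the subscripts of the $\lambda$'s under composition and inversion, which is where I expect the computation to be most error-prone rather than conceptually hard.
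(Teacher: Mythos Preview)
Your arguments for \eqref{eq:truss1} and \eqref{eq:truss2} are correct and coincide with the paper's proof.

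For \eqref{eq:truss3} your approach is correct and is genuinely different from---and more direct than---the paper's. The paper first establishes, from \eqref{eq:YBE2}, the auxiliary identity
\[
\rho_{\lambda^{-1}_{\lambda_a^{-1}(b)}\lambda_a^{-1}(d)}\lambda_a^{-1}(b)=\lambda^{-1}_{\rho_{\lambda^{-1}_a(d)}(a)}\rho_{\lambda_b^{-1}(d)}(b),
\]
and then expands $\cm_a(b+d)$ in two ways (once via additivity, once by writing $b+d=b\circ\lambda_b^{-1}(d)$ and applying \eqref{eq:truss2} and the auxiliary identity), compares, and finally substitutes to reach \eqref{eq:truss3}. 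Your route avoids all of this: you unfold $\rho_b(d\circ a)$ directly, split it as a sum using $\cm\in\End(A,+)$ and $\lambda^{-1}\in\Aut(A,+)$, recognise the first summand as $\rho_{\lambda_a(b)}(d)$, and for the second summand use only \eqref{eq:clambda} together with the rewriting $\lambda^{-1}_{\lambda_d(c)}\lambda_d=\lambda_{\rho_c(d)}\lambda_c^{-1}$, which is an immediate consequence of \eqref{eq:truss1} and \eqref{trusslambda}. The payoff is a cleaner computation that needs neither the auxiliary identity nor the detour through $\cm_a(b+d)$; the paper's approach, by contrast, makes the role of \eqref{eq:YBE2} more visible.
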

\begin{proof}
Let $a,b,d\in A$. Then,
\begin{align*}
    \lambda_a(b)\circ \rho_b(a) &= \lambda_a(b)\circ \lambda^{-1}_{\lambda_{a}(b)}\cm_{\lambda_a(b)}(a)\\
    &=
    \lambda_a(b)+\cm_{\lambda_a(b)}(a)= a+ \lambda_a(b)= a\circ b.
\end{align*}
Hence, we get 
\begin{align*}
\lambda_b (a\circ d) &= \lambda_b (a+\lambda_a (d)) = \lambda_b (a) + \lambda_b \lambda_a (d)\\
 &= \lambda_b (a) + \lambda_{b\circ a} (d) = \lambda_b (a) +\lambda_{\lambda_b (a) \circ \rho_a (b)}(d)\\
 &= \lambda_b (a) + \lambda_{\lambda_b (a)} \lambda_{\rho_a (b)}(d)\\
 &= \lambda_b(a) \circ \lambda_{\rho_a(b)}(d).
\end{align*}
To prove \eqref{eq:truss3} we first claim that
\begin{align}
\rho_{\lambda^{-1}_{\lambda_a^{-1} (b)} \lambda_{a}^{-1}(d)} \lambda_{a}^{-1} (b) = \lambda^{-1}_{\rho_{\lambda^{-1}_a (d)}(a)} \rho_{\lambda_{b}^{-1} (d)}(b). \label{eq:lem4}
\end{align}
Indeed by setting in \eqref{eq:YBE2} $c=\lambda^{-1}_y(t)$, $b=y$ and $a=z$, we get
 \begin{align*}
     \lambda_{\rho_{\lambda_y(\lambda^{-1}_y(t))}(z)}\rho_{\lambda^{-1}_y(t)}(y) = \rho_{\lambda_{\rho_y(z)}\lambda^{-1}_y(t)}\lambda_z(y).
 \end{align*}
 With $t=\lambda^{-1}_{z}(u)$ and $y=\lambda^{-1}_{z}(v)$ and because of \eqref{eq:YBE1} this yields
 \begin{align*}
     \lambda_{\rho_{\lambda^{-1}_{z}(u)}(z)}\rho_{\lambda^{-1}_{\lambda^{-1}_{z}(v)}\lambda^{-1}_{z}(u)}\lambda^{-1}_{z}(v) &= \rho_{\lambda_{\rho_{\lambda^{-1}_{z}(v)}(z)}\lambda^{-1}_{\lambda^{-1}_{z}(v)}\lambda^{-1}_{z}(u)}\lambda_z\lambda^{-1}_{z}(v)\\
     &=\rho_{\lambda^{-1}_v(u)}(v),
 \end{align*}
as desired.

Moreover, we notice that
\begin{align*}
    \cm_a(b+d) 
    &= \lambda_a \rho_{\lambda^{-1}_{b+d}(a)}(b+d)
    = \lambda_a \rho_{\lambda^{-1}_{b\circ \lambda^{-1}_b(d)}(a)}(b\circ \lambda^{-1}_b(d)),
\end{align*}
and
\begin{align*}
    \cm_a(b) + \cm_a(d)&= \cm_a(b) \circ \lambda^{-1}_{\cm_a(b)}(\cm_a(d))\\
    &= \lambda_a( \rho_{\lambda^{-1}_b(a)}(b)) \circ \lambda^{-1}_{\lambda_a \rho_{\lambda^{-1}_b(a)}(b)}(
    \lambda_a(\rho_{\lambda^{-1}_d(a)}(d)))\\
    &=  \lambda_a( \rho_{\lambda^{-1}_b(a)}(b)) \circ \lambda_{\rho_{\rho_{\lambda^{-1}_b(a)}(b)}(a)}(
    \lambda^{-1}_{\rho_{\lambda^{-1}_b(a)}(b)}(\rho_{\lambda^{-1}_d(a)}(d))) \quad &\mbox{ by \eqref{eq:YBE1}}\\
    &= \lambda_a( \rho_{\lambda^{-1}_b(a)}(b)) \circ \lambda_{\rho_{\rho_{\lambda^{-1}_b(a)}(b)}(a)}(
    \rho_{\lambda^{-1}_{\lambda^{-1}_b(d)}\lambda^{-1}_b(a)}(\lambda^{-1}_b(d))) \quad &\mbox{ by \eqref{eq:lem4}} \\
    &= \lambda_a ( \rho_{\lambda^{-1}_b(a)}(b) \circ \rho_{\lambda^{-1}_{\lambda^{-1}_b(d)}\lambda^{-1}_b(a)}(\lambda^{-1}_b(d))  )
      \quad &\mbox{ by \eqref{eq:truss2}}.
\end{align*}
As  $\cm_a$ is an additive homomorphism and $\lambda_a$ is bijective, we thus obtain that 
\begin{align*}
     \rho_{\lambda^{-1}_{b\circ \lambda^{-1}_b(d)}(a)}(b\circ \lambda^{-1}_b(d))
    &= \rho_{\lambda^{-1}_b(a)}(b) \circ \rho_{\lambda^{-1}_{\lambda^{-1}_b(d)}\lambda^{-1}_b(a)}(\lambda^{-1}_b(d)).
\end{align*}
Put $e=\lambda^{-1}_b (d)$ and $x=\lambda^{-1}_{b\circ e}(a)$. Then the previous equality becomes, for arbitrary $x,b,e\in A$,
 $$\rho_x (b\circ e) =\rho_{\lambda_b^{-1} \lambda_{b\circ e} (x)} (b) \circ \rho_x (e) =\rho_{\lambda_e (x)}(b) \circ \rho_x (e),$$
as desired.
\end{proof}

The requirement that the $\cm$-map is an anti-homomorphism implies that the $\rho$-map also is an anti-homomorphism. 

\begin{lemma}\label{chomomorphism}
Let $(A,+,\circ, \lambda, \cm)$ be a \ourstructure with $\rho$-map  $\rho_{b}(a)= \lambda^{-1}_{\lambda_a(b)}\cm_{\lambda_a(b)}(a)$.
Then $\rho:(A,\circ)\to \Map(A,A): a\mapsto \rho_a$ is an anti-homomorphism.
\begin{proof} 
Let $a,b\in A$. By \eqref{eq:truss2} and because the $\cm$-map is an anti-homomorphism, we get for all $x\in A$ that
\begin{align*}
    \rho_{b\circ a} (x) &= \lambda^{-1}_{\lambda_x(b\circ a)}\cm_{\lambda_x(b\circ a)}(x)
    =\lambda^{-1}_{\lambda_x(b)\circ\lambda_{\rho_b(x)}(a)}\cm_{\lambda_x(b+\lambda_b(a))}(x)\\
    &=\lambda^{-1}_{\lambda_{\rho_b(x)}(a)}\lambda^{-1}_{\lambda_x(b)} \cm_{\lambda_x(b)+\lambda_x\lambda_b(a)}(x)\\
    &=\lambda^{-1}_{\lambda_{\rho_b(x)}(a)}\lambda^{-1}_{\lambda_x(b)}\cm_{\lambda_x\lambda_b(a)} \cm_{\lambda_x(b)}(x)\\
    &= \lambda^{-1}_{\lambda_{\rho_b(x)}(a)}\cm_{\lambda^{-1}_{\lambda_x(b)}\lambda_x\lambda_b(a)} \lambda^{-1}_{\lambda_x(b)}\cm_{\lambda_x(b)}(x)\\
    &= \lambda^{-1}_{\lambda_{\rho_b(x)}(a)}\cm_{\lambda_{\rho_{b}(x)}(a)}\rho_b(x)\\
    &=\rho_a\rho_b(x).
\end{align*}
   Hence, $\rho_{b\circ a}(x) =\rho_a\rho_b(x)$, as desired.
\end{proof}
\end{lemma}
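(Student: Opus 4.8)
The plan is to verify directly that $\rho_{b\circ a}(x)=\rho_a\bigl(\rho_b(x)\bigr)$ for all $a,b,x\in A$, by unravelling the definition $\rho_b(a)=\lambda^{-1}_{\lambda_a(b)}\cm_{\lambda_a(b)}(a)$ and repeatedly invoking the structural identities of a \ourstructure together with the relations from Proposition~\ref{epitruss}. First I would expand $\rho_{b\circ a}(x)=\lambda^{-1}_{\lambda_x(b\circ a)}\cm_{\lambda_x(b\circ a)}(x)$ and treat the two occurrences of $\lambda_x(b\circ a)$ differently. For the outer term I use \eqref{eq:truss2}, in the form $\lambda_x(b\circ a)=\lambda_x(b)\circ\lambda_{\rho_b(x)}(a)$, and then the homomorphism property \eqref{trusslambda} to factor $\lambda^{-1}_{\lambda_x(b\circ a)}=\lambda^{-1}_{\lambda_{\rho_b(x)}(a)}\lambda^{-1}_{\lambda_x(b)}$. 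For the $\cm$-subscript I instead write $b\circ a=b+\lambda_b(a)$ via \eqref{eq:sumcirc}, apply $\lambda_x\in\Aut(A,+)$ to get $\lambda_x(b\circ a)=\lambda_x(b)+\lambda_x\lambda_b(a)$, and then use the anti-homomorphism property \eqref{eq:condc} to obtain $\cm_{\lambda_x(b\circ a)}=\cm_{\lambda_x\lambda_b(a)}\cm_{\lambda_x(b)}$. Putting these together gives $\rho_{b\circ a}(x)=\lambda^{-1}_{\lambda_{\rho_b(x)}(a)}\lambda^{-1}_{\lambda_x(b)}\cm_{\lambda_x\lambda_b(a)}\cm_{\lambda_x(b)}(x)$.

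It then remains to commute the middle block $\lambda^{-1}_{\lambda_x(b)}\cm_{\lambda_x\lambda_b(a)}$ past into the form $\cm_{\lambda_{\rho_b(x)}(a)}\lambda^{-1}_{\lambda_x(b)}$, since once that is done the trailing $\lambda^{-1}_{\lambda_x(b)}\cm_{\lambda_x(b)}(x)$ collapses to $\rho_b(x)$ and the whole expression becomes $\lambda^{-1}_{\lambda_{\rho_b(x)}(a)}\cm_{\lambda_{\rho_b(x)}(a)}(\rho_b(x))=\rho_a(\rho_b(x))$, as required. This commutation is an instance of \eqref{eq:clambda}, and the bookkeeping step I expect to be the main obstacle is recognising the right substitution: one must observe that $\lambda_x\lambda_b(a)=\lambda_{x\circ b}(a)=\lambda_{\lambda_x(b)\circ\rho_b(x)}(a)=\lambda_{\lambda_x(b)}\bigl(\lambda_{\rho_b(x)}(a)\bigr)$, using \eqref{eq:truss1} for $x\circ b=\lambda_x(b)\circ\rho_b(x)$ and again \eqref{trusslambda}. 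Then applying \eqref{eq:clambda} with its $a$ replaced by $\lambda_x(b)$ and its $d$ replaced by $\lambda_{\rho_b(x)}(a)$ yields $\cm_{\lambda_x\lambda_b(a)}\lambda_{\lambda_x(b)}=\lambda_{\lambda_x(b)}\cm_{\lambda_{\rho_b(x)}(a)}$, which rearranges to the desired $\lambda^{-1}_{\lambda_x(b)}\cm_{\lambda_x\lambda_b(a)}=\cm_{\lambda_{\rho_b(x)}(a)}\lambda^{-1}_{\lambda_x(b)}$. Substituting this back finishes the computation.

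As a sanity check on the direction, note that by Proposition~\ref{prop:semitruss} the map $r_A$ is a solution, so its second-component maps satisfy \eqref{eq:YBE3}; however, converting \eqref{eq:YBE3} into the multiplicative statement $\rho_{b\circ a}=\rho_a\rho_b$ does not seem to be cheaper than the direct argument above, so I would present the computation. The essential inputs are exactly the three facts that distinguish a \ourstructure from a bare left semitruss: that $\lambda$ is a homomorphism into $\Aut(A,+)$, that $\cm$ is an anti-homomorphism into $\End(A,+)$, and the mixed compatibility \eqref{eq:clambda}; all of Proposition~\ref{epitruss} that is used (namely \eqref{eq:truss1} and \eqref{eq:truss2}) is itself derived from these.
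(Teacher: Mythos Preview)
Your proposal is correct and follows essentially the same route as the paper's proof: both expand $\rho_{b\circ a}(x)$ using the definition, rewrite $\lambda_x(b\circ a)$ via \eqref{eq:truss2} for the outer $\lambda^{-1}$ and via \eqref{eq:sumcirc} for the $\cm$-subscript, split $\cm$ using the anti-homomorphism property \eqref{eq:condc}, commute with \eqref{eq:clambda}, and then recognise $\rho_b(x)$ and $\rho_a(\rho_b(x))$. Your explicit justification of the identity $\lambda^{-1}_{\lambda_x(b)}\lambda_x\lambda_b(a)=\lambda_{\rho_b(x)}(a)$ via \eqref{eq:truss1} and \eqref{trusslambda} is exactly what underlies the corresponding step in the paper.
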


In fact, if we do not assume that a  \ourstructure $(A,+,\circ,\lambda,\cm)$ satisfies $\cm_{a+b} = \cm_b \cm_a$ for all $a,b \in A$, then we can prove that this condition follows from the $\rho$-map being an anti-homomorphism. Indeed as  $\cm_b(a)=\lambda_{b}\rho_{\lambda^{-1}_a(b)}(a)$ we get that, for any $a,b,x \in A$, using \eqref{eq:YBE1}, \eqref{eq:YBE2} and that the $\rho$-map is an anti-homomorphism,
   \begin{align*}
       \cm_{a+b}(x)
       &= \lambda_{a+b} \rho_{\lambda^{-1}_x(a+b)}(x)
       \\ &= \lambda_a \lambda_{\lambda^{-1}_a(b)} \rho_{\lambda^{-1}_x(a) + \lambda^{-1}_x(b)}(x)
       \\ &= \lambda_b \lambda_{\rho_{\lambda^{-1}_a(b)}(a)} \rho_{\lambda^{-1}_x(a) \circ \lambda^{-1}_{\lambda^{-1}_x(a)} (\lambda^{-1}_x(b))}(x)
       \\ &= \lambda_b \lambda_{\rho_{\lambda^{-1}_a(b)}(a)} \rho_{\lambda^{-1}_{\lambda^{-1}_x(a)} (\lambda^{-1}_x(b))} \rho_{\lambda^{-1}_x(a)}(x)
       \\ &= \lambda_b \lambda_{\rho_{\lambda^{-1}_a(b)}(a)} \rho_{\lambda^{-1}_{\rho_{\lambda^{-1}_x(a)}(x)} (\lambda^{-1}_a(b))} \rho_{\lambda^{-1}_x(a)}(x)
       \\ &= \lambda_b \lambda_{\rho_{\lambda_{\rho_{\lambda^{-1}_x(a)}(x)} (\lambda^{-1}_{\rho_{\lambda^{-1}_x(a)}(x)} \lambda^{-1}_a(b))}(a)} \rho_{\lambda^{-1}_{\rho_{\lambda^{-1}_x(a)}(x)} \lambda^{-1}_a(b)} (\rho_{\lambda^{-1}_x(a)}(x))
       \\ &= \lambda_b \rho_{\lambda_{\rho_{\rho_{\lambda^{-1}_x(a)}(x)} (a)}(\lambda^{-1}_{\rho_{\lambda^{-1}_x(a)}(x)} \lambda^{-1}_a(b))} \lambda_{a} (\rho_{\lambda^{-1}_x(a)}(x))
       \\ &= \lambda_b \rho_{\lambda^{-1}_{\lambda_a\rho_{\lambda^{-1}_x(a)}(x)} \lambda_a \lambda^{-1}_a(b)} \lambda_{a} (\rho_{\lambda^{-1}_x(a)}(x))
       \\ &= \lambda_b \rho_{\lambda^{-1}_{\lambda_a\rho_{\lambda^{-1}_x(a)}(x)}(b)} \lambda_{a} (\rho_{\lambda^{-1}_x(a)}(x))
       \\ &= \cm_b (\lambda_a\rho_{\lambda^{-1}_x(a)}(x))
       \\ &= \cm_b \cm_a(x).
   \end{align*}
   Hence, the map $\cm:(A,+) \to \Map(A,A): a \mapsto \cm_a$ is an anti-homomorphism.

Obviously, a homomorphism of \ourstructures should be a mapping  that preserves the operations and is compatible with the $\lambda$-map and the $\cm$-map.

\begin{definition}\label{def:homomorphism}
A mapping $f:A\rightarrow B$ between \ourstructures $(A,+,\circ, \lambda, \cm)$ and $(B,+',\circ', \lambda', \cm')$ is said the be a \emph{homomorphism} if
\begin{enumerate}
    \item 
$f(a+b)=f(a)+'f(b)$, $f(a\circ b) =f(a) \circ' f(b)$, 
  \item $f(\lambda_a (b)) =\lambda'_{f(a)}(f(b))$ and $f(\cm_b(a))=\cm'_{f(b)}(f(a))$, for all $a,b\in A$.
 \end{enumerate}
\end{definition}

Note that a homomorphism $f$ of \ourstructures also is compatible with the inverses of the $\lambda$-maps, i.e.  
$\lambda'^{-1}_{f(a)} f = f\lambda^{-1}_a$.

\begin{lemma}\label{homomorphismqsemitruss}
Let $f:A\rightarrow B$ be a epimorphism of \ourstructures $(A,+,\circ, \lambda, \cm)$ and $(B,+',\circ', \lambda', \cm')$.
Then $\rho'_{f(a)}f(b) =f\rho_{a}(b)$ for all $a,b\in A$, i.e. the $\rho$-map is compatible with $f$.

If, for all $a\in A$, $\rho_a$ is surjective then for all $x\in B$, $\rho'_{x}$ is surjective. 

If,  for all $a\in A$, $\rho_a$ is bijective and the mapping $B\rightarrow B: f(b) \mapsto f\rho_{a}^{-1}(b)$ is well-defined, that is $f\rho^{-1}_a(b)=f\rho^{-1}_{a'}(b')$ for any $a,a',b,b' \in A$ such that $f(a)=f(a')$ and $f(b)=f(b')$, then all $(\rho')_{f(a)}$ are bijective
and $(\rho')_{f(a)}^{-1} f=f\rho_a^{-1}$.
\end{lemma}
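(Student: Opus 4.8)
The plan is to prove the intertwining identity $\rho'_{f(a)}f(b)=f\rho_a(b)$ first, and then to obtain the surjectivity and bijectivity statements as purely formal consequences. For the first identity I would argue through the derived solutions rather than manipulate the $\rho$-formula directly. By Proposition~\ref{prop:semitruss} we may write $r_A=\varphi^{-1}s_A\varphi$ and $r_B=\psi^{-1}s_B\psi$, where $\varphi\colon A^2\to A^2$, $(a,b)\mapsto(a,\lambda_a(b))$ and $\psi\colon B^2\to B^2$, $(x,y)\mapsto(x,\lambda'_x(y))$ are bijections, each $\lambda_a$ and $\lambda'_x$ being bijective. From the homomorphism axioms $f(\cm_b(a))=\cm'_{f(b)}(f(a))$ and $f(\lambda_a(b))=\lambda'_{f(a)}(f(b))$ one checks at once that $(f\times f)\,s_A=s_B\,(f\times f)$ and $(f\times f)\,\varphi=\psi\,(f\times f)$, where $f\times f\colon A^2\to B^2$ is the product map; the second of these rearranges to $\psi^{-1}(f\times f)=(f\times f)\varphi^{-1}$. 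Stringing these together gives $(f\times f)\,r_A=(f\times f)\varphi^{-1}s_A\varphi=\psi^{-1}(f\times f)s_A\varphi=\psi^{-1}s_B(f\times f)\varphi=\psi^{-1}s_B\psi(f\times f)=r_B\,(f\times f)$. Comparing second coordinates of $(f\times f)r_A(a,b)$ and $r_B(f(a),f(b))$ then yields $f\rho_a(b)=\rho'_{f(a)}f(b)$. (Alternatively one can expand $\rho_a(b)=\lambda^{-1}_{\lambda_b(a)}\cm_{\lambda_b(a)}(b)$ and push $f$ through term by term, invoking in addition the compatibility $\lambda'^{-1}_{f(a)}f=f\lambda^{-1}_a$ recorded after Definition~\ref{def:homomorphism}.)

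Rewriting the identity as $\rho'_{f(a)}\,f=f\,\rho_a$, the surjectivity claim is immediate: if each $\rho_a$ is surjective, then $f\rho_a$ is a composite of surjections, hence surjective onto $B$, so $\rho'_{f(a)}$ is surjective for every $a\in A$; since $f$ is an epimorphism every $x\in B$ equals $f(a)$ for some $a$, so every $\rho'_x$ is surjective. For the last statement, assume each $\rho_a$ is bijective and that $f\rho^{-1}_a(b)$ depends only on $f(a)$ and $f(b)$. This well-definedness hypothesis is exactly what lets us define $g_x\colon B\to B$ by $g_{f(a)}(f(b))=f\rho^{-1}_a(b)$. Using $\rho'_{f(a)}\,f=f\,\rho_a$ together with surjectivity of $f$, for every $b\in A$ one gets $\rho'_{f(a)}(g_{f(a)}(f(b)))=\rho'_{f(a)}(f\rho^{-1}_a(b))=f\rho_a(\rho^{-1}_a(b))=f(b)$ and, symmetrically, $g_{f(a)}(\rho'_{f(a)}(f(b)))=g_{f(a)}(f\rho_a(b))=f\rho^{-1}_a(\rho_a(b))=f(b)$; hence $\rho'_{f(a)}g_{f(a)}=g_{f(a)}\rho'_{f(a)}=\id_B$. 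Therefore every $\rho'_{f(a)}$ is bijective with inverse $g_{f(a)}$, which is precisely the assertion $(\rho')^{-1}_{f(a)}\,f=f\,\rho^{-1}_a$.

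The only step needing real computation is the intertwining identity; once it is in hand the rest is a formal diagram chase. The difficulty there is entirely bookkeeping — keeping straight which slot of $\rho$ is the subscript and which is the argument, and calling on the $\lambda^{-1}$-compatibility at the right moments — rather than anything conceptual, and routing the argument through $s_A$ and the twist $\varphi$ largely sidesteps it by reducing everything to the transparent facts that $f$ commutes with $s_A$ and with $\varphi$. I would expect the write-up to spend most of its length on the verification that $(f\times f)s_A=s_B(f\times f)$ and $(f\times f)\varphi=\psi(f\times f)$, both of which are one-line checks from the homomorphism axioms.
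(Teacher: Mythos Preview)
Your argument is correct. For the intertwining identity you take a genuinely different route from the paper: the paper simply expands $\rho'_{f(b)}f(a)=(\lambda')^{-1}_{\lambda'_{f(a)}f(b)}\cm'_{\lambda'_{f(a)}f(b)}f(a)$ and pushes $f$ through each layer using the homomorphism axioms and the compatibility $(\lambda')^{-1}_{f(c)}f=f\lambda^{-1}_c$, arriving at $f\rho_b(a)$ in four lines. Your approach via the factorisations $r_A=\varphi^{-1}s_A\varphi$ and $r_B=\psi^{-1}s_B\psi$ is more structural: it isolates the two one-line checks $(f\times f)s_A=s_B(f\times f)$ and $(f\times f)\varphi=\psi(f\times f)$ and then composes, avoiding any direct manipulation of the nested $\lambda^{-1}\cm$ formula. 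The payoff is that the bookkeeping with subscripts versus arguments is confined to those two transparent identities; the cost is that one must invoke Proposition~\ref{prop:semitruss}. (One small slip: comparing second coordinates of $(f\times f)r_A(a,b)$ and $r_B(f(a),f(b))$ literally gives $f\rho_b(a)=\rho'_{f(b)}f(a)$; the identity you state follows after relabelling $a\leftrightarrow b$.) For the surjectivity and bijectivity parts your argument coincides with the paper's.
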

\begin{proof}
    By definition  we have that $\rho_b(a) =\lambda^{-1}_{\lambda_a(b)}\cm_{\lambda_a(b)}(a)$ and $\rho'_y(x) =(\lambda')^{-1}_{\lambda'_x(y)}\cm'_{\lambda'_x(y)}(x)$, for all $a,b\in A$, $x,y\in B$. 
    It follows that
    \begin{align*}
        \rho'_{f(b)}f(a) &= (\lambda')^{-1}_{\lambda'_{f(a)}f(b)}\cm'_{\lambda'_{f(a)}f(b)}f(a)
        =(\lambda')^{-1}_{f(\lambda_{a}(b))}\cm'_{f(\lambda_{a}(b))}f(a)\\
        &=(\lambda')^{-1}_{f(\lambda_{a}(b))}f(\cm_{\lambda_{a}(b)}(a))
        =f(\lambda^{-1}_{\lambda_{a}(b)}\cm_{\lambda_{a}(b)}(a))\\
        &=f\rho_b(a),
    \end{align*}
    for any $a,b\in A$.
    This proves the first part.
    Obviously this implies that $\rho'_{f(b)}$ is surjective if both $f$ and $\rho_b$ are surjective. Hence the second part also follows.
    
    For the third part,
    assume $\rho_a$ bijective, for any $a\in A$, and that the mapping 
    $g:B\rightarrow B: f(b) \mapsto f\rho_{a}^{-1}(b)$ is well-defined. 
     Then
    \begin{align*}
        g\rho'_{f(a)}f(b) =g f\rho_{a}(b)= f\rho^{-1}_a\rho_a(b) =f(b),
    \end{align*}
    and 
    \begin{align*}
    \rho'_{f(a)}gf(b) = \rho'_{f(a)}f\rho^{-1}_a(b) = f\rho_a\rho^{-1}_a(b) = f(b).
    \end{align*}
    Therefore $\rho'_{f(a)}$ is bijective with inverse $g$.
\end{proof}

Recall that a solution $(X',r')$ is said to be an \emph{epimorphic image} of a solution $(X,r)$ if there exists a surjective map
$f:X\rightarrow X'$ so that $(f\times f) r =r' (f\times f)$ on $X^{2}$. If, moreover, $f$ is bijective then one says that the solutions are isomorphic.

\begin{proposition}
Let $(A,+,\circ,\lambda,\cm)$ and $(A',+',\circ',\lambda',\cm')$ be \ourstructures. Then the associated solution $r_{A'}$ is an epimorphic (respectively isomorphic) image of the associated solution $r_A$ 
if and only if there exists a surjective (respectively bijective)  map $f:A\to A'$ such that $f\lambda_a = \lambda'_{f(a)}f$ and $f\cm_a=\cm'_{f(a)}f$, for any $a \in A$ (that is condition (2) of Definition~\ref{def:homomorphism} holds).
\end{proposition}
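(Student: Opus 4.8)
The plan is to unwind both sides of the claimed equivalence into a single pair of pointwise identities that the map $f$ must satisfy. Write $r_A(a,b)=(\lambda_a(b),\rho_b(a))$ with $\rho_b(a)=\lambda^{-1}_{\lambda_a(b)}\cm_{\lambda_a(b)}(a)$, and similarly for $A'$. For a surjective (respectively bijective) map $f\colon A\to A'$, comparing coordinates shows that $(f\times f)r_A=r_{A'}(f\times f)$ holds on $A^2$ if and only if
\begin{equation}\label{eq:coordproof}
f\lambda_a(b)=\lambda'_{f(a)}f(b)\qquad\text{and}\qquad f\rho_b(a)=\rho'_{f(b)}f(a)\qquad\text{for all }a,b\in A.
\end{equation}
So it suffices to prove that \eqref{eq:coordproof} is equivalent to condition~(2) of Definition~\ref{def:homomorphism}; surjectivity (respectively bijectivity) of $f$ is simply transported along, yielding the epimorphic (respectively isomorphic) statement.

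For the implication ``condition~(2) $\Rightarrow$ \eqref{eq:coordproof}'', assume $f\lambda_a=\lambda'_{f(a)}f$ and $f\cm_a=\cm'_{f(a)}f$ for all $a$. Since $\lambda_a$ and $\lambda'_{f(a)}$ are bijective, the first identity yields $\lambda'^{-1}_{f(a)}f=f\lambda^{-1}_a$, exactly as in the Note following Definition~\ref{def:homomorphism}. Then the computation in the first part of the proof of Lemma~\ref{homomorphismqsemitruss} applies verbatim --- it uses only condition~(2) and this derived identity, not the remaining \ourstructure axioms nor surjectivity --- and produces $f\rho_b(a)=\rho'_{f(b)}f(a)$. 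Together with $f\lambda_a=\lambda'_{f(a)}f$ this is \eqref{eq:coordproof}.

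For the converse, assume \eqref{eq:coordproof}. Its first identity is literally $f\lambda_a=\lambda'_{f(a)}f$, and hence again $f\lambda^{-1}_a=\lambda'^{-1}_{f(a)}f$. To recover compatibility with the $\cm$-map we use the identity $\cm_b(a)=\lambda_b\rho_{\lambda^{-1}_a(b)}(a)$, which holds in any \ourstructure (substitute $c=\lambda^{-1}_a(b)$ and $d=a$ in the defining formula $\rho_c(d)=\lambda^{-1}_{\lambda_d(c)}\cm_{\lambda_d(c)}(d)$ of the $\rho$-map), hence in both $A$ and $A'$. Using successively $f\lambda_b=\lambda'_{f(b)}f$, the second identity of \eqref{eq:coordproof}, and $f\lambda^{-1}_a=\lambda'^{-1}_{f(a)}f$, we get
\begin{align*}
f\cm_b(a)&=f\lambda_b\rho_{\lambda^{-1}_a(b)}(a)=\lambda'_{f(b)}f\rho_{\lambda^{-1}_a(b)}(a)=\lambda'_{f(b)}\rho'_{f(\lambda^{-1}_a(b))}f(a)\\
&=\lambda'_{f(b)}\rho'_{\lambda'^{-1}_{f(a)}f(b)}f(a)=\cm'_{f(b)}f(a),
\end{align*}
the last equality being the same $\cm$--$\rho$ identity inside $A'$. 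Hence $f\cm_b=\cm'_{f(b)}f$, so condition~(2) holds.

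The only delicate point is that $f$ is a priori assumed to intertwine only the $\lambda$- and $\cm$-maps, not the additive or multiplicative operations, so Lemma~\ref{homomorphismqsemitruss} cannot be quoted as a black box; one must isolate the half of its proof that needs nothing more than condition~(2). The real engine of the converse is the identity $\cm_b(a)=\lambda_b\rho_{\lambda^{-1}_a(b)}(a)$, which expresses the $\cm$-map purely through $\lambda$, $\lambda^{-1}$ and $\rho$, and thus lets the hypothesis on solutions --- which only constrains $\lambda$ and $\rho$ --- force $\cm$-compatibility. Everything else is a routine unravelling of definitions.
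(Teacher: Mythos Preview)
Your proof is correct and follows essentially the same approach as the paper. Both directions amount to unpacking the $\rho$-map via $\rho_b(a)=\lambda^{-1}_{\lambda_a(b)}\cm_{\lambda_a(b)}(a)$ and using that $f$ intertwines $\lambda$ (hence $\lambda^{-1}$); your use of the inverse identity $\cm_b(a)=\lambda_b\rho_{\lambda^{-1}_a(b)}(a)$ is just a change of variables in the paper's direct manipulation, and your observation that the computation in Lemma~\ref{homomorphismqsemitruss} uses only condition~(2) is exactly what the paper's converse calculation does explicitly.
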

\begin{proof}
    Assume that $r_{A'}$ is an epimorphic image of  $r_{A}$. Then there exists a surjective map $f:A\to A'$ such that $(f\times f)r_A = r_{A'}(f\times f)$. It follows that 
    \begin{align*}
        (f\lambda_a(b),f\lambda^{-1}_{\lambda_a(b)}\cm_{\lambda_a(b)}(a))=(\lambda'_{f(a)}f(b),(\lambda')^{-1}_{\lambda'_{f(a)}f(b)}
        \cm'_{\lambda'_{f(a)}f(b)}(f(a))).
    \end{align*}
    Hence, in particular $f\lambda_a(b)=\lambda'_{f(a)}f(b)$ for every $a,b \in A$. It yields $f\lambda_a = \lambda'_{f(a)}f$ and, since $\lambda_a$ and $\lambda'_{f(a)}$ are bijective, $f\lambda_a^{-1}=(\lambda')^{-1}_{f(a)}f$. Hence, we get 
    \begin{align*}
        (\lambda')^{-1}_{\lambda'_{f(a)}f(b)}\cm'_{\lambda'_{f(a)}f(b)}(f(a))&=f\lambda^{-1}_{\lambda_a(b)}\cm_{\lambda_a(b)}(a) \\
        &=(\lambda')^{-1}_{f\lambda_a(b)}f \cm_{\lambda_a(b)}(a)\\
        &=(\lambda')^{-1}_{\lambda'_{f(a)}f(b)}f \cm_{\lambda_a(b)}(a).
    \end{align*}
    It follows that $f \cm_{\lambda_a(b)}(a)=\cm'_{\lambda'_{f(a)}f(b)}(f(a)) =\cm'_{f\lambda_a(b)}f(a)$, for every $a,b\in A$. Hence $f \cm_b(a)=\cm_{f(b)}'f(a)$.
    
    Conversely, assume that $f\lambda_a = \lambda'_{f(a)}f$ and $f \cm_a=\cm'_{f(a)}f$, for any $a \in A$. Then
    \begin{align*}
        r_{A'}(f\times f)(a,b)&=(\lambda'_{f(a)}f(b),(\lambda')^{-1}_{\lambda'_{f(a)}f(b)}\cm'_{\lambda'_{f(a)}f(b)}f(a)) \\
        &=(f\lambda_a(b), (\lambda')^{-1}_{f\lambda_a(b)}\cm'_{f\lambda_a(b)}f(a))\\
        &=(f\lambda_a(b), (\lambda')^{-1}_{f\lambda_a(b)}f \cm_{\lambda_a(b)}(a))\\
        &=(f\lambda_a(b), f\lambda^{-1}_{\lambda_a(b)}\cm_{\lambda_a(b)}(a))\\
        &=(f\times f)r_A(a,b).
    \end{align*}
Hence the result follows.
\end{proof}

So, isomorphic \ourstructures yield isomorphic associated solutions. The converse however is not true, take for example two non-isomorphic abelian groups of the same size $(G,+)$ and $(H,\cdot)$. Then $(G,+,+,\iota, \cm)$ with $\iota_g = \cm_g = \id_G$ for all $g \in G$, and $(H,\cdot,\cdot,\iota, \cm')$ with $\iota_h = \cm'_h = \id_H$ for all $h \in H$, are non-isomorphic \ourstructures with isomorphic associated solutions.

\subsection{Structure monoids as \ourstructures}\label{sec_StructureMonoid}\hfill

In this section we show that all left non-degenerate solutions are determined by \ourstructures.
We do this by putting a \ourstructure structure on the structure monoid associated to a left non-degenerate solution,  by specifically defining the $\cm$-map using the derived solution. At the end of this section we show that isomorphic solutions have isomorphic (unital) structure \ourstructures and vice versa.

We begin by recalling results of Gateva-Ivanova and Majid \cite{GIMa08} and Ced\'o, Jespers and Verwimp \cite{CJV2020} using the same notation as in \cite{CJV2020}. 
For completeness' sake we include all the necessary background.
The notation $\langle X \rangle$ is used to denote the monoid generated by the set $X$.

Let $(X,r)$ be a solution. As before, write  $r(x,y)=(\lambda_x (y), \rho_y(x))$, for $x,y\in X$. The \emph{(left) derived monoid} 
$A(X,r)$ of $(X,r)$ is defined as the monoid with the following presentation
$$
    A=A(X,r)= \langle x\in X \mid x+\lambda_x (y) =\lambda_x (y) +\lambda_{\lambda_x(y)} \rho_{y}(x),\; x,y \in X\rangle, 
$$ 
and the \emph{structure monoid} $M(X,r)$ of $(X,r)$ is the monoid with presentation 
$$
    M=M(X,r)=\langle x \in X \mid x\circ y =\lambda_x (y) \circ \rho_y (x), \; x,y\in X\rangle .
$$
The subsemigroup  $M\setminus\{ 1\}$ is called the {\it structure semigroup}  of $(X,r)$ and is denoted by $S(X,r)$.
In \cite[Proposition 2.2]{JeKuVA19} it is shown that if $(X,r)$ is left non-degenerate then  
  $$s: X^2 \rightarrow X^2: (x,y)\mapsto (y,\sigma_y (x)),$$
also is a solution, called the \emph{derived solution} of $(X,r)$, where
\begin{eqnarray}\label{existc}
\sigma_y (x) =\lambda_y \rho_{\lambda^{-1}_{x}(y)}(x).
\end{eqnarray}
Note that in this case $x+y=y+\sigma_y (x)$ for all $x,y\in X$ (explaining the use of the notation $\sigma_y$ in \eqref{existc}),
and furthermore, $s$ is bijective if and only if $r$ is bijective (or equivalently, $s$ is right non-degenerate).

It was shown by Gateva-Ivanova  and Majid \cite[Theorem 3.6]{GIMa08} that  for an arbitrary solution $(X,r)$
the maps $\lambda$ and  $\rho$ can be extended uniquely to $M$ (for simplicity we use the same notation for the extension)
so that one  obtains a monoid homomorphism
 $\lambda : (M,\circ)  \rightarrow \Map (M,M): a\mapsto \lambda_a,$
 and
  a monoid anti-homomorphism
  $\rho : (M,\circ)  \rightarrow \Map (M,M): a\mapsto \rho_a.$
Furthermore, the map
  $r_M\colon M\times M\rightarrow M\times M,$
defined by
 $r_M(a,b)=(\lambda_a(b),\rho_b(a))$, for all $a,b\in M$
is a solution. 
For $a,b\in M$ we have
\begin{eqnarray}\label{eqproduct}
a\circ b=\lambda_{a}(b)\circ\rho_{b}(a),\label{eq:strmon1}\\
\rho_b(c \circ  a) = \rho_{\lambda_a (b)}(c) \circ  \rho_b(a),\label{eq:strmon2}\\
 \lambda_b (a \circ  c) = \lambda_b (a) \circ  \lambda_{\rho_a(b)}(c).\label{eq:strmon3}
\end{eqnarray}
Also, 
$r$ is left (respectively right)  non-degenerate if and only if $r_M$ is left (respectively right) non-degenerate; and 
$r$ is bijective if and only if $r_M$ is bijective.
Note that the derived solution of $(M,r_M)$ of a left non-degenerate solution 
is the solution
  $$s_M: M^2 \rightarrow M^2 : (a,b) \mapsto (a,\sigma_b (a)),$$
where $\sigma_a (b) = \lambda_a \rho_{\lambda^{-1}_{b}(a)} (b)$.
From the above we know that $r_M$ is bijective if and only if $s_M$ is bijective.

Actually one obtains more as shown by Ced\'o, Jespers and Verwimp in \cite[Proposition 3.1]{CJV2020}.  Each $\lambda$ induces a  monoid homomorphism
  $$\lambda' : (M,\circ)  \rightarrow \End(A,+): a\mapsto \lambda_a', $$
with $\lambda'_x(y) =\lambda_x (y)$, for $x,y\in X$.

In \cite[Proposition 3.2]{CJV2020} it is  shown that there exists a $1$-cocycle
  $$\pi : M \rightarrow A,$$
with respect to the homomorphism $\lambda'$, such that $\pi (x) =x$ for all $x\in X$, and hence one obtains a homomorphism
  $$f:M \rightarrow A \rtimes \text{Im} (\lambda') : m\mapsto (\pi (m),\lambda'_m).$$
Furthermore, if $r$ is left non-degenerate then  it follows from Proposition 3.6 and Proposition 3.7 in \cite{CJV2020} that $\pi$ is bijective and thus $$M\cong \{ (\pi (m),\lambda'_m)\mid m\in M\} \subseteq A \rtimes \text{Im} (\lambda').$$
For left non-degenerate solutions we will identify $M=M(X,r)$ with $A=A(X,r)$ via the mapping $\pi$
and we thus denote $\lambda'$ (respectively $\rho'$) by $\lambda$ (respectively $\rho$).
Hence $M$ also has an additive structure $(M,+)$ and we have $a\circ b =a+\lambda_a (b)$ and $a+b =b+\sigma_b(a)$ for all $a,b\in A$.
Also, 
\begin{align*}
    \lambda_a\sigma _d(b) 
    &= \lambda_a\lambda_d\rho_{\lambda^{-1}_b(d)}(b)
    \\ &= \lambda_{\lambda_a(d)} \lambda_{\rho_d(a)}\rho_{\lambda^{-1}_b(d)}(b)
    \\ &= \lambda_{\lambda_a(d)} \lambda_{\rho_{\lambda_b(\lambda^{-1}_b(d))}(a)}\rho_{\lambda^{-1}_b(d)}(b)
    \\ &= \lambda_{\lambda_a(d)} \rho_{\lambda_{\rho_b(a)}(\lambda^{-1}_b(d))}\lambda_a(b)
    \\ &= \lambda_{\lambda_a(d)} \rho_{\lambda^{-1}_{\lambda_a(b)}(\lambda_a(d))}\lambda_a(b)
    \\ &= \sigma_{\lambda_a(d)}\lambda_a(b).
\end{align*}
So equation (\ref{eq:clambda}) holds. 
Moreover, because of equation \eqref{eq:strmon2} one can show (with a reversed argument to that in the last part of the  proof of  Proposition~\ref{epitruss})  that each $\sigma_a \in \End(M,+)$. Finally, since $\rho$ is an anti-homomorphism we can prove that $\sigma_{a+b} = \sigma_b \sigma_a$, for all $a,b\in M(X,r)$.

To summarize we have shown the following.

\begin{proposition}\label{SolFromSemiTruss}
 Let $(X,r)$ be a left non-degenerate solution. The  structure monoid $M=M(X,r)$ is a unital \ourstructure
 $(M,+,\circ , \lambda , \cm)$ (with $\lambda$-map and $\cm$-map as defined above)
 and with the structure semigroup $S(X,r)$ as a sub-\ourstructure (not unital).  
 The associated solution of the unital  structure \ourstructure $M$ (respectively associated derived solution of the \ourstructure $M$) (as defined in Proposition~\ref{prop:semitruss}) is precisely the solution $(M,r_M)$ (respectively $(M,s_M)$; hence justifying the terminology in Section 1).
\end{proposition}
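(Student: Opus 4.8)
The plan is to assemble the facts accumulated in the discussion preceding this proposition. Concretely, I would verify the five conditions of Definition~\ref{def:qsemitruss} for $(M,+,\circ,\lambda,\cm)$, then establish unitality, then check that $S(X,r)=M\setminus\{1\}$ is a (non-unital) sub-\ourstructure, and finally identify the two associated solutions. Throughout one works with the identification $M\cong\{(\pi(m),\lambda'_m)\mid m\in M\}\subseteq A\rtimes\im(\lambda')$ of \cite{CJV2020}, which transports the additive monoid structure of $A=A(X,r)$ onto $M$ and gives $a\circ b=a+\lambda_a(b)$ and $a+b=b+\sigma_b(a)$ for all $a,b\in M$; here $\lambda,\rho$ denote the extensions to $M$ of the maps of $(X,r)$ from \cite{GIMa08}, and $\cm:=\sigma$ with $\sigma_a(b)=\lambda_a\rho_{\lambda_b^{-1}(a)}(b)$ as in \eqref{existc}. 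Note that $\pi$ is degree-preserving, so the gradings coming from the (homogeneous quadratic) presentations of $M(X,r)$ and $A(X,r)$ agree, and $M$ is $\mathbb N$-graded with $X$ in degree $1$.

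For \eqref{trusslambda}: each $\lambda_a$ is an additive endomorphism by \cite{CJV2020} and is bijective because $r_M$ is left non-degenerate (since $r$ is), so $\lambda_a\in\Aut(M,+)$; the relation $\lambda_a\lambda_b=\lambda_{a\circ b}$ is the homomorphism property of \cite{GIMa08}. Conditions \eqref{eq:sumcirc} and \eqref{eq:csum} are the two displayed identities just recalled, and combined with additivity of $\lambda_a$ they yield the left semitruss identity \eqref{eq:leftsemitrussrelation}. Condition \eqref{eq:clambda} is the computation $\lambda_a\sigma_d(b)=\sigma_{\lambda_a(d)}\lambda_a(b)$ displayed above, which relies on \eqref{eq:YBE1} for $r_M$. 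For \eqref{eq:condc}: that each $\sigma_a\in\End(M,+)$ comes from running backwards the final computation in the proof of Proposition~\ref{epitruss}, now reading \eqref{eq:strmon2} (together with \eqref{eq:strmon1} and \eqref{eq:YBE1}) as the hypothesis rather than the conclusion; and $\sigma_{a+b}=\sigma_b\sigma_a$ follows from $\rho$ being an anti-homomorphism of $(M,\circ)$ (\cite{GIMa08}) by the argument displayed in the text showing that $\cm_{a+b}=\cm_b\cm_a$ whenever $\rho$ is an anti-homomorphism. Hence $\cm$ is a semigroup anti-homomorphism $(M,+)\to\End(M,+)$ and $(M,+,\circ,\lambda,\cm)$ is a \ourstructure.

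For unitality: $\pi(1)=1$, so $(M,+)$ and $(M,\circ)$ are monoids with a common identity $1$, and Lemma~\ref{lemma:unital} gives $\lambda_1=\cm_1=\id$ and $\lambda_a(1)=1$. Since the degree-$0$ part of $M$ is $\{1\}$ and $\cm_a=\sigma_a\in\End(M,+)$, the element $\sigma_a(1)$ is idempotent, hence of degree $0$, hence equal to $1$; thus \eqref{eq:condunital} holds and $M$ is unital. For the structure semigroup $S(X,r)=M\setminus\{1\}$: by the grading, sums and products of non-identity elements are non-identity, so $M\setminus\{1\}$ is closed under $+$ and $\circ$; each $\lambda_a$ is a bijection fixing $1$, hence preserves $M\setminus\{1\}$; and since $\sigma_x$ restricts on $X$ to the derived solution, $\sigma_a(X)\subseteq X$ for all $a$, so $\sigma_a$ is degree-preserving and maps $M\setminus\{1\}$ into itself. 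Therefore $S(X,r)$ is a sub-\ourstructure, and it is not unital because, by the grading, it contains no identity element.

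Finally, substituting $b\mapsto\lambda_a(b)$ into $\cm_b(a)=\lambda_b\rho_{\lambda_a^{-1}(b)}(a)$ gives $\rho_b(a)=\lambda_{\lambda_a(b)}^{-1}\cm_{\lambda_a(b)}(a)$, which is exactly the $\rho$-map that Proposition~\ref{prop:semitruss} attaches to the \ourstructure $M$; hence the associated solution $(a,b)\mapsto(\lambda_a(b),\rho_b(a))$ is $(M,r_M)$, and the associated derived solution $(a,b)\mapsto(b,\cm_b(a))=(b,\sigma_b(a))$ is $(M,s_M)$. Essentially the whole proposition is bookkeeping; I expect the only steps with genuine content, and hence the main obstacle, to be $\sigma_a\in\End(M,+)$ and $\sigma_{a+b}=\sigma_b\sigma_a$, where the Yang--Baxter relations \eqref{eq:YBE1}--\eqref{eq:YBE3} for $r_M$ are really used, together with the grading observation that makes the unitality and sub-\ourstructure statements go through without any cancellativity hypothesis.
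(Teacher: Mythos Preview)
Your proposal is correct and follows essentially the same approach as the paper: the proposition is stated there as a summary (``To summarize we have shown the following'') of the preceding discussion, and you have accurately reassembled those pieces, including the key steps $\sigma_a\in\End(M,+)$ and $\sigma_{a+b}=\sigma_b\sigma_a$. Your grading argument for $\cm_a(1)=1$ and the explicit closure check for $S(X,r)$ fill in details the paper leaves implicit, but the overall strategy is identical.
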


We refer to the \ourstructure $(M(X,r), +,\circ ,\lambda, \cm)$ as the \emph{unital structure \ourstructure} associated to $(X,r)$ and to the sub-\ourstructure $S(X,r)$ as the \emph{structure \ourstructure} of $(X,r)$.

We note that for specific  solutions $(X,r)$ the monoid  $M(X,r)$ possibly can be made into a \ourstructure in several manners. The following example shows that one can consider another  $\cm$-map. This shows that given a set $A$ with two operations $+,\circ$ and a $\lambda$-map, it is possible to have more than one $\cm$-map that makes $(A,+,\circ,\lambda,\cm)$ into a \ourstructure.

\begin{example}\label{ex:candc'}
Let $X=\{1,2\}$. Let  $r:X\times X \rightarrow X \times X$ denote the left non-degenerate solution with $\lambda_1=\lambda_2=\rho_2 = \id_{X}$ and $\rho_1: X \rightarrow X: x \mapsto 1$. The associated left derived structure monoid is
    $A = A(X,r) = \langle X \mid 1 + 2 = 2 + 1 = 1 + 1 \rangle$ and the $\cm$-map is such that $\cm_1$ is the constant map onto $1$, and $\cm_2=\id$.
Note that $M=M(X,r)=A$ and $r_A=r_M$. One can also consider another $\cm$-map, denoted $\cm'$, and defined by $\cm'_1$  the constant mapping onto $2$ and $\cm'_2=\id$. Notice that the $\cm'$-map yields a left non-degenerate solution
$r':X^2\rightarrow X^2$ with $\lambda$-map (respectively $\rho$-map), denoted $\lambda'$ (respectively $\rho'$), and defined by 
$\lambda'_1=\lambda'_2 = \rho'_2 = \id_X$ and $\rho'_1$ is the constant map onto $2$.
We get that $M(X,r') = A(X,r') = A=M(X,r)$.
However, it easily is verified that the solutions $r$ and $r'$ are not isomorphic and thus also $r_M$ and $r'_M$ are not isomorphic.
\end{example}

Let $\mathbb{N}$ denote the non-negative integers.
Note that  $M(X,r)$  has a natural strongly $\mathbb{N}$-gradation (its generators $x\in X$ are the elements of degree $1$) in the following sense.

\begin{definition}\label{definition:graded}
One says that a \ourstructure $(A,+,\circ , \lambda , \cm)$ is \emph{$\mathbb{N}$-graded} (respectively \emph{strongly $\mathbb{N}$-graded})
if $A=\bigcup_{n\in \mathbb{N}} A_n$, a disjoint union  of subsets indexed by the non-negative integers, so that $A_n+ A_m \subseteq  A_{n+m}$ (respectively $A_n + A_m =A_{n+m}$) and all maps $\lambda_a$ and $\cm_a$ are graded, i.e. degree preserving.
In particular, if $A$ is strongly $\mathbb{N}$-graded then the subsemigroup $A\setminus A_0$   is additively (and thus  also multiplicatively) generated by its elements of degree $1$. Note that the derived solution $r_A : A^2 \rightarrow A^2$ restricts to a solution $r_{A_1} : A_1^2 \rightarrow A_1^2$.
\end{definition}

\begin{corollary}\label{CorEpiGraded}

Let $(A,+,\circ , \lambda , \cm)$ be a \ourstructure and let $S(A,r_A)$ be the structure semigroup of its associated solution.
Then the \ourstructure $A$ is an epimorphic image of the  structure \ourstructure $S(A,r_A)$.
If $A$ is a unital \ourstructure then $A$ is an epimorphic image of the unital structure \ourstructure $M(A,r_A)$.
Furthermore, if $A$ is strongly $\mathbb{N}$-graded with $A_0=\emptyset$  then $A$ is a graded  epimorphic image of the $\mathbb{N}$-graded structure \ourstructure $S(A_1,r_{A_1})$. If, moreover, $A$ is unital and $A_0=\{ 1\}$ 
 then $A$ is a graded  epimorphic image of the $\mathbb{N}$-graded unital structure \ourstructure $M(A_1,r_{A_1})$.

In particular, left non-degenerate solutions are restrictions of solutions associated to  \ourstructures.
\end{corollary}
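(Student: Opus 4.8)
The plan is to exhibit, for an arbitrary \ourstructure $(A,+,\circ,\lambda,\cm)$, an explicit epimorphism of \ourstructures from $S(A,r_A)$ onto $A$, and then to peel off the variations (unital, graded) as refinements of the same construction. First I would recall that $S=S(A,r_A)=M(A,r_A)\setminus\{1\}$ is, by Proposition~\ref{SolFromSemiTruss} applied to the left non-degenerate solution $(A,r_A)$ (which is left non-degenerate by Proposition~\ref{prop:semitruss}), a sub-\ourstructure of the unital structure \ourstructure $M(A,r_A)$. The natural candidate map is the one induced by the identity on generators: since $M(A,r_A)=\langle a\in A \mid a\circ b=\lambda_a(b)\circ\rho_b(a)\rangle$ and, by Proposition~\ref{epitruss}, these defining relations $a\circ b=\lambda_a(b)\circ\rho_b(a)$ hold in $(A,\circ)$, the assignment $a\mapsto a$ extends to a monoid homomorphism $g:M(A,r_A)\to(A,\circ)$, and it is surjective because $A$ generates itself. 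One must then check $g$ is a \ourstructure homomorphism: compatibility with $+$, with $\lambda$ and with $\cm$ in the sense of Definition~\ref{def:homomorphism}.

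The key steps, in order: (i) verify that $g$ respects the additive structure, i.e. $g(m+m')=g(m)+g(m')$ — here I would use the $1$-cocycle identification $M(A,r_A)\cong A(A,r_A)$ together with $a\circ b=a+\lambda_a(b)$ holding in both $M$ and in $A$, so that the multiplicative homomorphism $g$ is automatically additive once one knows $g$ commutes with the $\lambda$-maps; (ii) verify $g\lambda_m=\lambda_{g(m)}g$, which holds on generators since both $\lambda$ on $M$ and $\lambda$ on $A$ are built from the same $r_A$ and $\lambda$ is multiplicative (Gateva-Ivanova--Majid, and \eqref{trusslambda}); (iii) verify $g\cm_m=\cm_{g(m)}g$, using that both $\cm$-maps arise from the respective derived solutions via $\cm_b(a)=\lambda_b\rho_{\lambda_a^{-1}(b)}(a)$ and that $g$ already commutes with $\lambda$, $\lambda^{-1}$ and $\rho$ (the latter by Lemma~\ref{homomorphismqsemitruss}). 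Restricting $g$ to $S(A,r_A)$ gives the desired epimorphism $S(A,r_A)\twoheadrightarrow A$; if $A$ is unital one keeps the identity and gets $M(A,r_A)\twoheadrightarrow A$. For the graded statements, when $A$ is strongly $\mathbb{N}$-graded one notes (Definition~\ref{definition:graded}) that $r_A$ restricts to $r_{A_1}$ on $A_1^2$, so one applies the previous construction to the solution $(A_1,r_{A_1})$: the universal property of $M(A_1,r_{A_1})$ (resp.\ $S(A_1,r_{A_1})$) gives a homomorphism into $A$ sending degree-$1$ generators to $A_1$, which is graded and surjective because $A\setminus A_0$ is generated in degree $1$.

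Finally, the "in particular" clause: given a left non-degenerate solution $(X,r)$, Proposition~\ref{SolFromSemiTruss} makes $M=M(X,r)$ a unital \ourstructure whose associated solution $r_M$ satisfies $r_M|_{X^2}=r$; since $X$ sits inside $M$ and $r_M$ restricts to $r$ on $X$, we see $(X,r)$ is the restriction of the solution $r_M$ associated to the \ourstructure $M$. (Alternatively, one invokes the graded case: $M(X,r)$ is strongly $\mathbb{N}$-graded with $M_0=\{1\}$ and $M_1=X$, and $r_{M_1}=r$.) The main obstacle I anticipate is step (i)—checking additivity of $g$—because a priori $g$ is only defined as a multiplicative homomorphism; the resolution is to exploit the identification $M\cong A(X,r)$ via the $1$-cocycle $\pi$ (so that $+$ on $M$ is literally the presented additive monoid $A$) and the relation $a+b=b+\cm_b(a)$, reducing additivity to the already-verified compatibility of $g$ with $\cm$ and $\lambda$. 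Everything else is a direct transport of identities through a homomorphism and is routine once the compatibility with $\lambda$, $\lambda^{-1}$, $\rho$ and $\cm$ is in place.
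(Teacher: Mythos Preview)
Your proposal follows essentially the same route as the paper: define the map on generators, extend via the defining relations of $M(A,r_A)$ (which hold in $(A,\circ)$ by Proposition~\ref{epitruss}), and then verify compatibility with $\lambda$, $\rho$, $\cm$ and $+$. There is, however, a circularity in your step (iii): you invoke Lemma~\ref{homomorphismqsemitruss} to obtain $\rho$-compatibility of $g$, but that lemma takes as hypothesis that $f$ is already an epimorphism of \ourstructures---in particular already $\cm$-compatible---so you cannot use it en route to establishing $\cm$-compatibility.

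The fix, which is precisely what the paper does, is to get $\rho$-compatibility directly. In $M(A,r_A)$ the extended $\rho$ is a $\circ$-anti-homomorphism (Gateva-Ivanova--Majid) and satisfies \eqref{eq:strmon2}; in $A$ the $\rho$-map is a $\circ$-anti-homomorphism by Lemma~\ref{chomomorphism} and satisfies \eqref{eq:truss3}. Since the two $\rho$'s agree on the generating set $A$, an induction on word length (using these two pairs of identities together with the already-established $\lambda$-compatibility) yields $g\rho_m=\rho_{g(m)}g$. Then $\cm$-compatibility follows from \eqref{existc} exactly as you indicate, and additivity of $g$ from $a+b=a\circ\lambda_a^{-1}(b)$. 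With this correction your argument coincides with the paper's.
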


\begin{proof}
Of course there is a natural map of the generating set $A$ of $S(A,r_A)$ to $A$. Because of the equations $\eqref{eq:strmon1}$, $\eqref{eq:strmon2}$ and $\eqref{eq:strmon3}$ in $S(A,r_A)$ and the equations $\eqref{eq:truss1}$, $\eqref{eq:truss2}$ and $\eqref{eq:truss3}$
this map can be extended to a map $f:S(A,r_A)\rightarrow A$. Because of Lemma~\ref{chomomorphism} and $\eqref{existc}$ it follows
that $f$ is an epimorphism of \ourstructures.
In case $A$ is unital, we have that $\lambda_1=\id$ in both \ourstructures and also $\lambda_a(1) =1$ for all $a$.  Hence we can extend $f$ to $M = M(A,r_A)$. 
This proves the first part of the statement. The graded statements for $A_0=\emptyset$ follow at once as $A$ is generated by $A_1\cup A_0$ and the natural map preserves degrees.
\end{proof}

In the previous section it was shown that isomorphic \ourstructures yield isomorphic associated solutions. The converse is however not true. Yet, isomorphic left non-degenerate solutions have isomorphic (unital) structure \ourstructures and vice versa.

\begin{proposition}
    Let $(X,r)$ and $(Y,r')$ be left non-degenerate solutions. Then $(X,r) \cong (Y,r')$ if and only if the associated (unital) structure \ourstructures $S(X,r)$ and $S(Y,r')$ ($M(X,r)$ and $M(Y,r')$) are isomorphic as \ourstructures.
\end{proposition}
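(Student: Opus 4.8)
The plan is to prove the two implications separately, with the forward direction being essentially a functoriality statement and the reverse being the interesting one.

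For the forward implication, suppose $\phi\colon X\to Y$ is an isomorphism of solutions, so $(\phi\times\phi)r = r'(\phi\times\phi)$. I would first observe that $\phi$ extends to a monoid isomorphism $M(X,r)\to M(Y,r')$: indeed the defining relations $x\circ y = \lambda_x(y)\circ\rho_y(x)$ of $M(X,r)$ are carried by $\phi$ to the defining relations of $M(Y,r')$ precisely because $\phi$ intertwines $r$ and $r'$, hence $\phi\circ\lambda_x = \lambda'_{\phi(x)}\circ\phi$ and $\phi\circ\rho_y = \rho'_{\phi(y)}\circ\phi$ on $X$; the same argument applied to the inverse solution isomorphism $\phi^{-1}$ gives a two-sided inverse. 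Then I would check this extended map is a \ourstructure isomorphism: compatibility with $\circ$ is automatic, compatibility with $\lambda$ follows since $\lambda\colon(M,\circ)\to\End(A,+)$ is generated (as a family of maps) by its values on $X$ where compatibility holds by construction, compatibility with $+$ follows since $a\circ b = a + \lambda_a(b)$ expresses $+$ in terms of $\circ$ and $\lambda$, and compatibility with $\cm$ follows from \eqref{existc}, i.e. $\cm_y(x) = \lambda_y\rho_{\lambda_x^{-1}(y)}(x)$, which is built from $\lambda$, $\rho$ and their inverses, all already shown compatible with $\phi$ (using Lemma~\ref{chomomorphism} and the remark that a \ourstructure homomorphism intertwines the $\lambda^{-1}$'s). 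The restriction to the structure semigroups $S(X,r) = M(X,r)\setminus\{1\}$ and $S(Y,r')$ is then an isomorphism as well, since these are the complements of the identities, which are preserved.

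For the reverse implication, suppose $F\colon S(X,r)\to S(Y,r')$ (respectively $M(X,r)\to M(Y,r')$) is a \ourstructure isomorphism. The key point is that the solution $(X,r)$ is recoverable from its structure \ourstructure: by the strongly $\mathbb{N}$-graded structure of $M(X,r)$ (Definition~\ref{definition:graded} and the remark preceding Corollary~\ref{CorEpiGraded}), the degree-one component is exactly the image of $X$, and $r_{M}$ restricts on $X^2$ to the original solution $r$ (this is part of Proposition~\ref{SolFromSemiTruss}, which identifies the associated solution of the structure \ourstructure with $(M,r_M)$). So I would argue that $F$ is graded: it must send the degree-one part $X$ of $M(X,r)$ bijectively onto the degree-one part $Y$ of $M(Y,r')$, because the degree-one elements are precisely the additively indecomposable elements, or more simply the generators of minimal length, which is an intrinsic property preserved by any \ourstructure isomorphism. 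Restricting $F$ to $X$ gives a bijection $\phi\colon X\to Y$, and since $F$ intertwines $\lambda$ and $\cm$, hence also $\rho$ (again via Lemma~\ref{homomorphismqsemitruss} or the formula for $\rho$ in terms of $\lambda$ and $\cm$), and since $r_M(a,b) = (\lambda_a(b),\rho_b(a))$, we get $(\phi\times\phi)r = (\phi\times\phi)r_M\big|_{X^2} = r'_M\big|_{Y^2}(\phi\times\phi) = r'(\phi\times\phi)$, i.e. $\phi$ is an isomorphism of solutions.

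The main obstacle I anticipate is the gradedness claim in the reverse direction: one needs that a \ourstructure isomorphism necessarily preserves the natural $\mathbb{N}$-grading, equivalently that the generating set $X$ is intrinsically characterized inside the structure \ourstructure. For the structure monoid this should follow from $M(X,r)$ being a strongly graded, connected, finitely generated structure in which the degree-one elements are exactly those that are not expressible as a sum (or product) of two non-identity elements — but one must be careful that the identity behaves correctly (which is why the statement phrases things for both $S$ and $M$), and that no element of higher degree is accidentally indecomposable. I would handle this by working with the semigroup $S(X,r)$, using the length function coming from the presentation by generators $X$ of degree one: an isomorphism of \ourstructures is in particular a monoid isomorphism onto a monoid with the analogous presentation, and one can transport the length function; alternatively, one invokes that both $S(X,r)$ and $S(Y,r')$ are $\mathbb{N}$-graded structure \ourstructures of their degree-one parts (the final clause of Corollary~\ref{CorEpiGraded}) together with the uniqueness of the minimal generating set in a connected graded setting. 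Once gradedness is secured, everything else is a routine unwinding of the definitions.
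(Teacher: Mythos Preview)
Your proposal is correct and follows essentially the same approach as the paper. The paper's proof of the reverse direction handles your anticipated obstacle in one line by invoking that $X$ is the \emph{unique minimal generating set} of the monoid $(M(X,r),\circ)$, which any monoid isomorphism must therefore carry to the unique minimal generating set $Y$ of $(M(Y,r'),\circ)$; this is exactly your ``additively indecomposable elements'' argument phrased more crisply, and your worry that a higher-degree element might be accidentally indecomposable is ruled out automatically by the strong $\mathbb{N}$-grading $A_n = A_1 + A_{n-1}$.
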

\begin{proof}
Let $(X,r)$ and $(Y,r')$ be two isomorphic left non-degenerate solutions and denote $r(x,y)=(\lambda_x(y), \rho_y(x))$, for all $x,y \in X$, and $r'(x',y') = (\lambda'_{x'}(y'), \rho'_{y'}(x'))$, for all $x',y' \in Y$. Then, there exists a bijection $f:X \to Y$ such that $(f \times f) r = r' (f \times f)$, i.e. for any $x,y \in X$, $f(\lambda_x(y))= \lambda'_{f(x)}(f(y))$ and $f(\rho_y(x))= \rho'_{f(y)}(f(x))$. 
Defining the $\cm$-map and $\cm'$-map as \eqref{existc} for $M(X,r)$ and $M(Y,r')$ respectively,
we get for any $x,y \in X$,
    \begin{align*}
        f(\cm_y(x)) &= f(\lambda_y(\rho_{\lambda^{-1}_x(y)}(x)))
        \\ &= \lambda'_f(y)(f(\rho_{\lambda^{-1}_x(y)}(x)))
        \\ &= \lambda'_f(y)(\rho'_{f(\lambda^{-1}_x(y))}(f(x)))
        \\ &= \lambda'_f(y)(\rho'_{\lambda^{-1}_{f(x)}(f(y))}(f(x)))
        \\ &= \cm'_{f(y)}(f(x)).
    \end{align*}
We will prove that the associated unital structure \ourstructures $M=M(X,r)$ and $M'=M(Y,r')$ are isomorphic. The proof for the associated structure \ourstructures is similar. 
Extend $f$ to $M$ by defining $f(x \circ a) = f(x) \circ' f(a)$ for all $x \in X, a \in M$. Hence, $f(a\circ b) = f(a) \circ' f(b)$, for all $a,b \in M$. One can check that $f:M \to M'$ is a bijection. To prove that it is well-defined, let $x,y \in X$. Then, 
    \begin{align*}
        f(\lambda_x(y) \circ \rho_y(x))
        &= f(\lambda_x(y)) \circ' f(\rho_y(x))
        \\ &= \lambda'_{f(x)}(f(y)) \circ' \rho'_{f(y)}(f(x))
        \\ &= f(x) \circ' f(y)
        \\ &= f(x \circ y).
    \end{align*}
By induction on the length of the elements in $M$, one can show that $f(\lambda_a(b))= \lambda'_{f(a)}(f(b))$ and $f(\cm_b(a)) = \cm'_{f(b)}(f(a))$, for any $a,b \in M$. Finally, since $a+b = a \circ \lambda^{-1}_a(b)$ and $f(a) +' f(b) = f(a) \circ' \lambda'^{-1}_{f(a)}(f(b))$ for all $a,b \in M$, we obtain
    \begin{align*}
        f(a+b) &= f(a \circ \lambda^{-1}_a(b))
        \\ &= f(a) \circ' f(\lambda^{-1}_a(b))
        \\ &= f(a) \circ' \lambda'^{-1}_{f(a)}(f(b))
        \\ &= f(a) +' f(b).
    \end{align*}
    Hence, by Definition \ref{def:homomorphism}, $(M(X,r),+,\circ,\lambda,\cm) \cong (M(Y,r'),+',\circ',\lambda',\cm')$ as \ourstructures.
    
    Conversely, assume that $M=M(X,r)$ and $M'=M(Y,r')$ are isomorphic as \ourstructures, i.e. there exists a bijection $f:M \to M'$ such that $(f \times f)r_M = r_{M'} (f \times f)$, i.e. for any $a,b \in M$, $f(\lambda_a(b))= \lambda'_{f(a)}(f(b))$ and $f(\rho_b(a))= \rho'_{f(b)}(f(a))$. Hence, for any $x,y \in X$, $f(\lambda_x(y))= \lambda'_{f(x)}(f(y))$ and $f(\rho_y(x))= \rho'_{f(y)}(f(x))$. Furthermore, since the set $X$ is the unique minimal set of generators of the monoid  $(M,\circ)$ and $f$ is a monoid homomorphism, we get that $f(X)=Y$, the unique minimal set of generators of the monoid $M'$. Hence, $(X,r) \cong (Y,r')$.
\end{proof}

Two structure monoids that are isomorphic as monoids do not necessarily imply isomorphic solutions. In \cite[Example 1.1]{JeKuVA19}, two non-isomorphic bijective non-degenerate solutions are given with the same structure monoid. In case both solutions are involutive non-degenerate one has a satisfactory answer. This already has  been pointed out in \cite[page 6]{JeKuVA19}. 
If one of the solutions is bijective while the other is still involutive, not necessarily non-degenerate, the result is still true.

\begin{proposition}\label{IsoStr}
    Let $(X,r)$ be an involutive solution. Suppose $(Y,r')$ is a bijective solution such that $M(X,r) \cong M(Y,r')$ as monoids. Then, $(X,r) \cong (Y,r')$ as solutions.
\end{proposition}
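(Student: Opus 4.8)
\emph{Proof plan.} The idea is to recover each solution from the degree--$1$ and degree--$2$ parts of its structure monoid. Write $M=M(X,r)$ and $M'=M(Y,r')$; both are $\mathbb{N}$-graded, with the generators in degree $1$ and all defining relations homogeneous of degree $2$, so $M_0=\{1\}$ and $M_1$, $M'_1$ are the images of $X$, $Y$. Every element of degree $n\geq 2$ is visibly a product of two non-identity elements, while no element of degree $1$ is; hence $M_1$ is exactly the set of indecomposable elements of $M$, and therefore the unique minimal monoid generating set. Moreover a length-$1$ word can be rewritten only to itself, so $X\to M_1$ is a bijection (and likewise $Y\to M'_1$). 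Consequently any monoid isomorphism $\phi\colon M\to M'$ restricts to a bijection $\bar\phi\colon X\to Y$.

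The combinatorial heart of the argument is the following description of degree--$2$ equalities, valid for any bijective solution $(Z,s)$: for $a,b,c,d\in Z$ one has $a\circ b=c\circ d$ in $M(Z,s)$ if and only if $(c,d)$ lies in the orbit of $(a,b)$ under the cyclic group $\langle s\rangle$ acting on $Z^2$. Indeed, since the relations are homogeneous of degree $2$, any rewriting between two length-$2$ words proceeds through length-$2$ words, and the only relations applicable to a length-$2$ word $a\circ b$ replace it by $s(a,b)$ or, using bijectivity of $s$, by $s^{-1}(a,b)$; so the congruence class of $a\circ b$ inside $Z^2$ is precisely the $\langle s\rangle$-orbit of $(a,b)$. (This is essentially the standard combinatorial picture of the degree-$2$ component of the structure monoid.)

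Now apply $\phi$ to the defining relation $x\circ y=\lambda_x(y)\circ\rho_y(x)$ of $M$; all four entries lie in $X$, hence have images in $Y$. By the previous paragraph applied to $M'$, the pair $\bigl(\bar\phi(\lambda_x(y)),\bar\phi(\rho_y(x))\bigr)$ lies in the $\langle r'\rangle$-orbit of $\bigl(\bar\phi(x),\bar\phi(y)\bigr)$. Put $\psi=\bar\phi\times\bar\phi\colon X^2\to Y^2$; since $r^2=\id$ the $\langle r\rangle$-orbit of $(x,y)$ is $\{(x,y),r(x,y)\}$, and the above says both of its elements are sent by $\psi$ into one $\langle r'\rangle$-orbit, so $\psi$ maps each $\langle r\rangle$-orbit into a single $\langle r'\rangle$-orbit. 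Symmetrically, applying $\phi^{-1}$ to the relations of $M'$ gives, for all $u,v\in Y$, that $\psi^{-1}(r'(u,v))\in\langle r\rangle\cdot\psi^{-1}(u,v)=\{q,rq\}$ where $q=\psi^{-1}(u,v)$ (again $r^2=\id$, so $\langle r\rangle$-orbits have at most two elements). Since in addition $r^{-1}=r$, a short induction along $n\mapsto r'^{\,n}(u,v)$ upgrades this to: the whole $\langle r'\rangle$-orbit of $(u,v)$ is carried by $\psi^{-1}$ into the single $\langle r\rangle$-orbit $\{q,rq\}$. Chasing the two inclusions through the bijection $\psi$ now shows that $\psi$ sends the partition of $X^2$ into $\langle r\rangle$-orbits bijectively onto the partition of $Y^2$ into $\langle r'\rangle$-orbits, with matched orbits of equal cardinality (hence at most $2$).

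On a matched pair of orbits this already forces $\psi r=r'\psi$: if the orbit is a fixed point $\{(x,y)\}$, then $\{\psi(x,y)\}$ is an $\langle r'\rangle$-orbit, so $r'\psi(x,y)=\psi(x,y)=\psi r(x,y)$; if it is $\{(x,y),r(x,y)\}$ with two elements, then $\{\psi(x,y),\psi r(x,y)\}$ is a two-element $\langle r'\rangle$-orbit, on which $r'$ must interchange the two points, giving $r'\psi(x,y)=\psi r(x,y)$. Hence $(\bar\phi\times\bar\phi)\,r=r'\,(\bar\phi\times\bar\phi)$, i.e. $\bar\phi$ is an isomorphism $(X,r)\to(Y,r')$. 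The delicate point is the ``upgrade'' in the third paragraph, from consecutive generator-images along an $r'$-orbit lying in a common $\langle r\rangle$-orbit to the entire $\langle r'\rangle$-orbit lying in one $\langle r\rangle$-orbit; this genuinely uses $r^2=\id$, since a two-element set $\{q,rq\}$ is stable under the ambiguity $p\mapsto\{p,rp\}$ exactly because $r^2 q=q$. Without an involutivity hypothesis on at least one side this step (and the conclusion) fails, in accordance with the existence of non-isomorphic bijective solutions sharing a structure monoid.
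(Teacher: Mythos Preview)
Your proof is correct and follows essentially the same strategy as the paper: both arguments identify $X$ and $Y$ with the unique minimal generating sets (the degree-$1$ parts), characterise degree-$2$ equalities in a structure monoid of a bijective solution as $\langle s\rangle$-orbits, and then use that $r^2=\id$ forces these orbits to have size at most $2$, which pins down $r'$ on each orbit. The only stylistic difference is that the paper deduces early that $r'$ itself is involutive (since the isomorphism forces $\langle r'\rangle$-orbits to have at most two elements, and a bijection with two-element orbits must swap them), which lets it do a direct two-case analysis; your orbit-matching induction arrives at the same conclusion by a slightly longer route.
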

\begin{proof}
The monoid $M(X,r)$ has a unique minimal generating set $X$, which holds analogously for $Y$. Hence, a monoid isomorphism $f$ between $M(X,r)$ and $M(Y,r')$ maps $X$ to $Y$. Hence, this can be restricted to a map $f: X \longrightarrow Y$. Moreover, considering that $r$ is involutive, one obtains that at most two words of length $2$ (in the alphabet $X$) become equal in the structure monoid of $M(X,r)$. Hence, the same can be said about words of length $2$ (in alphabet $Y$) in $M(Y,r')$. In particular, $r'$ is involutive. 
Let $x_1,x_2 \in X$. Then, either $x_1\circ x_2$ can not be rewritten or $x_1\circ x_2 = u\circ v$ with $r(x_1,x_2) = (u,v)$. In the former case, as $f$ is a monoid isomorphism, it follows that $f(x_1\circ x_2) = f(x_1)\circ f(x_2)$ can also not be rewritten, implying that $\lambda_{f(x_1)}(f(x_2)) = f(x_1)$ and $\rho_{f(x_2)}(f(x_1))=f(x_2)$. In the latter case, $f(x_1)\circ f(x_2) = f(u)\circ f(v)$ are the two only ways to write this element. Considering the defining relation of $M(Y,s)$, it follows that $\lambda_{f(x_1)}(f(x_2)) = f(u)$ and $\rho_{f(x_2)}(f(x_1)) = f(v)$. 
In conclusion, in both cases one obtains that $f\lambda_x = \lambda_{f(x)}f$ and $f\rho_x = \rho_{f(x)}f$ for all $x \in X$, which shows that $f$ is an isomorphism of solutions. 
\end{proof}

The following example shows that the assumption that $(Y,r')$ is bijective is essential.
Let $X=\{1,2\}$ and $(X,r)$  the trivial  solution, i.e. $r(x,y)=(y,x)$, for $x,y \in X$. Its  structure monoid is $M(X,r) = \langle X \mid 1 \circ 2 = 2 \circ 1 \rangle$, the free abelian monoid of rank 2. On $X$ one can also have a degenerate idempotent solution $(X,r')$  with $r'(x,y) = (\lambda_x(y), \rho_y(x))$ and $\lambda_1: x  \mapsto 1, \lambda_2 = \rho_1 = \id$ and $\rho_2: x \mapsto 2$, i.e. $r'(2,1)=(1,2)$ and the other pairs are fixed under $r'$.
Then, $M(X,r') = \langle X \mid 2 \circ 1 = 1 \circ 2 \rangle = M(X,r)$. However, $(X,r) \not\cong (X,r')$.
Note that the example $(X,r')$ is not left non-degenerate and that, because of Theorem~\ref{maintheorembijective} no counterexamples exists in case $(Y,r')$ is finite non-degenerate.

We end this section with a class of examples that yield left non-degenerate idempotent solutions.

\begin{example}\label{exidempotent}

If $(X,r)$ is an idempotent solution, i.e. $r^2=r$, then $\lambda_{\lambda_x(y)}(\rho_y(x)) = \lambda_x(y)$ and $\rho_{\rho_y(x)}(\lambda_x(y)) = \rho_y(x)$. If furthermore the solution is left non-degenerate, the associated $\cm$-map on the unital structure \ourstructure $M(X,r)$ satisfies $\cm_b(a) = \lambda_b(\rho_{\lambda^{-1}_a(b)}(a)) =  \lambda_{\lambda_a(t)}(\rho_{t}(a)) = \lambda_a(t) = \lambda_a(\lambda^{-1}_a(b)) = b$, with $t=\lambda^{-1}_a(b)$.
Conversely, assume $(A, +, \circ, \lambda, \cm)$ is a \ourstructure  with $\cm_b(a) = b$ for all $a,b \in A$, then one easily verifies that $(r_A)^2 = r_A$, i.e. the solution $(A,r_A)$ is idempotent.
Hence, idempotent left non-degenerate solutions correspond with \ourstructures $(A,+,\circ, \lambda, \cm)$ with $\cm_b$ the constant mapping onto $b$, for every $b\in A$.
\end{example}

\subsection{Left cancellative \ourstructures}\label{sec_leftcancellative}\hfill

In this section we show that the known algebraic associative structures that determine left non-degenerate solutions, and that are mentioned in the introduction, are special classes of \ourstructures.

Let $(A,+,\circ, \lambda ,\cm)$ be a \ourstructure. Because $a\circ b =a+\lambda_a (b)$ for all $a,b\in A$, it easily is verified that
$(A,+)$ being left cancellative (i.e. $a+b=a+d$ implies $b=d$) is equivalent to $(A,\circ)$ being left cancellative. 
So if $(A,+)$ is a left cancellative semigroup, then the mappings $\cm_b$
are uniquely determined as $a+b=b+\cm_b(a)$ for any $a,b\in A$ (in the non-cancellative case this is not necessarily true, see \cref{ex:candc'}). 
Furthermore, since  (by Proposition~\ref{epitruss}) $a\circ b =\lambda_a (b) \circ \lambda^{-1}_{\lambda_a (b)}\cm_{\lambda_a(b)} (a)$, there exists a unique $x\in A$ so that $a\circ b =\lambda_a (b) \circ x$. It turns out that several of the requirements to be a \ourstructure are then redundant. 

\begin{proposition}\label{cancellativetruss}
Let $(A,+,\circ,\lambda)$ be a left semitruss with $(A,+)$ left cancellative (and thus also $(A,\circ)$ is left cancellative) and $\lambda_a$ bijective, for any $a\in A$ such that for all $a,b\in A$, the equation
\begin{align*}
    a\circ b =a +\lambda_a(b),
\end{align*}
holds and there exists an $x\in A$ such that
\begin{align*}
    a\circ b =\lambda_a(b)\circ x.
\end{align*}
We put $\rho_b(a) = x$. Then there exists a unique  semigroup anti-morphism $\cm:(A,+)\to \End(A,+): a \mapsto \cm_a$  such that $(A,+,\circ,\lambda,\cm)$ is a \ourstructure. 
For $a,b\in A$ we have $\cm_b(a)= \lambda_b\rho_{\lambda^{-1}_a(b)}(a)$.
\begin{proof} 
By \cite[Proposition 2.1]{Br18}, we have that  $\lambda:(A,\circ)\to \Aut (A,+): a\mapsto \lambda_a$ is a semigroup morphism. By the previous and the  assumptions, for any $a,b\in A$, there is a unique element $x\in A$ such that $a\circ b= \lambda_a(b)\circ x$.  We  denote  $x$ by $\rho_b(a)$. Now, set $\cm_b(a)= \lambda_b\rho_{\lambda^{-1}_a(b)}(a)$. Then
\begin{align*}
   b+\cm_b(a)&= b+\lambda_b\rho_{\lambda^{-1}_a(b)}(a) = b\circ\rho_{\lambda^{-1}_a(b)}(a) = \lambda_a\lambda^{-1}_a(b) \circ \rho_{\lambda^{-1}_a(b)}(a) \\&= a\circ\lambda^{-1}_a(b) = a + \lambda_a\lambda^{-1}_a(b) =a+b.
\end{align*}
Hence, \eqref{eq:csum} holds. Moreover,
\begin{align*}
    \lambda_a(b) + \cm_{\lambda_a(b)}\lambda_a(d) &= \lambda_a(d) + \lambda_a(b) = \lambda_a(d+b) = \lambda_a(b+\cm_b(d)) 
    \\&=\lambda_a(b) +\lambda_a \cm_b(d),
\end{align*}
and, since $(A,+)$ is left cancellative, we get $\cm_{\lambda_a(b)}\lambda_a(d) =\lambda_a\cm_b(d)$, i.e. condition \eqref{eq:clambda} is satisfied. Furthermore, note that
\begin{align*}
    d+ \cm_d(a+b) = a+(b+d) = a+ d+ \cm_d(b) = d+\cm_d(a)+\cm_d(b),
\end{align*}
i.e. $\cm_d(a+b) = \cm_d(a)+\cm_d(b)$ and 
\begin{align*}
    b+a+\cm_a  \cm_b(d) = b +\cm_b(d) + a = d + (b+a) = b+a + \cm_{b+a}(d),
\end{align*}
i.e. $\cm_a \cm_b =\cm_{b+a}$. Hence $\cm: (A,+)\to \End(A,+)$ is an anti-morphism,
i.e. \eqref{eq:condc} holds. 
Finally, since \eqref{eq:sumcirc} holds by assumption, $(A,+,\circ, \lambda,\cm)$ is a \ourstructure.
\end{proof}
\end{proposition}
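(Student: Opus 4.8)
The plan is to verify, step by step, the four conditions \eqref{trusslambda}--\eqref{eq:clambda} together with \eqref{eq:condc}, which together (with the hypothesised \eqref{eq:sumcirc}) constitute the definition of a \ourstructure, and then to check uniqueness of $\cm$. The inputs we may use freely are: $(A,+,\circ,\lambda)$ is a left semitruss with $(A,+)$ (hence $(A,\circ)$) left cancellative, each $\lambda_a\in\Aut(A,+)$, the identity $a\circ b=a+\lambda_a(b)$ holds, and for each $a,b$ there is a (necessarily unique, by left cancellativity of $(A,\circ)$) element $\rho_b(a)$ with $a\circ b=\lambda_a(b)\circ\rho_b(a)$. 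By \cite[Proposition 2.1]{Br18} the map $\lambda\colon(A,\circ)\to\Aut(A,+)$ is already a semigroup homomorphism, so \eqref{trusslambda} is immediate, and \eqref{eq:sumcirc} is assumed. The real content is to show that the formula $\cm_b(a):=\lambda_b\rho_{\lambda^{-1}_a(b)}(a)$ gives a well-defined map satisfying \eqref{eq:csum}, \eqref{eq:condc}, \eqref{eq:clambda}.

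First I would establish \eqref{eq:csum}, i.e.\ $a+b=b+\cm_b(a)$. The computation is short: using $c\circ d=c+\lambda_c(d)$ and the definition of $\rho$, one writes $b+\cm_b(a)=b\circ\rho_{\lambda^{-1}_a(b)}(a)=\lambda_a(\lambda_a^{-1}(b))\circ\rho_{\lambda_a^{-1}(b)}(a)=a\circ\lambda_a^{-1}(b)=a+b$, where the middle equality is the defining relation of $\rho$ applied with the pair $(a,\lambda_a^{-1}(b))$. Next, with \eqref{eq:csum} in hand, the remaining conditions follow by manipulating sums and invoking left cancellativity of $(A,+)$ at the end. Concretely: for \eqref{eq:clambda}, compare $\lambda_a(b)+\cm_{\lambda_a(b)}\lambda_a(d)$ with $\lambda_a(b)+\lambda_a\cm_b(d)$; both equal $\lambda_a(d)+\lambda_a(b)$ (the first by \eqref{eq:csum}, the second because $\lambda_a$ is additive and $\lambda_a(d+b)=\lambda_a(b+\cm_b(d))$), so left cancellation gives $\cm_{\lambda_a(b)}\lambda_a(d)=\lambda_a\cm_b(d)$. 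For the additivity $\cm_d\in\End(A,+)$, both $d+\cm_d(a+b)$ and $d+\cm_d(a)+\cm_d(b)$ equal $a+b+d$ (repeatedly using \eqref{eq:csum}), and cancelling $d$ on the left yields $\cm_d(a+b)=\cm_d(a)+\cm_d(b)$. For the anti-homomorphism property $\cm_{a+b}=\cm_b\cm_a$ (which is how \eqref{eq:condc} is phrased, $\cm_{a+b}=\cm_b\cm_a$ with the order matching the statement's $\cm_{a+b}=\cm_b\cm_a$), one checks $b+a+\cm_a\cm_b(d)=d+b+a=b+a+\cm_{b+a}(d)$ and cancels $b+a$; a symmetric relabelling then gives $\cm_{a+b}=\cm_b\cm_a$.

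The point I expect to require the most care is the \emph{well-definedness} of $\cm_b$ as a genuine map $A\to A$: a priori $\rho_b(a)$ is only defined as ``the unique $x$ with $a\circ b=\lambda_a(b)\circ x$'', and one must confirm $\lambda_a^{-1}(b)$ lies in $A$ (it does, since $\lambda_a\in\Aut(A,+)$) so that $\rho_{\lambda_a^{-1}(b)}(a)$ makes sense, and that the resulting assignment $a\mapsto\cm_b(a)$ is single-valued — which it is, again by left cancellativity of $(A,\circ)$ guaranteeing uniqueness of $\rho$. Once \eqref{eq:csum}--\eqref{eq:clambda} are verified, $\cm$ is automatically a semigroup anti-morphism $(A,+)\to\End(A,+)$ and $(A,+,\circ,\lambda,\cm)$ is a \ourstructure. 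Uniqueness is the easy final step: if $\cm'$ is any anti-morphism making $(A,+,\circ,\lambda,\cm')$ a \ourstructure, then $\cm'$ satisfies \eqref{eq:csum}, so $b+\cm'_b(a)=a+b=b+\cm_b(a)$ for all $a,b$, and left cancellativity of $(A,+)$ forces $\cm'_b(a)=\cm_b(a)$; the displayed formula $\cm_b(a)=\lambda_b\rho_{\lambda_a^{-1}(b)}(a)$ is then exactly the one we used.
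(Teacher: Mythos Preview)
Your proposal is correct and follows essentially the same route as the paper's proof: the same computation for \eqref{eq:csum}, and the same left-cancellation arguments for \eqref{eq:clambda}, for $\cm_d\in\End(A,+)$, and for $\cm_a\cm_b=\cm_{b+a}$. The only addition is that you spell out the uniqueness of $\cm$ (from \eqref{eq:csum} and left cancellativity), which the paper's proof leaves implicit.
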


We call a \ourstructure $(A,+,\circ,\lambda,\cm)$ with $(A,+)$ left cancellative (and so also $(A,\circ)$ left cancellative) a \emph{left cancellative \ourstructure}.

\begin{example}
   Let $(X,r)$ be a left non-degenerate solution and let $\eta$ be the left cancellative congruence on $(M(X,r),+)$. In \cite[Lemma 5.8, Corollary 5.9]{CJV2020}, it is shown that the left cancellative image $\bar{M}=M/\eta$ of $(M(X,r),+)$ has a left semitruss structure, denoted by $(\bar{M}, +, \circ, \bar{\lambda})$, with a unique $\bar{\cm}$-map such that $a+\bar{\lambda}_a(b)=a\circ b$ and $a+b= b+\bar{\cm}_b(a)$. We claim that $(\bar{M}, +, \circ, \bar{\lambda}, \bar{\cm})$ is a left cancellative \ourstructure. Indeed $(\bar{M}, +)$ and $(\bar{M}, \circ)$ are left cancellative and
   \begin{align*}
       a\circ b = a+\bar{\lambda}_a(b) = \bar{\lambda}_a(b) +\bar{\cm}_{\bar{\lambda}_a(b)}(a) =\bar{\lambda}_a(b)\circ \bar{\lambda}^{-1}_{\bar{\lambda}_a(b)}\bar{\cm}_{\bar{\lambda}_a(b)}(a),
   \end{align*}
   i.e. $(\bar{M}, +, \circ, \bar{\lambda})$ is a left semitruss satisfying the assumptions of \cref{cancellativetruss}. Therefore $(\bar{M}, +, \circ, \bar{\lambda}, \bar{\cm})$ is a left cancellative \ourstructure.
\end{example}

\begin{example}\label{excsb}
 As defined by Catino,  Colazzo and Stefanelli in \cite{CCS2017}, a \emph{left cancellative semi-brace} is a triple $(A,+,\circ)$ such that $(A,\circ)$ is a group, $(A,+)$ is a left cancellative semigroup and the condition $a\circ (b+d) = a \circ b + a\circ (\bar{a}+d)$ holds for any $a,b,d \in A$, where $\bar{a}$ denotes the inverse of $a$ in $(A,\circ)$. 
 Any left cancellative semi-brace is a \ourstructure with $(A,\circ)$ a group,  $\lambda_a(b)=a\circ(\bar{a}+b)$ and $\cm_b(a) = \lambda_b(\bar{b}\circ (a+b))=\lambda_b(\bar{b})+a+b$ for all $a,b\in A$.
 Conversely,  a  \ourstructure with $(A,\circ)$ a group is a left cancellative semi-brace.
    \begin{proof}
     Let $(A,+,\circ)$ be a left cancellative semi-brace. Let $1$ denote the identity of $(A,\circ)$. Then $1+1 = 1\circ(1+1) = 1\circ 1 + 1\circ (\bar{1}+1) = 1+1+1$ and thus the left cancellativity of  $(A,+)$  yields $1=1+1$. So, for any $a\in A$ we obtain that $1+a = 1+1+a$ and thus $a= 1+a$, i.e. $1$ is a left identity of $(A,+)$. Put $\lambda_a(b)=a\circ(\bar{a}+b)$, by \cite[Proposition 3]{CCS2017} the map $\lambda: (A,\circ) \to \Aut(A,+): a\mapsto \lambda_a$ is a group homomorphism and $a\circ(b+d) = a \circ b +\lambda_a(d)$. In addition, $a+\lambda_a(b)=a\circ 1+\lambda_a(b) = a\circ(1+b) = a\circ b$.
     Because $(A,\circ)$ is a group, for any $a,b\in A$ there exists a unique $x\in A$ such that $a\circ b =\lambda_a (b)\circ x$.
     Hence, from Proposition~\ref{cancellativetruss} we obtain that indeed $(A,+,\circ, \lambda ,\cm)$ is a \ourstructure.
    
    Conversely,
    let $(A,+,\circ,\lambda, \cm)$ be a \ourstructure with $(A,\circ)$ a group. Then,  $(A,+)$ is a left cancellative semigroup. Moreover, \eqref{eq:sumcirc} yields $\lambda^{-1}_a(b) = \bar{a}\circ(a+b)$ and, since $\lambda_a^{-1}=\lambda_{\bar{a}}$, we get $\lambda_a(b)=a\circ(\bar{a}+b)$. Hence, $a\circ(b+d) = a\circ b +\lambda_a(d)= a\circ b +a \circ (\bar{a}+d)$. Therefore, $(A,+,\circ)$ is a left cancellative semi-brace.
    \end{proof}
\end{example}

\begin{example}\label{exsb}
As defined by Guarnieri and Vendramin in  \cite{GV17}, a \emph{skew left brace}  is a triple $(A,+,\circ)$, with  $(A,+)$ and $(A,\circ)$ groups, so that
$a\circ (b + d)=a\circ b - a +a\circ d$, for all $a,b,d\in A$ (here $-a$ denotes  the inverse of $a$ in the additive group). 
If, furthermore, $(A,+)$ is abelian, then this is called a \emph{left brace}, as introduced by Rump in \cite{Rump2007} (see also \cite{CJO14}).
 Skew left braces are (unital)  \ourstructures $(A,+,\circ, \lambda , \cm)$ where both  $(A,+)$ and $(A,\circ)$ are groups.
In this case $\lambda_a (b) =-a+a\circ b$ and $\cm_b(a) =-b+a+b$. Furthermore, skew left braces are left cancellative semi-braces for which also $\rho_a$ is bijective, for any $a\in A$.
\end{example}
\begin{proof}
Note that if $(A,+,\circ, \lambda, \cm)$ is a \ourstructure with $(A,+)$ and $(A,\circ)$ groups, they share the same identity, say $1$, and by Example~\ref{excsb}, $(A,+,\circ)$  it is a left cancellative semi-brace. Then  $\lambda_a (b)  = a\circ (\bar{a} +b) =-a +(a\circ 1) + (a\circ (\bar{a} +b)) =-a + a\circ (1+b) =-a + (a\circ b) $ and  $\cm_b(a)= \lambda_b (\bar{b}) +a +b
=-b + (b\circ \bar{b}) +a+b =-b+a+b$.  Hence $(A,+,\circ)$ is a skew left brace. The previous equalities also show that a skew left brace is a left cancellative semi-brace with $(A,+)$ also a group.

The last statement of the example has been proven in  \cite[p. 167]{CCS2017}. For completeness' sake we include a short proof.
So, let  $(A,+,\circ)$ be a left cancellative semi-brace and assume that $\rho_a$ is bijective for any $a\in B$.
We know that $(A,+,\circ,\lambda,\cm )$ is  a \ourstructure and thus  $\lambda:(A,\circ)\to \Aut(A,+)$ is a monoid  homomorphism and, by \cref{chomomorphism}, $\rho: (A,\circ) \to \Map (A,A): b\mapsto \rho_b$ is a monoid anti-homomorphism. In particular, since $\lambda_a$ and $\rho_a$ are bijective for any $a\in A$ we get $\lambda_1=\id$ and $\rho_1=\id$ where $1$ denotes the identity of $(A,\circ)$.
Moreover, by \cref{cancellativetruss} we get $\rho_b(a) = \overline{(\lambda_a(b))}\circ a\circ b = \overline{a\circ (\bar{a}+b)}\circ a\circ b = \overline{(\bar{a}+b)}\circ \bar{a}\circ a\circ b = \overline{(\bar{a}+b)}\circ b$, which yields 
\begin{align}\label{eq:semibraceplusrho}
    a+b = b\circ \overline{\rho_b(\bar{a})}.
\end{align}
It follows that $1$ also is an identity for $(A,+)$. Indeed, for any $a\in A$, we have 
    $1+ a =1+\lambda_1(a)= 1\circ a = a$ and 
    $a+1 \overset{\text{\eqref{eq:semibraceplusrho}}}{=} 1 \circ \overline{\rho_1(\bar{a})} = \bar{\bar{a}} = a$.
Furthermore, put $b= \lambda_a(\bar{a})$. Then $a+b = a+ \lambda_a(\bar{a}) = a \circ \bar{a} = 1$. 
Since  $\overline{\rho_{\bar{a}}(a)} = \overline{\overline{(\bar{a}+\bar{a})}\circ \bar{a}}= a\circ (\bar{a}+\bar{a})=\lambda_a(\bar{a})= b$
we also get that
$b+a \overset{\text{\eqref{eq:semibraceplusrho}}}{=} a \circ \overline{\rho_a(\bar{b})} = a\circ \overline{\rho_a\rho_{\bar{a}}(a)} = a\circ \overline{\rho_{\bar{a} \circ a}(a)} = a \circ \overline{\rho_1(a)} = a \circ \bar{a} = 1$. Therefore, $(A,+)$ is a group and thus $(A,+,\circ)$ is skew left brace and the result follows.
\end{proof}

Note that if $(B,+,\circ)$ is a skew left brace, the opposite \ourstructure as defined in \cref{prop:opposite} is a skew left brace and the definition coincides with the definition of an opposite skew left brace given in \cite[Proposition 3.1]{KoTr20}.

In order to deal with degenerate and non-bijective  solutions the notion of a left semi-brace has been introduced  in \cite{JVA19}.
The definition is as in Example~\ref{excsb} but without the restriction that $(A,+)$ has to be left cancellative.
In general this is not a \ourstructure as, for example, $A+b$ is not necessarily contained in $b+A$. It is easy to see that a left semi-brace is a \ourstructure if and only if it is a left cancellative semi-brace.

Catino, Colazzo and Stefanelli in \cite{CaCoSt21} and Catino, Mazzotta and Stefanelli in \cite{CaMaSt} introduced a generalization of left semi-braces. In general these do not yield left non-degenerate solutions.

We conclude this section with an example of a left cancellative \ourstructure that is finite but is not a left cancellative semi-brace.

\begin{example}\label{ex:circnotgroup}
Let $A = \{1,2,3,4\}$. Define for any $a,b \in A$, $a+b=b$, $\cm_b(a) = b$ and $a \circ b = \lambda_a(b)$, where $\lambda_1 = \lambda_2 = \id_A$ and $\lambda_3 = \lambda_4 = (13)(24)$. Then $(A,+,\circ,\lambda,\cm)$ is a \ourstructure with  $(A,+)$ left cancellative and $\rho$-map given by
$\rho_1=\rho_3: t \mapsto 1$ and $\rho_2 = \rho_4: t \mapsto 2$. In particular $1\circ 1 =2\circ 1$ and thus $(A,\circ)$ is not a group.
\end{example}

\subsection{Idempotents in \ourstructures}\label{sec_idempotent}\hfill

For a semigroup $(S,*) $ we denote by $E_{*}(S)$ its subset of idempotents. In case $(A,+,\circ, \lambda ,\cm)$ is a \ourstructure  we have two such subsets $E_{+}(A)$ and $E_{\circ}(A)$.
It has been shown in \cite{CCS2017} that if $A$ is a left cancellative semi-brace then $E_{+}(A)$ is a subsemigroup of $(A,+)$ and, furthermore $A=A + 1 + E_{+}(A)$, where $\{1 \} =E_{\circ}(A)$, and $A+1$ is a maximal subgroup of $(A,+)$ and it is a sub-\ourstructure that is skew left brace. We will prove that for an arbitrary \ourstructure $E_{+}(A)$ is subsemigroup of $(A,+)$ and also $E_{\circ}(A)$ is a subsemigroup of $(A,\circ)$.
In the terminology of \cite{MR1142743}, $(A,+)$ and $(A,\circ)$ are E-semigroups and they thus both have a so-called unitary cover.
Actually we will show that $E_{+}(A)$ is a sub-\ourstructures and $E_{\circ}(A)$ is a sub-semitruss of $A$.

\begin{proposition}\label{prop:idempotents}
    Let $(A,+,\circ,\lambda,\cm)$ be a \ourstructure. The following properties hold.
    \begin{enumerate}
        \item  $\lambda_e=\id$ for $e\in E_{\circ}(A)$.
        \item  $\cm_e^{2}=\cm_e$ for $e\in E_{+}(A)$.
        \item $E_{\circ}(A)\subseteq E_+(A)$.
        \item $E_{+}(A)$ is a sub-\ourstructure  of $A$.
        \item $E_{\circ}(A)$ is a  sub-semitruss of $A$.
        \item If the associated solution $(A,r_A)$ is bijective, then $E_+(A)$ and $E_{\circ}(A)$ are   commutative semigroups.
    \end{enumerate}
    \begin{proof}
      (1) Follows from the fact that $\lambda_e$ is bijective and $\lambda_e^{2}=\com{\lambda_{e\circ e}}=\lambda_e$.
      (2) Follows from the fact that the $\cm$-map is a anti-homomorphism with domain $(A,+)$.
      To prove  (3), let $e\in E_{\circ}(A)$ then $e=e\circ e =e+\lambda_e (e) =e+e$ and thus $e\in E_{+}(A)$.

      (4) Let $e,f \in E_{+}(A)$. Then,
      \begin{align*}
          (e+f)+(e+f) \overset{\eqref{eq:csum}}{=} e+ f + f +\cm_f(e) =  e + f + \cm_f(e) \overset{\eqref{eq:csum}}{=} e+e + f = e+f,
      \end{align*}
      i.e. $e+f\in E_{+}(A)$. Hence  $E_{+}(A)$ is a subsemigroup. Since $e\circ f =e+\lambda_e (f)$ and because $\lambda_e (f)\in E_{+}(A)$,
      as $\lambda_e \in \End (A,+)$, we get that $e\circ f\in E_{+}(A)$. So, $E_{+}(A)$ is a subsemigroup of $(A,\circ)$.
      Since $E_{+}(A)$ is $\lambda$- and $\cm$-invariant it follows that $E_{+}(A)$ is a sub-\ourstructure  of $A$.
      
      (5) Let $e,f\in E_{\circ}(A)$. Then,
      \begin{align*}
          (e\circ f)\circ (e\circ f) &= (e\circ f)+\lambda_{e\circ f} (e\circ f) = e+f +\lambda_e\lambda_f(e+f)\\
          &\overset{(1)}{=} e+f+e+f \overset{(4)}{=} e+f \overset{(1)}{=} e\circ f,
      \end{align*}
      i.e. $e\circ f \in E_{\circ}(A)$. Hence, also $e+f=e\circ \lambda_e^{-1}(f)\overset{(1)}{=} e\circ f \in E_{\circ}(A)$. Moreover, $\lambda_e(f) \overset{(1)}{=} f\in E_{\circ}(A)$.
      It follows that $E_{\circ}(A)$ is a sub-semitruss. 
      
      (6)  Assume $r_A$ is bijective, i.e. $\cm_x$ is bijective for any $x\in A$. Hence, by part  (2) it follows that $\cm_e=\id$ for any $e\in E_+(A)$. Therefore,
        $e+f \overset{\eqref{eq:csum}}{=} f+\cm_f(e) = f+ e$.
    Hence, $E_+(A)$ is commutative. In case $e,f\in E_{\circ}(A)$ then we get that $e\circ f =e+f =f+e =f\circ e$. So also $E_{\circ}(A)$
    is commutative.
    \end{proof}
\end{proposition}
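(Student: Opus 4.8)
The plan is to dispatch the six items in the stated order, since each leans on the previous ones and the whole argument is driven by choosing the right instance of \eqref{eq:csum} at each turn. Items (1) and (2) are immediate: if $e\in E_{\circ}(A)$ then $\lambda_e^{2}=\lambda_{e\circ e}=\lambda_e$ by \eqref{trusslambda}, and bijectivity of $\lambda_e$ forces $\lambda_e=\id$; if $e\in E_{+}(A)$ then $\cm_e^{2}=\cm_{e+e}=\cm_e$ by \eqref{eq:condc}. For (3) one writes $e=e\circ e=e+\lambda_e(e)$ via \eqref{eq:sumcirc} and substitutes $\lambda_e=\id$ from (1) to obtain $e=e+e$, i.e. $e\in E_{+}(A)$.

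For (4) the first job is closure of $E_{+}(A)$ under $+$: for $e,f\in E_{+}(A)$ one expands $(e+f)+(e+f)$, rewrites the second summand $e+f$ as $f+\cm_f(e)$ using \eqref{eq:csum}, collapses $f+f=f$, then rewrites $f+\cm_f(e)$ back as $e+f$ and uses $e+e=e$, yielding $(e+f)+(e+f)=e+f$. Closure under $\circ$ then follows from $e\circ f=e+\lambda_e(f)$ together with $\lambda_e\in\Aut(A,+)$, which makes $\lambda_e(f)$ idempotent, so $e\circ f$ is a sum of two idempotents. The same additivity remark shows $\lambda_a(E_{+}(A))\subseteq E_{+}(A)$ and $\cm_a(E_{+}(A))\subseteq E_{+}(A)$ for every $a\in A$; and since $\lambda_a$ is injective, $\lambda_a(x)\in E_{+}(A)$ forces $x\in E_{+}(A)$, so each $\lambda_e$ in fact restricts to an automorphism of $(E_{+}(A),+)$. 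With these closures, the axioms \eqref{trusslambda}--\eqref{eq:clambda} hold on $E_{+}(A)$ by restriction, so $E_{+}(A)$ is a sub-\ourstructure.

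For (5) the point is that for $e,f\in E_{\circ}(A)$ we have $\lambda_e=\lambda_f=\lambda_{e\circ f}=\id$ by (1) and \eqref{trusslambda}, whence $e\circ f=e+\lambda_e(f)=e+f$ and $e+f=e\circ\lambda_e^{-1}(f)=e\circ f$. Combining this with (3) and the closure of $E_{+}(A)$ under $+$ from (4) gives $(e\circ f)\circ(e\circ f)=(e+f)+(e+f)=e+f=e\circ f$, so $E_{\circ}(A)$ is closed under both $+$ and $\circ$, and $\lambda$-invariance is vacuous since $\lambda_e=\id$. Here one obtains only a sub-semitruss, not a sub-\ourstructure, because nothing forces the $\cm_a$ to preserve $E_{\circ}(A)$.

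Finally, for (6), Proposition~\ref{prop:semitruss} says that $r_A$ being bijective is equivalent to every $\cm_a$ being bijective; then (2) upgrades to $\cm_e=\id$ for all $e\in E_{+}(A)$, so \eqref{eq:csum} gives $e+f=f+\cm_f(e)=f+e$ and $(E_{+}(A),+)$ is commutative, while for $e,f\in E_{\circ}(A)\subseteq E_{+}(A)$ part (1) reduces $e\circ f$ to $e+f$ and $f\circ e$ to $f+e$, which now coincide. There is no serious obstruction; the content is bookkeeping with \eqref{eq:csum}. The step most prone to a slip is the closure of $E_{+}(A)$ under $+$ in (4) — \eqref{eq:csum} must be applied in the correct direction to the correct pair, twice — together with the easily overlooked verification that $\lambda_e$ and $\cm_a$ genuinely restrict to self-maps (an automorphism in the $\lambda$-case) of $E_{+}(A)$, which is exactly what promotes $E_{+}(A)$ from a subset closed under the two operations to a sub-\ourstructure.
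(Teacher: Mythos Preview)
Your proof is correct and follows essentially the same route as the paper's: items (1)--(3) are identical, in (4) you reproduce the paper's $(e+f)+(e+f)$ computation via \eqref{eq:csum} and the closure under $\circ$ via $e\circ f=e+\lambda_e(f)$, and in (5)--(6) you use the same reductions through (1), (3), (4) and Proposition~\ref{prop:semitruss}. The only difference is cosmetic---you are slightly more explicit than the paper about why $\lambda_e$ restricts to a genuine automorphism of $(E_+(A),+)$ (via the observation that $\lambda_a^{-1}$ also preserves additive idempotents), which the paper leaves implicit in the phrase ``$\lambda$- and $\cm$-invariant''.
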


Recall that if $S$ is a right simple semigroup, i.e. $S$ is the  only right ideal, or a left cancellative semigroup then any idempotent is a left identity.
For both classes of semigroups we obtain additional information. We begin with the class of left cancellative semigroups.

\begin{proposition}
 Let $(A,+,\circ,\lambda,\cm)$ be a \ourstructure. If $e\in E_+(A)$ is a left identity of $(A,+)$, then $\cm_e(a) = a+e$. In particular, if $(A,+,\circ,\lambda,\cm)$ is left cancellative, then  $E_{\circ}(A)$ is a sub-\ourstructure.
 \begin{proof}
 Let $e\in E_+(A)$ be a left identity of $(A,+)$ and $a\in A$. It follows that
 \begin{align*}
     \cm_e(a) = e+ \cm_e(a) \overset{\eqref{eq:csum}}{=}  a+e. 
 \end{align*}
Hence, if $(A,+)$ is left cancellative  then, for any $e,f\in E_{\circ}(A)$, we get that  $\cm_{e}(f) = f+e = e \in E_{\circ}(A)$.
So $E_{\circ}(A)$ is $\lambda$- and $\cm$-invariant. Hence, by \cref{prop:idempotents}.(5), $E_{\circ}(A)$ is a  sub-\ourstructure. 
 \end{proof}
\end{proposition}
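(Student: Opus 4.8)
The plan is to treat the two assertions separately, with the second reducing to the first together with \cref{prop:idempotents}. For the first assertion the only tool needed is the defining relation \eqref{eq:csum}. Let $e\in E_+(A)$ be a left identity of $(A,+)$ and let $a\in A$. Instantiating \eqref{eq:csum} at the pair $(a,e)$ gives $a+e = e+\cm_e(a)$; since $e$ is a left identity of $(A,+)$ the right-hand side equals $\cm_e(a)$, and hence $\cm_e(a)=a+e$. No further input is required.

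For the second assertion, assume $(A,+)$ is left cancellative. First I would recall the elementary fact that in a left cancellative semigroup every idempotent is a left identity, so every element of $E_+(A)$ is a left identity of $(A,+)$; by \cref{prop:idempotents}.(3) this applies in particular to every $e\in E_\circ(A)$. Now take $e,f\in E_\circ(A)$. By the first assertion (applied with this $e$) we get $\cm_e(f)=f+e$, and since $f$ is itself a left identity of $(A,+)$ we have $f+e=e$, so $\cm_e(f)=e\in E_\circ(A)$. Thus $E_\circ(A)$ is $\cm$-invariant. Since \cref{prop:idempotents}.(5) already records that $E_\circ(A)$ is a sub-semitruss of $A$ --- that is, closed under $+$ and $\circ$ and $\lambda$-invariant (with $\lambda_e=\id$ there by part~(1)) --- adjoining $\cm$-invariance shows that all the operations of $A$ restrict to $E_\circ(A)$, so the universally quantified identities \eqref{trusslambda}--\eqref{eq:clambda} hold on $E_\circ(A)$ and it is a sub-\ourstructure.

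I do not anticipate a genuine obstacle: both steps are short formal manipulations. The one point that needs care is using the left-identity property twice in the second part --- once on $e$ (to evaluate $\cm_e$ via the first assertion) and once on $f$ (to collapse $f+e$ to $e$) --- and invoking \cref{prop:idempotents}.(5) for the remaining closure properties rather than reproving them, so that the only thing actually left to verify is $\cm$-invariance of $E_\circ(A)$.
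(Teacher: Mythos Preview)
Your proof is correct and follows essentially the same approach as the paper's own proof: both parts are handled identically, using \eqref{eq:csum} together with the left-identity property for the first assertion, and then combining this with \cref{prop:idempotents}.(3) and (5) to obtain $\cm$-invariance of $E_\circ(A)$ in the left cancellative case. You are simply a bit more explicit than the paper in spelling out why elements of $E_\circ(A)$ are left identities of $(A,+)$.
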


\begin{proposition}
Let  $(A,+,\circ,\lambda,\cm)$ be a  left cancellative \ourstructure. If the associated solution   $(A,r_A)$ is bijective then
\begin{enumerate}
    \item $|E_+(A)|\leq 1$, and 
    \item $(A,+)$ is  right cancellative.
\end{enumerate}
In particular, in this case, if  furthermore $A$ is finite, then $(A,+)$ is a group.
\end{proposition}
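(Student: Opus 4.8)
The plan is to obtain the right cancellativity of $(A,+)$ by transporting its left cancellativity through the bijections $\cm_c$ via \eqref{eq:csum}, and to control $E_+(A)$ using the commutativity of its idempotents. First I would record two facts. Since the associated solution $(A,r_A)$ is bijective, Proposition~\ref{prop:semitruss} shows that every $\cm_a$ (for $a\in A$) is a bijection of $A$; and by Proposition~\ref{prop:idempotents}(6) the set $E_+(A)$ is a commutative subsemigroup of $(A,+)$. I also use the fact, recalled above, that in a left cancellative semigroup every idempotent is a left identity.

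For (1), let $e,f\in E_+(A)$. Since $e$ and $f$ are left identities of $(A,+)$ we get $e+f=f$ and $f+e=e$, whereas commutativity of $E_+(A)$ gives $e+f=f+e$; hence $e=f$, so $|E_+(A)|\le 1$. For (2), I would take $a+c=b+c$ with $a,b,c\in A$ and apply \eqref{eq:csum} to each side, obtaining
\[
c+\cm_c(a)=a+c=b+c=c+\cm_c(b);
\]
left cancellation in $(A,+)$ yields $\cm_c(a)=\cm_c(b)$, and injectivity of $\cm_c$ then gives $a=b$, so $(A,+)$ is right cancellative.

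For the ``in particular'' assertion: if moreover $A$ is finite, then $(A,+)$ is a finite semigroup that is left cancellative by hypothesis and right cancellative by (2), so every left translation $x\mapsto a+x$ and every right translation $x\mapsto x+a$ is injective, hence bijective by finiteness. Thus for all $a,b\in A$ the equations $a+x=b$ and $y+a=b$ have (unique) solutions, i.e. $(A,+)$ is an associative quasigroup and therefore a group (and its unique additive idempotent is then the identity, recovering (1) in this case).

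There is no serious obstacle here: once one notices that the bijectivity of the $\cm$-maps is exactly what turns \eqref{eq:csum} into a bridge between left and right cancellativity, part (2) is immediate, and the finite case is a classical fact about cancellative semigroups. The only point requiring a little care is part (1), where one must remember that a semigroup can carry many distinct left identities, so Proposition~\ref{prop:idempotents}(6) — and hence the bijectivity hypothesis — is genuinely needed to force them to coincide.
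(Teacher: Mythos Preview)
Your proof is correct and follows essentially the same route as the paper. For (2) your argument is identical to the paper's; for (1) you invoke Proposition~\ref{prop:idempotents}(6) (commutativity of $E_+(A)$) whereas the paper unfolds its content directly (using $\cm_e^2=\cm_e$ bijective $\Rightarrow \cm_e=\id$ to compute $e=f+e=e+\cm_e(f)=e+f=f$), but this is the same idea in different packaging.
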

\begin{proof}
(1) Let $e,f\in E_+(A)$. Since $(A,+)$ is left cancellative, $e$ and $f$ also are  left identities. Moreover, since $r_A$ is bijective, we have that $\cm_e$ is bijective and thus  $\cm_e=\id$. It follows that
$
    e = f+e = e+\cm_e(f) = e+f = f
$.

(2) Let $a,b,d\in A$. Assume that $a+b = d+ b$. Equation \eqref{eq:csum} yields $b+\cm_b(a) = b+ \cm_b(d)$. Since $(A,+)$ is left cancellative, it follows that $\cm_b(a)=\cm_b(d)$. Hence, since $r_A$ is bijective, and thus  $\cm_b$ bijective,  $a= d$. Therefore $(A,+)$ is right cancellative.
\end{proof}

\begin{proposition}\label{prop:sumgroup}
    Let $(A,+,\circ, \lambda, \cm)$ be a \ourstructure. If $(A,+)$ is a group, then $(A,\circ)$ is a group. 
    \begin{proof}
    First note that \eqref{eq:sumcirc} and $(A,+)$ a group yield that $(A,\circ)$ is left cancellative. Let $0$ denote the identity of $(A,+)$. We claim that $0\in E_{\circ}(A)$ so in particular, $0$ is a left identity. First note that $\lambda_0(0) = \lambda_0(0+0) = \lambda_0(0) +\lambda_0(0)$, i.e. $\lambda_0(0)=0$. It yields $0\circ 0 \overset{\eqref{eq:sumcirc}}{=} 0+\lambda_0(0) = 0$ and thus  indeed  $0\in E_{\circ}(A)$. Similarly,  for $a\in A$ we have  $\lambda_a(0) = \lambda_a(0+0)= \lambda_a(0) + \lambda_a(0)$, i.e. $\lambda_a(0) = 0$. 
    It follows that $a\circ 0 = a + \lambda_a(0) = a + 0 = a$. Hence $0$ is also a right identity and $(A,\circ)$ is a monoid. Now set $b=\lambda^{-1}_a(-a)$, then we get $a\circ b = a\circ \lambda^{-1}_a(-a)\overset{\eqref{eq:sumcirc}}{=} a +(-a) = 0$. Therefore, any $a\in A$ has a right inverse in $(A,\circ)$. It follows that $(A,\circ)$ is a group.
    \end{proof}
\end{proposition}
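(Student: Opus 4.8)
The plan is to use the relation $a\circ b = a+\lambda_a(b)$ from \eqref{eq:sumcirc} to transport the group structure of $(A,+)$ to $(A,\circ)$, first building an identity and then inverses. To begin with, I would note that for each fixed $a$ the map $b\mapsto a+\lambda_a(b)$ is injective, since $\lambda_a\in\Aut(A,+)$ and $(A,+)$ is cancellative; hence $(A,\circ)$ is left cancellative.

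Write $0$ for the additive identity. The crucial observation is that $\lambda_a(0)=0$ for every $a\in A$: from $\lambda_a(0)=\lambda_a(0+0)=\lambda_a(0)+\lambda_a(0)$ and cancellation in the group $(A,+)$ one gets $\lambda_a(0)=0$. In particular $0\circ 0 = 0+\lambda_0(0)=0$, so $0$ is an idempotent of $(A,\circ)$; together with left cancellativity this forces $0$ to be a left identity of $(A,\circ)$, since from $0\circ(0\circ a)=(0\circ 0)\circ a=0\circ a$ one may cancel $0$. On the other side, $a\circ 0 = a+\lambda_a(0) = a+0 = a$, so $0$ is also a right identity, and thus $(A,\circ)$ is a monoid with identity $0$.

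It then remains to produce inverses. For $a\in A$ set $b=\lambda_a^{-1}(-a)$, which is legitimate as $\lambda_a$ is bijective; then $a\circ b = a+\lambda_a(\lambda_a^{-1}(-a)) = a+(-a) = 0$, so every element of the monoid $(A,\circ)$ has a right inverse, and a monoid in which every element has a right inverse is a group. The argument is essentially routine; the only point demanding care is the ordering of the steps — one needs left cancellativity of $(A,\circ)$ in hand before concluding that the additive idempotent $0$ acts as a left identity, and one needs $\lambda_a(0)=0$ (which relies on $\lambda_a$ being an additive endomorphism) before the right-identity and right-inverse computations go through. I do not anticipate any genuine obstacle.
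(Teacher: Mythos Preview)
Your proof is correct and follows essentially the same route as the paper's: establish left cancellativity of $(A,\circ)$ from \eqref{eq:sumcirc}, use $\lambda_a(0)=0$ to make $0$ a two-sided identity, and then produce right inverses via $b=\lambda_a^{-1}(-a)$. The only cosmetic difference is that you spell out the cancellation argument showing an idempotent in a left cancellative semigroup is a left identity, whereas the paper simply asserts it.
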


\begin{corollary}
Let $(A,+,\circ, \lambda, \cm)$ be a \ourstructure. If $(A,+)$ is a group, then $(A,+,\circ)$ is a skew left brace and the associated solution $(A,r_A)$ is bijective non-degenerate.
\begin{proof}
 By \cref{prop:sumgroup}, $(A,\circ)$ is a group. Moreover, by \cref{exsb}, a \ourstructure with $(A,+)$ and $(A,\circ)$ groups is a skew left brace. 
\end{proof}
\end{corollary}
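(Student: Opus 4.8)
The plan is to reduce the statement to Proposition~\ref{prop:sumgroup} and to the description of skew left braces in Example~\ref{exsb}, and then to read off bijectivity and non-degeneracy of the associated solution. First I would apply Proposition~\ref{prop:sumgroup}: since $(A,+)$ is a group, so is $(A,\circ)$. Thus $(A,+,\circ,\lambda,\cm)$ is a \ourstructure in which both semigroups are groups, and Example~\ref{exsb} then says that $(A,+,\circ)$ is a skew left brace, with $\lambda_a(b)=-a+(a\circ b)$ and $\cm_b(a)=-b+a+b$.

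It remains to check that the associated solution $r_A$ is bijective and non-degenerate. Left non-degeneracy is part of the \ourstructure axioms, since each $\lambda_a\in\Aut(A,+)$ by \eqref{trusslambda}. For bijectivity I would use that $\cm_b(a)=-b+a+b$ is conjugation in the group $(A,+)$, hence $\cm_b\in\Aut(A,+)$ for every $b$ (its inverse is $\cm_{-b}$, since $\cm$ is an anti-homomorphism on $(A,+)$ and $\cm_{-b}\cm_b=\cm_{(-b)+b}=\cm_0=\id$); by Proposition~\ref{prop:semitruss}, the fact that all maps $\cm_a$ are bijective is precisely the condition for the derived solution $s_A$, and hence $r_A=\varphi^{-1}s_A\varphi$, to be bijective. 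Finally, right non-degeneracy, i.e. bijectivity of every $\rho_a$, is the last assertion of Example~\ref{exsb} (a skew left brace is a left cancellative semi-brace with all $\rho_a$ bijective, following \cite{CCS2017}); alternatively one may simply quote that the solution associated to a skew left brace is bijective non-degenerate \cite{GV17}. Assembling these observations gives the corollary.

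The only step that is not a purely formal consequence of the results already in hand is the bijectivity of the maps $\rho_a$: this is where one genuinely needs that \emph{both} $(A,+)$ and $(A,\circ)$ are groups, rather than just that $r_A$ is a bijection of $A^2$. Everything else follows at once from Proposition~\ref{prop:sumgroup} (for the group structure) and Proposition~\ref{prop:semitruss} (for the equivalence between bijectivity of $r_A$ and bijectivity of all $\cm_a$).
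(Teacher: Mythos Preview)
Your proof is correct and follows the same route as the paper's: invoke Proposition~\ref{prop:sumgroup} to get that $(A,\circ)$ is a group, then Example~\ref{exsb} to conclude that $(A,+,\circ)$ is a skew left brace. The paper's proof stops there, leaving the ``bijective non-degenerate'' claim implicit (it is the standard fact about the solution associated to a skew left brace); you spell it out via the explicit formula $\cm_b(a)=-b+a+b$ and Proposition~\ref{prop:semitruss}, which is fine. One small remark: in your parenthetical, the anti-homomorphism property \eqref{eq:condc} gives $\cm_{(-b)+b}=\cm_b\cm_{-b}$, not $\cm_{-b}\cm_b$; of course $\cm_{b+(-b)}=\cm_{-b}\cm_b=\cm_0=\id$ as well, so the conclusion is unaffected.
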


Recall that an inverse semigroup is a regular semigroup $S$ (i.e. for any $a\in S$ there exists $b\in S$ so that $aba=a$) such that any   two idempotents of $S$ commute \cite[Theorem 1.17]{ClPr}.

\begin{lemma} \label{invsemigroup}
Let $(A,+,\circ,\lambda, \cm)$ be a \ourstructure. If $a\in A$ is regular in $(A,\circ)$ then a is regular in $(A,+)$.
\begin{proof}
    Let $x\in A$ such that $a\circ x\circ a = a$. It follows
    \begin{align*}
        a&= a\circ x \circ a = a+\lambda_a(x+\lambda_x(a)) = a+ \lambda_a(x) + \lambda_a\lambda_x(a) \\
        &= a+\lambda_a(x)+\lambda_a\lambda_x\lambda_a\lambda^{-1}_a(a)\\
        &=a+\lambda_a(x)+\lambda_{a\circ x\circ a}\lambda^{-1}_a(a)\\
        &=a+\lambda_a(x)+\lambda_a\lambda^{-1}_a(a) &\text{(since $a\circ x \circ a = a$)}\\
        &=a+\lambda_a(x)+a.
    \end{align*}
    Hence $a+\lambda_a(x)+a= a$ and $a$ is regular in $(A,+)$.
\end{proof}
\end{lemma}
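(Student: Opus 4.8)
The plan is to convert the multiplicative regularity relation $a\circ x\circ a=a$ into an additive one by exploiting the two identities that tie $\circ$ to $+$ in a \ourstructure: the relation $a\circ b=a+\lambda_a(b)$ of \eqref{eq:sumcirc}, and the fact that $\lambda\colon(A,\circ)\to\Aut(A,+)$ is a homomorphism, i.e. $\lambda_a\lambda_b=\lambda_{a\circ b}$ from \eqref{trusslambda}.

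First I would fix $x\in A$ with $a\circ x\circ a=a$ and unwind the left-hand side. Parsing it as $a\circ(x\circ a)$ and applying \eqref{eq:sumcirc} twice, together with the fact that $\lambda_a$ is an additive endomorphism, gives $a\circ x\circ a = a+\lambda_a(x\circ a)=a+\lambda_a(x)+\lambda_a\lambda_x(a)$, so the hypothesis becomes $a = a+\lambda_a(x)+\lambda_a\lambda_x(a)$. It then suffices to show that the last summand equals $a$.

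The key step is to rewrite $a$ inside that summand as $\lambda_a(\lambda_a^{-1}(a))$, which is legitimate because $\lambda_a\in\Aut(A,+)$. Then $\lambda_a\lambda_x(a)=\lambda_a\lambda_x\lambda_a(\lambda_a^{-1}(a))=\lambda_{a\circ x\circ a}(\lambda_a^{-1}(a))$ by \eqref{trusslambda}, and the hypothesis $a\circ x\circ a=a$ collapses this to $\lambda_a(\lambda_a^{-1}(a))=a$. Substituting back yields $a=a+\lambda_a(x)+a$, so $a$ is regular in $(A,+)$ with $\lambda_a(x)$ serving as a von Neumann inverse.

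I do not anticipate a genuine obstacle here: the computation is a routine unwinding of \eqref{eq:sumcirc} and \eqref{trusslambda}. The only mildly non-obvious move is manufacturing an extra factor $\lambda_a$ on the right --- via $a=\lambda_a(\lambda_a^{-1}(a))$ --- precisely so that the subscript reads $a\circ x\circ a$ and the multiplicative hypothesis can be invoked there.
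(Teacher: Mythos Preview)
Your proposal is correct and follows essentially the same argument as the paper: both unwind $a\circ x\circ a$ via \eqref{eq:sumcirc} to $a+\lambda_a(x)+\lambda_a\lambda_x(a)$, then insert $\lambda_a\lambda_a^{-1}$ so that \eqref{trusslambda} produces $\lambda_{a\circ x\circ a}$, and finally invoke the hypothesis. (A minor terminological quibble: you have shown $a+\lambda_a(x)+a=a$, which is exactly regularity, but calling $\lambda_a(x)$ a von Neumann \emph{inverse} would additionally require $\lambda_a(x)+a+\lambda_a(x)=\lambda_a(x)$.)
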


\begin{proposition}
Let $(A,+,\circ,\lambda,\cm)$ be a \ourstructure. If $E_{+}(A)=E_{\circ}(A)$ and $(A,\circ)$ is an inverse semigroup, then $(A,+)$ is an inverse semigroup.
\begin{proof}
Recall that $e_1\circ e_2 = e_1 + e_2$ when $e_1 \in E_{\circ}(A)$. Hence, if $(A,\circ)$ is an inverse semigroup then, for $e_1,e_2 \in E_{\circ}(A)$, we have $e_2+e_1 =e_2 \circ e_1 = e_1 \circ e_2 = e_1+e_2$. Hence  $E_{+}(A)$ is a commutative semigroup and thus, by Lemma~\ref{invsemigroup},  the result follows.
\end{proof}
\end{proposition}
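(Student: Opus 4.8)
The plan is to verify directly the two defining properties of an inverse semigroup for $(A,+)$: that it is a regular semigroup, and that any two of its idempotents commute. The hypotheses are tailored to supply exactly these two ingredients, so I expect the argument to be short once the earlier lemmas are invoked.

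First I would handle regularity. Since $(A,\circ)$ is an inverse semigroup, every element $a\in A$ is regular in $(A,\circ)$, i.e. there is $x\in A$ with $a\circ x\circ a=a$. By Lemma~\ref{invsemigroup}, regularity in $(A,\circ)$ forces regularity in $(A,+)$ (concretely, $a+\lambda_a(x)+a=a$). Hence $(A,+)$ is a regular semigroup.

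Next I would show that the idempotents of $(A,+)$ commute. By the standing hypothesis $E_{+}(A)=E_{\circ}(A)$, it suffices to show that any two elements of $E_{\circ}(A)$ commute under $+$. Fix $e,f\in E_{\circ}(A)$. By Proposition~\ref{prop:idempotents}.(1), $\lambda_e=\id$, so $e+f=e+\lambda_e(f)=e\circ f$, and symmetrically $f+e=f\circ f$ becomes $f+e=f\circ e$. Since idempotents in the inverse semigroup $(A,\circ)$ commute, $e\circ f=f\circ e$, whence $e+f=f+e$. Combined with Proposition~\ref{prop:idempotents}.(4), which already gives that $E_{+}(A)$ is a subsemigroup of $(A,+)$, this shows $E_{+}(A)$ is a commutative subsemigroup of $(A,+)$.

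Putting the two observations together, $(A,+)$ is a regular semigroup whose idempotents commute, hence an inverse semigroup by \cite[Theorem 1.17]{ClPr}. I do not anticipate a genuine obstacle: Lemma~\ref{invsemigroup} transfers regularity for free, and the identity $e+f=e\circ f$ for $e\in E_{\circ}(A)$ makes the transfer of idempotent-commutativity automatic. The only place where the full hypothesis $E_{+}(A)=E_{\circ}(A)$ is needed — rather than the always-valid inclusion $E_{\circ}(A)\subseteq E_{+}(A)$ of Proposition~\ref{prop:idempotents}.(3) — is that we must control \emph{all} additive idempotents, not merely those arising from $\circ$; without this, a stray additive idempotent outside $E_{\circ}(A)$ could fail to commute with the others.
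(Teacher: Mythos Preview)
Your argument is correct and follows essentially the same route as the paper: use $\lambda_e=\id$ for $e\in E_\circ(A)$ to identify $e+f=e\circ f$, transfer commutativity of idempotents from $(A,\circ)$ to $(A,+)$ via the hypothesis $E_+(A)=E_\circ(A)$, and invoke Lemma~\ref{invsemigroup} for regularity. (Note the typo ``$f+e=f\circ f$'' where you meant $f\circ e$; also, the appeal to Proposition~\ref{prop:idempotents}.(4) is harmless but unnecessary, since the inverse-semigroup criterion only requires that any two idempotents commute.)
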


\subsection{Matched product of solutions and \ourstructures}\label{sec_matchedproduct}\hfill

In this section we construct from two \ourstructures a new \ourstructure. In particular we give examples 
of \ourstructures that are not as in the examples in \cref{sec_leftcancellative}. We do this by defining a matched product of \ourstructures, a construction technique that generalizes the matched product of left cancellative semi-braces \cite{CCS2017}, of skew left braces \cite{SmVe18} and braces \cite{Ba18}. Note that if $B_1$ and $B_2$ are left cancellative semi-braces (or skew left braces or braces), then the multiplicative group of the matched product of $B_1$ and $B_2$ is the Zappa-Szép product of the groups $(B_1,\circ)$ and $(B_2,\circ)$ \cite{CaCoSt20a}, i.e. $(B_1,\circ)$ and $(B_2,\circ)$ form a matched pair of groups \cite{Ta03}. In the same manner, the multiplicative structure of the matched product of two unital \ourstructures forms a matched pair of monoids. Matched pairs of monoids were successfully used by Gateva-Ivanova and Majid in \cite{GIMa08} to construct new solutions. 
Moreover, we prove that the solution associated to the matched product of two \ourstructures is indeed the matched product of the solutions associated to those two \ourstructures.

\begin{definition}
Let $(A_1, +,\circ, \lambda^{(1)},\cm^{(1)})$, $(A_2, +,\circ, \lambda^{(2)},\cm^{(2)})$ be \ourstructures, $\alpha: (A_2,\circ)\to \Aut(A_1,+)$, $\beta: (A_1, \circ)\to \Aut(A_2,+)$ semigroups morphisms such that the following hold 
\begin{align}
    &\lambda^{(1)}_a\alpha_{\beta^{-1}_a(u)} =\alpha_{u}\lambda^{(1)}_{\alpha^{-1}_u(a)}, \qquad && \lambda^{(2)}_u\beta_{\alpha^{-1}_u(a)} =\beta_{a}\lambda^{(2)}_{\beta^{-1}_a(u)},\label{eq:alphabetalambda}\\
   &\alpha_u\cm^{(1)}_a = \cm^{(1)}_{\alpha_u(a)}\alpha_u, &&\beta_a\cm^{(2)}_u = \cm^{(2)}_{\beta_a(u)}\beta_a,\label{eq:alphabetac}
\end{align}
for all $a\in A_1$, $u\in A_2$. The quadruple $(A_1,A_2,\alpha,\beta)$ is called a \emph{matched product system of \ourstructures}.
\end{definition}

In order to show that the matched product of  a matched product system of \ourstructures is again a \ourstructure we need the following lemma.

\begin{lemma}\label{lemma:semigroup}
Let $(A,+)$ be a semigroup and $\lambda: A\to \End(A,+):a \mapsto \lambda_a$. Define $a\circ b = a+\lambda_a(b)$, for any $a,b\in A$. If $\lambda_{a+\lambda_a(b)}= \lambda_a\lambda_b$ for any $a,b\in A$, then $(A,+,\circ, \lambda)$ is a left semitruss.
\begin{proof} First we show that $(A,\circ)$ is associative and thus a semigroup.
For  $a,b,d \in A$ we have
\begin{align*}
    (a\circ b) \circ d &= (a + \lambda_a(b))\circ d
    = a+\lambda_a(b) +\lambda_{a+\lambda_a(b)}(d) \\
    &=a +\lambda_a(b)+\lambda_a\lambda_b(d)
    =a +\lambda_a(b+\lambda_b(d))\\
    &=a\circ (b\circ d).
\end{align*}
Hence, $(A,\circ)$ is a semigroup.
Furthermore,
\begin{align*}
    a\circ(b+d) = a +\lambda_a(b+d) = (a+ \lambda_a(b)) +\lambda_a(d) 
    =a\circ b + \lambda_a(d).
\end{align*}
Hence, $(A,+,\circ, \lambda)$ is a left semitruss.
\end{proof}
\end{lemma}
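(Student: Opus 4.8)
The plan is to check, one by one, the three defining requirements of a left semitruss for the quadruple $(A,+,\circ,\lambda)$: that $(A,+)$ is a semigroup (which is hypothesised), that $(A,\circ)$ is a semigroup, and that the left semitruss identity \eqref{eq:leftsemitrussrelation} holds. The first thing I would do is rephrase the hypothesis in multiplicative language: since by definition $a\circ b = a+\lambda_a(b)$, the assumption $\lambda_{a+\lambda_a(b)}=\lambda_a\lambda_b$ is precisely the statement that $\lambda_{a\circ b}=\lambda_a\lambda_b$ for all $a,b\in A$. This identity, together with the fact that each $\lambda_a$ lies in $\End(A,+)$, is the only input the argument uses.

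For associativity of $\circ$, I would expand both triple products using the definition of $\circ$ and the two facts just isolated. On one side, $(a\circ b)\circ d = (a\circ b)+\lambda_{a\circ b}(d) = a+\lambda_a(b)+\lambda_a\lambda_b(d)$, the last equality using $\lambda_{a\circ b}=\lambda_a\lambda_b$. On the other side, using that $\lambda_a$ is additive, $a\circ(b\circ d) = a+\lambda_a\bigl(b+\lambda_b(d)\bigr) = a+\lambda_a(b)+\lambda_a\lambda_b(d)$. The two expressions agree by associativity of $+$, so $(A,\circ)$ is a semigroup.

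The left semitruss identity is then immediate: again using additivity of $\lambda_a$, one has $a\circ(b+c) = a+\lambda_a(b+c) = a+\lambda_a(b)+\lambda_a(c) = (a\circ b)+\lambda_a(c)$. Since $\lambda$ is given as a map $A\to\Map(A,A)$, nothing else needs checking and $(A,+,\circ,\lambda)$ is a left semitruss. There is no genuine obstacle here; the only thing to be careful about is invoking $\lambda_{a\circ b}=\lambda_a\lambda_b$ and the endomorphism property of $\lambda_a$ at exactly the right places in the computation.
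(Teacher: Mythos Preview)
Your proof is correct and follows essentially the same approach as the paper: expand $(a\circ b)\circ d$ and $a\circ(b\circ d)$ using the definition of $\circ$, the hypothesis $\lambda_{a+\lambda_a(b)}=\lambda_a\lambda_b$, and additivity of $\lambda_a$, then verify the left semitruss identity in the same way. Your explicit rephrasing of the hypothesis as $\lambda_{a\circ b}=\lambda_a\lambda_b$ is a harmless cosmetic difference.
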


\begin{theorem}\label{thm:matchedproduct}
 Let $(A_1,A_2,\alpha,\beta)$ a matched product system of \ourstructures. Define, on the cartesian product $A_1\times A_2$, the following
 \begin{align}
     (a,u)+(b,v)&=(a+b,u+v),\label{eq:matchedsum}\\
     (a,u)\circ (b,v) &= (\alpha_u(\alpha^{-1}_u(a)\circ b),\beta_a(\beta^{-1}_a(u)\circ v)),\label{eq:matchedcirc}\\
     \lambda_{(a,u)}(b,v) &= (\lambda^{(1)}_a\alpha_{\beta^{-1}_a(u)}(b), \lambda^{(2)}_u\beta_{\alpha^{-1}_u(a)}(v)),\label{eq:matchedlambda}\\
     \cm_{(a,u)}(b,v)&= (\cm_a(b),\cm_u(v))\label{eq:matchedc}.
 \end{align}
 Then $(A_1\times A_2, +,\circ, \lambda, \cm)$ is a \ourstructure, called \emph{the matched product of $A_1$ and $A_2$ (via $\alpha$ and $\beta$)} and denoted by $A_1\bowtie A_2$.
 \begin{proof}
    Let $a,b,d \in A_1$ and $u,v,w\in A_2$. 
    Denote by $\bar{a}=\alpha^{-1}_u(a)$, $\bar{u} = \beta^{-1}_a(u)$, $\bar{b}=\alpha^{-1}_v(b)$, $\bar{v} = \beta^{-1}_b(v)$, 
    First note that by \eqref{eq:alphabetalambda}, \eqref{eq:sumcirc} and \eqref{eq:matchedsum} we get
    \begin{align*}
        (a,u)+\lambda_{(a,u)}(b,v) 
        &= (a+\lambda^{(1)}_a\alpha_{\bar{u}}(b), u + \lambda^{(2)}_u\beta_{\bar{a}}(v))=(a+\alpha_u\lambda^{(1)}_{\bar{a}}(b), u+\beta_a\lambda^{(2)}_{\bar{u}}(v))\\
        &=(\alpha_u(\alpha^{-1}_u(a)+ \lambda^{(1)}_{\bar{a}}(b)), \beta_a(\beta_a^{-1}(u)+\lambda^{(2)}_{\bar{u}}(v)))\\
        &=(\alpha_u(\bar{a}\circ b), \beta_a(\bar{u}\circ v))
        = (a,u)\circ (b,v).
    \end{align*}
    Hence, $A_1\times A_2$ satisfies condition \eqref{eq:sumcirc}. Moreover, it is a routine computation to verify that $\lambda_{(a,u)}(b,v)+\lambda_{(a,u)}(d,w) = \lambda_{(a,u)}((b,v)+(d,w))$, i.e.
    $\lambda:A_1\times A_2\to \End(A_1\times A_2, +)$.
    Furthermore, note that $\alpha_u(\bar{a}\circ b) =a\circ \alpha_{\bar{u}}(b)$ and $\beta^{-1}_{\alpha_{\bar{u}}(b)}(\bar{u}\circ v)= \beta^{-1}_{\alpha_{\bar{u}}(b)}(\bar{u})\circ \beta^{-1}_{b}(v)$, that yields
    \begin{align*}
        \lambda^{(1)}_{\alpha_u(\bar{a}\circ b)}\alpha_{\beta^{-1}_{\alpha_u(\bar{a}\circ b)}\beta_a(\bar{u}\circ v)}(d) 
        &=\lambda^{(1)}_{a\circ \alpha_{\bar{u}}(b)} \alpha_{\beta^{-1}_{a\circ \alpha_{\bar{u}}(b)}\beta_a(\bar{u}\circ v)}(d)\\
        &= \lambda^{(1)}_{a}\lambda^{(1)}_{\alpha_{\bar{u}}(b)}
        \alpha_{\beta^{-1}_{\alpha_{\bar{u}}(b)}(\bar{u}\circ v)}(d)\\
        &=\lambda^{(1)}_{a}\lambda^{(1)}_{\alpha_{\bar{u}}(b)}\alpha_{\beta^{-1}_{\alpha_{\bar{u}}(b)}(\bar{u})\circ \beta^{-1}_{b}(v)}(d)\\
        &=\lambda^{(1)}_{a}\lambda^{(1)}_{\alpha_{\bar{u}}(b)}\alpha_{\beta^{-1}_{\alpha_{\bar{u}}(b)}(\bar{u})}\alpha_{ \beta^{-1}_{b}(v)}(d)\\
        &=\lambda^{(1)}_{a}\alpha_{\bar{u}}\lambda^{(1)}_{\alpha^{-1}_{\bar{u}}\alpha_{\bar{u}}(b)}\alpha_{ \bar{v}}(d)\\
        &=\lambda^{(1)}_{a}\alpha_{\bar{u}}\lambda^{(1)}_{b}\alpha_{\bar{v}}(d),
    \end{align*}
    In the same way, we get $\lambda^{(2)}_{\beta_a(\bar{u}\circ v)}\beta_{\alpha^{-1}_{\beta_a(\bar{u}\circ v)}\alpha_u(\bar{a}\circ b)}(w) = \lambda^{(2)}_u\beta_{\bar{a}}\lambda^{(2)}_v\beta_{\bar{b}}(w)$.
    It follows $\lambda_{(a,u)\circ (b,v)}(d,w) 
        =\lambda_{(\alpha_u(\bar{a}\circ b), \beta_a(\bar{u}\circ v))}(d,w)=\lambda_{(a,u)}\lambda_{(b,v)}(d,w)$. Hence, \cref{lemma:semigroup}  yields that  $(A_1\times A_2, +,\circ, \lambda)$ is a left semitruss. Moreover, by \eqref{eq:matchedc} and \eqref{eq:csum}, we obtain that 
    $A_1\times A_2$ satisfies \eqref{eq:csum}. In addition, condition \eqref{eq:condc} and \eqref{eq:clambda} for $A_1\times A_2$ follow from the respective conditions in  $A_1$ and $A_2$ and from \eqref{eq:matchedc} and \eqref{eq:alphabetac}. 
    Since $\lambda_{(a,u)}$ is bijective with inverse given by $\lambda^{-1}_{(a,u)}(b,v) = (\alpha^{-1}_{\beta^{-1}_{a}(u)}{\lambda_a^{(1)}}^{-1}(b), \beta^{-1}_{\alpha^{-1}_u(a)}{\lambda_u^{(2)}}^{-1}(v))$, we conclude that $(A_1\times A_2,+,\circ, \lambda, \cm)$ is a \ourstructure.
 \end{proof}
\end{theorem}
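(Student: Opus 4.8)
The plan is to verify, one at a time, the five defining conditions of Definition~\ref{def:qsemitruss} for $(A_1\times A_2,+,\circ,\lambda,\cm)$, reducing each to the corresponding identities in $A_1$ and $A_2$ together with the compatibility relations \eqref{eq:alphabetalambda}--\eqref{eq:alphabetac}. Three of the conditions come essentially for free: since $+$ and $\cm$ act componentwise by \eqref{eq:matchedsum} and \eqref{eq:matchedc}, the semigroup axiom for $(A_1\times A_2,+)$, the relation $(a,u)+(b,v)=(b,v)+\cm_{(b,v)}(a,u)$ of \eqref{eq:csum}, and the endomorphism/anti-homomorphism property \eqref{eq:condc} of $\cm$ all follow coordinatewise from \eqref{eq:csum} and \eqref{eq:condc} in each factor. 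Likewise $\lambda_{(a,u)}$ is, in each coordinate, a composition of additive automorphisms whose ``inner'' factor $\alpha_{\beta_a^{-1}(u)}$ (resp. $\beta_{\alpha_u^{-1}(a)}$) depends only on $(a,u)$, so $\lambda_{(a,u)}\in\Aut(A_1\times A_2,+)$, with the inverse exactly as written in the statement.

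Next I would settle \eqref{eq:sumcirc}. Starting from $(a,u)+\lambda_{(a,u)}(b,v)$, I would use \eqref{eq:alphabetalambda} to rewrite the inner maps as $\lambda^{(1)}_a\alpha_{\beta_a^{-1}(u)}=\alpha_u\lambda^{(1)}_{\alpha_u^{-1}(a)}$ and $\lambda^{(2)}_u\beta_{\alpha_u^{-1}(a)}=\beta_a\lambda^{(2)}_{\beta_a^{-1}(u)}$, pull $\alpha_u$ and $\beta_a$ out of the sums since they are additive, and apply \eqref{eq:sumcirc} inside $A_1$ and $A_2$; the result is precisely the right-hand side of \eqref{eq:matchedcirc}, so $(a,u)+\lambda_{(a,u)}(b,v)=(a,u)\circ(b,v)$. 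As a by-product this yields the ``matched pair'' rewriting $\alpha_u(\alpha_u^{-1}(a)\circ b)=a\circ\alpha_{\beta_a^{-1}(u)}(b)$ in $(A_1,\circ)$ and its mirror image in $(A_2,\circ)$, which I will need in the next step.

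The main work, and the step I expect to be the obstacle, is verifying \eqref{trusslambda}, i.e. that $\lambda\colon(A_1\times A_2,\circ)\to\Aut(A_1\times A_2,+)$ is a homomorphism. I would compute $\lambda_{(a,u)\circ(b,v)}$ by substituting \eqref{eq:matchedcirc}, expanding via \eqref{eq:matchedlambda}, and rewriting each coordinate separately: in the first coordinate use the matched-pair identity to turn $\alpha_u(\alpha_u^{-1}(a)\circ b)$ into $a\circ\alpha_{\beta_a^{-1}(u)}(b)$, then apply \eqref{trusslambda} for $\lambda^{(1)}$ in $A_1$, the fact that $\beta$ is a morphism $(A_1,\circ)\to\Aut(A_2,+)$, and finally \eqref{eq:alphabetalambda} once more, reaching $\lambda^{(1)}_a\alpha_{\beta_a^{-1}(u)}\lambda^{(1)}_b\alpha_{\beta_b^{-1}(v)}$; the second coordinate is symmetric under $A_1\leftrightarrow A_2$, $\alpha\leftrightarrow\beta$. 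Together these give $\lambda_{(a,u)}\lambda_{(b,v)}$. Since \eqref{eq:sumcirc} and this homomorphism identity are now in hand, Lemma~\ref{lemma:semigroup} applies and simultaneously yields that $(A_1\times A_2,\circ)$ is a semigroup and that $(A_1\times A_2,+,\circ,\lambda)$ is a left semitruss; essentially all the bookkeeping of the proof is concentrated in these coordinatewise chains of rewrites.

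Finally, for \eqref{eq:clambda} I would argue coordinatewise once more. In the first coordinate, applying \eqref{eq:clambda} of $A_1$ to the pair $\alpha_{\beta_a^{-1}(u)}(d)$, $\alpha_{\beta_a^{-1}(u)}(b)$ gives $\cm^{(1)}_{\lambda^{(1)}_a\alpha_{\beta_a^{-1}(u)}(d)}\lambda^{(1)}_a\alpha_{\beta_a^{-1}(u)}(b)=\lambda^{(1)}_a\cm^{(1)}_{\alpha_{\beta_a^{-1}(u)}(d)}\alpha_{\beta_a^{-1}(u)}(b)$, and \eqref{eq:alphabetac} rewrites the right-hand side as $\lambda^{(1)}_a\alpha_{\beta_a^{-1}(u)}\cm^{(1)}_d(b)$, which is the first coordinate of $\lambda_{(a,u)}\cm_{(d,w)}(b,v)$; the second coordinate is the mirror computation in $A_2$, using the second halves of \eqref{eq:alphabetac} and \eqref{eq:clambda}. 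With all five conditions of Definition~\ref{def:qsemitruss} verified and each $\lambda_{(a,u)}$ bijective, $(A_1\times A_2,+,\circ,\lambda,\cm)$ is a \ourstructure, which completes the proof.
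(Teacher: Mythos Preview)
Your proposal is correct and follows essentially the same approach as the paper: both verify the axioms of Definition~\ref{def:qsemitruss} directly, handle \eqref{eq:csum}, \eqref{eq:condc} and the additivity of $\lambda_{(a,u)}$ componentwise, establish \eqref{eq:sumcirc} via \eqref{eq:alphabetalambda}, concentrate the main computation on the homomorphism identity $\lambda_{(a,u)\circ(b,v)}=\lambda_{(a,u)}\lambda_{(b,v)}$ (whence Lemma~\ref{lemma:semigroup} gives the semitruss structure), and derive \eqref{eq:clambda} from the factor versions together with \eqref{eq:alphabetac}. The only cosmetic difference is that you spell out the verification of \eqref{eq:clambda} in more detail than the paper, which simply asserts it follows from \eqref{eq:matchedc} and \eqref{eq:alphabetac}.
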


Via the matched product we now give an example of a \ourstructure that is not a left cancellative semi-brace, and where the additive and multiplicative operations are different.

\begin{example}\label{MoreThenOne}
   Let $A=\{1,\ldots,n\}$ and $B=C_n=\langle \xi \rangle$ the cyclic group of $n$-elements.
   Let $a+b =a\circ b = b$, $\lambda_a=\id_A$ and $\cm_a(b)=a$, for any $a,b\in A$, then $(A,+,\circ, \lambda, \cm)$ is a \ourstructure, and let $B$ the trivial skew left brace with $(B,+)=(B,\circ)=C_n$, i.e. a \ourstructure with $\lambda_x=\cm_x=\id_B$.
   Consider the natural action of $C_n$ over $A$: $\alpha: B \to A$ defined by $\alpha(\xi^{i})(j) = (1\ 2 \ \ldots n)^{i}(j)$.
   Note that since $\Aut(A,+)=\Sym(A)$ we have that $\alpha:(B,\circ)\to \Aut(A,+)$. It is easy to check that setting $\beta_a=\id$ for any $a\in A$ we get  that $(A,B,\alpha, \beta)$ is a matched product system of \ourstructures.
\end{example}

As in the semi-braces' case we establish a connection between the matched product of \ourstructures and the matched product of their solutions.

For completeness' sake we recall the definition of matched product of solutions given in \cite{CaCoSt20a}.
Let $(S,r_S)$ and $(T,r_T)$ be solutions, $\alpha: T \to \Sym\left(S\right)$ and $\beta: S \to \Sym\left(T\right)$ maps. Put $\alpha_u=\alpha\left(u\right)$, for every $u \in T$, and $\beta_a =\beta\left(a\right)$, for every $a\in S$, then the quadruple
	$\left(r_S,r_T, \alpha,\beta\right)$ is said to be a \emph{matched product system of solutions} if the following conditions hold
	
	{\scriptsize
		\begin{center}
			\begin{minipage}[b]{.5\textwidth}
				\vspace{-\baselineskip}
				\begin{align}\label{eq:s1}\tag{s1}
					\alpha_u\alpha_v = \alpha_{\lambda_u\left(v\right)}\alpha_{\rho_{v}\left(u\right)},
				\end{align}
			\end{minipage}%
			\hfill\hfill\hfill
			\begin{minipage}[b]{.5\textwidth}
				\vspace{-\baselineskip}
				\begin{align}\label{eq:s2}\tag{s2}
					\beta_a\beta_b=\beta_{\lambda_a\left(b\right)}\beta_{\rho_b\left(a\right)},
				\end{align}
			\end{minipage}
		\end{center}

		\begin{center}
			\begin{minipage}[b]{.5\textwidth}
				\vspace{-\baselineskip}
				\begin{align}\label{eq:s3}\tag{s3}
					\rho_{\alpha^{-1}_u\!\left(b\right)}\alpha^{-1}_{\beta_a\left(u\right)}\left(a\right) = \alpha^{-1}_{\beta_{\rho_b\left(a\right)}\beta^{-1}_b\left(u\right)}\rho_b\left(a\right),
				\end{align}
			\end{minipage}%
			\hfill\hfill
			\begin{minipage}[b]{.5\textwidth}
				\vspace{-\baselineskip}
				\begin{align}\label{eq:s4}\tag{s4}
					\rho_{\beta^{-1}_a\!\left(v\right)}\beta^{-1}_{\alpha_u\left(a\right)}\left(u\right) = \beta^{-1}_{\alpha_{\rho_v\left(u\right)}\alpha^{-1}_v\left(a\right)}\rho_v\left(u\right),
				\end{align}
			\end{minipage}
		\end{center}
	
		\begin{center}
			\begin{minipage}[b]{.5\textwidth}
				\vspace{-\baselineskip}
				\begin{align}\label{eq:s5}\tag{s5}
					\lambda_a\alpha_{\beta^{-1}_{a}{\left(u\right)}}{}= \alpha_{u}{\lambda_{\alpha^{-1}_{u}{\left(a\right)}}{}},
				\end{align}
			\end{minipage}%
			\hfill\hfill
			\begin{minipage}[b]{.5\textwidth}
				\vspace{-\baselineskip}
				\begin{align}\label{eq:s6}\tag{s6}
					\lambda_{u}{\beta_{\alpha^{-1}_{u}{\left(a\right)}}{}}=\beta_{a}{\lambda_{\beta^{-1}_{a}{\left(u\right)}}{}},
				\end{align}
			\end{minipage}
		\end{center}
}
\noindent for all $a,b \in S$ and $u,v \in T$.
	
	As shown in {\cite[Theorem 1]{CaCoSt20a}}, any matched product system of solutions determines a new solution on the set $S\times T$. Specifically, if $\left(r_S,r_T, \alpha,\beta\right)$ is a matched product system of solutions, then the map $r:S{ \times} T\times S{ \times} T \to S{ \times} T\times S{ \times} T$ defined by
		\begin{align*}
			&r\left(\left(a, u\right), 
			\left(b, v\right)\right) = 
			\left(\left(\alpha_{u}{\lambda_{\bar{a}}{\left(b\right)}},\, \beta_{a}{\lambda_{\bar{u}}{\left(v\right)}}\right),\ \left(\alpha^{-1}_{\overline{U}}{\rho_{\alpha_{\bar{u}}{\left(b\right)}}{\left(a\right)}},\,  \beta^{-1}_{\overline{A}}{\rho_{\beta_{\bar{a}}{\left(v\right)}}{\left(u\right)}} \right) \right),
		\end{align*}
		where
		\begin{align*}
			\bar{a}=\alpha^{-1}_{u}{\left(a\right)}, \ \bar{u}= \beta^{-1}_{a}{\left(u\right)},\  
			A=\alpha_{u}\lambda_{\bar{a}}{\left(b\right)}, \ U=\beta_{a} \lambda_{\bar{u}}{\left(v\right)}, \  \overline{A}=\alpha^{-1}_{U}{\left(A\right)}, \ \overline{U}= \beta^{-1}_{A}{\left(U\right)},
     \end{align*}
		for all $\left(a,u\right),\left(b,v\right)\in S\times T$, is a solution. This solution is called the \emph{matched product of the solutions} $r_S$ and $r_T$ (via $\alpha$ and $\beta$) and it is denoted by $r_S\bowtie r_T$.

It is a routine computation to prove the following result (see for example \cite[Theorem 9]{CaCoSt20a} \cite[Theorem 6]{CaCoSt20b} in the context of (left cancellative) semi-braces).
\begin{proposition}
    Let $(A_1,A_2,\alpha,\beta)$ a matched product system of \ourstructures. Then $r_{A_1\bowtie A_2}= r_{A_1}\bowtie r_{A_2}$.
\end{proposition}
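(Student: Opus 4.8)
The plan is to verify directly that the solution $r_{A_1 \bowtie A_2}$ obtained from the matched product \ourstructure coincides, formula by formula, with the matched product solution $r_{A_1} \bowtie r_{A_2}$ built from the associated solutions $r_{A_1}$ and $r_{A_2}$. Both objects live on $(A_1 \times A_2)^2$, so it suffices to compare their values on an arbitrary pair $((a,u),(b,v))$. First I would recall from Proposition~\ref{prop:semitruss} that for any \ourstructure $A$ the associated solution is $r_A(a,b) = (\lambda_a(b), \rho_b(a))$ with $\rho_b(a) = \lambda^{-1}_{\lambda_a(b)}\cm_{\lambda_a(b)}(a)$, and apply this to $A = A_1 \bowtie A_2$ using the explicit $\lambda$-map \eqref{eq:matchedlambda}, $\cm$-map \eqref{eq:matchedc}, and the inverse $\lambda^{-1}_{(a,u)}$ computed at the end of the proof of Theorem~\ref{thm:matchedproduct}. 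This gives $r_{A_1\bowtie A_2}((a,u),(b,v))$ as a pair in $A_1 \times A_2$, whose first component is immediately $(\lambda^{(1)}_a \alpha_{\bar u}(b), \lambda^{(2)}_u \beta_{\bar a}(v))$ with $\bar a = \alpha^{-1}_u(a)$, $\bar u = \beta^{-1}_a(u)$, matching the first component of $r_{A_1}\bowtie r_{A_2}$ after using the intertwining relations \eqref{eq:alphabetalambda} (which are precisely conditions \eqref{eq:s5}--\eqref{eq:s6}).

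Next I would check that $(A_1, A_2, \alpha, \beta)$ being a matched product system of \ourstructures forces $(r_{A_1}, r_{A_2}, \alpha, \beta)$ to be a matched product system of \emph{solutions}, i.e. that conditions \eqref{eq:s1}--\eqref{eq:s6} hold. Conditions \eqref{eq:s5}--\eqref{eq:s6} are exactly \eqref{eq:alphabetalambda}. For \eqref{eq:s1}--\eqref{eq:s2}, since $\alpha: (A_2,\circ) \to \Aut(A_1,+)$ and $\beta: (A_1,\circ) \to \Aut(A_2,+)$ are semigroup homomorphisms and $(A,\circ)$ satisfies $u \circ v = \lambda_u(v) \circ \rho_v(u)$ by \eqref{eq:truss1}, one gets $\alpha_u \alpha_v = \alpha_{u\circ v} = \alpha_{\lambda_u(v)}\alpha_{\rho_v(u)}$, and symmetrically for $\beta$. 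Conditions \eqref{eq:s3}--\eqref{eq:s4} are the more delicate ones: they relate the $\rho$-maps to $\alpha, \beta$, and I would derive them by combining \eqref{eq:alphabetac} (the $\cm$-map intertwining) with \eqref{eq:alphabetalambda} and the defining formula $\rho_b(a) = \lambda^{-1}_{\lambda_a(b)}\cm_{\lambda_a(b)}(a)$; this is a matter of substituting and cancelling $\lambda$'s and $\alpha$'s, using that everything in sight is bijective. Once \eqref{eq:s1}--\eqref{eq:s6} are established, the existence and explicit form of $r_{A_1}\bowtie r_{A_2}$ from \cite[Theorem 1]{CaCoSt20a} is available.

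Then it remains to match the second components. The second component of $r_{A_1\bowtie A_2}$ is $\rho^{A_1\bowtie A_2}_{(b,v)}(a,u) = \lambda^{-1}_{(A,U)}\cm_{(A,U)}(a,u)$ where $(A,U) = \lambda_{(a,u)}(b,v) = (\alpha_u \lambda^{(1)}_{\bar a}(b), \beta_a \lambda^{(2)}_{\bar u}(v))$ after applying \eqref{eq:alphabetalambda}. Using \eqref{eq:matchedc} this splits as $\cm_{(A,U)}(a,u) = (\cm^{(1)}_A(a), \cm^{(2)}_U(u))$, and then $\lambda^{-1}_{(A,U)}$ splits according to the formula for $\lambda^{-1}_{(a,u)}$ recalled above. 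After unwinding, the first coordinate becomes $\alpha^{-1}_{\beta^{-1}_A(U)}(\lambda^{(1)}_A)^{-1}\cm^{(1)}_A(a) = \alpha^{-1}_{\overline U}\,\rho^{(1)}_{\lambda^{(1)}_{\bar a}(b)}\cdot(\text{something})$; the key manipulation is to push $\alpha^{-1}_{\overline U}$ past using \eqref{eq:alphabetac} and \eqref{eq:alphabetalambda} so that it lands in the form $\alpha^{-1}_{\overline U}\rho_{\alpha_{\bar u}(b)}(a)$ appearing in the target formula, with $\overline U = \beta^{-1}_A(U)$ as in \cite{CaCoSt20a}. The second coordinate is handled symmetrically, giving $\beta^{-1}_{\overline A}\rho_{\beta_{\bar a}(v)}(u)$.

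The main obstacle I anticipate is exactly this last bookkeeping step: reconciling the order in which $\lambda^{-1}$, $\cm$, $\alpha^{-1}$, $\beta^{-1}$ are applied in the \ourstructure formula against the order baked into the matched product of solutions formula from \cite{CaCoSt20a}. Both are correct, but the naming conventions ($\bar a$ vs.\ $\overline A$, and which $\lambda^{-1}_{\lambda_{\cdot}(\cdot)}$ absorbs which subscript) make the identification fiddly rather than conceptually hard. Because of this I would organize the computation so that the first-component and second-component checks for $A_1$ are done in full, and then merely remark that the $A_2$-side checks are entirely symmetric under swapping $(A_1, +, \circ, \lambda^{(1)}, \cm^{(1)}, \alpha)$ with $(A_2, +, \circ, \lambda^{(2)}, \cm^{(2)}, \beta)$ — which is why the statement is, as the authors say, ``a routine computation.'' No step requires anything beyond the identities already recorded in Definition~\ref{def:qsemitruss}, Proposition~\ref{prop:semitruss}, Proposition~\ref{epitruss}, the defining relations \eqref{eq:alphabetalambda}--\eqref{eq:alphabetac} of a matched product system, and the explicit formula for $\lambda^{-1}_{(a,u)}$ from the proof of Theorem~\ref{thm:matchedproduct}.
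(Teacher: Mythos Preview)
Your proposal is correct and follows precisely the approach indicated by the paper, which in fact omits the proof entirely, stating only that ``it is a routine computation'' and pointing to the analogous arguments in \cite[Theorem 9]{CaCoSt20a} and \cite[Theorem 6]{CaCoSt20b} for semi-braces. Your outline---verifying that a matched product system of \ourstructures yields a matched product system of solutions via \eqref{eq:alphabetalambda}, \eqref{eq:alphabetac}, and \eqref{eq:truss1}, then matching components using the explicit $\lambda$-, $\cm$-, and $\lambda^{-1}$-maps from Theorem~\ref{thm:matchedproduct}---is exactly this routine computation spelled out.
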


\section{\ourstructures with associated  bijective solutions}\label{sec_bijective}

The aim of this section is to prove the following result.

\begin{theorem}\label{maintheorembijective}
If $(X,r)$ is a finite left non-degenerate solution, then  $r$ is bijective
if and only if $(X,r)$ is right non-degenerate.
\end{theorem}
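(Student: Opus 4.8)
The plan is to route everything through the unital structure \ourstructure $M=M(X,r)$ of Proposition~\ref{SolFromSemiTruss} and its associated derived solution, so that both ``bijective'' and ``right non-degenerate'' become statements about the maps in the tuple $(M,+,\circ,\lambda,\cm)$. First I would record the dictionary. Since $(X,r)$ is a finite left non-degenerate solution, $r$ is bijective if and only if $r_M$ is bijective, which by Proposition~\ref{prop:semitruss} holds if and only if every $\cm_a$ ($a\in M$) is bijective; as $\cm\colon(M,+)\to\End(M,+)$ is an anti-homomorphism, $M$ is additively generated by $X$, and each $\cm_x$ with $x\in X$ restricts on $X$ to $\sigma_x$, where $\sigma_x(y)=\lambda_x\rho_{\lambda_y^{-1}(x)}(y)\in X$ (see \eqref{existc}), this is equivalent to bijectivity of the derived solution $s$ on $X$, i.e. (since $X$ is finite) to \emph{injectivity} of every $\sigma_x\colon X\to X$. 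On the other side, right non-degeneracy of $(X,r)$ is by definition bijectivity of every $\rho_x$ ($x\in X$), equivalently (using that $\rho\colon(M,\circ)\to\Map(M,M)$ is an anti-homomorphism and that $r_M$ is right non-degenerate iff $r$ is) bijectivity of every $\rho_a$ with $a\in M$. So the theorem becomes: for a finite left non-degenerate $(X,r)$, all $\rho_x$ are bijective if and only if all $\sigma_x$ are injective.

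For the direction ``bijective $\Rightarrow$ right non-degenerate'' I would quote Castelli, Catino and Stefanelli \cite{CaCaSt21}; within the present setup it can also be read off from Proposition~\ref{prop:opposite}: once all $\cm_a$ are invertible, $(M,+^{\op},\circ,\bar\lambda,\bar\cm)$ is a \ourstructure with associated solution $r_M^{-1}$, and substituting $\rho_b(a)=\lambda^{-1}_{\lambda_a(b)}\cm_{\lambda_a(b)}(a)$ into the identities \eqref{eq:YBE1}--\eqref{eq:YBE3} one checks by a short computation that each $\rho_b$ is injective.

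The real content is the converse. Assume every $\rho_c$ is bijective; I want every $\sigma_x$ injective, equivalently (finiteness) every $\sigma_x$ surjective, equivalently $r\colon X^2\to X^2$ surjective. Here I would run a counting argument. Left non-degeneracy partitions $X^2$ into the fibres $L_u=\{(y,c): \lambda_y(c)=u\}$ of the first component of $r$, each of cardinality $|X|$; right non-degeneracy partitions $X^2$ into the fibres $R_v=\{(y,c): \rho_c(y)=v\}$ of the second component of $r$, each of cardinality $|X|$; and $r^{-1}(u,v)=L_u\cap R_v$, whence $\sum_v|L_u\cap R_v|=|L_u|=|X|$ for every $u$. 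Thus $r$ is bijective as soon as $|L_u\cap R_v|\leq 1$ for all $u,v$, i.e. as soon as two pairs lying in the same first-component fibre and in the same second-component fibre must coincide. This is where the Yang--Baxter equation must be invoked: applying \eqref{eq:YBE1} to propagate $\lambda$ along $r$ and \eqref{eq:YBE3} dually for $\rho$, together with the bijectivity of all $\lambda$'s and all $\rho$'s, one forces the two pairs to agree, and then the pigeonhole identity above upgrades $|L_u\cap R_v|\leq 1$ to $|L_u\cap R_v|=1$ for all $u,v$; hence $r$ is injective, so bijective, and unwinding the first paragraph gives the theorem.

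The step I expect to be the genuine obstacle is precisely this ``general position'' claim, $|L_u\cap R_v|\leq 1$ (equivalently, injectivity of $r$). It is not a formal consequence of the \ourstructure axioms, and finiteness must enter in an essential way --- through the identity $\sum_v|L_u\cap R_v|=|X|$, which has no infinite analogue --- while the full Yang--Baxter equations \eqref{eq:YBE1}--\eqref{eq:YBE3} must be used rather than only the homomorphism properties of $\lambda$ and $\rho$. Everything else is bookkeeping with the dictionary of Section~\ref{sec_LNDsAndYBsemitrusses}, and along the way this dictionary makes transparent exactly which data of the tuple controls bijectivity (the $\cm$-map) and right non-degeneracy (the $\rho$-map).
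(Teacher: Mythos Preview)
Your dictionary in the first paragraph is correct and matches the paper's setup, and you are right that the content lies in ``right non-degenerate $\Rightarrow$ bijective''. However, your argument for this direction does not go through. The fibre decomposition $\sum_v |L_u\cap R_v|=|X|$ is a tautology on a finite set: it says only that a self-map of $X^2$ is bijective iff it is injective. So when you reduce to $|L_u\cap R_v|\leq 1$, you have reduced exactly to proving $r$ is injective --- which is the entire statement. You then claim this follows from \eqref{eq:YBE1} and \eqref{eq:YBE3}, but those relations tell you that $\lambda_a\lambda_b=\lambda_c\lambda_d$ and $\rho_b\rho_a=\rho_d\rho_c$ whenever $r(a,b)=r(c,d)$; they give you nothing about the individual elements $a,b,c,d$ themselves. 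I do not see how to force $(a,b)=(c,d)$ from this data, and you have not supplied an argument. You yourself flag this step as ``the genuine obstacle'' --- and indeed, it is the whole proof.

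The paper's approach is genuinely different and supplies the missing idea: the \emph{diagonal map} $\mathfrak{q}(a)=\lambda_a^{-1}(a)$. One first shows (Lemma~\ref{lemma:diagonal}) that non-degeneracy makes $\mathfrak{q}$ injective: if $\mathfrak{q}(a)=\mathfrak{q}(d)=t$ then a short computation gives $\rho_t(a)=\cm_t(t)=\rho_t(d)$, and bijectivity of $\rho_t$ finishes it. Finiteness (via the $\mathbb{N}$-grading of $M(X,r)$ with finite homogeneous components) then makes $\mathfrak{q}$ bijective. Finally (Lemma~\ref{prop:bijdiagonal}), one derives the identity $\cm_b=\lambda_b\,\mathfrak{q}\,\rho_b\,\mathfrak{q}^{-1}$, exhibiting $\cm_b$ as a composition of bijections. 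This is where finiteness actually enters in an essential way --- not through your pigeonhole identity on fibres, but by upgrading injectivity of $\mathfrak{q}$ to bijectivity. For the converse direction the paper likewise routes through $\mathfrak{q}$ (Lemma~\ref{lem:rewritediag} and Proposition~\ref{bijectivenondeg1}) rather than through Proposition~\ref{prop:opposite}; your appeal to the latter would need an argument why the opposite \ourstructure forces $\rho_b$ injective, which you have not given either.
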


Castelli, Catino and Stefanelli in \cite{CaCaSt21} have proven the necessity of the condition. Their proof is  based on the notion of  $q$-cycle sets introduced by Rump in \cite{Ru19}.  We  will prove the sufficiency as well and we reprove the necessity in terms of  \ourstructures. 

Recall that the solution $(A,r_A)$ associated to a \ourstructure $A$ is always left non-degenerate. If the solution  also is right non-degenerate, we call the \ourstructure a {\it non-degenerate \ourstructure}. An example is the structure \ourstructure $S(X,r)$ associated to a non-degenerate solution $(X,r)$.

Let
  $$\mathfrak{q}: A \rightarrow  A : a\mapsto \lambda_a^{-1}(a) ,$$ 
it is called the \emph{diagonal map} on a \ourstructure $A$.

In \cite[Corollary 2 of Proposition 8]{Ru19}, Rump proved that for any bijective solution $(X,r)$,  $\mathfrak{q}$ is bijective if and only if $(X,r)$ is non-degenerate. In the following lemma  we show that one does  not  need the bijectiveness of $r_A$ to prove that $\mathfrak{q}$ is injective. So for any $\mathbb{N}$-graded  non-degenerate \ourstructure with all homogeneous components $A_n$ finite, we get that $\mathfrak{q}$ is bijective.

\begin{lemma}\label{lemma:diagonal}
    Let $(A,+,\circ, \lambda, \cm)$ be a non-degenerate \ourstructure. Then the diagonal map $\mathfrak{q}$ is injective.
    \begin{proof}
        Let $a,d\in A$ and assume that  $\mathfrak{q}(a) =\mathfrak{q}(d)$. Put  say $t=\mathfrak{q}(a)$. It follows that
        \begin{align*}
            \rho_t(a)&= \lambda^{-1}_{\lambda_a(t)}\cm_{\lambda_a(t)}(a)
            =\lambda^{-1}_{\lambda_a\lambda^{-1}_a(a)}\cm_{\lambda_a(t)}(a)
            \\ &= \lambda^{-1}_a\cm_{\lambda_a(t)}(a)
            =\cm_{\lambda^{-1}_a\lambda_a(t)}\lambda^{-1}_a(a)
            = \cm_t(t).
        \end{align*}
        Similarly,  $\rho_t(d) = \cm_t(t)$. Hence, $\rho_t(a)=\rho_t(d)$.
        Since, by assumption,  $\rho_t$ is bijective,  it follows that $a=d$. So,  $\mathfrak{q}$ is injective.
    \end{proof}
\end{lemma}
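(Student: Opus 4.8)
The plan is to show that, given $\mathfrak{q}(a)$, the value $\rho_{\mathfrak{q}(a)}(a)$ is completely determined \emph{independently of $a$}, and then to invoke right non-degeneracy to recover $a$ from it.

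First I would recall that the $\rho$-map of $A$ is $\rho_b(a) = \lambda^{-1}_{\lambda_a(b)}\cm_{\lambda_a(b)}(a)$. Suppose $a,d\in A$ with $\mathfrak{q}(a)=\mathfrak{q}(d)$, and set $t=\mathfrak{q}(a)=\lambda_a^{-1}(a)$, so that $\lambda_a(t)=a$. Substituting $b=t$ into the formula for $\rho$ gives $\rho_t(a)=\lambda^{-1}_{a}\cm_{a}(a)$. Now I would apply \eqref{eq:clambda}, i.e. $\cm_{\lambda_a(d')}\lambda_a(b')=\lambda_a\cm_{d'}(b')$, with $b'=d'=t$: since $\lambda_a(t)=a$, this reads $\cm_a(a)=\lambda_a\cm_t(t)$, whence $\rho_t(a)=\lambda^{-1}_a\lambda_a\cm_t(t)=\cm_t(t)$. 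The right-hand side depends only on $t=\mathfrak{q}(a)$. Running the identical computation with $d$ in place of $a$ (and using $\mathfrak{q}(d)=t$) yields $\rho_t(d)=\cm_t(t)$ as well, so $\rho_t(a)=\rho_t(d)$. Finally, because the \ourstructure is non-degenerate, $\rho_t$ is bijective, and therefore $a=d$; this proves that $\mathfrak{q}$ is injective.

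The computation is short, and the only real point is recognizing that the diagonal value $\rho_t(a)$ collapses to the $a$-independent expression $\cm_t(t)$ — this is exactly the role of the compatibility \eqref{eq:clambda} between the $\lambda$-map and the $\cm$-map. I do not expect any genuine obstacle here; one should merely be careful that no stronger hypothesis (such as bijectivity of $r_A$, or equivalently non-degeneracy of all $\cm_a$) is used in producing the formula $\rho_t(a)=\cm_t(t)$: non-degeneracy of a single $\rho_t$ is invoked only once, at the very last step, to conclude. Surjectivity of $\mathfrak{q}$ in the $\mathbb{N}$-graded case with finite homogeneous components would then follow by a finiteness/counting argument applied degree by degree, but that lies outside the present lemma.
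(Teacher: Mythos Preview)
Your proof is correct and follows essentially the same route as the paper's: both compute $\rho_t(a)=\cm_t(t)$ via the identity \eqref{eq:clambda} (the paper writes it as $\lambda_a^{-1}\cm_{\lambda_a(t)}=\cm_t\lambda_a^{-1}$, you as $\cm_a(a)=\lambda_a\cm_t(t)$), and then invoke bijectivity of $\rho_t$. Your additional remark that only non-degeneracy of $\rho_t$ is used, not bijectivity of any $\cm_a$, is correct and worth making explicit.
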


Let $X$ be a set with $ |X|\geq 2$. Consider the idempotent solution on $X$ given by $r(x,y) = (y,y)$ (see Example~\ref{exidempotent}). Clearly $r$ is neither bijective nor right non-degenerate. However,
the solution is left non-degenerate (as each $\lambda_x=\id_X$) and  
$\mathfrak{q}$ is bijective (since $\mathfrak{q}(x)=\lambda^{-1}_x(x) = x$, for any $x\in X$). This shows that for an arbitrary solution $(X,r)$ one can have $\mathfrak{q}$ bijective even if $r$ is neither  bijective nor right non-degenerate; and thus the bijectiveness assumption is essential in Rump's result.

\begin{lemma} \label{prop:bijdiagonal}
    Let $(A,+,\circ, \lambda,\cm)$ be a non-degenerate \ourstructure. If the diagonal map $\mathfrak{q}$ is bijective, then  $r_A$ is bijective. 
    \begin{proof}
    By \cref{prop:semitruss}, $r_A$ is bijective if and only if $\cm_a$ is bijective for any $a\in A$. From \eqref{eq:clambda} and \eqref{eq:YBE1} we get that, for $a,b\in A$, 
    \begin{align*}
        \rho_b(a) = \lambda^{-1}_{\lambda_a(b)}\cm_{\lambda_a(b)}(a) = \lambda^{-1}_{\lambda_a(b)}\lambda_a\cm_b\lambda^{-1}_a(a)= \lambda_{\rho_b(a)}\lambda^{-1}_b\cm_b\mathfrak{q}(a).
    \end{align*}
    Since, by assumption,  $\rho_b$ is bijective it yields $\lambda_a\lambda^{-1}_b\cm_b\mathfrak{q}\rho^{-1}_b(a) =a$ and thus
    \begin{align*}
        \mathfrak{q}(a) = \lambda^{-1}_b\cm_b\mathfrak{q}\rho^{-1}_b(a).
    \end{align*}
    Now, assume $\mathfrak{q}$ is bijective. Then
    \begin{align*}
        \cm_b(a) = \lambda_b\mathfrak{q}\rho_b\mathfrak{q}^{-1}(a),
    \end{align*}
    i.e. $\cm_b$ is bijective. So $r_A$ is bijective.
    \end{proof}
\end{lemma}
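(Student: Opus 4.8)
The plan is to prove \cref{prop:bijdiagonal}, which asserts that for a non-degenerate \ourstructure $(A,+,\circ,\lambda,\cm)$, bijectivity of the diagonal map $\mathfrak{q}$ forces the associated solution $r_A$ to be bijective. By \cref{prop:semitruss}, $r_A$ is bijective exactly when every $\cm_a$ is bijective, so the whole task reduces to producing an explicit two-sided inverse (or at least a bijectivity argument) for each $\cm_b$ under the standing hypothesis that all $\rho_b$ are bijective (non-degeneracy) and that $\mathfrak{q}$ is bijective.

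First I would derive a single clean identity relating $\rho_b$, $\cm_b$, $\lambda_b$ and $\mathfrak{q}$. Starting from the definition $\rho_b(a)=\lambda^{-1}_{\lambda_a(b)}\cm_{\lambda_a(b)}(a)$, I would rewrite $\cm_{\lambda_a(b)}$ using condition \eqref{eq:clambda} in the form $\cm_{\lambda_a(b)}\lambda_a=\lambda_a\cm_b$, i.e. $\cm_{\lambda_a(b)}(a)=\cm_{\lambda_a(b)}\lambda_a\lambda_a^{-1}(a)=\lambda_a\cm_b\lambda_a^{-1}(a)=\lambda_a\cm_b\mathfrak{q}(a)$, recognizing $\lambda_a^{-1}(a)=\mathfrak{q}(a)$. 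Substituting and using \eqref{trusslambda}/\eqref{eq:YBE1} to handle the outer $\lambda^{-1}_{\lambda_a(b)}$ (writing $\lambda_{\rho_b(a)}\lambda_b^{-1}$ for it via the relevant composition identity), I obtain $\rho_b(a)=\lambda_{\rho_b(a)}\lambda_b^{-1}\cm_b\mathfrak{q}(a)$. This is precisely the displayed first identity in the lemma, and the bijectivity of $\rho_b$ then lets me cancel to arrive at $\mathfrak{q}(a)=\lambda_b^{-1}\cm_b\mathfrak{q}\rho_b^{-1}(a)$, valid for all $a$.

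Next, invoking the hypothesis that $\mathfrak{q}$ is bijective, I would solve the previous identity for $\cm_b$. Rearranging $\mathfrak{q}=\lambda_b^{-1}\cm_b\mathfrak{q}\rho_b^{-1}$ gives $\cm_b\mathfrak{q}\rho_b^{-1}=\lambda_b\mathfrak{q}$, hence $\cm_b=\lambda_b\mathfrak{q}\rho_b\mathfrak{q}^{-1}$ after composing on the right with $\rho_b\mathfrak{q}^{-1}$. Each factor on the right-hand side---$\lambda_b$ (an automorphism of $(A,+)$), $\mathfrak{q}$ (bijective by assumption), $\rho_b$ (bijective by non-degeneracy), and $\mathfrak{q}^{-1}$---is a bijection, so $\cm_b$ is a composition of bijections and is therefore itself bijective. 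Since $b$ was arbitrary, all $\cm_b$ are bijective, and \cref{prop:semitruss} yields that $r_A$ is bijective.

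The main obstacle is the first algebraic step: correctly manipulating the outer inverse map $\lambda^{-1}_{\lambda_a(b)}$ so that it becomes $\lambda_{\rho_b(a)}\lambda_b^{-1}$, which requires applying \eqref{eq:YBE1} (equivalently \eqref{trusslambda} together with $a\circ b=\lambda_a(b)\circ\rho_b(a)$) to rewrite $\lambda_{\lambda_a(b)\circ\rho_b(a)}=\lambda_{a\circ b}=\lambda_a\lambda_b$ and extract the needed conjugation. Care is needed because this mixes the multiplicative relation $\lambda_a(b)\circ\rho_b(a)=a\circ b$ with the automorphism property of $\lambda$; once that substitution is set up correctly the cancellations are forced and the remainder is purely formal bijection-chasing, so no delicate estimates or case analysis are involved.
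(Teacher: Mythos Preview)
Your proposal is correct and follows essentially the same route as the paper: you reduce via \cref{prop:semitruss} to bijectivity of each $\cm_b$, derive the identity $\rho_b(a)=\lambda_{\rho_b(a)}\lambda_b^{-1}\cm_b\,\mathfrak{q}(a)$ from \eqref{eq:clambda} and \eqref{eq:YBE1}, and then solve for $\cm_b=\lambda_b\,\mathfrak{q}\,\rho_b\,\mathfrak{q}^{-1}$ as a composition of bijections. The only imprecision is phrasing---what you actually replace by $\lambda_{\rho_b(a)}\lambda_b^{-1}$ is the composite $\lambda^{-1}_{\lambda_a(b)}\lambda_a$, not $\lambda^{-1}_{\lambda_a(b)}$ alone---but your displayed formula and subsequent reasoning are correct.
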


To prove the main result of this section we need one more lemma.

\begin{lemma}\label{lem:rewritediag}
 Let $(A,+,\circ,\lambda,\cm)$ be a \ourstructure with $\cm_a$ bijective for any $a\in A$ and let $\mathfrak{q}$ be the diagonal map. For any $a,b\in A$, it holds
 \begin{align*}
     \lambda^{-1}_b\mathfrak{q}(a)= \mathfrak{q}(\lambda^{-1}_{\cm^{-1}_a\lambda_a(b)}(a)).
 \end{align*}
\begin{proof}
By \eqref{eq:YBE1}, 
we get
$$\lambda^{-1}_{\rho_{\lambda^{-1}_a(b)}(a)}\lambda^{-1}_b = \lambda^{-1}_{\rho_{\lambda^{-1}_a(b)}(a)}\lambda^{-1}_{\lambda_a(\lambda^{-1}_a(b))} = \lambda^{-1}_{\lambda^{-1}_a(b)}\lambda^{-1}_a.$$
Next, put $a=\cm^{-1}_b\lambda_b(x)$, to obtain
\begin{equation}\label{eq:rho}
\lambda^{-1}_{\rho_{\lambda^{-1}_{\cm^{-1}_b\lambda_b(x)}(b)}(\cm^{-1}_b\lambda_b(x))}\lambda^{-1}_b(b) =  \lambda^{-1}_{\lambda^{-1}_{\cm^{-1}_b\lambda_b(x)}(b)}\lambda^{-1}_{\cm^{-1}_b\lambda_b(x)} (b).
\end{equation}
By definition, $\rho_b(a) = \lambda^{-1}_{\lambda_a(b)}\cm_{\lambda_a(b)}(a)$, and thus $$\rho_{\lambda^{-1}_{\cm^{-1}_b\lambda_b(x)}(b)}(\cm^{-1}_b\lambda_b(x)) = \lambda^{-1}_b \cm_b(\cm^{-1}_b\lambda_b(x)) = x.$$
Hence, \eqref{eq:rho} is equivalent to
$$\lambda^{-1}_{x}\mathfrak{q}(b) = \mathfrak{q}(\lambda^{-1}_{\cm^{-1}_b\lambda_b(x)}(b)),$$
as desired.
\end{proof}
\end{lemma}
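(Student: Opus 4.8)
The plan is to distill from the Yang--Baxter relation \eqref{eq:YBE1} a single identity among inverse $\lambda$-maps, and then specialize one of its free variables. First I would apply \eqref{eq:YBE1} with $b$ replaced by $\lambda_a^{-1}(b)$; since $\lambda_a\bigl(\lambda_a^{-1}(b)\bigr)=b$, inverting both sides gives
\[
\lambda^{-1}_{\lambda^{-1}_a(b)}\lambda^{-1}_a \;=\; \lambda^{-1}_{\rho_{\lambda^{-1}_a(b)}(a)}\lambda^{-1}_b ,
\]
valid for all $a,b\in A$. This is the only input from the Yang--Baxter equations, and it is where the $\rho$-map enters.

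Next I would substitute $a\mapsto\cm^{-1}_b\lambda_b(x)$ — legitimate because every $\cm_b$ is bijective — and evaluate both sides at the element $b\in A$. On the $\rho$-side the index collapses: writing $c:=\lambda^{-1}_{\cm^{-1}_b\lambda_b(x)}(b)$, the defining formula $\rho_c(a')=\lambda^{-1}_{\lambda_{a'}(c)}\cm_{\lambda_{a'}(c)}(a')$ with $a'=\cm^{-1}_b\lambda_b(x)$ yields $\lambda_{a'}(c)=b$, hence $\rho_c\bigl(\cm^{-1}_b\lambda_b(x)\bigr)=\lambda^{-1}_b\cm_b\bigl(\cm^{-1}_b\lambda_b(x)\bigr)=x$. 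So the $\rho$-side of the identity, applied to $b$, becomes $\lambda^{-1}_x\lambda^{-1}_b(b)=\lambda^{-1}_x\,\mathfrak{q}(b)$.

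On the other side, the same substitution turns $\lambda^{-1}_{\lambda^{-1}_a(b)}\lambda^{-1}_a(b)$ into $\lambda^{-1}_c(c)$ with $c$ as above, which is exactly $\mathfrak{q}(c)=\mathfrak{q}\bigl(\lambda^{-1}_{\cm^{-1}_b\lambda_b(x)}(b)\bigr)$ by the definition $\mathfrak{q}(y)=\lambda^{-1}_y(y)$. Equating the two expressions gives $\lambda^{-1}_x\mathfrak{q}(b)=\mathfrak{q}\bigl(\lambda^{-1}_{\cm^{-1}_b\lambda_b(x)}(b)\bigr)$, which is the assertion of the lemma after renaming $(b,x)$ to $(a,b)$.

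Everything here is formal manipulation; the only point requiring attention is consistent bookkeeping of which element plays the role of ``$a$'' and which of ``$b$'' across the three generic formulas invoked (the definition of $\rho$, the definition of $\mathfrak{q}$, and the specialized form of \eqref{eq:YBE1}), together with checking that the substitution $a\mapsto\cm^{-1}_b\lambda_b(x)$ is applied uniformly on both sides. I do not anticipate any genuine obstacle beyond this.
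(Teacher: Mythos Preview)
Your proposal is correct and follows essentially the same route as the paper's proof: invert \eqref{eq:YBE1} after the substitution $b\mapsto\lambda_a^{-1}(b)$, specialize $a\mapsto\cm_b^{-1}\lambda_b(x)$, use the defining formula for $\rho$ to see that the $\rho$-index collapses to $x$, evaluate at $b$ to produce $\mathfrak{q}(b)$ and $\mathfrak{q}(c)$ on the two sides, and finally relabel $(b,x)$ as $(a,b)$. The intermediate observation $\lambda_{a'}(c)=b$ that you make explicit is exactly what the paper's computation $\rho_{\lambda^{-1}_{\cm^{-1}_b\lambda_b(x)}(b)}(\cm^{-1}_b\lambda_b(x))=\lambda_b^{-1}\cm_b(\cm_b^{-1}\lambda_b(x))=x$ is using.
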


The sufficiency of the next proposition was proven by Castelli, Catino and Stefanelli in \cite{CaCaSt21}. We translated their proof in the language of \ourstructures and added it for completeness' sake.

\begin{proposition}\label{bijectivenondeg1}
    Let $(A,+,\circ, \lambda,\cm)$ be an $\mathbb{N}$-graded  \ourstructure with all homogeneous components $A_n$ finite and with  $(A,r_A)$
    its associated left non-degenerate solution.
    Then,  $(A,r_A)$ is right non-degenerate if and only if $r_A$ is bijective.
\end{proposition}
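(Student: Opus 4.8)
The plan is to funnel both implications of the proposition through the bijectivity of the diagonal map $\mathfrak{q}\colon A\to A,\ a\mapsto\lambda_a^{-1}(a)$. By Proposition~\ref{prop:semitruss}, $r_A$ is bijective if and only if every $\cm_a$ is bijective, so the assertion reduces to: all $\cm_a$ bijective $\Leftrightarrow$ $(A,r_A)$ right non-degenerate. Since each $\lambda_a$ is a graded automorphism, $\mathfrak{q}$ is degree-preserving, hence restricts to a self-map of each finite set $A_n$, which is bijective as soon as it is injective or surjective. For the implication ``$(A,r_A)$ right non-degenerate $\Rightarrow r_A$ bijective'' this already suffices: Lemma~\ref{lemma:diagonal} shows $\mathfrak{q}$ is injective, hence bijective on every $A_n$ and so on $A$, and then Lemma~\ref{prop:bijdiagonal} gives that $r_A$ is bijective.

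The substantive direction is the converse (the statement obtained by Castelli--Catino--Stefanelli). Assume all $\cm_a$ are bijective; I would first prove $\mathfrak{q}$ is bijective. Fix $n$ and set $G_n:=\{\lambda_c|_{A_n}:c\in A\}$. Since $\lambda\colon(A,\circ)\to\Aut(A,+)$ is a homomorphism and each $\lambda_c$ preserves degree, every $\lambda_c$ restricts to a permutation of $A_n$, and $G_n$ is a subsemigroup of the finite group $\Sym(A_n)$, hence a subgroup; in particular $\{\lambda_c^{-1}|_{A_n}:c\in A\}=G_n$. Thus for $a\in A_n$ the element $\mathfrak{q}(a)=\lambda_a^{-1}(a)$ lies in the finite $G_n$-orbit $\mathcal{O}$ of $a$, and $\mathfrak{q}$ maps each $G_n$-orbit into itself. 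Now apply Lemma~\ref{lem:rewritediag}: $\lambda_b^{-1}\mathfrak{q}(a)=\mathfrak{q}\bigl(\lambda^{-1}_{\cm_a^{-1}\lambda_a(b)}(a)\bigr)$ for all $b\in A$. Letting $b$ range over $A$, the left-hand side ranges over $\{\lambda_b^{-1}(\mathfrak{q}(a)):b\in A\}=G_n\cdot\mathfrak{q}(a)=\mathcal{O}$, while $b\mapsto\cm_a^{-1}\lambda_a(b)$ is a bijection of $A$, so the right-hand side ranges over $\{\mathfrak{q}(\lambda_c^{-1}(a)):c\in A\}=\mathfrak{q}(\mathcal{O})$. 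Hence $\mathfrak{q}(\mathcal{O})=\mathcal{O}$, and finiteness of $\mathcal{O}$ forces $\mathfrak{q}|_{\mathcal{O}}$ to be a bijection. Since the $G_n$-orbits partition $A_n$, $\mathfrak{q}$ is bijective on $A_n$, hence on $A$.

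With $\mathfrak{q}$ bijective in hand, right non-degeneracy follows from the identity already extracted in the proof of Lemma~\ref{prop:bijdiagonal}: using \eqref{eq:clambda} and \eqref{eq:truss1} one gets $\rho_b(a)=\lambda_{\rho_b(a)}\lambda_b^{-1}\cm_b\mathfrak{q}(a)$, that is, $\mathfrak{q}\circ\rho_b=\lambda_b^{-1}\cm_b\circ\mathfrak{q}$. Since $\mathfrak{q}$, $\lambda_b^{-1}$ and $\cm_b$ are all bijections of $A$, so is $\rho_b=\mathfrak{q}^{-1}\lambda_b^{-1}\cm_b\mathfrak{q}$; as this holds for every $b$, $(A,r_A)$ is right non-degenerate. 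Combining this with the easy direction above and Proposition~\ref{prop:semitruss} finishes the proof.

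The main obstacle is exactly the bijectivity of $\mathfrak{q}$ in the bijective case: a priori one only controls the $\cm$-maps, not the $\rho$-maps, so Lemma~\ref{lemma:diagonal} does not apply directly. The point where finiteness of the components $A_n$ is genuinely used, beyond ``an injective self-map of a finite set is bijective'', is the observation that $\{\lambda_c|_{A_n}:c\in A\}$ — being a finite subsemigroup of a symmetric group — is in fact a group; this is what lets Lemma~\ref{lem:rewritediag} be read as a surjectivity statement for $\mathfrak{q}$ on each orbit, which together with orbit-preservation yields bijectivity.
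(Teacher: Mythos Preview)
Your proof is correct and follows essentially the same strategy as the paper: both directions are funneled through bijectivity of the diagonal map $\mathfrak{q}$ via Lemmas~\ref{lemma:diagonal}, \ref{prop:bijdiagonal} and \ref{lem:rewritediag}. The only differences are presentational: where the paper iterates Lemma~\ref{lem:rewritediag} $m$ times (using a common exponent for the $\lambda_z^{-1}|_{A_n}$) to exhibit a preimage under $\mathfrak{q}$, you recast the same lemma as the equality $\mathfrak{q}(\mathcal{O})=\mathcal{O}$ on each $G_n$-orbit; and where the paper checks injectivity of $\rho_a$ by hand, you extract the cleaner closed formula $\rho_b=\mathfrak{q}^{-1}\lambda_b^{-1}\cm_b\,\mathfrak{q}$ (which is exactly the identity derived inside the proof of Lemma~\ref{prop:bijdiagonal}).
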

\begin{proof}  Assume first that the solution $(A,r_A)$ is non-degenerate, i.e. $(A,+,\circ, \lambda, \cm)$ is a non-degenerate \ourstructure. By Lemma \ref{lemma:diagonal} the diagonal map $\mathfrak{q}$ is injective. As $\mathfrak{q}$ is degree preserving and $A$ is an $\mathbb{N}$-graded  \ourstructure with all homogeneous components $A_n$ finite, it follows that $\mathfrak{q}$ is bijective. By  \cref{prop:bijdiagonal}, we conclude that $r_A$ is bijective.

To prove the converse, assume that $(A,r_A)$ is bijective (i.e. $\cm_a$ is bijective for all $a\in A$). First, we prove that the diagonal map $\mathfrak{q}$ is bijective.
Let $n\in \mathbb{N}$. Without loss of generality, we focus on the restrictions of $\lambda,\rho,\cm$ to $A_n$ for a fixed $n\in \mathbb{N}$.
Since $A_n$ is finite, there exists $m\in \mathbb{N}$ such that $(\lambda^{-1}_z)^{m}=\id_{A_n}$ for any $z\in A_n$. Now, if $m=1$, then $\mathfrak{q}=\id_{A_n}$ and in particular $\mathfrak{q}$ is bijective. If $m\geq 2$, then by \cref{lem:rewritediag}  for $a\in A_n$,
\begin{align*}
    a=(\lambda_a^{-1})^{m}(a)&=(\lambda^{-1}_a)^{m-2}\lambda^{-1}_a\mathfrak{q}(a)\\
    &=(\lambda^{-1}_a)^{m-2}\mathfrak{q}\lambda^{-1}_{\cm_a^{-1}\lambda_a(a)}(a)\\
    &=(\lambda^{-1}_a)^{m-3}\lambda^{-1}_a\mathfrak{q}\lambda^{-1}_{\cm_a^{-1}\lambda_a(a)}(a)\\
    &=(\lambda^{-1}_a)^{m-3}\mathfrak{q}\lambda_{\cm^{-1}_{\lambda^{-1}_{\cm_a^{-1}\lambda_a(a)}(a)}\lambda_{\lambda^{-1}_{\cm_a^{-1}\lambda_a(a)}(a)}(a)}\lambda^{-1}_{\cm_a^{-1}\lambda_a(a)}(a),
\end{align*}
and so on. So there exists $t\in A_n$ such that $a=\mathfrak{q}(t)$. Therefore $\mathfrak{q}$ is surjective and since $A_n$ is finite, $\mathfrak{q}$ is bijective. Now we will prove that $(A,r_A)$ is right non-degenerate. As the $\rho$-map is degree preserving, it is enough to prove that $\rho_a$ is injective, for all $a\in A$. Assume $\rho_a(b) = \rho_a(d)$, i.e. $\lambda^{-1}_{\lambda_b(a)}\cm_{\lambda_b(a)}(b) = \lambda^{-1}_{\lambda_d(a)}\cm_{\lambda_d(a)}(d)$. Therefore \eqref{eq:clambda} yields
\begin{align*}
     \lambda^{-1}_{\lambda_b(a)}\lambda_b \cm_a\mathfrak{q}(b)=\lambda^{-1}_{\lambda_b(a)}\lambda_b \cm_a\lambda^{-1}_b(b)=\lambda^{-1}_{\lambda_d(a)}\lambda_d \cm_a\lambda^{-1}_d(d)=\lambda^{-1}_{\lambda_d(a)}\lambda_d\cm_a\mathfrak{q}(d).
\end{align*}
Hence, since
$\lambda^{-1}_{\lambda_b(a)}\lambda_b =\lambda_{\rho_a(b)}\lambda^{-1}_a=\lambda_{\rho_a(d)}\lambda^{-1}_a=\lambda^{-1}_{\lambda_d(a)}\lambda_d$
we get $\cm_a\mathfrak{q}(b)=\cm_a\mathfrak{q}(d)$. As $\cm_a$ is bijective, this implies $\mathfrak{q}(b)=\mathfrak{q}(d)$, i.e. $b=d$ and thus $\rho_a$ is injective.
\end{proof}

We are now in a position to prove the main result of this section.

\begin{proof}[Proof of Theorem~\ref{maintheorembijective}]
Let $(X,r)$ be a finite left non-degenerate solution.
From Proposition \ref{SolFromSemiTruss}, we know that we can associate to $(X,r)$ its unital structure \ourstructure $M=(M(X,r),+,\circ,\lambda,\cm)$ and a left non-degenerate solution $(M,r_M)$. Furthermore, $M$ is an $\mathbb{N}$-graded \ourstructure with all homogeneous components finite. From \cref{bijectivenondeg1} it follows that $(M,r_M)$ is right non-degenerate if and only if $r_M$ is bijective. Since $(M,r_M)$ being right non-degenerate is equivalent to $(X,r)$ being right non-degenerate, and $r_M$ being bijective is equivalent to $r$ being bijective, we get that $(X,r)$ is right non-degenerate if and only if $r$ is bijective.
\end{proof}

In Lemma~\ref{prop:bijdiagonal} it was shown that the bijectiveness of $\mathfrak{q}$ is an important property to prove that $r$ is bijective. In \cite[Lemma  4.4]{CJV2020} the bijectiveness of $\mathfrak{q}$ is proven in case the solution is irretractable. The following proposition slightly extends this result and the proof follows the same lines.

\begin{proposition}
  Let $(A,+,\circ, \lambda, \cm)$ be a non-degenerate \ourstructure. 
 Assume the following property holds for all $a,b\in A$:
 $(\lambda_a, \rho_a)=(\lambda_b, \rho_b)$ implies $a=b$. Then the  diagonal map $\mathfrak{q}$ is bijective and its inverse is given by $a\mapsto \rho^{-1}_a(a)$.
\end{proposition}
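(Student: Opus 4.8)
The plan is to show $\mathfrak{q}$ is injective (which is free from Lemma~\ref{lemma:diagonal}) and that the assignment $a \mapsto \rho_a^{-1}(a)$ is a two-sided inverse, which will simultaneously establish surjectivity. First I would recall the computation carried out inside the proof of Lemma~\ref{lemma:diagonal}: for any $a \in A$, setting $t = \mathfrak{q}(a) = \lambda_a^{-1}(a)$, one has
\[
\rho_t(a) = \lambda^{-1}_{\lambda_a(t)}\cm_{\lambda_a(t)}(a) = \lambda_a^{-1}\cm_{\lambda_a(t)}(a) = \cm_{\lambda_a^{-1}\lambda_a(t)}\lambda_a^{-1}(a) = \cm_t(t),
\]
using \eqref{eq:clambda} in the penultimate step. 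Thus $\rho_{\mathfrak{q}(a)}(a) = \cm_{\mathfrak{q}(a)}(\mathfrak{q}(a))$ for all $a$. In particular $\rho_{\mathfrak{q}(a)}$ applied to $a$ lands in the image of $\mathfrak{q}$ in a controlled way; the key is to read this as saying that $a$ is itself recoverable from $\mathfrak{q}(a)$.

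The cleanest route is this. Since $A$ is non-degenerate, every $\rho_c$ is bijective. I claim that $\mathfrak{q}$ is surjective with $\mathfrak{q}^{-1}(c) = \rho_c^{-1}(c)$ for each $c \in A$. Indeed, fix $c \in A$ and put $a := \rho_c^{-1}(c)$, so $\rho_c(a) = c$. It suffices to prove $\mathfrak{q}(a) = c$. From the identity above (applied with this $a$), $\rho_{\mathfrak{q}(a)}(a) = \cm_{\mathfrak{q}(a)}(\mathfrak{q}(a))$. The plan is to combine this with the hypothesis that $(\lambda_a,\rho_a) = (\lambda_b,\rho_b)$ forces $a = b$, following the lines of \cite[Lemma 4.4]{CJV2020}: one shows that the element $c$ and the element $\mathfrak{q}(a)$ have the same $\lambda$-map and the same $\rho$-map, hence are equal. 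For the $\lambda$-part, note $\lambda_a(\mathfrak{q}(a)) = \lambda_a\lambda_a^{-1}(a) = a$, and from \eqref{eq:truss1} $a \circ \mathfrak{q}(a) = \lambda_a(\mathfrak{q}(a)) \circ \rho_{\mathfrak{q}(a)}(a) = a \circ \rho_{\mathfrak{q}(a)}(a)$, so by left cancellativity considerations (or directly comparing the defining relation $r_A(a,\mathfrak{q}(a)) = (a, \rho_{\mathfrak{q}(a)}(a))$ with $r_A$ applied to $(a,c)$) one identifies $\rho_{\mathfrak{q}(a)}(a)$ with $\rho_c(a) = c$. This pins down $\mathfrak{q}(a)$ and $c$ as a pair with matching $\lambda,\rho$ data, whence $\mathfrak{q}(a) = c$ by hypothesis. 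Together with Lemma~\ref{lemma:diagonal} (injectivity of $\mathfrak{q}$) this gives that $\mathfrak{q}$ is bijective with inverse $a \mapsto \rho_a^{-1}(a)$.

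I expect the main obstacle to be the bookkeeping in the previous paragraph: one must carefully extract from the fact that $\rho_{\mathfrak{q}(a)}(a) = \cm_{\mathfrak{q}(a)}(\mathfrak{q}(a))$ and $\lambda_a(\mathfrak{q}(a)) = a$ enough relations to conclude that $\mathfrak{q}(a)$ and $c = \rho_c(a)$ carry identical $\lambda$- and $\rho$-maps, so that the irretractability-type hypothesis applies. The analogous argument in \cite[Lemma 4.4]{CJV2020} shows how to feed the solution relations \eqref{eq:YBE1}--\eqref{eq:YBE3} together with \eqref{eq:clambda} into this identification; here the same manipulations go through verbatim once one uses that every $\cm_a$ (equivalently every $\rho_a$) is bijective, which is exactly the non-degeneracy of the \ourstructure. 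The remaining verifications — that $a \mapsto \rho_a^{-1}(a)$ really is the inverse on both sides — are then immediate from $\mathfrak{q}(a) = c \iff a = \rho_c^{-1}(c)$.
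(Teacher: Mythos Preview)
Your proposal has a genuine gap in the central step. You set $a = \rho_c^{-1}(c)$ and want $\mathfrak{q}(a) = c$; your plan is to show that $\mathfrak{q}(a)$ and $c$ carry the same $\lambda$- and $\rho$-maps and then invoke the hypothesis. But the argument you offer for this does not go through. You write that from $a\circ \mathfrak{q}(a) = a \circ \rho_{\mathfrak{q}(a)}(a)$ one ``identifies $\rho_{\mathfrak{q}(a)}(a)$ with $\rho_c(a)=c$'' by left cancellativity. A general non-degenerate \ourstructure is \emph{not} assumed left cancellative, so this step is unavailable. Your alternative, comparing $r_A(a,\mathfrak{q}(a)) = (a,\rho_{\mathfrak{q}(a)}(a))$ with $r_A(a,c) = (\lambda_a(c),c)$, is circular: the first components agree precisely when $\lambda_a(c)=a$, i.e.\ when $c=\mathfrak{q}(a)$, which is exactly what you are trying to prove. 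And even if you did obtain $\rho_{\mathfrak{q}(a)}(a)=c$, this alone does not say that $\mathfrak{q}(a)$ and $c$ have the same $\lambda$- and $\rho$-maps. Finally, your remark that ``every $\cm_a$ (equivalently every $\rho_a$) is bijective'' is not justified here: bijectivity of all $\cm_a$ is equivalent to bijectivity of $r_A$, which is not part of the hypothesis (indeed, Lemma~\ref{prop:bijdiagonal} derives it \emph{from} bijectivity of $\mathfrak{q}$).

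The paper avoids all of this by comparing a different pair of elements. With $b:=\rho_a^{-1}(a)$ (your $a$, and your $c$ is the paper's $a$), one shows directly from \eqref{eq:YBE1} that $\lambda_{\lambda_b(a)}\lambda_a = \lambda_b\lambda_a$, hence $\lambda_{\lambda_b(a)}=\lambda_b$, and from \eqref{eq:YBE3} that $\rho_a\rho_{\lambda_b(a)}=\rho_a\rho_b$, hence $\rho_{\lambda_b(a)}=\rho_b$. The hypothesis then gives $\lambda_b(a)=b$, i.e.\ $a=\mathfrak{q}(b)=\mathfrak{q}(\rho_a^{-1}(a))$, which is surjectivity with the stated inverse. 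The point is that the pair to which the irretractability-type hypothesis applies cleanly is $(\lambda_b(a),\,b)$, not $(\mathfrak{q}(a),\,c)$; once you reorganise the argument around that pair, no cancellativity and no information about $\cm$ is needed.
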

\begin{proof}

By \eqref{eq:YBE1},
\begin{align*}
 \lambda_{\lambda_{\rho^{-1}_a(a)}(a)} \lambda_a
&=\lambda_{\lambda_{\rho^{-1}_a(a)}(a)}\lambda_{\rho_a(\rho^{-1}_a(a))}
=\lambda_{\rho^{-1}_a(a)}\lambda_{a}.
\end{align*}
Because $\lambda_a$ is bijective, we thus get
\begin{align}\label{eq:irr1}
\lambda_{\lambda_{\rho^{-1}_a(a)}(a)}
&=\lambda_{\rho^{-1}_a(a)}.
\end{align}
Furthermore, by \eqref{eq:YBE3},
\begin{align*}
\rho_a \rho_{\lambda_{\rho^{-1}_a(a)}(a)} 
&=\rho_{\rho_a(\rho^{-1}_a(a))}\rho_{\lambda_{\rho^{-1}_a(a)}(a)}
=\rho_{a}\rho_{\rho^{-1}_a(a)}.
\end{align*}
Thus, since, by assumption, $\rho_a$ is bijective, 
\begin{align}\label{eq:irr2}
\rho_{\lambda_{\rho^{-1}_a(a)}(a)}
&=\rho_{\rho^{-1}_a(a)}.
\end{align}
By \eqref{eq:irr1}, \eqref{eq:irr2} and the assumption we get
$
\lambda_{\rho^{-1}_a(a)}(a)
=\rho^{-1}_a(a),
$ and therefore
$a =\lambda^{-1}_{\rho^{-1}_a(a)}(\rho^{-1}_a(a))=\mathfrak{q}(\rho^{-1}_a(a))$.
It follows that the diagonal map $\mathfrak{q}$ is bijective and its inverse is given by $a\mapsto \rho^{-1}_a(a)$.
\end{proof}

\section{Non-degenerate \ourstructures and the retract relation}\label{sec_retraction}

In the previous section we showed that non-degenerate solutions on a finite set are bijective. In this section we prove this for a much wider class of non-degenerate solutions. To do so we introduce, in analogy with skew left braces (see for example \cite{ESS99, LeVe2019}), i.e. for non-degenerate bijective solutions (see also \cite{CJKVAV2021} for various equivalent definitions), the retract of a non-degenerate \ourstructure. 

Let $(A,+,\circ , \lambda, \cm )$ be a \ourstructure. Consider the following set
   $$\mathcal{G}(A)=\{ f(a):=(\cm_a , \lambda_a , \rho_a ) \mid a\in A\}.$$

\begin{lemma}\label{PermSem}
Let $a,a',b,b'\in A$.
\begin{enumerate}
    \item  If $f(a)=f(a')$ then $\cm_{b\circ a} =\cm_{b\circ a'}$, $\lambda_{a\circ b}=\lambda_{a'\circ b}$,  $\rho_{b\circ a}=\rho_{b\circ a'} $, and $\lambda_{a+b}=\lambda_{a'+b}$, $\rho_{a+b}=\rho_{a'+b}$, $\cm_{a+b}=\cm_{a'+b}$.
    \item if  $f(b)=f(b')$ then
$\lambda_{a\circ b} =\lambda_{a\circ b'}$, $\rho_{b\circ a} =\rho_{b'\circ a}$, $\cm_{b\circ a} =\cm_{b'\circ a}$, and
$\lambda_{a+ b} =\lambda_{a+ b'}$, $\rho_{a+b} =\rho_{a+ b'}$, $\cm_{a+ b} =\cm_{a+ b'}$.
\end{enumerate}
\end{lemma}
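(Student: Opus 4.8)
The plan is to exploit the structural equations of a \ourstructure together with the compatibility relations between $\lambda$, $\rho$, and $\cm$ established in Proposition~\ref{epitruss} and Lemma~\ref{chomomorphism}. The key observation is that $f(a)=f(a')$ means precisely $\cm_a=\cm_{a'}$, $\lambda_a=\lambda_{a'}$, and $\rho_a=\rho_{a'}$ simultaneously, so each of the six claimed identities in part (1) (and symmetrically in part (2)) must be derived from an \emph{expression} of the target map in terms of $\cm$, $\lambda$, $\rho$ applied to $a$ (resp. $b$) in a way that only sees $a$ through its triple $f(a)$.

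First I would handle the multiplicative side. By \eqref{trusslambda} we have $\lambda_{a\circ b}=\lambda_a\lambda_b$, which depends on $a$ only through $\lambda_a$, giving $\lambda_{a\circ b}=\lambda_{a'\circ b}$ immediately; similarly Lemma~\ref{chomomorphism} gives $\rho_{b\circ a}=\rho_a\rho_b$, depending on $a$ only through $\rho_a$, hence $\rho_{b\circ a}=\rho_{b\circ a'}$. For $\cm_{b\circ a}$ I would use $b\circ a=b+\lambda_b(a)$ and \eqref{eq:condc}: $\cm_{b\circ a}=\cm_{b+\lambda_b(a)}=\cm_{\lambda_b(a)}\cm_b$, and then \eqref{eq:clambda} in the form $\cm_{\lambda_b(a)}\lambda_b=\lambda_b\cm_a$ yields $\cm_{\lambda_b(a)}=\lambda_b\cm_a\lambda_b^{-1}$, so $\cm_{b\circ a}=\lambda_b\cm_a\lambda_b^{-1}\cm_b$ depends on $a$ only via $\cm_a$. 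For part (2), the symmetric statements $\lambda_{a\circ b}=\lambda_a\lambda_b$ and $\rho_{b\circ a}=\rho_a\rho_b$ are immediate, and $\cm_{b\circ a}=\lambda_b\cm_a\lambda_b^{-1}\cm_b$ depends on $b$ only through $\lambda_b$ and $\cm_b$, i.e. through $f(b)$.

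Next I would treat the additive side, which is the more delicate part. Again $\cm_{a+b}=\cm_b\cm_a$ by \eqref{eq:condc}, so it depends on $a$ (resp. $b$) only through $\cm_a$ (resp. $\cm_b$), settling both the part (1) and part (2) claims for $\cm_{a+b}$. For $\lambda_{a+b}$: using $a+b=a\circ\lambda_a^{-1}(b)$ and \eqref{trusslambda} we get $\lambda_{a+b}=\lambda_a\lambda_{\lambda_a^{-1}(b)}$; in part (1) this depends on $a$ through $\lambda_a$ only, and in part (2) we instead write $a+b=b+\cm_b(a)$ so $\lambda_{a+b}=\lambda_b\lambda_{\cm_b(a)}$, but this still involves $a$ through $\cm_b(a)$, so to get the part (2) claim one should note that by the first computation $\lambda_{a+b}=\lambda_a\lambda_{\lambda_a^{-1}(b)}$ depends on $b$ through $\lambda_{\lambda_a^{-1}(b)}$ — here I would invoke the $f(b)=f(b')$ hypothesis by first reducing to the compatibility $\lambda_a^{-1}(b)$ in the semigroup, using that a homomorphism-type identity propagates: more cleanly, from $a+b = b+\cm_b(a)$ and $\lambda$ being a homomorphism on $(A,\circ)$ one derives $\lambda_{a+b}$ as a function of $\lambda_b,\rho_b$ and $\lambda_a$ — I expect the cleanest route is the identity $\lambda_{a+b}=\lambda_{\lambda_a(\lambda_a^{-1}(b))}\cdots$; to avoid circularity I would instead use \eqref{eq:truss2}/\eqref{eq:truss3} to re-express things. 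For $\rho_{a+b}$ I would use the $\rho$-map formula $\rho_b(a)=\lambda^{-1}_{\lambda_a(b)}\cm_{\lambda_a(b)}(a)$ together with \eqref{eq:truss3}, $\rho_b(d\circ a)=\rho_{\lambda_a(b)}(d)\circ\rho_b(a)$, applied with the decomposition $a+b=a\circ\lambda_a^{-1}(b)$; since $\rho$ is a multiplicative anti-homomorphism this expresses $\rho_{a+b}$ in terms of $\rho_a$, $\rho_{\lambda_a^{-1}(b)}$ and $\lambda$-data.

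The main obstacle will be the $\lambda_{a+b}=\lambda_{a+b'}$ and $\rho_{a+b}=\rho_{a+b'}$ identities in part (2): a naive expansion leaves $b$ appearing inside a $\lambda_b^{-1}$ or inside $\cm_b(a)$, not transparently as $f(b)$, so I would need an intermediate lemma showing that $f(b)=f(b')$ forces $f(\lambda_a^{-1}(b))=f(\lambda_a^{-1}(b'))$ — which itself follows from the homomorphism property $\lambda^{-1}_{\lambda_a(b)}f = f\lambda_a^{-1}$-type compatibility noted after Definition~\ref{def:homomorphism} together with \eqref{eq:clambda}; once that bootstrapping step is in place, the first-form expressions $\lambda_{a+b}=\lambda_a\lambda_{\lambda_a^{-1}(b)}$ and the $\rho$-analogue close the argument. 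I would organize the final write-up so that all multiplicative cases are dispatched first via \eqref{trusslambda}, Lemma~\ref{chomomorphism} and \eqref{eq:condc}, then the additive cases via the decompositions $a+b=a\circ\lambda_a^{-1}(b)=b+\cm_b(a)$, postponing the one genuinely nontrivial reduction to its own displayed computation.
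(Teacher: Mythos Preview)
Your treatment of the multiplicative identities in both parts, and of the additive identities in part~(1), matches the paper's proof exactly: $\lambda_{a\circ b}=\lambda_a\lambda_b$, $\rho_{b\circ a}=\rho_a\rho_b$, $\cm_{b\circ a}=\lambda_b\cm_a\lambda_b^{-1}\cm_b$ via \eqref{eq:clambda}, and $a+b=a\circ\lambda_a^{-1}(b)$ for the additive part~(1).

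The gap is in the additive case of part~(2). You write $a+b=b+\cm_b(a)$ and then dismiss this route because ``it still involves $a$ through $\cm_b(a)$''---but in part~(2) it is $b$ that varies while $a$ is fixed, so appearance of $a$ is harmless. In fact this is exactly the route the paper takes: from $a+b=b+\cm_b(a)=b\circ\lambda_b^{-1}\cm_b(a)$ (your displayed formula $\lambda_{a+b}=\lambda_b\lambda_{\cm_b(a)}$ omits a $\lambda_b^{-1}$) one gets
\[
\lambda_{a+b}=\lambda_b\,\lambda_{\lambda_b^{-1}\cm_b(a)},\qquad
\rho_{a+b}=\rho_{\lambda_b^{-1}\cm_b(a)}\,\rho_b,
\]
both of which depend on $b$ only through $\lambda_b$, $\cm_b$, $\rho_b$, i.e.\ through $f(b)$. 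This closes part~(2) immediately.

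By contrast, the alternative you propose---proving first that $f(b)=f(b')$ forces $f(\lambda_a^{-1}(b))=f(\lambda_a^{-1}(b'))$---is exactly the content of Lemma~\ref{ManyClaims}(3), which the paper establishes only \emph{after} Lemma~\ref{PermSem} and only under the additional non-degeneracy hypothesis (bijectivity of all $\rho_x$). That hypothesis is not available here, so your detour either introduces an unjustified assumption or becomes circular. Stick with the $b+\cm_b(a)$ decomposition and the argument is complete.
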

\begin{proof}
(1)
We have  that $\lambda_{a\circ b}=\lambda_{a}\lambda_{b}=\lambda_{a'}\lambda_b=\lambda_{a'\circ b}$ and similarly
$\rho_{b\circ a}=\rho_{b\circ a'}$. 
Further, $\cm_{\lambda_b(a)}=\lambda_b \cm_a\lambda^{-1}_b = \lambda_b \cm_{a'}\lambda^{-1}_b =\cm_{\lambda_b(a')}$ yields $$\cm_{b\circ a} = \cm_{b+\lambda_b(a)} = \cm_{\lambda_b(a)}\cm_b = \cm_{\lambda_b(a')}\cm_{b} =\cm_{b\circ a'}.$$
Now,
$\lambda_{a+b} = \lambda_{a\circ\lambda^{-1}_a(b)}= \lambda_a\lambda_{\lambda^{-1}_a(b)} = \lambda_{a'}\lambda_{\lambda^{-1}_{a'}(b)}=\lambda_{a'+b}$, 
$\rho_{a+b}=\rho_{a'+b}$ and $\cm_{a+b} =\cm_b\cm_a=\cm_b\cm_{a'} = \cm_{a'+b}$. 

(2)
We have  $\lambda_{a\circ b} =\lambda_{a\circ b'}$, $\rho_{b\circ a}=\rho_{b'\circ a}$. 
Furthermore,  $$\cm_{b\circ a} = \cm_{b +\lambda_b(a)} = \cm_{\lambda_b(a)}\cm_b = \cm_{\lambda_{b'}(a)}\cm_{b'} =\cm_{b'\circ a}.$$
Moreover, $a+b = b+\cm_b(a) = b\circ\lambda_b \cm_b(a)$, then $\lambda_{a+b} = \lambda_{b\circ\lambda_b \cm_b(a)} = \lambda_b\lambda_{\lambda_b \cm_b(a)} =\lambda_{b'}\lambda_{\lambda_{b'} \cm_{b'}(a)} = \lambda_{a+b'}$, similarly $\rho_{a+b} =\rho_{a+b'}$. Finally, $\cm_{a+b}=\cm_b\cm_a =\cm_{b'}\cm_{a} = \cm_{a+b'}$.
\end{proof}

As a consequence of Lemma~\ref{PermSem} we have the following (well-defined) additive and multiplicative operations on $\mathcal{G}(A)$:
$$f(a)+f(b) := (\cm_{a+b}, \lambda_{a+b},\rho_{a+b})=f(a+b),$$
and
$$f(a)\circ f(b):= (\cm_{a\circ b},\lambda_{a\circ b},\rho_{a\circ b})=f(a\circ b).$$
Hence,
  $$f:A \rightarrow \mathcal{G}(A): a \mapsto f(a)=(\cm_a, \lambda_a , \rho_a),$$
is a semigroup homomorphism for both the additive and multiplicative operations.

We will show that   $\mathcal{G}(A)$ is a \ourstructure provided $A$ is non-degenerate.
First we introduce a $\lambda$-map on $\mathcal{G}(A)$. 

\begin{lemma}\label{GammaSurjective}
The mapping $\lambda :\mathcal{G}(A) \rightarrow \Map (\mathcal{G}(A), \mathcal{G}(A)): a\mapsto \lambda_{f(a)}$
given by $\lambda_{f(a)} (f(b)) = f(\lambda_a (b))$, is well-defined.
Furthermore, each $\lambda_{f(a)}$ is surjective.
\end{lemma}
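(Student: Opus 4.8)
The plan is to verify well-definedness first and then surjectivity, both by reducing everything to identities that Lemma~\ref{PermSem} already records. For well-definedness I must show that $\lambda_{f(a)}(f(b))$ does not depend on the representatives $a$ and $b$ chosen for the triples $f(a)=(\cm_a,\lambda_a,\rho_a)$ and $f(b)$. That is: if $f(a)=f(a')$ and $f(b)=f(b')$, then $f(\lambda_a(b))=f(\lambda_{a'}(b'))$, i.e. the triple $(\cm_{\lambda_a(b)},\lambda_{\lambda_a(b)},\rho_{\lambda_a(b)})$ equals $(\cm_{\lambda_{a'}(b')},\lambda_{\lambda_{a'}(b')},\rho_{\lambda_{a'}(b')})$. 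I would argue in two steps: fixing $b$ and varying $a\to a'$, then fixing $a'$ and varying $b\to b'$.

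First I would handle the $\lambda$-component. We have $\lambda_{\lambda_a(b)}=\lambda_{a\circ b}\lambda_b^{-1}$ by \eqref{trusslambda} (since $\lambda_{a\circ b}=\lambda_a\lambda_b$, hence $\lambda_a=\lambda_{a\circ b}\lambda_b^{-1}$ and $\lambda_{\lambda_a(b)}$ is obtained by evaluating $\lambda$ at $\lambda_a(b)$; more directly $\lambda_{\lambda_a(b)}\lambda_{\rho_b(a)}=\lambda_a\lambda_b$ is exactly \eqref{eq:YBE1}, so $\lambda_{\lambda_a(b)}$ is determined by $\lambda_a,\lambda_b$ and $\rho_b(a)$). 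From $f(a)=f(a')$ we get $\lambda_a=\lambda_{a'}$, $\rho_a=\rho_{a'}$ and $\cm_a=\cm_{a'}$, and Lemma~\ref{PermSem}(1) gives $\lambda_{a\circ b}=\lambda_{a'\circ b}$, $\rho_{b\circ a}=\rho_{b\circ a'}$, $\cm_{b\circ a}=\cm_{b\circ a'}$; combined with $\lambda_a(b)=\lambda_{a'}(b)$ (immediate from $\lambda_a=\lambda_{a'}$) this already forces $\lambda_{\lambda_a(b)}=\lambda_{\lambda_{a'}(b)}$. For the $\cm$-component, \eqref{eq:clambda} gives $\cm_{\lambda_a(b)}=\lambda_a\cm_b\lambda_a^{-1}$ as endomorphisms of $(A,+)$, wait—more precisely $\cm_{\lambda_a(d)}\lambda_a(b)=\lambda_a\cm_d(b)$ for all $b$, so $\cm_{\lambda_a(d)}=\lambda_a\cm_d\lambda_a^{-1}$; since $\lambda_a=\lambda_{a'}$ and $\cm_b=\cm_{b'}$ this is symmetric in the primed variables. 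Similarly, the $\rho$-component is governed by its defining formula $\rho_b(a)=\lambda^{-1}_{\lambda_a(b)}\cm_{\lambda_a(b)}(a)$ together with \eqref{eq:YBE1}–\eqref{eq:YBE3}, and Lemma~\ref{homomorphismqsemitruss}/Lemma~\ref{PermSem} provide the needed invariance. The $b\to b'$ direction is handled the same way using Lemma~\ref{PermSem}(2), which lists exactly $\lambda_{a\circ b}=\lambda_{a\circ b'}$, $\rho_{b\circ a}=\rho_{b'\circ a}$, $\cm_{b\circ a}=\cm_{b'\circ a}$, and the additive analogues.

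For surjectivity of $\lambda_{f(a)}$: given any $f(c)\in\mathcal{G}(A)$, I want $f(b)$ with $f(\lambda_a(b))=f(c)$. The natural candidate is $b=\lambda_a^{-1}(c)$, since then $\lambda_a(b)=c$, hence $f(\lambda_a(b))=f(c)$ on the nose. So $\lambda_{f(a)}(f(\lambda_a^{-1}(c)))=f(c)$, and $\lambda_{f(a)}$ is surjective. (Note injectivity is \emph{not} claimed here; it is exactly where non-degeneracy will later be needed, and that is the content of subsequent lemmas, not this one.)

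The main obstacle is purely bookkeeping: confirming that the $\rho$-coordinate of the triple is invariant under the two substitutions. Unlike $\lambda_{a\circ b}$ and $\cm_{b\circ a}$, the quantity $\rho_{\lambda_a(b)}$ is not literally one of the entries listed in Lemma~\ref{PermSem}, so I would derive it: from $\rho_{\lambda_a(b)}=\lambda^{-1}_{\lambda_x(\lambda_a(b))}\cm_{\lambda_x(\lambda_a(b))}(\lambda_a(b))$—wait, the cleaner route is \eqref{eq:truss3} in Proposition~\ref{epitruss}, $\rho_b(d\circ a)=\rho_{\lambda_a(b)}(d)\circ\rho_b(a)$, which lets me solve for $\rho_{\lambda_a(b)}$ in terms of $\rho_b$, $\rho_{b}(a)$, and $\circ$; then the invariances already collected close the argument. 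This step is routine but must be written carefully; I expect it to be the only nontrivial verification, and I would present it as a short chain of equalities in an \texttt{align*} block, citing \eqref{eq:truss3}, \eqref{eq:YBE1}, and Lemma~\ref{PermSem}.
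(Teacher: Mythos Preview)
Your plan for surjectivity and for the $\cm$- and $\lambda$-components of well-definedness is fine and essentially matches the paper's. The step where $f(a)=f(a')$ is even simpler than you indicate: since $\lambda_a=\lambda_{a'}$, the element $\lambda_a(b)$ equals $\lambda_{a'}(b)$ in $A$, so $f(\lambda_a(b))=f(\lambda_{a'}(b))$ on the nose; no reference to Lemma~\ref{PermSem} is needed there. For the $\lambda$-component under $b\to b'$ your use of \eqref{eq:YBE1} (giving $\lambda_{\lambda_a(b)}=\lambda_a\lambda_b\lambda_{\rho_b(a)}^{-1}$) is exactly what the paper does, and for the $\cm$-component your observation $\cm_{\lambda_a(b)}=\lambda_a\cm_b\lambda_a^{-1}$ from \eqref{eq:clambda} is also the paper's argument.

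The genuine gap is the $\rho$-component under $b\to b'$. Your proposed route via \eqref{eq:truss3}, namely $\rho_b(d\circ a)=\rho_{\lambda_a(b)}(d)\circ\rho_b(a)$, only yields
\[
\rho_{\lambda_a(b)}(d)\circ\rho_b(a)=\rho_{\lambda_a(b')}(d)\circ\rho_b(a),
\]
and to ``solve for $\rho_{\lambda_a(b)}$'' you would need right cancellation in $(A,\circ)$, which is \emph{not} assumed in this lemma (the \ourstructure is arbitrary here; non-degeneracy enters only later). The same obstruction hits \eqref{eq:YBE3}, which gives $\rho_b\rho_a=\rho_{\rho_b(a)}\rho_{\lambda_a(b)}$ and would require $\rho_{\rho_b(a)}$ bijective. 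The paper avoids this by an explicit computation: using the definition $\rho_c(x)=\lambda_{\lambda_x(c)}^{-1}\cm_{\lambda_x(c)}(x)$ together with \eqref{eq:clambda} and \eqref{eq:YBE1}, one rewrites
\[
\rho_{\lambda_a(b)}(x)=\lambda_{\rho_b(x\circ a)}\,\lambda_b^{-1}\,\cm_b\,\lambda_a^{-1}\lambda_x^{-1}(x),
\]
which visibly depends on $b$ only through $\lambda_b,\rho_b,\cm_b$ and hence is invariant under $b\to b'$. You should replace the \eqref{eq:truss3} step by this direct manipulation.
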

\begin{proof}
To prove that the map  $\lambda$ is well-defined,
we  first  note that  if $f(a)=f(a')$ then $\lambda_{f(a)} =\lambda_{f(a')}$. 
Next assume $f(b)=f(b')$. Then $\lambda_{f(a)}(f(b)) = f(\lambda_a(b))$ and $\lambda_{f(a)}(f(b'))=f(\lambda_a(b'))$.

By \eqref{eq:YBE1},  $\lambda_{\lambda_a(b)} = \lambda_a\lambda_b\lambda^{-1}_{\rho_b(a)} = \lambda_a\lambda_{b'}\lambda^{-1}_{\rho_{b'}(a)} = \lambda_{\lambda_a(b')}$. By \eqref{eq:clambda}  $\cm_{\lambda_a(b)} =\lambda_a \cm_b \lambda^{-1}_a = \lambda_a \cm_{b'} \lambda^{-1}_a = \cm_{\lambda_a(b')}$
and by \eqref{eq:clambda} and \eqref{eq:YBE1}, for all $x\in A$,
\begin{align*}
\rho_{\lambda_a(b)}(x)
&= \lambda^{-1}_{\lambda_x\lambda_a(b)}\cm_{\lambda_x\lambda_a(b)}(x) \\
&= \lambda^{-1}_{\lambda_{x\circ a}(b)}\lambda_x\lambda_a\cm_{b}\lambda^{-1}_a\lambda_x^{-1}(x) 
\\&= \lambda_{\rho_b(x\circ a)}\lambda^{-1}_b\lambda^{-1}_{x\circ a}\lambda_x\lambda_a\cm_{b}\lambda^{-1}_a\lambda_x^{-1}(x)
\\&=\lambda_{\rho_{b'}(x\circ a)}\lambda^{-1}_{b'}\lambda^{-1}_{x\circ a}\lambda_x\lambda_a\cm_{b'}\lambda^{-1}_a\lambda_x^{-1}(x)\\& = \rho_{\lambda_a(b')}(x).
\end{align*}
This shows  that $\lambda_{f(a)}(f(b))=\lambda_{f(a)}(f(b'))$ and thus   the map $\lambda$  is  well-defined.

To prove that the map $\lambda_{f(a)}$ is surjective, let   $f(d) \in \im(f)$. Then there exists $b \in A$ such that $\lambda_a(b) = d$. Hence,
$\lambda_{f(a)}(f(b)) = (\cm_{\lambda_a(b)},\lambda_{\lambda_{a}(b)},\rho_{\lambda_{a}(b)}) = (\cm_d,\lambda_d,\rho_d) = f(d)$,
as desired. 
\end{proof}

Next we introduce a $\cm$-map in $\mathcal{G}(A)$ as follows
 $$\cm: \mathcal{G}(A) \rightarrow \Map (\mathcal{G}(A), \mathcal{G}(A)): f(b)\mapsto \cm_{f(b)},$$
 with $$\cm_{f(b)}(f(a)) = f(\cm_{b}(a)).$$
The well-definedness of this map will follow from  Lemma~\ref{ManyClaims}, but first we need another proposition.

\begin{proposition} \label{prop:c_a=c_b}
Let $(A,+,\circ, \lambda , \cm)$ be a non-degenerate \ourstructure.
Let $a,b\in A$. If  $\lambda_a=\lambda_b$ and $\rho_a=\rho_b$ then $\cm_a=\cm_b$.
Furthermore, if $\lambda_a =\rho_a =\id$ then $\cm_a=\id$.
\begin{proof}
First note that, for $x,y,z\in A$, because of Proposition~\ref{prop:semitruss} and \eqref{eq:YBE2},
\begin{align*}
    \lambda_{\rho_{\lambda_y\lambda^{-1}_x(z)}\rho^{-1}_y(x)}\rho_{\lambda^{-1}_x(z)}(y)
    &=
    \rho_{\lambda_{\rho_y\rho^{-1}_y(x)}\lambda^{-1}_x(z)}\lambda_{\rho^{-1}_y(x)}(y) \\
    &=\rho_z\lambda_{\rho^{-1}_y(x)}(y),
\end{align*}
yields 
\begin{align}\label{claim1}
    \rho_{\lambda^{-1}_x(z)}(y) =\lambda^{-1}_{\rho_{\lambda_y\lambda^{-1}_x(z)}\rho^{-1}_y(x)}\rho_z\lambda_{\rho^{-1}_y(x)}(y).
\end{align}

Let $a,b\in A$ and assume  $\lambda_a=\lambda_b$ and $\rho_a=\rho_b$. For any $x\in A$, using \eqref{claim1} it follows that
\begin{align*}
    \rho_{\lambda^{-1}_x(a)}(x) &= \lambda^{-1}_{\rho_{\lambda_x\lambda^{-1}_x(a)}\rho^{-1}_x(x)}\rho_a\lambda_{\rho^{-1}_x(x)}(x)\\
    &=\lambda^{-1}_{\rho_{a}\rho^{-1}_x(x)}\rho_a\lambda_{\rho^{-1}_x(x)}(x)\\
    &=\lambda^{-1}_{\rho_{b}\rho^{-1}_x(x)}\rho_b\lambda_{\rho^{-1}_x(x)}(x)\\
    &=\rho_{\lambda^{-1}_x(b)}(x).
\end{align*}
So,
\begin{align}\label{claimextra}
    \cm_a(x) &=\lambda_a\rho_{\lambda^{-1}_x(a)}(x) 
    =\lambda_b\rho_{\lambda^{-1}_x(b)}(x)  =\cm_b(x).
\end{align}
This proves the first part of the statement.

To prove the second part, 
 assume $\lambda_a =\rho_a=\id$. By (\ref{claim1}) we get that  $\rho_{\lambda^{-1}_x(a)}(x) =x$.
Then, as in (\ref{claimextra}),  $\cm_a(x)=\lambda_a\rho_{\lambda^{-1}_x(a)}(x) =x$, as desired.
\end{proof}
\end{proposition}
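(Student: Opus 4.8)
The plan is to reduce both assertions to the single formula
\[
\cm_a(x)=\lambda_a\,\rho_{\lambda_x^{-1}(a)}(x),
\]
which holds in every \ourstructure (it is the identity $\cm_b(a)=\lambda_b\rho_{\lambda_a^{-1}(b)}(a)$ used repeatedly above, with the roles of $a$ and $x$ exchanged; equivalently it follows at once from the definition of the $\rho$-map). Granting this, to prove $\cm_a=\cm_b$ it suffices to show that $\rho_{\lambda_x^{-1}(a)}(x)=\rho_{\lambda_x^{-1}(b)}(x)$ for all $x\in A$: together with $\lambda_a=\lambda_b$ this gives $\cm_a(x)=\lambda_a\rho_{\lambda_x^{-1}(a)}(x)=\lambda_b\rho_{\lambda_x^{-1}(b)}(x)=\cm_b(x)$.

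The key step is an auxiliary identity that isolates $\rho_{\lambda_x^{-1}(z)}(y)$. Since $(A,r_A)$ is a solution (Proposition~\ref{prop:semitruss}), its maps $\lambda,\rho$ satisfy \eqref{eq:YBE1}--\eqref{eq:YBE3}, and as $A$ is non-degenerate every $\rho_y$ is bijective. I would substitute $a\mapsto\rho_y^{-1}(x)$, $b\mapsto y$, $c\mapsto\lambda_x^{-1}(z)$ into \eqref{eq:YBE2}. The left-hand side $\lambda_{\rho_{\lambda_b(c)}(a)}\rho_c(b)$ becomes $\lambda_{\rho_{\lambda_y\lambda_x^{-1}(z)}\rho_y^{-1}(x)}\rho_{\lambda_x^{-1}(z)}(y)$; on the right-hand side $\rho_{\lambda_{\rho_b(a)}(c)}\lambda_a(b)$ the inner index collapses, since $\rho_b(a)=\rho_y(\rho_y^{-1}(x))=x$ and then $\lambda_x(\lambda_x^{-1}(z))=z$, so it equals $\rho_z\,\lambda_{\rho_y^{-1}(x)}(y)$. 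Applying $\lambda_{\rho_{\lambda_y\lambda_x^{-1}(z)}\rho_y^{-1}(x)}^{-1}$ then yields
\[
\rho_{\lambda_x^{-1}(z)}(y)=\lambda_{\rho_{\lambda_y\lambda_x^{-1}(z)}\rho_y^{-1}(x)}^{-1}\,\rho_z\,\lambda_{\rho_y^{-1}(x)}(y).
\]

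To conclude I specialise $y=x$, so that $\lambda_y\lambda_x^{-1}(z)=z$ and the displayed identity reads $\rho_{\lambda_x^{-1}(z)}(x)=\lambda_{\rho_z\rho_x^{-1}(x)}^{-1}\,\rho_z\,\lambda_{\rho_x^{-1}(x)}(x)$, whose right-hand side depends on $z$ only through the map $\rho_z$. Hence $\rho_a=\rho_b$ forces $\rho_{\lambda_x^{-1}(a)}(x)=\rho_{\lambda_x^{-1}(b)}(x)$ for all $x$, and by the reduction above $\cm_a=\cm_b$. For the final statement I put $z=a$, $y=x$ in the displayed identity and use $\lambda_a=\rho_a=\id$: the factor $\rho_z=\rho_a$ disappears, $\rho_{\lambda_x^{-1}(a)}(x)=\lambda_{\rho_x^{-1}(x)}^{-1}\lambda_{\rho_x^{-1}(x)}(x)=x$, and therefore $\cm_a(x)=\lambda_a\,\rho_{\lambda_x^{-1}(a)}(x)=x$, i.e. $\cm_a=\id$. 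The only delicate point will be carrying out the substitution into \eqref{eq:YBE2} cleanly and checking that the right-hand side indices genuinely collapse; everything else is formal manipulation.
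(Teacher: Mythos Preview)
Your proof is correct and follows essentially the same approach as the paper: both derive the auxiliary identity $\rho_{\lambda_x^{-1}(z)}(y)=\lambda_{\rho_{\lambda_y\lambda_x^{-1}(z)}\rho_y^{-1}(x)}^{-1}\,\rho_z\,\lambda_{\rho_y^{-1}(x)}(y)$ via the same substitution into \eqref{eq:YBE2}, specialise $y=x$ so that the right-hand side depends on $z$ only through $\rho_z$, and then feed this into $\cm_a(x)=\lambda_a\rho_{\lambda_x^{-1}(a)}(x)$. Your write-up is in fact slightly cleaner in that you make explicit the observation ``the right-hand side depends on $z$ only through $\rho_z$'', which the paper leaves implicit.
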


\begin{lemma}\label{ManyClaims}
Let $(A,+,\circ ,\lambda , \cm)$ be a \ourstructure.
Let $a,a',x\in A$.
\begin{enumerate}
    \item If $\lambda_a =\lambda_{a'}$ then $\lambda_{\rho_x(a)}=\lambda_{\rho_x(a')}$.
    \item if $\lambda_a =\lambda_{a'}$ and $\rho_a =\rho _{a'}$ then  $\lambda_{\lambda_x(a)}=\lambda_{\lambda_x(a')}$.
\end{enumerate}
If, furthermore, $A$ is non-degenerate then $\lambda_a =\lambda_{a'}$ and $\rho_a =\rho_{a'}$  implies
\begin{enumerate}
    \item[(3)]
    $\lambda_{\lambda_x^{-1} (a)} =\lambda_{\lambda_x^{-1}(a')}$
and $\rho_{\lambda_x^{-1} (a)} =\rho_{\lambda_x^{-1}(a')}$.
    \item[(4)] 
    $\rho_{\rho_x(a)} =\rho_{\rho_x(a')}$
    and  $\rho_{\lambda_x(a)} =\rho_{\lambda_x(a')}$.
    \item[(5)] 
    $\lambda_{\cm_{x}(a)}=\lambda_{\cm_x(a')}$ and
     $\rho_{\cm_{x}(a)}=\rho_{\cm_x(a')}$.
    \item[(6)]  
    $\cm_{\cm_{x}(a)} =\cm_{\cm_x(a')}$.
\end{enumerate}
\end{lemma}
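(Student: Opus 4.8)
The statement has six items, and they are not independent: later items are built from earlier ones, and everything ultimately rests on the two ``rigidity'' facts already proven, namely Proposition~\ref{prop:c_a=c_b} (if $\lambda_a=\lambda_b$ and $\rho_a=\rho_b$ then $\cm_a=\cm_b$) together with the structural identities \eqref{eq:YBE1}, \eqref{eq:YBE3}, \eqref{eq:clambda} and the formula $\rho_b(a)=\lambda^{-1}_{\lambda_a(b)}\cm_{\lambda_a(b)}(a)$. So the plan is to establish the items in the stated order, reducing each to a short manipulation of these identities. The hypothesis throughout items (1)--(6) is that $f(a)=f(a')$, i.e. $\lambda_a=\lambda_{a'}$, $\rho_a=\rho_{a'}$ and $\cm_a=\cm_{a'}$ (for items (1) and (2) only the $\lambda$-- and $\rho$--parts are assumed); the conclusion in each case is of the form $f(g(a))=f(g(a'))$ for $g$ one of $\rho_x$, $\lambda_x$, $\lambda_x^{-1}$, $\cm_x$.

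\emph{Items (1) and (2).} For (1), apply \eqref{eq:YBE1} in the form $\lambda_{\rho_x(a)}=\lambda^{-1}_{\lambda_x(a)}\lambda_x\lambda_a$ (solve \eqref{eq:YBE1} for $\lambda_{\rho_b(c)}$ after a relabelling); since $\lambda_a=\lambda_{a'}$ forces $\lambda_x(a)=\lambda_x(a')$ and the right-hand side depends on $a$ only through $\lambda_a$ and $\lambda_x(a)$, the two sides agree. For (2), use \eqref{eq:YBE1} the other way: $\lambda_{\lambda_x(a)}=\lambda_x\lambda_a\lambda^{-1}_{\rho_a(x)}$. Here the right-hand side depends on $a$ through $\lambda_a$ and through $\rho_a(x)$ (which is controlled once we know $\rho_a=\rho_{a'}$), so again both sides coincide. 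These two are genuinely routine.

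\emph{Items (3)--(6), assuming non-degeneracy.} For (3): set $b=\lambda_x^{-1}(a)$, so $a=\lambda_x(b)$. From (2) applied with the roles suitably chosen — or more directly by running the argument of (2) backwards, using bijectivity of $\lambda_x$ — one gets $\lambda_{\lambda_x^{-1}(a)}=\lambda_{\lambda_x^{-1}(a')}$; the $\rho$--part follows the same way from \eqref{eq:YBE3}. Item (4) is symmetric to (1) and (2) but on the $\rho$--side: use \eqref{eq:YBE3} in the forms $\rho_{\rho_x(a)}=\rho_x\rho_a\rho^{-1}_{\lambda_a(x)}$ and $\rho_{\lambda_x(a)}=\lambda$-conjugate expressions, each depending on $a$ only through the already-controlled data $\lambda_a,\rho_a$; non-degeneracy is what lets us invert the $\rho$'s appearing. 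For (5): write $\cm_x(a)=\lambda_x\rho_{\lambda^{-1}_a(x)}(x)$ (the formula from \eqref{existc}); the index $\lambda^{-1}_a(x)$ is determined by $\lambda_a$, and hence $\rho_{\lambda^{-1}_a(x)}$ and $\cm_x(a)$ itself depend on $a$ only through $\lambda_a$ — wait, that is too strong; rather, to control $\lambda_{\cm_x(a)}$ and $\rho_{\cm_x(a)}$ one feeds $\cm_x(a)=\lambda_x(\rho_{\lambda^{-1}_a(x)}(x))$ into (1)--(4): $\lambda$ and $\rho$ of $\rho_{\lambda^{-1}_a(x)}(x)$ depend on $a$ only through $\lambda_a$ by (1) and (4), and then applying $\lambda_x$ via (2) and the $\rho$-analogue preserves this. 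Finally (6): once (5) gives $\lambda_{\cm_x(a)}=\lambda_{\cm_x(a')}$ and $\rho_{\cm_x(a)}=\rho_{\cm_x(a')}$, Proposition~\ref{prop:c_a=c_b} (which needs non-degeneracy, already in force) immediately yields $\cm_{\cm_x(a)}=\cm_{\cm_x(a')}$.

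\emph{Expected obstacle.} The only place requiring care is bookkeeping in (3)--(5): the Yang--Baxter relations \eqref{eq:YBE1}--\eqref{eq:YBE3} mix $\lambda$ and $\rho$, so ``$\lambda_a$ determines $X$'' and ``$\rho_a$ determines $Y$'' have to be tracked jointly, and one must be sure that every $\lambda$ or $\rho$ one inverts is invertible — which is exactly where the standing non-degeneracy hypothesis (all $\rho_t$ bijective, all $\lambda_t$ bijective) is used, and why (3)--(6) are stated only under that extra assumption. I expect no conceptual difficulty beyond this; the proof is a sequence of substitutions into the three cubic identities followed by one invocation of Proposition~\ref{prop:c_a=c_b}.
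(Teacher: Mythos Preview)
Your overall architecture is right and coincides with the paper's: items (1), (2), (4) come straight from \eqref{eq:YBE1} and \eqref{eq:YBE3}, item (5) is built by writing $\cm_x(a)=\lambda_x(\rho_e(a))$ with $e=\lambda_a^{-1}(x)$ and then feeding through (1), (4), (2), and (6) is immediate from (5) together with Proposition~\ref{prop:c_a=c_b}. Two points need repair, however.

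\textbf{A real gap at (3).} ``Running (2) backwards'' does not yield (3). If you set $b=\lambda_x^{-1}(a)$ and try to invert the identity $\lambda_a=\lambda_{\lambda_x(b)}=\lambda_x\lambda_b\lambda_{\rho_b(x)}^{-1}$, you are left with $\lambda_b=\lambda_x^{-1}\lambda_a\lambda_{\rho_b(x)}$, and the unwanted factor $\lambda_{\rho_b(x)}=\lambda_{\rho_{\lambda_x^{-1}(a)}(x)}$ still depends on $a$ in an uncontrolled way. The paper closes exactly this gap by an explicit computation with \eqref{eq:YBE2}: one shows
\[
\rho_{\lambda_x^{-1}(a)}(x)=\lambda^{-1}_{\rho_a\rho_x^{-1}(x)}\,\rho_a\,\lambda_{\rho_x^{-1}(x)}(x),
\]
so this element depends on $a$ only through $\rho_a$; then \eqref{eq:YBE1} in the form $\lambda_x\lambda_{\lambda_x^{-1}(a)}=\lambda_a\lambda_{\rho_{\lambda_x^{-1}(a)}(x)}$ finishes the $\lambda$-part, and the $\rho$-part is analogous. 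Note that inverting $\rho_x$ here is where non-degeneracy is genuinely used. This step is not a routine relabelling of (2); you should supply it.

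\textbf{Index slips in (1) and (5).} In (1) you wrote $\lambda_{\rho_x(a)}=\lambda_{\lambda_x(a)}^{-1}\lambda_x\lambda_a$ and then claimed ``$\lambda_a=\lambda_{a'}$ forces $\lambda_x(a)=\lambda_x(a')$''. Both are wrong as stated (the second claim is simply false: equality of the maps $\lambda_a,\lambda_{a'}$ says nothing about the values $\lambda_x(a),\lambda_x(a')$). The correct identity from \eqref{eq:YBE1} is $\lambda_{\rho_x(a)}=\lambda_{\lambda_a(x)}^{-1}\lambda_a\lambda_x$, and the correct observation is that $\lambda_a=\lambda_{a'}$ gives $\lambda_a(x)=\lambda_{a'}(x)$. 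It looks like a consistent $a\leftrightarrow x$ swap, but please fix it. Similarly in (5), the formula is $\cm_x(a)=\lambda_x\rho_{\lambda_a^{-1}(x)}(a)$, not $\lambda_x\rho_{\lambda_a^{-1}(x)}(x)$; with the correct formula your chaining through (1), (4), (2) is fine and matches the paper.
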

\begin{proof}
(1) Since $\lambda_{\lambda_a(x)} \lambda_{\rho_x(a)} =\lambda_a \lambda_x =\lambda_{a'}\lambda_x =\lambda_{\lambda_{a'}(x)}\lambda_{\rho_x (a')}=\lambda_{\lambda_{a}(x)}\lambda_{\rho_x (a')}$ and, because $\lambda_{\lambda_a(x)}$ is bijective, we get that $\lambda_{\rho_x(a)}=\lambda_{\rho_x(a')}$.
 
(2) Since 
$\lambda_{\lambda_x(a)} \lambda_{\rho_a(x)} =\lambda_x \lambda_a =\lambda_{x}\lambda_{a'} =\lambda_{\lambda_{x}(a')}\lambda_{\rho_{a'} (x)}=\lambda_{\lambda_{x}(a')}\lambda_{\rho_a (x)}$ and, because $\lambda_{\rho_a(x)}$ is bijective, we get that  $\lambda_{\lambda_x(a)}=\lambda_{\lambda_x(a')}$.

Assume now that $A$ is non-degenerate.

(3)
First we claim that $\lambda_x \lambda_{\lambda^{-1}_x(a)} = \lambda_a \lambda_{\rho_{\lambda^{-1}_x(a)}(x)}$. Indeed, by \eqref{eq:YBE1},
\begin{equation}\label{eq:Claim2'}
    \lambda_x \lambda_{\lambda^{-1}_x(a)} = \lambda_{\lambda_x(\lambda^{-1}_x(a))} \lambda_{\rho_{\lambda^{-1}_x(a)}(x)} = \lambda_a \lambda_{\rho_{\lambda^{-1}_x(a)}(x)}.
\end{equation}
Using \eqref{eq:YBE2}, we also get
\begin{align*}
    \rho_{\lambda^{-1}_x(a)}(x) &= \lambda^{-1}_{\rho_{\lambda_x(\lambda^{-1}_x(a))} \rho^{-1}_x(x)} \rho_{\lambda_{\rho_x(\rho^{-1}_x(x))} \lambda^{-1}_x(a)} \lambda_{\rho^{-1}_x(x)}(x)
    \\ &= \lambda^{-1}_{\rho_{a} \rho^{-1}_x(x)} \rho_{\lambda_{x} \lambda^{-1}_x(a)} \lambda_{\rho^{-1}_x(x)}(x)
    \\ &= \lambda^{-1}_{\rho_{a} \rho^{-1}_x(x)} \rho_{a} \lambda_{\rho^{-1}_x(x)}(x),
\end{align*}
so as $\rho_a = \rho_{a'}$ this implies
\begin{align*}
    \rho_{\lambda^{-1}_x(a)}(x) &=  \lambda^{-1}_{\rho_{a} \rho^{-1}_x(x)} \rho_{a} \lambda_{\rho^{-1}_x(x)}(x)
    \\ &= \lambda^{-1}_{\rho_{a'} \rho^{-1}_x(x)} \rho_{a'} \lambda_{\rho^{-1}_x(x)}(x)
    \\ &= \rho_{\lambda^{-1}_x(a')}(x).
\end{align*}
Hence,
\begin{align*}
    \lambda_x \lambda_{\lambda^{-1}_x(a)} 
    &\stackrel{\eqref{eq:Claim2'}}{=} \lambda_a \lambda_{\rho_{\lambda^{-1}_x(a)}(x)}
    = \lambda_a \lambda_{\rho_{\lambda^{-1}_x(a')}(x)}
    = \lambda_{a'} \lambda_{\rho_{\lambda^{-1}_x(a')}(x)}
    \stackrel{\eqref{eq:Claim2'}}{=} \lambda_x \lambda_{\lambda^{-1}_x(a')}.
\end{align*}
Since $\lambda_x$ is bijective, we conclude that $\lambda_{\lambda^{-1}_x(a)} = \lambda_{\lambda^{-1}_x(a')}$. Similarly, one can prove that $\rho_{\lambda^{-1}_x(a)} = \rho_{\lambda^{-1}_x(a')}$.

(4)
Because $\rho_{\rho_x(a)} \rho_{\lambda_a(x)}= \rho_x \rho_a =\rho_x \rho_{a'}= \rho_{\rho_x(a')}\rho_{\lambda_{a'}(x)}=
\rho_{\rho_x (a')}\rho_{\lambda_a(x)}$ and, because of the assumption that $\rho_{\lambda_a(x)}$ is bijective, we get that 
$\rho_{\rho_x(a)} =\rho_{\rho_x(a')}$.

Because $\rho_{\rho_a(x)} \rho_{\lambda_x(a)}= \rho_a \rho_x =\rho_{a'} \rho_{x}= \rho_{\rho_{a'}(x)}\rho_{\lambda_{x}(a')}=
\rho_{\rho_a (x)}\rho_{\lambda_x(a')}$ and, because of the assumption that $\rho_{\rho_a(x)}$ is bijective, we get that 
$\rho_{\lambda_x(a)} =\rho_{\lambda_{x}(a')}$.

(5) Recall that $\cm_x(a) =\lambda_x \rho_{\lambda^{-1}_{a}(x)}(a)$
Hence, $\lambda_{\cm_x(a)}=\lambda_{\lambda_x \rho_{\lambda^{-1}_{a}(x)}(a)}$. Substitute $e=\lambda_a^{-1}(x)=\lambda^{-1}_{a'}(x)$ then we get
$\lambda_{\cm_x(a)}=\lambda_{\lambda_{\lambda_a(e)} \rho_e(a)}$.
Hence statement (5)  is equivalent with 
$\lambda_{\lambda_{\lambda_a(e)} \rho_e(a)} = \lambda_{\lambda_{\lambda_{a'}(e)} \rho_e(a')}= \lambda_{\lambda_{\lambda_a(e)} \rho_e(a')}$
for all $e\in A$.
Now, from part (1) we know that $\lambda_{\rho_{e}(a)}=\lambda_{\rho_e(a')}$ and from part (4) we also get that $\rho_{\rho_e(a)}=\rho_{\rho_e(a')}$. Therefore, by part (2), $\lambda_{\lambda_{\lambda_a(e)}\rho_{e}(a)} =\lambda_{\lambda_{\lambda_{a}(e)} \rho_{e}(a')}$, as desired.

Repeat the steps in the previous, but replace part (2)  by part  (4), i.e. if $\rho_x$ is bijective and $\lambda_a=\lambda_{a'}, \rho_a=\rho_{a'}$ then $\rho_{\lambda_x(a)} = \rho_{\lambda_x(a')}$.

(6) 
Note that because of part  (5)  and Proposition~\ref{prop:c_a=c_b} we immediately obtain that $\cm_{\cm_b(a)} = \cm_{\cm_b(a')}$.
\end{proof}

\begin{theorem}\label{retractassoc}
Let $(A,+,\circ, \lambda, \cm)$ be a non-degenerate \emph{\ourstructure}. Then 
$ \mathcal{G}(A)=\{ f(a)= (\cm_a,\lambda_a, \rho_a)\mid a\in A\}$ is a \ourstructure
for the operations
$$f(a)\circ f(b)= (\cm_{a\circ b},\lambda_{a\circ b},\rho_{a\circ b}), \quad
f(a)+f(b) = (\cm_{a+b}, \lambda_{a+b},\rho_{a+b}),$$   $\lambda$-map
 $$\lambda_{f(a)}f(b) =(\cm_{\lambda_a(b)},\lambda_{\lambda_{a}(b)},\rho_{\lambda_{a}(b)})=f(\lambda_a (b)).$$
 and $\cm$-map
  $$\cm_{f(b)}f(a) = (\cm_{\cm_{b}(a)}, \lambda_{\cm_{b}(a)}, \rho_{\cm_{b}(a)})=f(\cm_b(a)).$$

The map $f: A\rightarrow \mathcal{G}(A): a\mapsto f(a)$ is a \ourstructure epimorphism
and  the associated solution of $\mathcal{G}(A)$ is non-degenerate.

Note that because of Proposition~\ref{prop:c_a=c_b} the natural  mapping 
$\mathcal{G}(A)\rightarrow \{ (\lambda_a, \rho_a)\mid a\in A\}$ is bijective and thus the latter can be considered as a \ourstructure.
\end{theorem}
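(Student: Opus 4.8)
The plan is to verify the five defining conditions of a \ourstructure (Definition~\ref{def:qsemitruss}) for $\mathcal{G}(A)$, after first establishing that all the displayed operations are well-defined; the latter is exactly the content of Lemma~\ref{PermSem}, Lemma~\ref{GammaSurjective}, Proposition~\ref{prop:c_a=c_b} and Lemma~\ref{ManyClaims}, so I would invoke those and concentrate on the structural identities. The guiding principle throughout is that $f\colon A\to\mathcal{G}(A)$ is surjective by construction and is a homomorphism for $+$ and $\circ$ by the definition of the operations on $\mathcal{G}(A)$, so every identity one wants in $\mathcal{G}(A)$ can be obtained by applying $f$ to the corresponding identity in $A$ — provided the relevant map is well-defined on $\mathcal{G}(A)$, which is precisely what the preparatory lemmas guarantee.

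Concretely, I would proceed as follows. First, $(\mathcal{G}(A),+)$ and $(\mathcal{G}(A),\circ)$ are semigroups as homomorphic images of $(A,+)$ and $(A,\circ)$, and $(\mathcal{G}(A),+,\circ,\lambda)$ satisfies the left semitruss identity \eqref{eq:leftsemitrussrelation}: apply $f$ to $a\circ(b+c)=(a\circ b)+\lambda_a(c)$ and use $f(\lambda_a(c))=\lambda_{f(a)}f(c)$. Next, \eqref{trusslambda}: each $\lambda_{f(a)}$ is surjective by Lemma~\ref{GammaSurjective}, and injectivity follows because $\lambda_{f(a)}f(b)=\lambda_{f(a)}f(b')$ gives $f(\lambda_a(b))=f(\lambda_a(b'))$, hence $\lambda_{\lambda_a(b)}=\lambda_{\lambda_a(b')}$, $\rho_{\lambda_a(b)}=\rho_{\lambda_a(b')}$, $\cm_{\lambda_a(b)}=\cm_{\lambda_a(b')}$, and applying $\lambda_a^{-1}$ (using \eqref{trusslambda}, \eqref{eq:clambda}) together with part (3) of Lemma~\ref{ManyClaims} recovers $f(b)=f(b')$; that $\lambda_{f(a)}\in\Aut(\mathcal{G}(A),+)$ and $\lambda_{f(a)}\lambda_{f(b)}=\lambda_{f(a)\circ f(b)}$ then follow by applying $f$ to the corresponding facts in $A$. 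Condition \eqref{eq:sumcirc}, $f(a)+\lambda_{f(a)}f(b)=f(a)\circ f(b)$, is $f$ applied to $a+\lambda_a(b)=a\circ b$. Condition \eqref{eq:csum}, $f(a)+f(b)=f(b)+\cm_{f(b)}f(a)$, is $f$ applied to $a+b=b+\cm_b(a)$. Condition \eqref{eq:condc}: $\cm_{f(a)}\in\End(\mathcal{G}(A),+)$ and $\cm_{f(a)+f(b)}=\cm_{f(b)}\cm_{f(a)}$ come from applying $f$ to \eqref{eq:condc} in $A$; well-definedness of $\cm_{f(b)}$ is Lemma~\ref{ManyClaims}(5)--(6) combined with Proposition~\ref{prop:c_a=c_b}. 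Finally \eqref{eq:clambda}, $\cm_{\lambda_{f(a)}f(d)}\lambda_{f(a)}f(b)=\lambda_{f(a)}\cm_{f(d)}f(b)$, is $f$ applied to $\cm_{\lambda_a(d)}\lambda_a(b)=\lambda_a\cm_d(b)$. Then $f$ is a \ourstructure epimorphism by Definition~\ref{def:homomorphism}, and by the proposition relating epimorphic images of \ourstructures to epimorphic images of solutions, $r_{\mathcal{G}(A)}$ is an epimorphic image of the non-degenerate solution $r_A$; since right non-degeneracy passes to epimorphic images via Lemma~\ref{homomorphismqsemitruss} (the $\rho$-maps on $\mathcal{G}(A)$ stay bijective because $f\rho_a^{-1}$ descends — this is essentially Lemma~\ref{ManyClaims}(3)--(4) applied to $\rho^{-1}$), the associated solution of $\mathcal{G}(A)$ is non-degenerate.

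For the closing remark, I would argue that the map $\mathcal{G}(A)\to\{(\lambda_a,\rho_a)\mid a\in A\}$ forgetting the first coordinate is surjective by definition and injective by Proposition~\ref{prop:c_a=c_b}: if $\lambda_a=\lambda_b$ and $\rho_a=\rho_b$ then $\cm_a=\cm_b$, so $f(a)=f(b)$. Transporting the \ourstructure structure along this bijection makes $\{(\lambda_a,\rho_a)\mid a\in A\}$ a \ourstructure isomorphic to $\mathcal{G}(A)$.

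The main obstacle I anticipate is the injectivity of $\lambda_{f(a)}$, i.e. checking \eqref{trusslambda} fully: surjectivity is immediate, but to cancel $\lambda_a$ and conclude $f(b)=f(b')$ from $f(\lambda_a(b))=f(\lambda_a(b'))$ one genuinely needs the non-degeneracy hypothesis through parts (3)--(4) of Lemma~\ref{ManyClaims}, and one must be careful that these give equality of all three components $\cm,\lambda,\rho$ and not merely two of them — here Proposition~\ref{prop:c_a=c_b} does the final bookkeeping. Everything else is a mechanical ``apply $f$ to the identity that already holds in $A$'' once the well-definedness lemmas are in hand.
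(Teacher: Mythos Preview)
Your approach is essentially the same as the paper's: invoke Lemmas~\ref{PermSem}, \ref{GammaSurjective}, \ref{ManyClaims} and Proposition~\ref{prop:c_a=c_b} for well-definedness, verify injectivity of each $\lambda_{f(a)}$ via part~(3) of Lemma~\ref{ManyClaims}, push the identities \eqref{trusslambda}--\eqref{eq:clambda} through $f$, and then argue non-degeneracy via Lemma~\ref{homomorphismqsemitruss}. All of that is correct and matches the paper, which indeed dismisses the verification of \eqref{trusslambda}--\eqref{eq:clambda} as ``straightforward''.

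There is, however, a genuine gap in your last step. To apply Lemma~\ref{homomorphismqsemitruss} you must check that the map $f(b)\mapsto f(\rho_a^{-1}(b))$ is well-defined, which amounts to: if $\lambda_b=\lambda_{b'}$ and $\rho_b=\rho_{b'}$ then $\lambda_{\rho_x^{-1}(b)}=\lambda_{\rho_x^{-1}(b')}$ and $\rho_{\rho_x^{-1}(b)}=\rho_{\rho_x^{-1}(b')}$. You write that this is ``essentially Lemma~\ref{ManyClaims}(3)--(4) applied to $\rho^{-1}$'', but those parts give compatibility with $\lambda_x^{-1}$, $\rho_x$ and $\lambda_x$, not with $\rho_x^{-1}$; there is no direct way to read off the $\rho_x^{-1}$ statement from them. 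The paper closes this gap with a symmetry trick: if $r(x,y)=(\lambda_x(y),\rho_y(x))$ is a solution then so is $\tau r\tau$ with $\tau(x,y)=(y,x)$, and in $\tau r\tau$ the roles of $\lambda$ and $\rho$ are swapped. Hence Lemma~\ref{ManyClaims}(3), which was proved using only the Yang--Baxter identities \eqref{eq:YBE1}--\eqref{eq:YBE3}, holds verbatim with $\lambda$ and $\rho$ interchanged, yielding exactly the $\rho_x^{-1}$ compatibility you need. You should add this observation (or reprove the $\rho^{-1}$ version of (3) by hand); without it the non-degeneracy of the associated solution on $\mathcal{G}(A)$ is not yet established.
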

\begin{proof}
That the mentioned operations are well-defined has been proven in the beginning of this section in Lemma \ref{PermSem}. That the $\lambda$-map is well-defined follows from Lemma~\ref{GammaSurjective} and that the $\cm$-map is well-defined follows from Lemma~\ref{ManyClaims}.

Note that, for each $a\in A$ we  know from Lemma~\ref{GammaSurjective}  that $\lambda_{f(a)}$ is surjective. To prove it also is injective assume that $\lambda_{f(a)} (f(b)) =\lambda_{f(a)}(f(b'))$. Then $\lambda_{\lambda_a (b)} =\lambda_{\lambda_{a}(b')}$ and $\rho_{\lambda_a (b)}=\rho_{\lambda_a (b')}$ and thus it follows from Lemma~\ref{ManyClaims}.(2) that $\lambda_b =\lambda_{\lambda_a^{-1} \lambda_a  (b)} =\lambda_{\lambda_a^{-1} \lambda_a (b')}=\lambda_{b'}$.
 Similarly,  we get that $\rho_b=\rho_{b'}$.
From Proposition~\ref{prop:c_a=c_b} it follows that also $\cm_b =\cm_{b'}$. Hence, $f(b)=f(b')$, as desired.

It is now straightforward to verify that all required conditions for $\mathcal{G}(A)$  to be a \ourstructure are satisfied
(in particular, $\lambda : \mathcal{G}(A)\rightarrow \Aut (\mathcal{G}(A),+)$) and that $f: A \rightarrow \mathcal{G}(A)$ is an epimorphism of \ourstructures.

It remains to show that the associated solution of $\mathcal{G}(A)$ is right non-degenerate. 
We prove this via Lemma~\ref{homomorphismqsemitruss}. 
Hence, because of Proposition~\ref{prop:c_a=c_b}, we need to prove the following: if $f(a)=f(a')$, $f(b)=f(b')$ then 
$\lambda_{\rho_a^{-1}(b)} =\lambda_{\rho_{a'}^{-1}(b')}$ and $\rho_{\rho_a^{-1}(b)}= \rho_{\rho_{a'}^{-1}(b')}$.
Clearly, without loss of generality, we may assume that $a=a'$.
Hence we need to prove Lemma~\ref{ManyClaims}.(3) for $\rho $ and $\lambda$ interchanged.
For this notice that
if $r:X\times X \rightarrow X \times X: (x,y)\mapsto (\lambda_x (y), \rho_y(x))$ is a solution then so is
$\tau r \tau :X^{2} \times X^{2}: (x,y)\mapsto (\rho_x (y) , \lambda_y (x))$, where $\tau (x,y)=(y,x)$.
Indeed, $r_{12}r_{23}r_{12}=r_{23}r_{12}r_{23}$ if and only if $(\tau r_{12}\tau)(\tau r_{23} \tau) (\tau r_{12}\tau)=
(\tau r_{23}\tau) (\tau r_{12}\tau) (\tau r_{23}\tau)$. Hence we indeed may interchange $\lambda$ and $\rho$ and the result follows.
\end{proof}

\begin{corollary}\label{permutationyb}
Let $(A,+,\circ , \lambda, \cm)$ be a non-degenerate \ourstructure. Then $(\mathcal{G}(A), + , \circ ,\lambda , \cm)$ is a \ourstructure with $(\mathcal{G}(A),\circ)$ a cancellative monoid (and thus also $(\mathcal{G}(A),+)$ is left cancellative) that satisfies the left and right Ore condition.
\end{corollary}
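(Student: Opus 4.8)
The plan is to extract everything from \cref{retractassoc}, which already guarantees that $(\mathcal{G}(A),+,\circ,\lambda,\cm)$ is a non-degenerate \ourstructure and that $f\colon A\to\mathcal{G}(A)$ is an epimorphism. In particular, on $\mathcal{G}(A)$ every $\lambda_{f(a)}$ is an element of $\Aut(\mathcal{G}(A),+)$, every $\rho_{f(a)}$ is a bijection of $\mathcal{G}(A)$, $\lambda$ is a semigroup homomorphism from $(\mathcal{G}(A),\circ)$, and $\rho$ is a semigroup anti-homomorphism from $(\mathcal{G}(A),\circ)$ (the latter by \cref{chomomorphism}). So what is left is purely the statement about $(\mathcal{G}(A),\circ)$; the parenthetical assertion that $(\mathcal{G}(A),+)$ is left cancellative is then automatic, since $x\circ y=x+\lambda_x(y)$ by \eqref{eq:sumcirc} and each $\lambda_x$ is injective (as recalled at the beginning of \cref{sec_leftcancellative}).

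\emph{Cancellativity and the monoid property.} The crucial ingredient is \cref{prop:c_a=c_b}: in a non-degenerate \ourstructure an element $f(a)=(\cm_a,\lambda_a,\rho_a)$ is already determined by the pair $(\lambda_a,\rho_a)$. Assume $f(c)\circ f(a)=f(c)\circ f(b)$. Applying the homomorphism $\lambda$ gives $\lambda_{f(c)}\lambda_{f(a)}=\lambda_{f(c)}\lambda_{f(b)}$, hence $\lambda_{f(a)}=\lambda_{f(b)}$ since $\lambda_{f(c)}$ is invertible; applying the anti-homomorphism $\rho$ gives $\rho_{f(a)}\rho_{f(c)}=\rho_{f(b)}\rho_{f(c)}$, hence $\rho_{f(a)}=\rho_{f(b)}$ since $\rho_{f(c)}$ is invertible; by \cref{prop:c_a=c_b} therefore $f(a)=f(b)$. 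So $(\mathcal{G}(A),\circ)$ is left cancellative, and the mirror argument — cancelling $\lambda_{f(c)}$ on the right and, using that $\rho$ is an anti-homomorphism, $\rho_{f(c)}$ on the left — gives right cancellativity. A cancellative semigroup possessing an idempotent is a monoid with that idempotent as two-sided identity, and $(\id,\id,\id)\in\mathcal{G}(A)$ — it equals $f(1)$ when $A$ is unital (\cref{lemma:unital}), the case to which one reduces — so $(\mathcal{G}(A),\circ)$ is a cancellative monoid.

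\emph{Right Ore.} From $f(a)\circ f(b)=f(a)+\lambda_{f(a)}(f(b))$ and the bijectivity of $\lambda_{f(a)}$ on $\mathcal{G}(A)$ one obtains $f(a)\circ\mathcal{G}(A)=f(a)+\mathcal{G}(A)$, so the principal right $\circ$-ideals of $\mathcal{G}(A)$ coincide with the principal right $+$-ideals. Hence the right Ore condition for $(\mathcal{G}(A),\circ)$ is equivalent to that for $(\mathcal{G}(A),+)$, and the latter is immediate: by \eqref{eq:csum}, $f(a)+f(b)=f(b)+\cm_{f(b)}(f(a))$, so $f(a)+f(b)$ is a common right multiple of $f(a)$ and $f(b)$ (this uses only that the $\cm$-map exists, reflecting that right $+$-ideals are two-sided).

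\emph{Left Ore — the main obstacle.} This is the delicate half, which I would attack dually, now using that every $\rho_{f(a)}$, rather than $\lambda_{f(a)}$, is a bijection of $\mathcal{G}(A)$. By \eqref{eq:truss1}, $f(c)\circ f(a)=\lambda_{f(c)}(f(a))\circ\rho_{f(a)}(f(c))$, and as $f(c)$ runs through $\mathcal{G}(A)$ so does $\rho_{f(a)}(f(c))$. To produce, for given $f(a),f(b)$, a common left multiple $f(c)\circ f(a)=f(d)\circ f(b)$, one must — invoking \cref{prop:c_a=c_b} again — solve $\lambda_{f(c)}\lambda_{f(a)}=\lambda_{f(d)}\lambda_{f(b)}$ together with $\rho_{f(a)}\rho_{f(c)}=\rho_{f(b)}\rho_{f(d)}$ inside $\Aut(\mathcal{G}(A),+)$ and $\Sym(\mathcal{G}(A))$. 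Equivalently, one verifies the left Ore condition for the submonoid $\{\lambda_{f(a)}\mid a\in A\}$ of $\Aut(\mathcal{G}(A),+)$, or one embeds $(\mathcal{G}(A),\circ)$ into the group assembled from the right fractions of $(\mathcal{G}(A),+)$ and the group generated by the $\lambda_{f(a)}$ and checks that every element there is a left fraction from $\mathcal{G}(A)$. Making this step precise is the technical heart of the argument, the right-ideal identifications above together with cancellativity being the facts on which it rests.
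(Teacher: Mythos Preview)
Your cancellativity argument is exactly the paper's. Your right Ore argument, routed through the additive structure, is fine and equivalent to the paper's.

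The gap is in left Ore, and it is not a ``technical heart'' at all: you already wrote down everything needed and then walked past the conclusion. From \eqref{eq:truss1} in $\mathcal{G}(A)$ you have
\[
f(c)\circ f(a)=\lambda_{f(c)}(f(a))\circ\rho_{f(a)}(f(c)),
\]
and you noted that $\rho_{f(a)}$ is a bijection of $\mathcal{G}(A)$. So, given $f(a)$ and $f(b)$, simply take $f(c)=\rho_{f(a)}^{-1}(f(b))$; then the displayed identity reads
\[
f(c)\circ f(a)=\lambda_{f(c)}(f(a))\circ f(b),
\]
a common left multiple of $f(a)$ and $f(b)$. There is no need to solve the simultaneous system $\lambda_{f(c)}\lambda_{f(a)}=\lambda_{f(d)}\lambda_{f(b)}$ and $\rho_{f(a)}\rho_{f(c)}=\rho_{f(b)}\rho_{f(d)}$: the single equation \eqref{eq:truss1} already packages both constraints. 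This is precisely how the paper argues --- one line for each Ore condition, using bijectivity of $\lambda_{f(a)}$ for right Ore and of $\rho_{f(b)}$ for left Ore, both drawn from \cref{epitruss}.

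A minor remark: your justification that $(\mathcal{G}(A),\circ)$ is a \emph{monoid} via ``reducing to the unital case'' is a bit hand-wavy; the paper does not spell this out either, but note that the Ore argument above needs only the semigroup structure.
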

\begin{proof}
To prove left cancellativity,  suppose $f(a)\circ f(b) =f(a) \circ f(b')$. Then $\lambda_{a}\lambda_b=\lambda_{a}\lambda_{b'}$ and $\rho_a \rho_b =\rho_a \rho_{b'}$. As $\lambda_a$ and $\rho_a$ are bijective we obtain that $\lambda_b =\lambda_{b'}$ and $\rho_b =\rho_{b'}$. Because of Proposition~\ref{prop:c_a=c_b} we then also have $\cm_b =\cm_{b'}$. Hence $f(b)=f(b')$, as desired. So both semigroups $(\mathcal{G}(A),\circ)$ and $(\mathcal{G}(A),+)$ are left cancellative.
Similarly one proves that $(\mathcal{G}(A),\circ) $ is right cancellative. Indeed, suppose $f(a)\circ f(b) =f(a')\circ f(b)$. Then,
$\lambda_a \lambda_b =\lambda_{a'} \lambda_b$ and $\rho_a \rho_b = \rho_{a'} \rho_b$. As $\lambda_b$ and $\rho_b$ are bijective it follows that $\lambda_a =\lambda_{a'}$ and $\rho_a =\rho_{a'}$. Hence, also $\cm_a=\cm_{a'}$ and thus $f(a)=f(a')$, as desired.

Since $\mathcal{G}(A)$ is a non-degenerate \ourstructure, we obtain from  Proposition~\ref{epitruss}   that $f(a)\circ f(b) =\lambda_{f(a)} (f(b)) \circ \rho_{f(b)}(f(a))$, where $\rho$ denotes the $\rho$-map for $\mathcal{G}(A)$. As each $\lambda_{f(a)}$ and $\rho_{f(b)}$ is  bijective, the left and right conditions follow at once.
\end{proof}

We are now in a position to extend \cref{bijectivenondeg1} to a much larger class of solutions.
For this, in the non-degenerate case, we consider $\{ (\lambda_a , \rho_a) \mid a\in A\}$ as a subsemigroup of  the direct product of
$(\Sym (A),\circ)$ and its opposite group $(\Sym (A),\circ^{op})$, where $\circ$ is the composition of functions.

\begin{corollary}\label{bijectivenondeg2}
Let $(A,+,\circ, \lambda, \cm)$ be a non-degenerate \ourstructure. If   for every $a\in A$, there exists $b\in A$ such that
$\lambda_ a \lambda_b =\id$ and $\rho_{a\circ b} =\rho_b \rho_a =\id$ (for example if all $\lambda_a$ and $\rho_a$ are of finite order),
then the associated solution $r_A$ is bijective and $\mathcal{G(A)}$ is a skew left brace.
\end{corollary}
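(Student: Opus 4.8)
The plan is to treat the two assertions in turn, both resting on the same elementary observation: for each $a\in A$ the hypothesis produces an element $a\circ b$ whose $\lambda$-, $\rho$- and $\cm$-maps are all the identity. (The parenthetical ``for example'' is a special case: if every $\lambda_a$ and $\rho_a$ has finite order, then for $a\in A$ one picks $N\ge 2$ a common multiple of the orders of $\lambda_a$ and $\rho_a$ and sets $b=a^{\circ(N-1)}$; since $\lambda$ is a homomorphism and $\rho$ an anti-homomorphism by Lemma~\ref{chomomorphism}, $\lambda_a\lambda_b=\lambda_a^{N}=\id$ and $\rho_{a\circ b}=\rho_b\rho_a=\rho_a^{N}=\id$.) So from now on I use only the hypothesis that for every $a$ there is $b$ with $\lambda_a\lambda_b=\id$ and $\rho_{a\circ b}=\rho_b\rho_a=\id$.

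First I would prove that $r_A$ is bijective. By Proposition~\ref{prop:semitruss} this amounts to showing that every $\cm_a$ is a bijection, i.e.\ that the set $\mathcal C(A)=\{\cm_a\mid a\in A\}\subseteq \Map(A,A)$ consists of bijections; note $\mathcal C(A)$ is closed under composition since $\cm_a\cm_b=\cm_{b+a}$ by \eqref{eq:condc}. Fix $a$ and choose $b$ as above. Then $\lambda_{a\circ b}=\lambda_a\lambda_b=\id$ by \eqref{trusslambda} and $\rho_{a\circ b}=\id$, so the second part of Proposition~\ref{prop:c_a=c_b} (using that $A$ is non-degenerate) gives $\cm_{a\circ b}=\id$. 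Since $a\circ b=a+\lambda_a(b)$ by \eqref{eq:sumcirc}, \eqref{eq:condc} yields
\[
\id=\cm_{a\circ b}=\cm_{a+\lambda_a(b)}=\cm_{\lambda_a(b)}\,\cm_a .
\]
Thus $\id=\cm_{a\circ b}\in\mathcal C(A)$ is a two-sided identity and every element of $\mathcal C(A)$ has a left inverse in $\mathcal C(A)$; a monoid in which each element has a left inverse is a group, so $\mathcal C(A)$ is a group and every $\cm_a$ is bijective. Hence $r_A$ is bijective.

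For the second assertion, recall from Theorem~\ref{retractassoc} and Corollary~\ref{permutationyb} that $\mathcal G(A)$ is a \ourstructure whose associated solution is non-degenerate, that $(\mathcal G(A),\circ)$ is a cancellative monoid, and that $f\colon A\to\mathcal G(A)$ is a \ourstructure epimorphism. With $a$ and $b$ as above, $f(a)\circ f(b)=f(a\circ b)=(\cm_{a\circ b},\lambda_{a\circ b},\rho_{a\circ b})=(\id,\id,\id)$; a short computation shows $(\id,\id,\id)$ is the identity of $(\mathcal G(A),\circ)$ (for any $c$, $\lambda_{c\circ a\circ b}=\lambda_c$ and $\rho_{c\circ a\circ b}=\rho_c$ by \eqref{trusslambda} and Lemma~\ref{chomomorphism}, whence $\cm_{c\circ a\circ b}=\cm_c$ by Proposition~\ref{prop:c_a=c_b}, and symmetrically on the other side, so $f(c\circ a\circ b)=f(c)=f(a\circ b\circ c)$). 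Therefore $f(b)$ is a right inverse of $f(a)$, and a monoid in which every element has a right inverse is a group, so $(\mathcal G(A),\circ)$ is a group. By Example~\ref{excsb}, $\mathcal G(A)$ is then a left cancellative semi-brace; since its associated solution is non-degenerate, all its $\rho$-maps are bijective, and the last statement of Example~\ref{exsb} gives that $\mathcal G(A)$ is a skew left brace.

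The one genuinely delicate point is that $f$ need not be injective, so bijectivity of the $\cm$-maps (or of the diagonal map $\mathfrak q$) cannot simply be transported back from the skew brace $\mathcal G(A)$ to $A$; this is why the bijectivity of $r_A$ is obtained directly from the observation that $\mathcal C(A)$ is a monoid with left inverses, hence a group, rather than as a corollary of the structure of $\mathcal G(A)$.
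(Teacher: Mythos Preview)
Your proof is correct and follows essentially the same line as the paper's: both hinge on Proposition~\ref{prop:c_a=c_b} to obtain $\cm_{a\circ b}=\id$ from $\lambda_{a\circ b}=\rho_{a\circ b}=\id$, then deduce that $(\mathcal G(A),\circ)$ is a group and invoke Example~\ref{exsb}. The only difference is cosmetic ordering---the paper first shows $(\mathcal G(A),\circ)$ is a group and then says ``in particular all $\cm_a$ are bijective'', while you spell out explicitly that $\mathcal C(A)$ is a monoid with left inverses (hence a group), which is precisely the content of that ``in particular''.
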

\begin{proof} Note that it follows from Proposition~\ref{prop:c_a=c_b} that if $\{ (\lambda_a , \rho_a) \mid a\in A\}$ is a group then also $(\mathcal{G}(A),\circ)$ is a group. 
Indeed, if $a\in A$ then, by assumption, $(\lambda_a ,\rho_a) (\lambda_b , \rho_b) =(\id ,\id)$ for some $b\in A$. Hence $\lambda_{a\circ b} =\rho_{a\circ b}=\id$. So, by Proposition~\ref{prop:c_a=c_b}, 
$\cm_{a\circ b}=\id$. Hence $f(a) \circ f(b)=f(a\circ b)=\id$. It follows that $(\mathcal{G}(A),\circ)$
is indeed a group.
In particular, all maps $\cm_a$ are bijective and thus, by Proposition~\ref{prop:semitruss}, $r_A$ is bijective.

The last part of the result now  follows at once from Example~\ref{exsb} and Corollary~\ref{permutationyb}.
\end{proof}

\begin{corollary}
Let $(X,r)$ be a non-degenerate solution.  If the subsemigroup $\langle (\lambda_x, \rho_x)\mid x\in X\rangle $
of  the group $(\Sym (X),\circ) \times (\Sym (X),\circ^{\text{op}})$ is a group itself then $r$ is bijective.
\end{corollary}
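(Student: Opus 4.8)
The plan is to deduce this from Corollary~\ref{bijectivenondeg2}, applied to the unital structure \ourstructure $M=M(X,r)$. Since $(X,r)$ is non-degenerate, so is $(M,r_M)$, hence $M$ is a non-degenerate \ourstructure by Proposition~\ref{SolFromSemiTruss}, and $r$ is bijective if and only if $r_M$ is. So it suffices to verify the hypothesis of Corollary~\ref{bijectivenondeg2} for $M$: for every $a\in M$ there should be $b\in M$ with $\lambda_a\lambda_b=\id_M$ and $\rho_b\rho_a=\id_M$.

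First I would introduce a restriction homomorphism. Since $\lambda$ is a monoid homomorphism $(M,\circ)\to\Map(M,M)$, $\rho$ an anti-homomorphism, and each $\lambda_x|_X$, $\rho_x|_X$ ($x\in X$) is a bijection of $X$ by non-degeneracy, it follows that $\lambda_a(X)\subseteq X$, $\rho_a(X)\subseteq X$ and $\lambda_a|_X,\rho_a|_X\in\Sym(X)$ for all $a\in M$, and that $\phi\colon(M,\circ)\to\Sym(X)\times\Sym(X)^{\mathrm{op}}$, $a\mapsto(\lambda_a|_X,\rho_a|_X)$, is a monoid homomorphism. Its image is exactly the subsemigroup $G:=\langle(\lambda_x,\rho_x)\mid x\in X\rangle$, which is a group by hypothesis. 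Given $a\in M$, the element $\phi(a)\in G$ has an inverse in $G$; since $G$ is the \emph{subsemigroup} generated by the $(\lambda_x,\rho_x)$ and happens to be a group, that inverse is a \emph{positive} word $\phi(x_{j_1})\cdots\phi(x_{j_m})=\phi(b)$ where $b:=x_{j_1}\circ\cdots\circ x_{j_m}\in M$. Hence $\phi(a\circ b)=\id$, i.e. $\lambda_{a\circ b}|_X=\id_X$ and $\rho_{a\circ b}|_X=\id_X$.

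The crux is then to upgrade ``$\lambda_c|_X=\id_X$ and $\rho_c|_X=\id_X$'' (for $c=a\circ b\in M$) to ``$\lambda_c=\id_M$ and $\rho_c=\id_M$'', which is what Corollary~\ref{bijectivenondeg2} needs. I would first show that the condition $\phi(c)=\id$ is stable under $c\mapsto\lambda_y(c)$ and $c\mapsto\rho_y(c)$ for $y\in X$: applying \eqref{eq:YBE1} and \eqref{eq:YBE3} for $(M,r_M)$ with one entry equal to $c$ and the other to $y$ — so that $\lambda_c(y)=y$ and $\rho_c(y)=y$ — and cancelling the bijections $\lambda_y|_X,\rho_y|_X$, one gets $\lambda_{\lambda_y(c)}|_X=\lambda_{\rho_y(c)}|_X=\id_X$ and $\rho_{\lambda_y(c)}|_X=\rho_{\rho_y(c)}|_X=\id_X$. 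Then, expanding $\lambda_c(y_1\circ\cdots\circ y_k)$ by repeated use of \eqref{eq:strmon3} (and $\rho_c(y_1\circ\cdots\circ y_k)$ by \eqref{eq:strmon2}), all subscripts that occur are obtained from $c$ by successively applying maps $\rho_{y_i}$ (resp.\ $\lambda_{y_i}$), hence still restrict trivially to $X$; so every factor is fixed and $\lambda_c=\id_M$, $\rho_c=\id_M$. This gives $\lambda_a\lambda_b=\lambda_{a\circ b}=\id_M$ and $\rho_b\rho_a=\rho_{a\circ b}=\id_M$, so Corollary~\ref{bijectivenondeg2} applies and $r_M$, hence $r$, is bijective. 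I expect this last paragraph to be the only delicate point; everything else is bookkeeping with the known monoid-theoretic identities.
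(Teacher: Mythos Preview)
Your proof is correct and follows the same strategy as the paper: apply Corollary~\ref{bijectivenondeg2} to the structure \ourstructure of $(X,r)$. The paper's one-line proof simply asserts that $\mathcal{G}(S(X,r))$ is a group ``because of the assumptions''; you carefully justify the passage from $\lambda_c|_X=\rho_c|_X=\id_X$ to $\lambda_c=\rho_c=\id_M$ via the stability of $\ker\phi$ under $\lambda_y,\rho_y$ and the expansion formulas \eqref{eq:strmon2}--\eqref{eq:strmon3}, which is precisely the step the paper leaves implicit.
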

\begin{proof}
As before, the structure semigroup $S(X,r)$ is a non-degenerate \ourstructure and, because of the assumptions, $\mathcal{G}(S(X,r)) $ is a group.
Hence, by Corollary~\ref{bijectivenondeg2}, $r_{S(X,r)}$ is bijective. Since this map, restricted to $X^2$ is $r$, we obtain that $r$ is bijective.
\end{proof}

\section{Algebraic structure of \ourstructures}\label{sec_algebraic}
\subsection{Additive structure of \ourstructure and the structure algebra}\label{sec_structurealgebra}\hfill

In this section we investigate the algebra $K[(A,\circ)]$ of a \ourstructure $(A,+,\circ, \lambda,\cm)$. Our main result states that this algebra is left Noetherian and satisfies a polynomial identity
provided $A$ is a unital strongly $\mathbb{N}$-graded  \ourstructure  with  $A_1$ finite, $A_0=\{ 1\}$  and the diagonal map is  bijective.
As such an algebra is an epimorphic image of the structure algebra $K[M(X,r)]$, with $(X,r)$ a left non-degenerate finite solution, it is sufficient to deal with the unital structure \ourstructure $M(X,r)$. To prove this result we first determine the algebraic structure of the derived monoid $A(X,r)$.

Put
$$A=A(X,r) =\langle x\in X \mid x +y =y + \cm_y (x)\rangle,$$
the additive monoid of the unital structure \ourstructure $M(X,r)$.
Recall from Lemma~\ref{lemma:unital} that for a unital \ourstructure we have  $0=1$, where $0$ and $1$ are the identities of the additive and the multiplicative monoid respectively.
For  any $a,b\in A$,
   \begin{eqnarray}
     a+b =b+\cm_b(a), \label{leftdiv}
   \end{eqnarray}
where  $\cm: (A,+) \rightarrow \End (A,+): a\mapsto \cm_a$,
is a monoid anti-homomorphism.
In particular,  each right ideal $b+A$ of $(A,+)$ is a two-sided ideal of $(A,+)$.
One says that $a\in A$ is  \emph{divisible} by $b$ if $a=e+b+d$ for some $e,d\in A$. If $e=0$ then one says that $a$ is \emph{left divisible} by $b$.
Because of \eqref{leftdiv},  $a$ is divisible by $b$ if and only if $a$ is left divisible by $b$.

Consider the subsemigroup 
 $$\mathcal{C}=\mathcal{C}(A):=\{ \cm_a \mid a\in A\}=\langle \cm_x \mid x\in X\rangle,$$
of $\End(A,+)$. Furthermore, for $a,b\in A$,
\begin{eqnarray}
 \cm_a \cm_b =\cm_{b+a} =\cm_{a+\cm_a (b)} =\cm_{\cm_a(b)} \cm_a, \label{cnormal}
 \end{eqnarray}
 and thus
 \begin{eqnarray}
  \cm_a  \mathcal{C} \subseteq \mathcal{C} \cm_a.\label{leftsimple}
  \end{eqnarray}
 So, every left ideal of the semigroup $\mathcal{C}$ is a two-sided ideal.
If $\mathcal{C}$ is finite (for example if $X$ is finite), there exists
a positive integer, say $v$, so that 
  $$\cm_x^{v}=\cm_{vx}=\cm_{vx}^2 \mbox{ is an idempotent},$$  
for each $x\in X$. 
 
From now on $X=\{ x_1 , \ldots , x_n\}$ is a finite set
and thus any element of $A=A(X,r)$ is divisible by only finitely many elements.
Let $a\in A=\langle x_1 , \dots , x_n\rangle$ and  let $m_1$ be the maximal non-negative integer so that $a$ is left divisible by $m_1x_1$, that is  $a=m_1 x_1 + b$ and $b\in A$ can not be written as $x_1 + d$ for some $d\in A$. 
Repeat this  argument on $b$ and consider the maximal non-negative integer $m_2$ so that $b=m_2x_2 +d$,
for some $d\in A$. After at most $n$ steps we get that 
  $$a=m_1 x_1+ m_2 x_2 + \cdots + m_n x_n,$$ 
for some non-negative integers $m_i$.
Notice that  $vx_i+vx_j=vx_j+ \cm_{vx_j}(vx_i)= vx_j +v\cm_{vx_j}(x_i)$
and $\cm_{vx_j}(x_i)\in X$.
Hence,  
  $$B(v)=\{ m_1 vx_{1}+ \cdots + m_n vx_n \mid m_1, \ldots , m_n\geq 0\},$$ 
is a submonoid of $(A,+)$.
Furthermore
\begin{eqnarray}\label{finitemodule}
  A=B(v)+F(v), \: 
\mbox{ with }F(v)=\{ m_1 x_{1} +\cdots + m_n x_n \mid v>m_1, \ldots , m_n\geq 0\} .
\end{eqnarray}
Thus $A$ is a finite module over the finitely generated submonoid $B(v)$.
For $1\leq i \leq n$,
put 
  $$y_i=vx_i \quad \mbox{ and } \quad X(v)=\{ y_1 , \dots , y_n\}.$$
The solution $s:X^2 \rightarrow X^2: (x,y)\mapsto (y,\cm_y (x))$
induces a solution $s_{X(v)}: X(v)^{2} \rightarrow X(v)^{2}: (y_i,y_j) \mapsto (y_j, \cm_{y_j}(y_i))$.
Clearly,
  $$B(v)=A(X(v),s_{X(v)})=\langle y_1, \ldots , y_n\rangle.$$
Put 
    $$\cm_j =\cm_{y_j},\quad  \mbox{ for } 1\leq j \leq n.$$
Because of \eqref{cnormal} and since each element $\cm_y $, for $y\in Y$ is idempotent,  any non-identity element of $\mathcal{C}(B(v))$
can be written as $\cm_{j_1} \cdots  \cm_{j_k}$ for some $j_1, \ldots , j_k$ 
and some $k\leq n$. 
If $v$ is clear from the context we simply write $B(v)$ as $B$.

Let 
$\cm_{j_1} \cm_{j_2} \cdots \cm_{j_k}\in \mathcal{C}(B)$
with $J=\{ j_1 ,\cdots , j_k\}$ a subset of $\{ 1, \ldots , n\}$.
Then, for any $j_l\in J$,  we have 
\begin{eqnarray*}
   \cm_{j_l} \cdots \cm_{j_k} &=& \cm_{\cm_{j_l}(y_{j_{l+1}})} \cm_{j_l} \cm_{j_{l+2}} \cdots \cm_{j_{k}}\\
   &=& \cm_{\cm_{j_l}(y_{j_{l+1}})}  \cm_{\cm_{j_l}(y_{j_{l+2}})} \cdots \cm_{\cm_{j_l}(y_{j_{k}})} \cm_{j_l}.
 \end{eqnarray*}
Because  $\cm_{j_l}$ and $\cm_{j_k}$ are  idempotent we get 
 \begin{eqnarray}
   \cm_{j_l} \cdots \cm_{j_k} \cm_{j_{l} }
   &=& \cm_{\cm_{j_l}(y_{j_{l+1}})}  \cm_{\cm_{j_l}(y_{j_{l+2}})} \cdots \cm_{\cm_{j_l}(y_{j_{k}})} \cm_{j_l}\cm_{j_{l}}
   \nonumber\\
    &=& \cm_{\cm_{j_l}(y_{j_{l+1}})}  \cm_{\cm_{j_l}(y_{j_{l+2}})} \cdots \cm_{\cm_{j_l}(y_{j_{k}})} \cm_{j_l} \nonumber\\
    &=&  \cm_{j_l} \cdots \cm_{j_k} \label{rightC}
 \end{eqnarray}
Hence it follows from \eqref{rightC}  that each $ \cm_{j_1} \cdots \cm_{j_k} $ is an idempotent and 
we have shown the following property.

\begin{lemma} \label{Cband}
The semigroup $\mathcal{C}(B(v)) =\{ \cm_{j_1} \cm_{j_2} \cdots  \cm_{j_k} \mid 1 \leq j_1 ,\cdots , j_k \leq n \}\cup \{ \id \}
=\{ \cm_{j_1} \cm_{j_2} \cdots  \cm_{j_k} \mid 1 \leq j_1 ,\cdots , j_k \leq n, \mbox{ all distinct} \}\cup \{ \id \}$
is a band, i.e. a semigroup consisting of idempotents. 
\end{lemma}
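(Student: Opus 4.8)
The plan is to establish the three assertions of \cref{Cband} in turn: first that every element of $\mathcal{C}(B(v))$ is idempotent, then that the generators may be taken pairwise distinct, and finally that this forces the semigroup to be a band. The bulk of the work is already done in the display \eqref{rightC} above: it shows that for a product $\cm_{j_1}\cdots\cm_{j_k}$ indexed by a \emph{set} $J=\{j_1,\dots,j_k\}$ and for any $j_l\in J$ we have $\cm_{j_l}\cdots\cm_{j_k}\cm_{j_l}=\cm_{j_l}\cdots\cm_{j_k}$, using only \eqref{cnormal} and the fact that each $\cm_{y}$ with $y\in X(v)$ is idempotent.

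First I would argue that any element of $\mathcal{C}(B(v))$ can be rewritten as a product $\cm_{j_1}\cdots\cm_{j_k}$ with the indices $j_1,\dots,j_k$ \emph{pairwise distinct}. Indeed, by \eqref{cnormal} and idempotency of each $\cm_i$, whenever an index $j$ repeats in a word one can use the commutation relation $\cm_i\cm_j=\cm_{\cm_i(y_j)}\cm_i$ to move the two occurrences of $\cm_j$ together (each such move replaces intervening factors $\cm_{j_l}$ by $\cm_{\cm_i(y_{j_l})}$, which is again of the form $\cm_{y}$ with $y=\cm_i(y_{j_l})=v\cm_{y_i}(x_{j_l})\in X(v)$, since $\cm_{y_i}(x_{j_l})\in X$), and then $\cm_j\cm_j=\cm_j$ removes the duplication. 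Each such step strictly decreases the length of the word, so after finitely many steps we reach a word with distinct indices. This proves the second displayed equality in the statement; note the words produced are over the alphabet $\{\cm_1,\dots,\cm_n\}$ so their length is at most $n$.

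Next I would deduce idempotency: given $c=\cm_{j_1}\cdots\cm_{j_k}$ with distinct indices, compute $c\circ c=\cm_{j_1}\cdots\cm_{j_k}\cm_{j_1}\cdots\cm_{j_k}$ and apply \eqref{rightC} with $l=1$ to collapse the middle $\cm_{j_1}$ into the left block, obtaining $\cm_{j_1}\cdots\cm_{j_k}\cm_{j_2}\cdots\cm_{j_k}$; then apply \eqref{rightC} with $l=2$, and so on, peeling off $\cm_{j_2},\dots,\cm_{j_k}$ one at a time. After $k$ applications we are left with $\cm_{j_1}\cdots\cm_{j_k}=c$, so $c^2=c$. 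The identity $\id$ is trivially idempotent. Hence every element of $\mathcal{C}(B(v))$ is idempotent, i.e. $\mathcal{C}(B(v))$ is a band, which is the last assertion.

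The only genuinely delicate point is making the rewriting argument in the second step precise: one must check that moving a repeated factor together is well-founded (length decreases, which it does since one duplicate is ultimately deleted) and that no new indices outside $\{1,\dots,n\}$ are introduced (guaranteed because $\cm_{y_i}$ maps $X$ into $X$, so it maps $X(v)$ into $X(v)$, keeping all factors of the form $\cm_{y}$ with $y\in X(v)$). Everything else is a direct unwinding of \eqref{cnormal}, \eqref{leftsimple} and \eqref{rightC}, all of which are available from the material preceding the lemma. I would therefore present the proof as: (i) reduce to distinct indices via the rewriting lemma; (ii) invoke \eqref{rightC} iteratively to get idempotency; (iii) conclude.
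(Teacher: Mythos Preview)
Your proposal is correct and follows essentially the same approach as the paper: the paper reduces to words of bounded length (hence distinct indices) via \eqref{cnormal} and idempotency of the generators, and then invokes \eqref{rightC} to conclude that every product is idempotent, exactly as you do. Your write-up simply makes explicit the rewriting termination (length decreases) and the iterated peeling $c\cm_{j_1}=c$, $c\cm_{j_2}=c$, \dots\ from \eqref{rightC}, which the paper leaves to the reader.
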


 For $1\leq k\leq n$  and $\kappa\in \Sym_n$, the symmetric group of degree $n$, put 
 \begin{align*}
     t_k &=y_1 + \cdots +y_k\in A, \\
   \kappa (t_k)&=y_{\kappa (1)} + \cdots + y_{\kappa (k)}\in A, \\
   B_{\kappa (t_k)} &= \langle y_{\kappa (i)} \mid 1\leq i \leq k\rangle,
 \end{align*}
and put 
   $$T=\{ \kappa (t_k) \mid 1 \leq k \leq n,\; \kappa \in \Sym_n\}.$$ 
Let $0\neq w\in B$. Assume $w=w'+ y_k$  for some $k\leq n$.  Let  $m$ be the maximal non-negative integer so that $w'$ is left divisible by $m y_k$. Then  $w=my_k + w''+y_k$ for some $w''\in B$ that is  not left divisible by $y_k$. 
If $0\neq w''$ then repeat the argument on $w''$ and after at most $n$  steps we get that
  $$w=w_t + t,$$
for some $t\in T$ and $w_t\in B_t$.
Since $\cm_{w}=\cm_t \cm_{w_t}$, it follows from 
\eqref{rightC}  that 
   $$\cm_w =\cm_t.$$ 
  Hence,
 for $a,b\in B_t$
 \begin{eqnarray}
 a+b+t =a+t +\cm_t (b) = a+t + \cm_{a+t} (b) =b+a+t,\label{interchange}
 \end{eqnarray}
 in particular
  $$a+t+b+t =b+t +a+t,$$
that is, each  $B_t+t$ is an  abelian semigroup. 
With the notation introduced above we thus  obtain the following property.
\begin{lemma}
Let $(X,r)$ be a finite left non-degenerate solution and $A=A(X,r)$.
Then $A=B(v) +F(v)$ and $B(v)=\{ 0\} \cup \bigcup_{t\in T} (B(v)_{t} +t)$, a finite union of abelian subsemigroups.
\end{lemma}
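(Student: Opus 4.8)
The plan is to extract both assertions from the analysis already carried out in the paragraphs preceding the statement. The first equality $A = B(v) + F(v)$ is literally \eqref{finitemodule}, so there is nothing new to add there; all the work concerns the internal structure of $B = B(v)$.

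For the decomposition of $B$, one inclusion is immediate: for each $t \in T$ we have $B_t \subseteq B$ and $t \in B$, hence $B_t + t \subseteq B$, and of course $0 \in B$; moreover $T$ is finite, since its elements are of the form $\kappa(t_k)$ with $1 \le k \le n$ and $\kappa \in \Sym_n$, so $|T| \le \sum_{k=1}^{n} \binom{n}{k} k!$. For the reverse inclusion I would invoke the normal form produced by the peeling procedure described just before the lemma: given $0 \ne w \in B$, pick a generator $y_k$ occurring as the rightmost letter of a word for $w$, write $w = w' + y_k$, pull out the largest power $m y_k$ left dividing $w'$ so that $w = m y_k + w'' + y_k$ with $w''$ not left divisible by $y_k$, and iterate on $w''$. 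The key point is that $w''$ then contains no occurrence of $y_k$ at all — a $y_k$ inside $w''$ could be shifted to the front using $y_i + y_k = y_k + \cm_k(y_i)$, contradicting non-divisibility — so the chosen generators are distinct, the process halts after at most $n = |X|$ rounds, and it yields $w = w_t + t$ with $t \in T$ and $w_t \in B_t$. I expect this normal-form step to be the main obstacle, since it is a combinatorial termination statement rather than a computation, and a fully careful treatment rests on $B(v)$ being the additive monoid $A(X(v), s_{X(v)})$ of the derived solution $s_{X(v)}$, for which such decompositions are available.

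Finally I would verify that each $B_t + t$ is a commutative subsemigroup. Closure under $+$ is clear because $B_t$ is a submonoid containing $t$, so for $a,b \in B_t$ one has $(a + t) + (b + t) = \bigl(a + (t + b)\bigr) + t \in B_t + t$. For commutativity I would use the fact, recorded via \eqref{rightC}, that $\cm_w = \cm_t$ for every $w \in B_t + t$; combining this with \eqref{eq:csum} gives, for $a,b \in B_t$,
\[
a + b + t = a + \bigl(t + \cm_t(b)\bigr) = (a + t) + \cm_{a+t}(b) = b + (a + t),
\]
i.e.\ $a + b + t = b + a + t$. Applying this identity once with $a$ replaced by $a + t \in B_t$, once with the roles of $a$ and $b$ exchanged, and then adding a trailing $t$, one concludes $(a + t) + (b + t) = (b + t) + (a + t)$. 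Hence $B = \{0\} \cup \bigcup_{t \in T}(B_t + t)$ exhibits $B$ as a finite union of abelian subsemigroups, as claimed.
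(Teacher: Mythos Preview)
Your proposal is correct and follows exactly the paper's approach: the decomposition $A=B(v)+F(v)$ is \eqref{finitemodule}, the peeling procedure producing $w=w_t+t$ with $t\in T$ is the same, and the commutativity of $B_t+t$ is derived from $\cm_{a+t}=\cm_t$ via \eqref{eq:csum} just as in the paper's display \eqref{interchange}. Your explicit justification that the chosen generators $y_{j_1},y_{j_2},\dots$ are distinct (so the process halts after at most $n$ rounds) is a welcome elaboration of what the paper asserts without proof; the only slight detour is in your last paragraph, where the paper obtains $(a+t)+(b+t)=(b+t)+(a+t)$ in one line from \eqref{interchange} rather than by the two substitutions you describe.
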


We also have that for $\kappa \in \Sym_n$, $1\leq k\leq n$ and $1\leq l\leq k$
  \begin{eqnarray}\label{idealR}
   \kappa (t_k) +y_{\kappa (l)} &= &y_{\eta (l')} +\eta (t_k)
  \end{eqnarray}
for some $\eta \in \Sym_n$ and $1\leq l'\leq k$. 

To prove this it is sufficient to deal with the case that $\kappa =\id$. Hence, we need to show that for $1\leq l\leq k\leq n$ one can rewrite 
$t_k + y_l = y_1 + y_2 + \cdots + y_k + y_l$ as  $y_{\eta(l')}+ \eta (t_k)$ for some $\eta \in \Sym_n$ and $1 \leq l' \leq k$.
If $l=1$ then this is obvious. Assume $k>l>1$. Then
$$t_k + y_l  = y_l  + \cdots +y_k + y_l  + \cm_{y_l  + \cdots +y_k+ y_l} (y_1+ \cdots + y_{l-1}). $$
Hence, since  $\cm$ is an anithomomorphism and because of \eqref{rightC},
\begin{eqnarray*}
t_k + y_l  &=& y_l  + y_{l+1}+ \cdots +y_k + y_l  + \cm_{y_{l+1}  + \cdots +y_k+ y_l} (y_1+ \cdots + y_{l-1})\\
    &=& y_l + y_1+ \cdots + y_{l-1}+  y_{l+1}  + \cdots +y_k+ y_l \\
    &=& y_l +\eta (t_k) 
    \\&=& y_{\eta(l')} +\eta (t_k),
\end{eqnarray*}
for some $\eta$ and $l'=k$, as desired.
It remains to deal with the case $k=l$.
In this case
 $$t_k + y_k = y_k +y_k + \cm_{y_k+y_k} (y_1 + \cdots +y_{k-1}),$$
and thus, because $\cm_{y_k}$ is an idempotent and $\cm$ is an anti-homomorphism,
  \begin{eqnarray*}
   t_k + y_k &=& y_k +y_k + \cm_{y_k} (y_1 + \cdots +y_{k-1})\\
        &=& y_k + y_1 +\cdots +y_{k-1} +y_k\\
        &=& y_k +t_k,
  \end{eqnarray*}
again as desired.

Let $K$ be a field. Each
  $K[ B_t+t]$
is a left $K[B_t]$-module for the module action
 $K[B_t] \times K[B_t+t] \rightarrow K[B_t+t]$
defined by the linear extension of 
  $ (a, d+t) \mapsto a+d+t$,
where $a,d\in B_t$ and $t\in T$.
Because of \eqref{interchange},  $K[B_t+t]$ is a cyclic left module over the 
finitely generated commutative $K$-algebra $K[B_t]^{ab}=K[B_t]/[K[B_t],K[B_t]]$, i.e.  the abelianization of the algebra $K[B_t]$.
Hence the commutative (non-unital) algebra $K[B_t+t]$ is a Noetherian left $K[B_t]$-module.

Obviously, for $\kappa \in \Sym_n$, we have that  $B_{\kappa (t_n)}+\kappa (t_n) =B+\kappa (t_n)$ is a left ideal of $B$.
Hence, $K[B_{t_n} + t_n]$ is a left  ideal of $K[B]$ and thus it  is a Noetherian left
$K[B]$-module   and, as ring, it is commutative. Put
 $$R_n= \sum_{\kappa \in \Sym_n} K[B_{\kappa (t_n)}+\kappa (t_n)].$$ 
Clearly this is a Noetherian left $K[B]$-module. Furthermore, from \eqref{idealR} it follows that 
$R_n$  is an ideal  of $K[B]$. 
As a ring it is  a sum of left ideals that are  PI-algebras (actually each of the left ideals is commutative as a ring) and  thus,
by a result of Rowen \cite{Rowen1976} (or more general a result of K\c{e}pczyk \cite{Kcepczyk}, which states that a ring which is a sum of two  PI rings is again a PI ring), $R_n$ is a PI-algebra.

Next consider the $K$-algebra  $K[B]/R_n$. For any $\kappa \in \Sym_n$, 
it easily is verified that  $K[B_{\kappa( t_{n-1})}+\kappa(t_{n-1})]/R_n$ is a left $K[B]/R_n$-submodule of $K[B]/R_n$. 
With a similar reasoning  as above, we obtain that this is a Noetherian cyclic left module over $K[B_{\kappa(t_{n-1})}]$
and thus  $K[B_{\kappa( t_{n-1})}+\kappa(t_{n-1})]/R_n$ is a Noetherian left module over $K[B]/R_n$. Furthermore, as a ring it is commutative.
Put
   $$R_{n-1} =\sum_{\kappa \in \Sym_n}  K[B_{\kappa (t_{n-1})}+\kappa (t_{n-1})].$$
By \eqref{idealR},  $R_{n-1}+R_n/R_n$ is an ideal in $K[B]/R_n$ and it is  a Noetherian left $K[B]/R_n$-module. 
Consequently $R_{n-1}+R_n$ is a Noetherian left $K[B]$-module.
One can repeat this process in an obvious manner making use of the sets
   $$R_{k} =\sum_{\kappa \in \Sym_n}  K[B_{\kappa (t_k)}+\kappa (t_k)],$$
where $1\leq k\leq n$. Put $R_0=K$.  It follows, in particular the algebra  $K[B]=R_0+ R_{1}+ \cdots +R_n$ is left  Noetherian.
Since a $K$-algebra $V$  is PI  if it has an ideal $I$ so that both algebras $I$ and $V/I$ are PI, we obtain 
 that $K[B]$  also is a  PI-algebra.
By \eqref{finitemodule},  $K[A]=\sum_{f\in F} K[B]+f$. So the $K$-algebra $K[A]$ is a finite module over the subalgebra $K[B]$
and thus
$K[A]$ also is a left Noetherian PI algebra (actually it is a Noetherian left $K[B]$-module).
We thus have proven the following result. 

\begin{theorem} \label{ThmNoetherianPI}
Let $(X,r)$ be a finite left non-degenerate solution and put $X=\{ x_1 , \ldots , x_n\}$. 
Let $v$ be a positive integer so that $\cm_x^v$ is an idempotent endomorphism for each $x\in X$.
Then, with the notation introduced above, the derived monoid $A=A(X,r)$ satisfies the following properties.
\begin{enumerate}
    \item $A(X,r) =B(v) +F(v)$, i.e. $A(X,r)$ is a finite module over $B(v)=A(Y,s_Y)=\langle y_1=vx_1, \ldots , y_n=vx_n\rangle.$
    \item $\mathcal{C}(B(v))$ is a band;
    \item  for $1\leq k \leq n$, $t_k =y_1 + \cdots y_k\in B$ and
    $\kappa \in \Sym_n$,  each  $B_{\kappa(t_{k})}+ \kappa (t_k)$ is a commutative semigroup, where 
    $B_{\kappa(t_{k})}=  \langle y_{\kappa (i)} \mid 1\leq i \leq k\rangle$.
    \item for $1\leq k \leq n$  and $B_k = \bigcup_{\kappa \in \Sym_n} B_{\kappa(t_{k})} +\kappa (t_k)$,
       $$B_n \subseteq B_n \cup B_{n-1} \subseteq  \cdots \subseteq B_n \cup \cdots \cup B_2 \cup B_1 \subseteq B,$$
       is an ideal chain of $B(v)$,
    \item  each Rees factor semigroup of the ideal chain  
     is a union of left ideals
      $$(B_n\cup B_{n-1} \cup \cdots \cup B_{i-1} \cup 
        B_{\kappa(t_{i})} +\kappa (t_{i}))/(B_n \cup B_{n-1} \cup \cdots \cup B_{i-1}),$$
        with $\kappa \in \Sym_n.$
     \item $A$ satisfies the ascending chain condition on left ideals.
     \item for any field $K$, each factor of the ideal chain 
      $$\{0\} \subseteq K[B_n] \subseteq K[B_n]+ K[B_{n-1}] \subseteq \cdots \subseteq K[B_n ] +\cdots + K[B_1] \subseteq K[B(v)],$$
      of $K[B(v)]$ 
      is  a Noetherian left $K[B(v)]$-module that is a finite sum of commutative rings.
\end{enumerate}
In particular, $K[A]$ is a Noetherian left $K[B(v)]$-module, and  $K[A]$ is a left Noetherian PI-algebra of finite Gelfand-Kirillov dimension bounded by $n$.
\end{theorem}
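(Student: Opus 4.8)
The plan is to collect the structural facts built up in the discussion preceding the statement. Items (1)--(3) are essentially recorded already: (1) is \eqref{finitemodule}, (2) is Lemma~\ref{Cband}, and (3) is \eqref{interchange} together with the rewriting that shows $\cm_w=\cm_t$ whenever $w=w_t+t$ with $w_t\in B_t$. For (4) and (5) I would invoke the rewriting rule \eqref{idealR}: it shows that, modulo the pieces of strictly larger index, each $B_{\kappa(t_k)}+\kappa(t_k)$ generates a left ideal of $B(v)$, so the nested unions $B_n\subseteq B_n\cup B_{n-1}\subseteq\cdots$ form an ascending ideal chain, and that each Rees factor splits as the stated union of left ideals indexed by $\kappa\in\Sym_n$.

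For the module-theoretic and PI statements I would introduce, as in the text, the subspaces $R_k=\sum_{\kappa\in\Sym_n}K[B_{\kappa(t_k)}+\kappa(t_k)]$ of $K[B(v)]$. By \eqref{interchange} each summand $K[B_{\kappa(t_k)}+\kappa(t_k)]$ is a cyclic left module over the finitely generated commutative algebra $K[B_{\kappa(t_k)}]^{\mathrm{ab}}$, hence a Noetherian module and commutative as a ring; reducing modulo $R_{k+1}+\cdots+R_n$ keeps the cyclic generator, so each successive quotient $\bigl(R_k+\cdots+R_n\bigr)/\bigl(R_{k+1}+\cdots+R_n\bigr)$ is a Noetherian left $K[B(v)]$-module and a finite sum of commutative rings. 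Since, again by \eqref{idealR}, $R_k+\cdots+R_n$ is an ideal of $K[B(v)]$, stacking these quotients shows $K[B(v)]=R_0+R_1+\cdots+R_n$ is left Noetherian; and since a $K$-algebra possessing an ideal $I$ with $I$ and the quotient both PI is itself PI, while by Rowen's theorem (or more generally K\c{e}pczyk's) a sum of finitely many PI rings is PI, $K[B(v)]$ is a PI-algebra. Finally $K[A]=\sum_{f\in F(v)}K[B(v)]+f$ is a finite module over $K[B(v)]$, so it is a Noetherian left $K[B(v)]$-module and inherits both the left Noetherian and the PI property; property (6) then follows since an ascending chain of left ideals of the monoid $A$ yields an ascending chain of left ideals of the left Noetherian algebra $K[A]$, and (7) is precisely what was shown above.

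For the bound on the Gelfand--Kirillov dimension, since $K[A]$ is a finitely generated left $K[B(v)]$-module one has $\GK K[A]=\GK K[B(v)]$; and $K[B(v)]$ admits a finite filtration by left ideals whose factors are finite sums of commutative algebras generated by at most $n$ elements, each therefore of GK-dimension at most $n$, so $\GK K[B(v)]\le n$.

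The main obstacle I anticipate is not any single estimate but the bookkeeping in the filtration: one must check carefully that the partial sums $R_k+\cdots+R_n$ are genuinely ideals of $K[B(v)]$ --- this is exactly where \eqref{idealR} is used --- and that passing to the quotient by the higher-index $R_j$ preserves the cyclicity of $K[B_{\kappa(t_k)}+\kappa(t_k)]$ over $K[B_{\kappa(t_k)}]^{\mathrm{ab}}$. A secondary point requiring care is the GK-dimension estimate, where one must confirm that a finite module extension does not increase GK-dimension and that an algebra with a finite filtration by modules of GK-dimension at most $n$ again has GK-dimension at most $n$; the commutative building blocks, being generated by at most $n$ commuting elements, contribute at most $n$.
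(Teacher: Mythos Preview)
Your proposal is correct and follows the paper's own argument essentially step for step: items (1)--(3) are exactly the references you cite, items (4)--(5) come from the rewriting rule \eqref{idealR}, and the Noetherian/PI conclusions are obtained via the same filtration by the $R_k$'s, the cyclicity over $K[B_{\kappa(t_k)}]^{\mathrm{ab}}$, Rowen/K\c{e}pczyk for the PI step, and finally the finite-module passage from $K[B(v)]$ to $K[A]$. The only things you add beyond the paper are the explicit remarks on the GK-dimension bound and the derivation of (6) from left Noetherianity of $K[A]$, both of which are routine and correctly handled.
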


In general the algebra $K[A]$ is not right Noetherian.
Indeed, again as in Example~\ref{exidempotent}, consider  solutions $(X,r)$ with $r(x,y)=(y,y)$ for $x,y\in X$. So $r$ is idempotent and left non-degenerate. For simplicity take $X=\{ x,y\}$.
To avoid confusion of the operations in the structure algebra, we will write $A=A(X,r)$ multiplicatively, i.e.
$A=A(X,r) =\langle x,y \mid xy=yy,\; yx=xx \rangle =\{ x^{n}, y^{n} \mid n\geq 0\}$.
In $K[A]$ we have $(x^{n}-y^{n})a=0$ for any $a\in A$. Hence 
$\sum_{n>0}K(x^n-y^n)$ is a right ideal of $K[A]$ that obviously  is not finitely generated as a right ideal.
 
As an application of Theorem~\ref{ThmNoetherianPI} we claim that the structure algebra $K[M(X,r)]$ is left Noetherian in case $(X,r)$ is a finite left non-degenerate solution, provided the diagonal map $\mathfrak{q}$ is surjective  (i.e. $X\rightarrow X: x\mapsto \lambda_x^{-1}(x)$ is surjective). With a standard length and induction argument one can easily show that the latter is equivalent with the diagonal map   $\mathfrak{q} : M(X,r) \rightarrow M(X,r) : a\mapsto \lambda_a^{-1}(a)$ being surjective.

From Section~\ref{sec_StructureMonoid}, we know that
 $M(X,r)$ is a submonoid of the semidirect product $A(X,r) \rtimes \im \lambda$ and $\im \lambda=\{\lambda_a \mid a\in M(X,r)\}$ is a finite group. From Theorem~\ref{ThmNoetherianPI} we get that the algebra $K[A(X,r) \rtimes \im \lambda]$ is a 
 Noetherian left $K[B(v)]$-module.
 Hence, to prove 
 that $K[M(X,r)]$ is left Noetherian  
 it is sufficient to show that one can choose the positive integer $v$ in the statement of Theorem~\ref{ThmNoetherianPI} so that $B(v)\subseteq M(X,r)$, i.e. for all $x\in X$ we also may assume that $\lambda_{vx}=\id$, and thus $M(X,r)$ is a finite 
 module over $B(v)$.
This is what will be shown in the following result.

Also recall that an algebra over a field  is said to be  \emph{representable} if it can be embedded into a  matrix algebra over some field.
A well-known result of Ananin \cite{Ananin} states that  any finitely generated  left Noetherian PI-algebra over a field is representable. Conversely, any representable algebra is a PI-algebra.
 
\begin{corollary} \label{corollary:pinoetherian}
 Let $(X,r)$ be a finite left non-degenerate solution and let $K$ be a field. If the diagonal map $\mathfrak{q}:X\rightarrow X$ is bijective,
then there exists a  positive integer  $v$  so that $\cm_x^v$ is an idempotent endomorphism for each $x\in X$ and $B(v)\subseteq M(X,r)$.

Consequently, $M(X,r) =B(v)+F$, for some finite subset $F$ of $M(X,r)$
and  
the structure algebra $K[M(X,r)]$ is a connected 
graded left Noetherian representable algebra.
In particular, so is the algebra $K[(A,\circ)]$ for any unital strongly $\mathbb{N}$-graded  \ourstructure $(A,+,\circ, \lambda,\cm)$ with  $A_1$ finite, $A_0=\{ 1\}$  and with  bijective diagonal map.
\end{corollary}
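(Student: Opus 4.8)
The plan is the one indicated just before the statement: it suffices to produce a single positive integer $v$ such that, for every $x\in X$, the endomorphism $\cm_{x}^{v}$ is idempotent \emph{and} $\lambda_{vx}=\id$; once this is done, Theorem~\ref{ThmNoetherianPI} together with the module-finiteness reasoning sketched above yields the conclusion for $K[M(X,r)]$. The idempotency part is routine: since $X$ is finite and the derived solution restricts to a map $X^{2}\to X^{2}$, each $\cm_{x}$ is determined by its restriction to $X$, so $\mathcal{C}=\mathcal{C}(A(X,r))$ embeds into $\Map(X,X)$ and is finite; hence each cyclic subsemigroup $\langle\cm_{x}\rangle$ contains an idempotent, and one may fix $v_{0}\geq 1$ with $\cm_{x}^{v_{0}}=\cm_{x}^{2v_{0}}$ for all $x\in X$, a property inherited by every multiple of $v_{0}$.

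The substantive point, and the only place where bijectivity of $\mathfrak{q}$ enters, is the control of $\lambda_{vx}$. First I would establish, by simultaneous induction on $n\geq 0$, the two identities $\lambda_{nx}=\lambda_{\mathfrak{q}^{0}(x)}\lambda_{\mathfrak{q}^{1}(x)}\cdots\lambda_{\mathfrak{q}^{n-1}(x)}$ and $\lambda_{nx}^{-1}(x)=\mathfrak{q}^{n}(x)$, where $\mathfrak{q}^{i}$ denotes the $i$-fold composite and the empty product is $\id$. The inductive step rewrites $nx=\bigl((n-1)x\bigr)\circ\lambda_{(n-1)x}^{-1}(x)$ (the identity $a+b=a\circ\lambda_{a}^{-1}(b)$), then applies $\lambda_{a\circ b}=\lambda_{a}\lambda_{b}$ and $\mathfrak{q}(a)=\lambda_{a}^{-1}(a)$; every term lies in the finite group $\im\lambda$, since each $\lambda_{a}$ with $a\in M(X,r)$ restricts to a permutation of $X$. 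Because $\mathfrak{q}$ is a bijection of the finite set $X$, there is $e\geq 1$ with $\mathfrak{q}^{e}=\id_{X}$, and the first identity then collapses to $\lambda_{(ke)x}=(\lambda_{ex})^{k}$ for all $k\geq 1$. Hence $\langle\lambda_{ex}\rangle$ is a cyclic subgroup of $\im\lambda$, so $(\lambda_{ex})^{m_{x}}=\id$ for some $m_{x}\geq 1$, i.e.\ $\lambda_{(em_{x})x}=\id$; setting $v_{1}=e\cdot\operatorname{lcm}_{x\in X}(m_{x})$ gives $\lambda_{v_{1}x}=\id$ for all $x\in X$, and then $\lambda_{kv_{1}x}=\id$ for every $k\geq 1$ (again by the product formula). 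Taking $v$ to be any common multiple of $v_{0}$ and $v_{1}$ produces the required integer.

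With such a $v$, a short induction on additive word length shows that every element of the additive submonoid $B(v)=\langle vx_{1},\dots,vx_{n}\rangle$ of $(A(X,r),+)$ is a $\circ$-product of the $y_{i}=vx_{i}$ and has $\lambda$-map equal to $\id$; consequently $+$ and $\circ$ agree on $B(v)$ and $B(v)$ is a submonoid of $(M(X,r),\circ)$, i.e.\ $B(v)\subseteq M(X,r)$. By \eqref{finitemodule} and the equality $b+f=b\circ\lambda_{b}^{-1}(f)=b\circ f$ for $b\in B(v)$, we get $M(X,r)=A(X,r)=B(v)+F(v)=\bigcup_{f\in F(v)}B(v)\circ f$ with $F(v)$ finite, so $K[M(X,r)]=\sum_{f\in F(v)}K[B(v)]\,f$ is a finitely generated left $K[B(v)]$-module. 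Theorem~\ref{ThmNoetherianPI} applies to the finite left non-degenerate solution $(X(v),s_{X(v)})$ (whose $\cm$-maps $\cm_{y_{i}}=\cm_{x_{i}}^{v}$ are idempotent), so $K[B(v)]=K[A(X(v),s_{X(v)})]$ is a left Noetherian PI-algebra; therefore $K[M(X,r)]$ is a Noetherian left $K[B(v)]$-module, whence every left ideal of the ring $K[M(X,r)]$ is a finitely generated $K[B(v)]$-submodule and so finitely generated as a left ideal. Thus $K[M(X,r)]$ is left Noetherian; being connected graded by word length in $X$ (with degree-$0$ part $K$), finitely generated, and PI (finite over the PI-algebra $K[B(v)]$), it is representable by Ananin's theorem.

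For the last assertion, if $(A,+,\circ,\lambda,\cm)$ is unital, strongly $\mathbb{N}$-graded with $A_{1}$ finite, $A_{0}=\{1\}$ and $\mathfrak{q}$ bijective, then $\mathfrak{q}$ is degree preserving and hence restricts to an injective, thus bijective, self-map of the finite set $A_{1}$; so $(A_{1},r_{A_{1}})$ is a finite left non-degenerate solution with bijective diagonal map, and by Corollary~\ref{CorEpiGraded} the monoid $(A,\circ)$ is a graded epimorphic image of $(M(A_{1},r_{A_{1}}),\circ)$. Thus $K[(A,\circ)]$ is a graded quotient of the connected graded, left Noetherian, finitely generated PI-algebra $K[M(A_{1},r_{A_{1}})]$, and such a quotient is again connected graded, left Noetherian, finitely generated and PI, hence representable by Ananin's theorem. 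The crux of the whole argument is the product formula $\lambda_{nx}=\prod_{i=0}^{n-1}\lambda_{\mathfrak{q}^{i}(x)}$ together with its companion $\lambda_{nx}^{-1}(x)=\mathfrak{q}^{n}(x)$: once these are in place, bijectivity of $\mathfrak{q}$ forces $\lambda_{vx}=\id$ for a suitable $v$ almost immediately, and everything else is a routine assembly around Theorem~\ref{ThmNoetherianPI} and Ananin's theorem.
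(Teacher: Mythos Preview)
Your proof is correct and follows essentially the same strategy as the paper's: the key identity $kx=\mathfrak{q}^{0}(x)\circ\mathfrak{q}^{1}(x)\circ\cdots\circ\mathfrak{q}^{k-1}(x)$ (equivalently your product formula $\lambda_{nx}=\prod_{i=0}^{n-1}\lambda_{\mathfrak{q}^{i}(x)}$) is exactly what the paper uses, and the reduction of the graded \ourstructure case to the structure monoid via Corollary~\ref{CorEpiGraded} is identical. One small difference in packaging: the paper deduces module-finiteness of $K[M(X,r)]$ over $K[B(v)]$ by embedding $M(X,r)$ in $A(X,r)\rtimes\im\lambda$ and using that $K[A(X,r)\rtimes\im\lambda]$ is a Noetherian $K[B(v)]$-module, whereas you argue more directly via $b+f=b\circ\lambda_{b}^{-1}(f)=b\circ f$ for $b\in B(v)$, which is a pleasant simplification.
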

 \begin{proof}
  From Theorem~\ref{ThmNoetherianPI}
we know that $A=A(X,r)$ is a finite module over the left Noetherian ring $K[B]$, where $B=B(v)$ and $v$ is any positive integer so that each $\cm_x^{v}$ is an idempotent, for $x\in X$. Without loss of generality, replacing $v$ by a multiple if needed, we also may assume that $\lambda_a^{v}=\id$, for each $a\in M(X,r)$.
 
 Now, for $x\in X$, define $x^{(1)} =x$ and recursively $x^{(i+1)} =\mathfrak{q} (x^{(i)})=\lambda_{x^{(i)}}^{-1}(x^{(i)})$.
Since, by assumption, $\mathfrak{q}$ is a bijective map and thus it is of finite order, i.e. $\mathfrak{q}^m=\id$, for some positive integer $m$. 
So $x^{(m+1)}=x^{(1)}=x$. Since for any positive integer $k$ we have that $kx=x^{(1)}\circ x^{(2)} \circ \cdots \circ  x^{(k)}$, we obtain that
$vmx=(x^{(1)}\circ x^{(2)} \circ \cdots \circ x^{(m)})^{v}$ and thus $\lambda_{mvx} =(\lambda_x \lambda_{x^{(1)}} \cdots \lambda_{x^{(m)}})^v=\id$. So $B(mv) \subseteq M(X,r)$, as desired.
 
 Because of Corollary~\ref{CorEpiGraded}, for any strongly  $\mathbb{N}$-graded unital \ourstructure   $(A,+,\circ, \lambda , \cm)$ with bijective diagonal map  and
 with  $A_1$ finite and $A_0=\{ 1\}$,
 we have that the algebra $K[(A,\circ)]$ is a graded epimorphic image of the structure algebra $K[M(A_1,r_{A_1})]$ for the left non-degenerate finite solution $(A_1,r_{A_1})$ with bijective diagonal map. Hence, the last statement of the result follows from the first one.
 \end{proof}

In case $(X,r)$ is a finite left non-degenerate bijective solution (so a finite non-degenerate bijective solution, by \cref{maintheorembijective}), then each element of $A(X,r)$ is a normal element, as each $\cm_a$ is bijective, and thus  $\mathcal{C}(A)$ is a group with $\id$ as the only idempotent. Hence, for an appropriate choice of $v$ we get that  $B(v)\subseteq (Z(A)\cap M(X,r))$. It follows that $w\circ B(v)=B(v)\circ w$ for each element $w\in M(X,r)$. Hence one obtains Theorem 4 
in \cite{JeKuVA19Cor} (see also \cite{JeKuVA19}); in particular 
$K[M(X,r)]$ is a module-finite normal extension of the commutative affine subalgebra $K[B(v)]$ and thus
$K[M(X,r)]$ is a left and right Noetherian PI-algebra.  Note that the bijective left non-degenerate assumption implies that the diagonal map is bijective, because of Theorem~\ref{maintheorembijective} and Lemma~\ref{lemma:diagonal}.

\subsection{\ourstructures with a left simple semigroup of endomorphisms and their solutions}\label{sec_simple}\hfill

Let $A=(A,+,\circ , \lambda ,\cm)$ be a \ourstructure and assume 
 $\mathcal{C}=\{ \cm_a \mid a\in A\}$
 is a finite subsemigroup of $\End(A,+)$. We know from \eqref{leftsimple} that  every left ideal of $\mathcal{C}$ is a two-sided ideal.
It is then well known that this semigroup has an ideal chain
  $$C_0 \subseteq C_1 \cdots \subseteq C_{n-1} \subseteq C_n=\mathcal{C}, $$
with $C_0=\{ \theta\}$ or $C_0$ the empty set, and all factors $C_i/C_{i-1}$ are either a null semigroup or completely ($\theta$)-simple. In the latter case we know that such a semigroup is of the form $\mathcal{M}^{0}(G,k,l,P)$, with $G$ a maximal 
subgroup of $\mathcal{C}$ and $P$ a regular sandwich matrix. Since every element of such a semigroup has an idempotent as left identity, it is easy to see that $R\cup C_{i-1}$, where $R$ is a ``column'' of this semigroup, is a left ideal of $\mathcal{C}$. But since this must be a two-sided ideal, we get that $l=1$.

So the principal factors that are completely ($\theta$)-simple (or possibly simple in case of $C_1$) are of the form
$\mathcal{M}^{0}(G,k,1,P)$, and after normalising, we also may assume that each entry of the row matrix $P$ is the identity of $G$.
So, this semigroup is a finite union of finite groups and the idempotents form a subsemigroup so that $ef =e$ for any two non-zero idempotents.

In this subsection we will deal with the case that $\mathcal{C}$ is left simple (for example if $(A,+)$ is a right simple semigroup),  i.e. $\mathcal{C}=C_1$ and $C_0 =\emptyset$ and show that $(A,+)$
is determined by skew left braces (actually racks).

So, from now on, assume $\mathcal{C}$ is a finite  left simple semigroup and thus 
 $$\mathcal{C}= G_1 \cup \cdots \cup G_k,$$ 
a disjoint union of finite groups, say with identity $1_1, \ldots, 1_k$ respectively. We have 
   \begin{eqnarray}
       G_iG_j \subseteq G_i \quad \mbox{and} \quad  1_i1_j=1_i.\label{multiplication}
 \end{eqnarray}
Consider the  anti-homomorphism
 $\cm:(A,+) \rightarrow \End(A,+)$ and 
define the following left ideals  of $(A,+)$, for $1\leq i\leq n$,
    $$A_i=\cm^{-1} (G_i).$$ 
For $a\in A_j$ and $b\in A_i$, since  $\cm_b \cm_a = \cm_{\cm_b (a)}\cm_b$, and thus, by  \eqref{multiplication},  we get  that 
$\cm_{\cm_b (a)} \in G_i$, i.e. $\{ \cm_b(a) \mid \cm_b \in G_i,\; a \in A \} \subseteq A_i$.
Because $1_i \sigma_b =\sigma_b$ the set $\{ \cm_b(a) \mid \cm_b \in G_i,\; a \in A \}$ is a subsemigroup of $(A,+)$. We will denote it as $G_i(A)$.
For a subset $T$ of $A$ we denote by $G_i(T)$ the  set $\{ \cm_b(t) \mid \cm_b \in G_i,\; t \in T \} $.
Note that $G_i(A_i)\subseteq G_i(A)=G_i 1_i (A) \subseteq G_i (A_i)$ and 
thus    $$G_i(A)=G_i(A_i).$$
We also have that 
 \begin{eqnarray}
 a+b=b+\cm_b (a)=\cm_b (a) + \cm_{\cm_b(a)}(b).\label{sumoftwo}
 \end{eqnarray}
Hence it follows that  $a+b \in G_i (A_i)$ and that 
  each  $G_i(A_i)$ is a   left ideal of the semigroup $(A,+)$.
Furthermore, we get the  following restrictions of the derived solution of $A$,
 $$\cm_{j,i} : A_j \times A_i \rightarrow A_i \times A_i: (a,b) \mapsto (b, \cm_b(a)).$$
In particular, 
each
$$\cm_{i,i} : A_i \times A_i \rightarrow A_i \times A_i: (a,b) \mapsto (b,\cm_b(a)),$$
is a solution.
Clearly this map restricts to solutions 
 $$\widetilde{\cm}_{i,i} : G_i (A_i) \times G_i (A_i) \rightarrow G_i (A_i) \times G_i (A_i) .$$

As $\mathcal{C}$ is finite, there exists a positive integer, say  $v$, so that if $b\in A_i$ then  $\cm_b^v = 1_i$. 
Hence, it follows that $$\cm_b = 1_i\cm_b = \cm_b^{v+1} = \cm_{\cm_b^v(b)}\cm_b^v = \cm_{1_i(b)}1_i = \cm_{1_i(b)}.$$
So 
$$\cm_{j,i} : A_j \times A_i \rightarrow A_i \times A_i: (a,b) \mapsto (b, \cm_{1_i(b)}(a)).$$

Now for $a\in A_i$ we have that $\cm_a (G_i (A_i)) \subseteq  G_i (A_i)$ and $1_i$ acts as the identity on $G_i(A_i)$.
As $G_i$ is a group we get that each map
 $\cm_{a} : G_i(A_i) \rightarrow G_i(A_i)$
is bijective (and of finite order, as $\mathcal{C}$ is finite). Hence the restriction
 $$\widetilde{\cm}_{i,i} : G_i (A_i) \times G_i (A_i) \rightarrow G_i (A_i) \times G_i (A_i), $$
is a bijective non-degenerate solution and thus $G_i(A_i)$ is a normalising semigroup, i.e. $G_i(A_i) +a =a+G_i(A_i)$ for all $a\in A_i$.

Furthermore, for $a \in A_i$ and $b \in A$, we have that $\sigma_a(b) = \sigma_a(1_i(b))$. Hence, the solution $\cm_{i,i}$ on $A_i$ is determined by its bijective non-degenerate subsolution on $G_i(A_i)$ and the projections $1_i$.

To conclude, we have proved the  first part of the following result.

 \begin{theorem}\label{descrleftsimple}
 Let $A=(A,+,\circ, \lambda , \cm)$  be a \ourstructure. Assume $\mathcal{C}$ is a finite left simple semigroup, let $E=E(\mathcal{C})$ be the subsemigroup consisting of the idempotents and $G_e$ the maximal subgroup of $\mathcal{C}$ containing $e \in E$. Then, $A=\cup_{e\in E} A_e$, a disjoint union of left ideals $A_e=\cm^{-1}(G_e)$, so that the derived solution $s_A$ associated to $A$ restricts to a bijective non-degenerate solution on $G_e(A_e)$, say $s_{A_e}$
 and for  $a_e\in A_e$, $a_f\in A_f$ we have $s_A(a_e,a_f) =s_{A_f}(f(a_e),a_f) =( a_f, \cm_{f(a_f)}(f(a_e)))$, 
 i.e. the derived solution is determined by bijective non-degenerate solutions.
 
 If, furthermore, $A$ is strongly $\mathbb{N}$-graded with  $A_1$ finite if  $A_0=\emptyset$ then, 
for each $e\in E(\mathcal{C})$, 
  $A_e \setminus G_e(A_e)$ is finite and $G_e(A_e)$ also is strongly $\mathbb{N}$-graded.
 \end{theorem}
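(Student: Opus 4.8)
The plan is to show that every element of $A_e$ of additive degree $\geq 2$ already lies in $G_e(A_e)$, which forces $A_e\setminus G_e(A_e)\subseteq A_1$ and hence finiteness. The first step is to identify $G_e(A_e)$ as the image of the idempotent $1_e\in G_e$. Since $1_e\in G_e\subseteq\mathcal{C}=\{\cm_a\mid a\in A\}$, we may write $1_e=\cm_{a_0}$ for some $a_0\in A_e$, and by Definition~\ref{definition:graded} this endomorphism of $(A,+)$ is degree preserving. Because $1_e$ is the identity of the group $G_e$, one has $1_e\cm_b=\cm_b$ for all $\cm_b\in G_e$; it follows that $1_e(A)=G_e(A)=G_e(A_e)$ and that $1_e$ restricts to the identity on $G_e(A_e)$. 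Hence $G_e(A_e)=\{x\in A\mid 1_e(x)=x\}$, and, $1_e$ being degree preserving, $G_e(A_e)\cap A_n=1_e(A_n)$ for every $n$.

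Next I would carry out the reduction in degree. Let $a\in A_e\cap A_n$ with $n\geq 2$. As $A$ is strongly $\mathbb{N}$-graded with $A_0=\emptyset$, we can write $a=c+a'$ with $c\in A_1$ and $a'\in A_{n-1}$. Since $\cm\colon(A,+)\to\End(A,+)$ is an anti-homomorphism by \eqref{eq:condc}, we get $\cm_{a'}\cm_c=\cm_a\in G_e$; as $\mathcal{C}$ is the disjoint union of the groups $G_f$ with $G_fG_g\subseteq G_f$ (see \eqref{multiplication}), the product $\cm_{a'}\cm_c$ lies in the same component as $\cm_{a'}$, so $\cm_{a'}\in G_e$, i.e.\ $a'\in A_e$. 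Now \eqref{eq:csum} gives $a=c+a'=a'+\cm_{a'}(c)$, and since $\cm_{a'}\in G_e$ we have $\cm_{a'}(c)\in G_e(A)=G_e(A_e)$. Because $G_e(A_e)$ is a left ideal of $(A,+)$ (as shown earlier in this subsection), $a=a'+\cm_{a'}(c)\in A+G_e(A_e)\subseteq G_e(A_e)$. Therefore $A_e\cap A_n\subseteq G_e(A_e)$ for all $n\geq 2$, whence $A_e\setminus G_e(A_e)\subseteq A_e\cap A_1\subseteq A_1$ is finite. I would stress that the bound $n\geq 2$ is sharp: there may genuinely be generators in $A_1\setminus G_e(A_e)$, so one should not aim for the stronger statement $A_e=G_e(A_e)$.

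For the strong gradation of $G_e(A_e)$, I would use $G_e(A_e)=\bigcup_{n\geq 1}\bigl(G_e(A_e)\cap A_n\bigr)=\bigcup_{n\geq1}1_e(A_n)$, each piece being nonempty since $A_n\neq\emptyset$ (as $A_0=\emptyset$ and $A_1\neq\emptyset$). For $n,m\geq 1$ the sum $1_e(A_n)+1_e(A_m)$ is contained in $A_{n+m}$ and, $G_e(A_e)$ being a subsemigroup of $(A,+)$, in $G_e(A_e)$; conversely, for $g\in G_e(A_e)\cap A_{n+m}$ strong gradation of $A$ gives $g=h_1+h_2$ with $h_1\in A_n$, $h_2\in A_m$, and applying $1_e$ together with $1_e(g)=g$ yields $g=1_e(h_1)+1_e(h_2)$ with $1_e(h_i)$ of degree $n$, resp.\ $m$, and lying in $G_e(A_e)$. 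Hence $\bigl(G_e(A_e)\cap A_n\bigr)+\bigl(G_e(A_e)\cap A_m\bigr)=G_e(A_e)\cap A_{n+m}$; since each $\cm_b$ with $b\in G_e(A_e)$ restricts to a degree-preserving endomorphism of $G_e(A_e)$, the sub-\ourstructure $(G_e(A_e),+,+,\iota,\cm)$ carrying the restricted derived solution is strongly $\mathbb{N}$-graded. The main (and essentially only delicate) point in the whole argument is the identification $G_e(A_e)=\im(1_e)$ together with the observation that the left-ideal property of $G_e(A_e)$ turns the trivial rewriting $a=a'+\cm_{a'}(c)$ into membership in $G_e(A_e)$; everything else is routine bookkeeping with the disjoint-union structure of $\mathcal{C}$ and the degree function.
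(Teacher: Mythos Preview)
Your proof is correct and follows essentially the same strategy as the paper: showing that every element of $A_e$ of degree $\geq 2$ already lies in $G_e(A_e)$, whence $A_e\setminus G_e(A_e)\subseteq A_1$. The paper achieves this in one line by invoking \eqref{sumoftwo}, which rewrites any sum $c+a'$ as $\cm_{a'}(c)+\cm_{\cm_{a'}(c)}(a')$ with both summands in $G_e(A_e)$; you instead use a single application of \eqref{eq:csum} together with the left-ideal property of $G_e(A_e)$, which is an equally valid variant. For the strong gradation of $G_e(A_e)$ the paper simply asserts the conclusion, whereas your argument via the degree-preserving idempotent projection $1_e$ makes the reasoning explicit and is a welcome addition.
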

 \begin{proof}
 It only remains to prove the second part of the statement. So, assume $A$ is strongly
 $\mathbb{N}$-graded with  $A_1$ finite if  $A_0=\emptyset$.
 Let $e\in E$.
 Because of \eqref{sumoftwo} each element of $A_e\setminus (A_0\cup A_1)$ belongs to $G_e(A_e)$.
 Clearly $A_e$ is strongly $\mathbb{N}$-graded and hence so is  $G_e(A_e)$.
 Hence the result follows.
 \end{proof}

\section*{Acknowledgement}
The first named author is supported by the Engineering and Physical Sciences Research Council [grant number EP/V005995/1].
The second author was supported in part by Onderzoeksraad of Vrije Universiteit Brussel and Fonds voor Wetenschappelijk Onderzoek
(Flanders), grant G016117. The third author is supported by Fonds voor Wetenschappelijk Onderzoek (Flanders), via an FWO post-doctoral fellowship, grant 12ZG221N.
The fourth author is supported by Fonds voor Wetenschappelijk Onderzoek (Flanders), via an FWO Aspirant-fellowship, grant 11C9421N.

For the purpose of open access, the author has applied a CC BY public copyright licence  to any Author Accepted Manuscript version arising.

Data Access Statement: Data sharing is not applicable to this article as no datasets were generated or analysed in this research.

\bibliographystyle{abbrv}
\bibliography{bibliography}
\end{document}